\newtheorem{theorem}{Theorem}[section]
\newtheorem{proposition}[theorem]{Proposition}
\newtheorem{lemma}[theorem]{Lemma}
\newtheorem{claim}[theorem]{Claim}
\newtheorem{corollary}[theorem]{Corollary}
\newtheorem{D}[theorem]{Definition}
\newenvironment{definition}{\begin{D} \rm }{\end{D}}
\newtheorem{R}[theorem]{Remark}
\newenvironment{remark}{\begin{R}\rm }{\end{R}}
\def\Zee{\mathbb{Z}}
\def\Cee{\mathbb{C}}
\def\Pee{\mathbb{P}}
\def\Pic{\operatorname{Pic}}
\def\Spec{\operatorname{Spec}}
\def\End{\operatorname{End}}
\def\Hom{\operatorname{Hom}}
\def\Ext{\operatorname{Ext}}
\def\Ker{\operatorname{Ker}}
\def\Supp{\operatorname{Supp}}
\def\scrO{\mathcal{O}}
\def\mmu{\boldsymbol{\mu}}
\begin{document}

\title{On the Geometry of Principal Homogeneous Spaces}

\author{A. J. de Jong\thanks{The first author was supported in part by NSF grant \# DMS-0600425.}
\  and Robert Friedman}

\maketitle

\section*{Introduction}

Let $k$ be an algebraically closed field, let $\pi\colon X \to B$ be an elliptic surface defined over $k$, and let $X_K$ be the generic fiber of $\pi$, which is an elliptic curve defined over the field $K=k(B)$, the function field of $B$. If $f\colon Y \to B$ is a genus one fibration locally isomorphic to $X$ (in the \'etale topology on $B$), then $Y$ corresponds to a principal homogeneous space $Y_K$ over $X_K$ which is everywhere locally trivial. The goal of this paper is to study the geometry of such surfaces. Of course, the main  interest is when $X$, $B$ and $Y$ are instead defined over a finite field $\mathbb{F}$. In this case, the set of isomorphism classes of everywhere locally trivial principal homogeneous spaces for $X_K$ is conjectured to be finite. This is known to hold in case  $X \cong D\times B$, where $D$ is an  elliptic curve $D$ defined over $\mathbb{F}$, or $X$ is a rational surface,  by work of Milne \cite{Milne1, Milne2}, or  $X$ is a $K3$ surface defined over $\mathbb{F}$, by work of Artin and Swinnerton-Dyer \cite{ASD}. Since the appearance of \cite{ASD}, little progress has been made in the function field case, and it is our hope that the geometric study of principal homogeneous spaces, over an algebraically closed field, may give some clues as to how to attack the finiteness problem over finite fields. 

 Suppose that $f\colon Y\to B$ is a genus one fibration, everywhere locally trivial. Let $n$ be the smallest positive integer such that there exists an \textsl{$n$-section} of $f$, in other words an irreducible curve $D\subseteq Y$ whose intersection number with a  fiber of $f$ is $n$.  Fix a divisor $D$ on $Y$, not necessarily effective, such that the degree of the restriction $\scrO_Y(D)$ to a smooth fiber is $n$.  For simplicity, we also assume in the introduction  that every fiber of $f$, or equivalently $\pi$, is irreducible. In this case, $D$ is specified by its restriction to a generic fiber up to adding an integral combination of smooth fibers. In particular, the rank $n$ vector bundle $f_*\scrO_Y(D)$ on the base curve $B$ is determined up to tensoring with a line bundle on $B$. Of course, by relative duality, it is essentially equivalent to consider the rank $n$ vector bundle $R^1f_*\scrO_Y(-D)\cong (f_*\scrO_Y(D))\spcheck \otimes \omega^{-1}$. Here $\omega$ is an invertible sheaf on $B$ pulling back to the relative dualizing sheaves $\omega_{Y/\Pee^1}$, resp.\ $\omega_{X/\Pee^1}$ on $Y$, resp.\ $X$. 
If $B\cong \Pee^1$, then we can write
$$R^1f_*\scrO_Y(-D) = \bigoplus _{i=1}^n\scrO_{\Pee^1}(\alpha_i)$$ 
for unique integers $\alpha_1\leq \alpha_2\leq \dots \leq \alpha_n$. Geometrically, the significance of the bundle $R^1f_*\scrO_Y(-D)$ is as follows: for $n\geq 3$, the divisor $D$ is relatively very ample and induces an embedding $Y $ in the projective bundle $\Pee(R^1f_*\scrO_Y(-D))$ over $B$ (and realizes $Y$ as a double cover of $\Pee(R^1f_*\scrO_Y(-D))$ in case $n=2$).

Ideally the vector bundle $R^1f_*\scrO_Y(-D)$ should be fairly well behaved. In particular it is natural to ask if it is semistable. This is not possible for many reasons. For example, if $B\cong \Pee^1$, then a bundle $\bigoplus _{i=1}^n\scrO_{\Pee^1}(\alpha_i)$ is semistable if and only if 
$\alpha_1 = \cdots = \alpha_n$. But $D^2 = -2\sum_i\alpha_i + (n-2)d$, where $d=\chi(Y; \scrO_Y)$, and $D^2 \bmod 2n$, which is clearly an invariant of the restriction of $D$ to the generic fiber,   is an essentially topological invariant. In particular one can show that many values of $D^2\bmod 2n$ occur, so that $R^1f_*\scrO_Y(-D)$ cannot in general be semistable. However, again in the case $B\cong \Pee^1$, one could ask if, in the above notation, $|\alpha_i-\alpha_j| \leq 1$ for all $i,j$. Equivalently, up to a twist, is $R^1f_*\scrO_Y(-D)$ always of the form $\scrO_{\Pee^1}^k\oplus \scrO_{\Pee^1}(-1)^{n-k}$? Define  the pair $(Y,D)$ to be \textsl{rigid} if $B=\Pee^1$ and there exists an integer $t$ such that $f_*\scrO_Y(D)\cong \scrO_{\Pee^1}(t)^k\oplus \scrO_{\Pee^1}(t-1)^{n-k}$.
Thus (for the case $B\cong \Pee^1$), if $(Y,D)$ is a rigid pair, then $R^1f_*\scrO_Y(-D)$ is optimal in various senses: as a bundle over $\Pee^1$, it is both rigid and generic in moduli.

It is easy to see that not every pair $(Y,D)$ is rigid. However,  in some sense, every pair is not too far from being rigid:

\begin{theorem}\label{thm2} Let $f\colon Y \to B\cong \Pee^1$, $D$, and $n$ be as above. Let $d=\chi(X;\scrO_X)$, and suppose that  the characteristic of $k$ does not divide $n$. Let $R^1f_*\scrO_Y(-D) = \bigoplus_{i=1}^n\scrO_{\Pee^1}(\alpha_i)$ 
with $\alpha_1\leq \alpha_2\leq \dots \leq \alpha_n$.  Then 
   $0\leq\alpha_n - \alpha_1 \leq 3d/2$.
\end{theorem}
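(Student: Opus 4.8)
The plan is first to reduce to the case that $D$ is an \emph{integral $n$-section}, i.e.\ an irreducible reduced curve with $f|_D\colon D\to\Pee^1$ finite of degree $n$: using the minimality of $n$, a maximal destabilising line subbundle of $f_*\scrO_Y(D)$ produces an effective divisor whose horizontal part is a single integral $n$-section lying in the same generic class as $D$, and when the fibres of $f$ are irreducible, replacing $D$ by this curve does not alter the spread of $R^1f_*\scrO_Y(-D)$ (in general one must also keep track of vertical contributions, which is part of the work). The key structural input is then an exact sequence on $\Pee^1$. Applying $Rf_*$ to $0\to\scrO_Y(-D)\to\scrO_Y\to\scrO_D\to0$ and using that $\scrO_Y(-D)$ has negative degree on the fibres (so $f_*\scrO_Y(-D)=0$), that $f|_D$ is finite (so $R^1f_*\scrO_D=0$), and that $R^1f_*\scrO_Y=\scrO_{\Pee^1}(-d)$, collapses the long exact sequence to
\[
0\longrightarrow E_D\longrightarrow R^1f_*\scrO_Y(-D)\longrightarrow\scrO_{\Pee^1}(-d)\longrightarrow0 ,
\]
where $E_D$ is the rank $n-1$ Tschirnhaus bundle defined by $(f|_D)_*\scrO_D=\scrO_{\Pee^1}\oplus E_D$. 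Here the hypothesis $\operatorname{char}k\nmid n$ is used: it is exactly what makes the trace $\tfrac1n\operatorname{tr}$ split the inclusion $\scrO_{\Pee^1}\hookrightarrow(f|_D)_*\scrO_D$.

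The upper bound on $\alpha_n$ now drops out. Since $D$ is integral, a line bundle of positive degree on $D$ has no sections, so $\Hom(\scrO_{\Pee^1}(1),(f|_D)_*\scrO_D)=H^0\bigl(D,(f|_D)^*\scrO_{\Pee^1}(-1)\bigr)=0$; hence the subsheaf $E_D$ has no line subbundle of positive degree, i.e.\ all its splitting degrees are $\le0$. The exact sequence then gives $\mu_{\max}\bigl(R^1f_*\scrO_Y(-D)\bigr)\le\max\{\mu_{\max}(E_D),-d\}\le0$, that is $\alpha_n\le0$. (The inequality $0\le\alpha_n-\alpha_1$ is trivial.) Since $R^1f_*\scrO_Y(-D)\cong(f_*\scrO_Y(D))\spcheck\otimes\omega^{-1}$ with $\deg\omega=d$, we have $\alpha_1=-d-\mu_{\max}(f_*\scrO_Y(D))$, so $\alpha_n-\alpha_1\le d+\mu_{\max}(f_*\scrO_Y(D))$, and it remains to prove $\mu_{\max}(f_*\scrO_Y(D))\le d/2$, equivalently that $H^0\bigl(Y,\scrO_Y(D-kF)\bigr)=0$ for every integer $k>d/2$, $F$ a fibre.

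This last vanishing is the heart of the matter, and where the constant $3/2$ enters. I would argue by contradiction, choosing $D$ within its class to have minimal self-intersection. If $H^0(\scrO_Y(D-kF))\ne0$ with $k\ge1$ then $\dim|D|\ge1$, so $\scrO_D(D)$ has a section and $D^2\ge0$; and an effective divisor equivalent to $D-kF$ has, by minimality of $n$, horizontal part a single integral $n$-section $D_0$ in the generic class of $D$, so $D\sim D_0+E_v+kF$ with $E_v\ge0$ vertical. Minimality gives $D^2\le D_0^2$, and expanding $D^2=D_0^2+E_v^2+2D_0\!\cdot\!E_v+2kn$ (with $E_v\!\cdot\!F=0$) forces $-E_v^2\ge2kn$. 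One must then bound $-E_v^2$ from above: writing $-E_v^2=2\chi(\scrO_{E_v})$, using $D^2\ge0$, and estimating the contribution of each reducible fibre of $Y$ — whose total combinatorial size is controlled by $\chi(\scrO_Y)=d$ — one obtains a bound of the shape $-E_v^2\le cd$, forcing $k\le d/2$. The main obstacle is precisely this estimate: controlling how negative an effective vertical divisor can be while $D$ stays of minimal self-intersection, a purely two-dimensional problem about the bad fibres of the elliptic surface. When all fibres are irreducible one has $E_v^2=0$ and the cleaner conclusion $\alpha_n-\alpha_1\le d$; in general one pays the extra $d/2$.
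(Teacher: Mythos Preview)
Your route is genuinely different from the paper's. The paper proves Theorem~\ref{thm2} by first establishing the Bogomolov-type inequality $\Delta(V)\ge (r^2-e)d$ for bundles with stable restriction to the generic fibre (Theorem~\ref{Cthm1}), then applying it to the ``universal extension'' $0\to\scrO_Y(-D)\to V\to f^*W\to 0$ attached to a rank-one subbundle $W\subseteq R^1f_*\scrO_Y(-D)$ (this gives the bound on $\alpha_n$) and, via duality, to a rank-one quotient (this gives the bound on $\alpha_1$); adding these two rank-one estimates is what produces the $3d/2$. Your argument bypasses all of this: the Tschirnhaus sequence $0\to E_D\to R^1f_*\scrO_Y(-D)\to\scrO_{\Pee^1}(-d)\to 0$ together with $\mu_{\max}((f|_D)_*\scrO_D)\le 0$ gives $\alpha_n\le 0$ directly, and choosing $D$ minimal (equivalently of least $D^2$) gives $\beta_1=\mu_{\max}(f_*\scrO_Y(D))=0$ immediately, hence $\alpha_1=-d$. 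Since Theorem~\ref{thm2} is stated in the introduction under the blanket assumption that all fibres are irreducible, this is a complete proof and in fact yields the stronger bound $\alpha_n-\alpha_1\le d$. What you lose is generality: the paper's method simultaneously bounds $\mu(W)$ for subbundles $W$ of \emph{every} rank (Theorem~\ref{thm3}), which your Tschirnhaus sequence does not see.

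Two remarks on details. First, your invocation of $\operatorname{char}k\nmid n$ for the trace splitting is not actually needed: the inclusion $\scrO_{\Pee^1}\hookrightarrow (f|_D)_*\scrO_D$ is already a subbundle (it is fibrewise injective since $D_b\neq\emptyset$), and over $\Pee^1$ any map $\scrO\hookrightarrow\bigoplus_i\scrO(v_i)$ with all $v_i\le 0$ factors through the trivial summands, so the quotient $E_D$ again has all summands $\le 0$. So your argument in the irreducible-fibre case works in every characteristic. Second, your treatment of reducible fibres is not a proof: the assertion ``$-E_v^2\le cd$'' is not true in general for an arbitrary effective vertical divisor (take $E_v=mC$ with $C^2=-2$), and the competing minimality constraints on $D$ and $D_0$ are not enough to salvage it as written, because once you allow $E_v$ to contain components of reducible fibres, $D$ and $D_0$ no longer lie in the same class modulo $\Zee F$ and the spread of $R^1f_*\scrO_Y(-D)$ is no longer invariant under your replacement. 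This does not affect the theorem as stated, but you should not claim the $3d/2$ bound in the reducible case follows from this sketch.
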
 

In fact, we prove a slightly better bound. In the cases where one can compute all possible examples by hand, namely $n=2,3$ (and presumably $4$), one can check that this bound is essentially sharp.

To put this result in perspective, to obtain further inequalities among the $\alpha_i$, and to generalize to arbitrary base curves, recall that, for a rank $n$ bundle on a curve $B$, the \textsl{slope} $\mu(V)$ of $V$ is the rational number $\deg(V)/n$, and the bundle $V$ is \textsl{semistable} if, for all subbundles $W$ of $V$ with $0<\operatorname{rank}(W)< n$, $\mu(V) \geq \mu(W)$. We then prove the following theorem:

\begin{theorem}\label{thm3} Let $f\colon Y \to B$, $D$, $n$, and $d$ be as above. Suppose that the characteristic of $k$ does not divide $n$. Let $W$ be a subbundle of $R^1f_*\scrO_Y(-D)$ of rank $r$, $0< r<n$, and let $e =\gcd(r+1,n)$. Then
$$\mu(R^1f_*\scrO_Y(-D))-\mu(W) \geq -\left(\frac{r(n-r) + (e-1)}{2nr}\right)d.$$
\end{theorem}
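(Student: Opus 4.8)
The plan is to reduce the statement to a purely local/geometric computation on $Y$ by translating the existence of a subbundle $W \subseteq R^1f_*\mathcal{O}_Y(-D)$ of rank $r$ into the existence of a sub-line-bundle or, more precisely, a family of divisors on the fibers, and then to exploit the group-law structure on the smooth fibers. First I would dualize: by the relative duality isomorphism $R^1f_*\mathcal{O}_Y(-D) \cong (f_*\mathcal{O}_Y(D))\spcheck \otimes \omega^{-1}$ mentioned in the introduction, a rank-$r$ subbundle $W$ corresponds to a rank-$(n-r)$ quotient bundle $f_*\mathcal{O}_Y(D) \twoheadrightarrow Q$, and on the generic fiber $X_K$ this quotient is the space of sections of a degree-$(n-r)$ line bundle obtained by restricting $\mathcal{O}_Y(D)$ and projecting. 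So $W$ encodes a sub-linear-system, hence (generically on $B$) an effective divisor $D'$ on $Y$ of fiber degree $n-r$, or rather the failure of $D$ to decompose as $D' + D''$. The slope $\mu(W)$ is then read off from $f_*\mathcal{O}_Y(D')$ (up to the twist by $\omega$ and bookkeeping of $d$), so the inequality becomes a statement comparing $\deg f_*\mathcal{O}_Y(D)$, $\deg f_*\mathcal{O}_Y(D')$ and $\deg f_*\mathcal{O}_Y(D'')$ where $D''$ has fiber degree $r$.

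The key step is then a lower bound on $\deg f_*\mathcal{O}_Y(D) - \deg f_*\mathcal{O}_Y(D') - \deg f_*\mathcal{O}_Y(D'')$ in terms of $d$, $r$, $n$, and $e = \gcd(r+1,n)$. I expect this to come from analyzing the multiplication-of-sections map $f_*\mathcal{O}_Y(D') \otimes f_*\mathcal{O}_Y(D'') \to f_*\mathcal{O}_Y(D)$ fiberwise: on a smooth fiber this is the addition map on linear systems on an elliptic curve, whose behavior (surjectivity, rank of the image) is completely understood. The hypothesis that $\mathrm{char}(k) \nmid n$ enters here to control the $n$-torsion and the multiplication-by-$n$ map on the fibers — in particular to rule out pathological behavior in small characteristic and to guarantee that the relevant evaluation/multiplication maps have the expected generic rank. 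The combinatorial quantity $e = \gcd(r+1,n)$ should emerge from counting how the $n$-torsion subgroup scheme of $X_K$ interacts with the linear equivalence class of $D$ restricted to the fiber: the $+1$ shift is the classical shift between $\mathrm{Pic}^0$ and $\mathrm{Pic}^{n}$ of a genus-one curve, and the gcd measures a monodromy/quotient obstruction.

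The main obstacle I anticipate is making the local analysis uniform over all of $B$, including the singular fibers of $f$ (even granting the simplifying assumption that fibers are irreducible, they may be nodal or cuspidal, or multiple fibers may be absent but the jacobian may still degenerate). One needs to bound the contribution of the degenerate fibers to $\deg f_*\mathcal{O}_Y(D)$ from below, and this is where a careful choice of the divisor $D$ — and possibly a reduction to the relatively minimal model together with a formula relating $d = \chi(\mathcal{O}_Y)$ to the discriminant — will be essential. A secondary technical point is that $W$ is merely a subbundle, not necessarily saturated in a way that corresponds cleanly to a sub-linear-system globally; one must pass to the saturation and check that saturating only helps the inequality (it lowers $\mu(W)$ by a nonnegative amount, wait — it raises it, so one must instead argue that the worst case is achieved by a saturated $W$, or bound the defect directly). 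I would organize the proof so that the elliptic-curve linear-algebra computation is isolated as a lemma on a single smooth genus-one curve, then globalized by a degeneration/semicontinuity argument, with the singular-fiber estimate handled separately via local contributions to $\chi$.
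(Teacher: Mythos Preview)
Your proposal has a fundamental gap that cannot be repaired. You try to translate a rank-$(n-r)$ quotient of $f_*\scrO_Y(D)$ into a divisor $D'$ on $Y$ of fiber degree $n-r$ (and a complementary $D''$ of fiber degree $r$), and then compare $\deg f_*\scrO_Y(D')$, $\deg f_*\scrO_Y(D'')$ with $\deg f_*\scrO_Y(D)$. But by hypothesis $Y$ has \emph{order} $n$: the minimal positive fiber degree of any divisor on $Y$ is $n$, so no divisor of fiber degree $r$ or $n-r$ exists (for $0<r<n$). Equivalently, the generic fiber $Y_K$ is a genus-one curve with no line bundle of degree strictly between $0$ and $n$. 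A rank-$r$ subspace of $H^0(Y_K;\scrO_{Y_K}(D))$ is a sub-linear-system of $|D|$ --- still consisting of degree-$n$ divisors --- and its base locus, while of degree $n-r$ on each geometric fiber, cannot be the restriction of a global Cartier divisor for exactly this reason. Your multiplication-of-sections strategy collapses at this point, and your heuristic for why $e=\gcd(r+1,n)$ should appear does not survive the collapse.

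The paper's argument is of a completely different nature. Given $W\subseteq R^1f_*\scrO_Y(-D)$ of rank $r$, one identifies $\Ext^1(f^*W,\scrO_Y(-D))\cong \Hom(W,R^1f_*\scrO_Y(-D))$ and uses the inclusion $W\hookrightarrow R^1f_*\scrO_Y(-D)$ itself as an extension class to build a rank-$(r+1)$ bundle
\[
0\to \scrO_Y(-D)\to V\to f^*W\to 0
\]
on $Y$. A short argument shows that $V_K$ is stable on the generic fiber (any destabilizing subsheaf would force a nonzero summand of $H^0(\scrO_{Y_K}^r)$ to map to zero in $H^1(\scrO_{Y_K}(-D))$, contradicting the injectivity built into the extension). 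One then applies the Bogomolov-type inequality of Theorem~\ref{thm7}, $\Delta(V)\geq ((r+1)^2-e)d$, and a direct Chern-class computation turns this into the asserted slope bound. The integer $e=\gcd(r+1,n)$ arises not from torsion on the fibers but as $\dim_K \End(V_K)$: by Tsen's theorem this endomorphism algebra is a commutative field extension of $K$, and Atiyah's classification forces its dimension to be $\gcd(\operatorname{rank},\deg)=\gcd(r+1,n)$. The hypothesis that $\operatorname{char}k\nmid n$ is used to guarantee that the associated spectral cover $C\to B$ is separable, which feeds into a Hurwitz-type estimate in the proof of Theorem~\ref{thm7}.
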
 

Essentially trivial manipulations give a corresponding result for quotient bundles:

\begin{corollary}\label{cor4} With the above notation and hypotheses, suppose that $Q$  is a quotient bundle of $R^1f_*\scrO_Y(-D)$ of rank $r$, $0< r<n$, and let $e =\gcd(n-r+1,n)$. Then
$$  \mu(R^1f_*\scrO_Y(-D))-\mu(Q)\leq \left(\frac{r(n-r) + (e-1)}{2nr}\right)d.$$
\end{corollary}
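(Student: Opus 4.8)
The plan is to deduce Corollary \ref{cor4} from Theorem \ref{thm3} by passing from a quotient bundle to the corresponding subbundle (its kernel) and then doing the slope bookkeeping forced by a short exact sequence. Write $V = R^1f_*\scrO_Y(-D)$, so that $V$ has rank $n$ on $B$, and let $q\colon V \to Q$ be the given surjection onto a bundle of rank $r$ with $0 < r < n$. First I would check that $W := \Ker q$ is a subbundle of $V$ in the sense needed for Theorem \ref{thm3}: since $Q$ is locally free, $q$ is Zariski-locally split, so $W$ is locally a direct summand of $V$, hence locally free of rank $n-r$ and with locally free (in particular torsion-free) quotient $Q$. Thus Theorem \ref{thm3} applies to $W \subseteq V$, with the integer called ``$r$'' there equal to $n - r$; the corresponding gcd is $\gcd\bigl((n-r)+1, n\bigr) = \gcd(n-r+1, n) = e$, which matches the $e$ of the corollary.

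Next I would carry out the slope computation. The exact sequence $0 \to W \to V \to Q \to 0$ gives $\deg V = \deg W + \deg Q$, i.e.\ $n\mu(V) = (n-r)\mu(W) + r\mu(Q)$, and hence
$$\mu(V) - \mu(Q) = -\,\frac{n-r}{r}\bigl(\mu(V) - \mu(W)\bigr).$$
Applying Theorem \ref{thm3} to the subbundle $W$ (of rank $n-r$) yields
$$\mu(V) - \mu(W) \ \geq\ -\left(\frac{(n-r)r + (e-1)}{2n(n-r)}\right)d.$$
Since the coefficient $-(n-r)/r$ in the first display is negative, multiplying the second display by it reverses the sense of the inequality, and after cancelling the common factor $n-r$ one obtains
$$\mu(V) - \mu(Q) \ \leq\ \left(\frac{r(n-r) + (e-1)}{2nr}\right)d,$$
which is exactly the asserted bound.

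I do not expect any genuine obstacle here — this is the ``essentially trivial manipulation'' promised in the text. The only two points that deserve an explicit word are (i) that the kernel of a surjection of vector bundles is again a vector bundle, so that Theorem \ref{thm3} is legitimately applicable to $W$, and (ii) the observation that the combinatorial quantity $\gcd(n-r+1,n)$ appearing in the corollary is precisely the $e$ that Theorem \ref{thm3} produces when the subbundle has rank $n-r$. Everything else is the linear algebra of slopes in a short exact sequence.
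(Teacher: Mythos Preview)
Your proof is correct and follows exactly the paper's own argument: pass to the kernel $W$ of rank $n-r$, apply Theorem~\ref{thm3} to $W$ (noting that the relevant gcd is $\gcd((n-r)+1,n)=e$), and convert the resulting inequality via the relation $\mu(V)-\mu(Q)=-\frac{n-r}{r}\bigl(\mu(V)-\mu(W)\bigr)$. Your added remarks about why $W$ is locally free and why the gcd matches are nice explicit checks of points the paper takes for granted.
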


\begin{remark} (i) There are slightly sharper bounds in case $B\cong \Pee^1$. Using these, and considering the appropriate rank one subbundle and quotient bundle of $R^1f_*\scrO_Y(-D)$, one can check that  Theorem~\ref{thm3} and Corollary~\ref{cor4} essentially imply the bounds of Theorem~\ref{thm2}. 

\smallskip
\noindent (ii) One can  drop the hypothesis that the characteristic of $k$ does not divide $n$, but the bounds are not as strong in this case.
\end{remark}

Theorem~\ref{thm3} or the equivalent Corollary~\ref{cor4} say that, while the rank $n$ bundle $R^1f_*\scrO_Y(-D)$ may not be semistable, its failure to be semistable can be controlled in a fairly precise way.

Very roughly speaking, the idea behind the proofs of Theorem~\ref{thm2} and Theorem~\ref{thm3} is as follows. Let $V$ be a vector bundle of rank $r$ on a smooth surface $S$. Define $\Delta(V) = 2rc_2(V) - (r-1)c_1(V)^2$. Then in characteristic  zero, Bogomolov's inequality says that, if $V$ is semistable, then $\Delta(V) \geq 0$. Of course, as is well known, this inequality fails in positive characteristic. However,  vector bundles on elliptic surfaces, and more generally genus one fibrations, which have stable restriction to the generic fiber typically satisfy much stronger forms of Bogomolov's inequality, with only very mild restrictions on the characteristic (see \cite{Fr2, Fr1} for early examples of this in case $V$ has rank $2$). The link between the case where $V$ is semistable and the case where the restriction of $V$ to the generic fiber is stable is as follows: For a genus one fibration $f\colon Y \to B$ and a vector bundle $V$ on $Y$, the semistability of $V$ with respect to an ample divisor numerically equivalent to one of the form $H_0 + tF$, where $F$ is the numerically equivalence class of a general fiber and $t\gg 0$, is closely related to the semistability of the restriction of $V$ to the (geometric) generic fiber of $f$, which is an elliptic curve defined over the algebraic closure of the function field of $B$ \cite{Fr1, Fr2}. For example, let $K=\Spec k(B)$, where $k(B)$ is the function field of $B$, and suppose that the restriction $V_K$ of $V$ to the generic fiber $Y_K$ of $f$, which is a curve of genus one defined over the non-algebraically closed field $K$, is stable. Then it is easy to see that $V$ is semistable with respect to some ample divisor of the form given above.

 We then prove  the following inequality:

\begin{theorem}\label{thm7} Let $f\colon Y \to B$ be a genus one fibration over the algebraically closed field $k$ with $\chi(Y;\scrO_Y)=d$, and let $V$ be a vector bundle of rank $r$ whose restriction to the generic fiber of $f$ is stable. Let $n$ be the degree of the restriction of $V$ to the generic fiber of $f$, and let $e =\gcd(n,r)$.  Suppose that the characteristic of $k$ does not divide $r$. Then
$$\Delta(V) \geq (r^2-e)d.$$
\end{theorem}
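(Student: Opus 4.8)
The plan is to relate the discriminant $\Delta(V)$ to the behavior of $V$ restricted to the fibers of $f$, using the stability hypothesis on the generic fiber together with the classification of semistable sheaves on curves of genus one. First I would observe that, since the restriction $V_{\bar\eta}$ to the geometric generic fiber is a stable (hence indecomposable) bundle of rank $r$ and degree $n$ on an elliptic curve, Atiyah's classification tells us that such a bundle exists only when $\gcd(r,n)=e=1$ in the truly stable case; more generally, for the polystable case $V_{\bar\eta}$ is a direct sum of $e$ stable bundles each of rank $r/e$ and degree $n/e$. The key numerical input is a Riemann–Roch / Fourier–Mukai computation on the elliptic curve: for a semistable bundle $E$ of rank $r$ and degree $n$ with $\gcd = e$, one controls $h^0$ and $h^1$ of twists of $E$, and the "jumping" of these cohomology groups as the fiber varies over $B$ is what contributes positively to $c_2(V)$.

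The main technical step is to run a relative Fourier–Mukai transform (or, equivalently, a spectral-cover / elementary-modification argument as in \cite{Fr1, Fr2}) along $f\colon Y\to B$. Applying the relative autoduality of the genus one fibration, $V$ transforms into a sheaf $\widehat V$ on the compactified relative Jacobian (which, since $Y_K$ is a principal homogeneous space, is birational to $X$ itself); because $\gcd(r,n)=e$, the transform $\widehat V$ has rank $e$ generically, and is a torsion-free sheaf whose double dual is a vector bundle. The discriminant is (Fourier–Mukai)-invariant up to explicit correction terms involving the relative dualizing sheaf, so $\Delta(V)$ is expressed in terms of $\Delta(\widehat V)$ plus a contribution measuring the failure of $\widehat V$ to be locally free, i.e. the length of $\widehat V^{\vee\vee}/\widehat V$. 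Since $\widehat V^{\vee\vee}$ is a bundle of rank $e$ on a surface, and since the transform of a bundle stable on the generic fiber again has semistable (in fact stable, for $e=1$) restriction to the generic fiber, one iterates or invokes a base case: for $e=1$ the transform is a line bundle up to a finite-length sheaf, and $\Delta$ of a line bundle is $0$, giving $\Delta(V)\ge (\text{length})\cdot(\text{something})$, which one arranges to be at least $(r^2-1)d$.

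The hard part, and where I expect the real work to lie, is bookkeeping the $d=\chi(Y;\scrO_Y)$ factor correctly through the Fourier–Mukai transform and pinning down the extremal configuration that produces exactly $(r^2-e)d$: one must show that the length of the cokernel $\widehat V^{\vee\vee}/\widehat V$ is bounded below in terms of $d$ and the discrete invariants, and that the "new" $c_2$ created by the singular fibers of $f$ and by the fibers where $V_{\bar\eta}$'s twist jumps cohomology cannot be avoided. The characteristic hypothesis $\operatorname{char}k \nmid r$ enters precisely to guarantee that the relative Fourier–Mukai transform is well-behaved (the multiplication-by-$r$ map and the relevant torsion are separable) and that stability is preserved; without it one only gets weaker positivity of the correction terms, matching Remark (ii). I would finish by specializing the general inequality to a single generic fiber to confirm the bound is sharp, matching the claim made for $n=2,3$.
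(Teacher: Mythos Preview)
Your proposal is an outline rather than a proof, and the route you sketch is considerably more circuitous than what is actually needed. The paper's argument bypasses Fourier--Mukai transforms entirely and works directly with the endomorphism sheaf $End\,V$.

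The key idea you are missing is this: since $V_K$ is stable over $K=k(B)$, the algebra $\End(V_K)$ is a finite-dimensional division algebra over $K$, and by Tsen's theorem (the Brauer group of a function field in one variable over an algebraically closed field is trivial) it must be \emph{commutative}, hence a field extension of $K$. Atiyah's classification on the elliptic curve $Y_{\bar K}$ then pins down $\dim_K \End(V_K) = e$. Thus $f_*End\,V$ is a locally free sheaf of commutative $\scrO_B$-algebras of rank $e$, and $C = \mathbf{Spec}\, f_*End\,V$ is an integral curve with a finite flat degree-$e$ map $g\colon C\to B$. The characteristic hypothesis enters precisely here: since $\operatorname{char} k \nmid e$ (a consequence of $\operatorname{char} k \nmid r$), this cover is separable, so $\gamma := \deg g_*\scrO_C \le 0$ by Riemann--Hurwitz.

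Now compute $\chi(Y; End\,V)$ two ways. Riemann--Roch on $Y$ gives $\chi(End\,V) = -\Delta(V) + r^2 d$, since $c_1(End\,V)=0$ and $c_2(End\,V) = \Delta(V)$. Leray plus relative duality ($\omega_{Y/B}=f^*\omega$ with $\deg\omega=d$, and $(End\,V)\spcheck\cong End\,V$) gives $\chi(End\,V) = 2\gamma - t + ed$ for some torsion length $t\ge 0$. Combining, $-\Delta(V) + r^2 d \le ed$, which is exactly the inequality.

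By contrast, your Fourier--Mukai plan leaves the decisive step---why the correction terms sum to at least $(r^2-e)d$---completely unspecified (``$(\text{length})\cdot(\text{something})$'' is not an argument), and your account of where the characteristic hypothesis enters (via ``multiplication-by-$r$'') is off: it is used only to guarantee that the degree-$e$ spectral cover is separable. Also note that $V_{\bar\eta}$ is only \emph{semistable}, not stable, in general---stability is hypothesized over $K$, not $\bar K$---a point on which your first paragraph wavers.
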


(There is also a weaker inequality than that of Theorem~\ref{thm7} which holds with no assumption on  the characteristic of $k$.)

Given Theorem~\ref{thm7}, the proof of Theorem~\ref{thm2} and Theorem~\ref{thm3} becomes a matter of constructing the appropriate vector bundles $V$. There are many approaches to doing so. For example,  Theorem~\ref{thm3} follows by considering certain ``universal extensions." Other methods for constructing bundles lead to results on the vanishing of $H^1(Y; \scrO_Y(D))$ or base point free and embedding theorems for the linear system $|D|$ on $Y$, following methods of Mumford \cite{Mumf} and Reider \cite{Reider}. 

While Theorem~\ref{thm2} and Theorem~\ref{thm3} are valid for all principal homogeneous spaces, one can ask the following question: Fixing a \textbf{generic} elliptic surface $f\colon X\to B$, what can one say about the vector bundles $R^1f_*\scrO_Y(-D)$ for all pairs $(Y,D)$, where  $D$ is an $n$-section. In the case $B\cong \Pee^1$, and $k=\Cee$, a degeneration argument shows:

\begin{theorem}\label{thm9} Let $X$ be a generic elliptic surface over $\Pee^1$. Then  every pair $(Y,D)$ such that $X$ is the Jacobian surface of $Y$ is rigid.
\end{theorem}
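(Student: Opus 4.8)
The plan is to reduce everything, via a degeneration, to the case $d=1$ and to semicontinuity of the splitting type. Recall that the moduli space $\mathcal{M}_d$ of elliptic surfaces over $\Pee^1$ with $\chi=d$ is irreducible (it is parametrized by an open set in a space of Weierstrass data). For each index $n$ the pairs $(Y,D)$ in question form a finite-type family $\mathcal{N}_{d,n}$: Theorem~\ref{thm2} bounds $\alpha_n-\alpha_1$ (indeed, applying Theorem~\ref{thm3} to the largest line subbundle and Corollary~\ref{cor4} to the smallest line quotient of $R^1f_*\scrO_Y(-D)$, with $r=1$, gives $\alpha_n-\alpha_1\le \tfrac{3d}{2}-\tfrac{d}{n}<\tfrac{3d}{2}$, a bound which the universal-extension proof of those results delivers for an arbitrary divisor of fibre-degree $n$), so only finitely many splitting types occur, and for each of them $Y$ is a divisor in a fixed linear system on a fixed $\Pee^{n-1}$-bundle over $\Pee^1$ (branch data when $n=2$). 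Rigidity is the condition $\alpha_n-\alpha_1\le 1$, which is open on $\mathcal{N}_{d,n}$; hence the non-rigid locus $\mathcal{N}^{\mathrm{nr}}_{d,n}$ is closed, and its image $S_n\subseteq\mathcal{M}_d$ under $(Y,D)\mapsto\operatorname{Jac}(Y)$ is constructible. Over a very general $X$ the fibre of $\mathcal{N}_{d,n}\to\mathcal{M}_d$ is discrete (the Mordell--Weil group and the relevant part of the Tate--Shafarevich group are countable for $k=\Cee$), so each component of $\mathcal{N}_{d,n}$ dominating $\mathcal{M}_d$ does so generically finitely. It therefore suffices to prove that the \emph{generic member of every component of $\mathcal{N}_{d,n}$ dominating $\mathcal{M}_d$ is rigid}: then each $S_n$ lies in a proper closed subset, and a generic (i.e.\ very general) $X$ avoids $\bigcup_n S_n$, which is what Theorem~\ref{thm9} asserts.

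The base case $d=1$ is immediate: here $X$ is a rational elliptic surface, the bound above reads $\alpha_n-\alpha_1<3/2$, so $\alpha_n-\alpha_1\in\{0,1\}$ and \emph{every} pair $(Y,D)$ with $\chi(\scrO_Y)=1$ is rigid. For general $d$ I would degenerate as follows. Fix a component $\mathcal{N}'$ of $\mathcal{N}_{d,n}$ dominating $\mathcal{M}_d$, and choose a one-parameter family $X_t\to\Pee^1$, $t\in\Delta$, with $X_t$ realizing the generic point of $\mathcal{N}'$ (after base change) for $t\neq0$ and whose Weierstrass data degenerate — by breaking the degree-$12d$ discriminant/$j$-data into $d$ generic degree-$12$ pieces — so that the central member is a normal-crossings union $X_0=Z_1\cup\cdots\cup Z_d$ of rational elliptic surfaces glued along fibres, with $\Pee^1$ degenerating to a chain $C_0=C_1\cup\cdots\cup C_d$. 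After semistable reduction, extend a pair $(Y_t,D_t)$ to a family with central fibre $(Y_0,D_0)$, $Y_0=\bigcup_j Y_j$, each $Y_j\to Z_j$ a relatively minimal genus-one fibration with $D_0|_{Y_j}$ of fibre-degree $n$. By the base case each $(Y_j,D_0|_{Y_j})$ is rigid. The sheaf $\mathcal{E}_t:=R^1f_{t*}\scrO_{Y_t}(-D_t)$ then has a flat limit $\mathcal{E}_0$ on $C_0$ whose restriction to each $C_j\cong\Pee^1$ is, up to a twist, balanced; computing $h^1$ of suitable twists of $\mathcal{E}_t$ by degenerating to $C_0$ — via Mayer--Vietoris and the vanishing on each $C_j$ coming from rigidity there — forces $\mathcal{E}_t$ to be balanced for $t\neq0$. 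This contradicts non-rigidity of the generic member of $\mathcal{N}'$, completing the argument.

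The step I expect to be the main obstacle is making the degeneration of the pair $(Y,D)$ over the degenerating base precise, together with the bookkeeping of the limit bundle. One must (i) arrange that after semistable reduction the central fibre $Y_0$ is a reduced normal-crossings union of relatively minimal genus-one fibrations over the $Z_j$ with no multiple components or spurious vertical pieces, so that the base case genuinely applies on each $Y_j$; this is the technical heart, since semistable models of elliptic surfaces over a degenerating $\Pee^1$ can be delicate. And (ii) one must control how the balanced restrictions $\mathcal{E}_0|_{C_j}$ are glued across the nodes of $C_0$ — essentially a limit-linear-series compatibility: the cleanest route is not to identify $\mathcal{E}_0$ exactly but to bound $h^1(C_t,\mathcal{E}_t(m))$ for the one or two critical twists $m$ by summing the contributions of the components $C_j$ and using semicontinuity, thereby forcing the generic $\mathcal{E}_t$ to have the smallest possible $h^1$, hence to be balanced.

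An essentially equivalent and perhaps more economical variant is to induct on $d$, degenerating $X$ to $X'\cup Z$ along a single fibre with $X'$ a generic elliptic surface of invariant $d-1$ and $Z$ rational: a pair over $X_t$ limits to a pair over $X'$ and a pair over $Z$, rigid by the inductive hypothesis and by the $d=1$ case respectively, and one assembles the conclusion by the same semicontinuity/gluing input. Either way the crux is item (ii) above, and in both set-ups the finiteness of the relevant moduli and the openness of rigidity — both already underwritten by Theorem~\ref{thm2} — are what let a single generic example per component suffice.
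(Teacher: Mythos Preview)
Your base case $d=1$ is false, and this breaks the induction. For a rational elliptic surface $X$ over $\Cee$ one has $H^2(X,\scrO_X)=0$, so every genus one fibration with Jacobian $X$ is isomorphic to $X$ itself; in particular $Y$ has a section and its order is $1$, not $n$. The bounds of Theorem~\ref{thm2} and Theorem~\ref{thm3} are \emph{not} valid for an arbitrary divisor of fibre degree $n$: the stability of the universal extension on the generic fibre (Lemma~\ref{stab1}) uses that any sub of negative degree must have degree a negative multiple of $n$, and this is exactly the hypothesis that $n$ is the order of $Y$. Concretely, on any rational elliptic surface with section $\sigma$ take $D=2\sigma$, so $D\cdot F=2$. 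Then $\chi(\scrO_X(2\sigma))=\tfrac12(-4+2)+1=0$ while $h^0(\scrO_X(2\sigma))=h^0(\scrO_X(\sigma))=1$ (from $0\to\scrO_X(\sigma)\to\scrO_X(2\sigma)\to\scrO_\sigma(2\sigma)\cong\scrO_{\Pee^1}(-2)\to 0$), and $2\sigma-F$ is not effective. Hence $\pi_*\scrO_X(2\sigma)\cong\scrO_{\Pee^1}\oplus\scrO_{\Pee^1}(-2)$, which is not rigid.

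This also undermines both of your degenerations. When you limit a pair $(Y_t,D_t)$ to a normal-crossings union containing a rational component $Z$, you have no control over the restricted divisor $D_0|_Z$: it is whatever the limit gives, and the example above shows it need not be rigid. So the premise that each $\mathcal{E}_0|_{C_j}$ is balanced fails, and the semicontinuity/gluing step has nothing to feed on. The paper proceeds in the opposite direction. It first shows (Section~4, using L\"onne's monodromy computation) that the pairs with fixed $d$, $n$ and $\wp(\alpha)\bmod 2n$ form an irreducible family, so that a single rigid example in each component suffices. It then \emph{builds} such an example from below: take by induction a rigid pair $(Y',D')$ with $\chi=d-1$ (the genuine base case is $d=2$, where rigidity of all pairs follows from $h^1(\scrO_Y(D))=0$ on a $K3$), and on the rational component choose a \emph{specific} divisor---a smooth rational curve of self-intersection $n-2$---for which one checks directly that $\pi_*\scrO_X(D)=\scrO_{\Pee^1}^n$; then construct a principal homogeneous space over the degenerating family of Jacobians realizing these prescribed restrictions. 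The divisor on the rational piece is chosen, not inherited from a limit, and that is precisely what your top-down approach cannot arrange.
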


It is very likely that the methods of proof can be extended to handle the case of positive characteristic as well.

The contents of this paper are as follows: In Section 1, we establish the Bogomolov type inequality of Theorem~\ref{thm7}. In Section 2, we construct vector bundles for which we can apply this inequality and prove Theorem~\ref{thm3}. In Section 3, we construct examples for small values of $n$ and show that our bounds are sharp. Section 4 studies the group of components of the coarse moduli space of genus one fibrations  over $\Pee^1$. Although we have stated the result in terms of coarse moduli spaces, it could also be formulated via stacks. Unfortunately, in the determination of the monodromy, we eventually have to assume that $k=\Cee$. However, we expect that similar results hold in  the case of positive characteristic as well. Somewhat inconsistently we have devoted a fair amount of space to a proof in all characteristics of a (generalization of) a result of Artin and Swinnerton-Dyer; the proof is straightforward over $\Cee$. Finally, in Section 5, we prove Theorem~\ref{thm9} via a degeneration argument, induction, and a result concerning rational elliptic surfaces.

\medskip

\noindent \textbf{Conventions:} All schemes are separated and of finite type over $k$,  an algebraically closed field. If $X$ is a scheme and $\mathcal{F}$ is a sheaf on $X$, we shall use $H^i(X; \mathcal{F})$ to denote the usual sheaf cohomology in case $\mathcal{F}$ is coherent, \'etale cohomology in case $\mathcal{F}$ is a sheaf of the form $\mmu_n$ or one of its variants, or the Betti cohomology of $X(\Cee)$ in case $X$ is defined over $\Cee$ and $\mathcal{F} =\Zee$. The meaning should be clear from the context. 

\smallskip

A \textsl{genus one fibration $f\colon Y \to B$ without multiple fibers}, or more briefly a \textsl{genus one fibration}, is a smooth projective surface $Y$, together with a morphism $f$ to a smooth projective curve $B$, such that no exceptional curve of $Y$ is contained in a fiber of $f$ ($Y$ is relatively minimal), every fiber of $f$ has trivial dualizing sheaf (hence is of arithmetic genus one), and $f$ has no multiple fibers (if $U$ denotes the  Zariski open subset of $Y$ where $f$ is smooth, then $f(U) = B$). 

\smallskip

Let $f\colon Y\to B$ be a genus one fibration. The \textsl{order} of $Y$ is the smallest positive integer $n$ such that there exists an element $D$ in $\Pic Y$ with $D\cdot F =n$, where $F$ is the class of a general fiber of $f$.

\smallskip

A \textsl{elliptic surface} $\pi : X \to B$ is a genus one fibration endowed
with a section $\sigma : B \to X$.

\section{A Chern class inequality}

 \begin{definition}\label{def1}
Let $V$ be a vector bundle of rank $r$ on a smooth projective surface $S$. Define $\Delta(V) = 2rc_2(V) - (r-1)c_1(V)^2$. By the splitting principle, $\Delta(V) = c_2(End \, V)$. 
\end{definition}

According to Bogomolov's inequality, if $k$ has characteristic zero and $V$ is semistable, then $\Delta(V) \geq 0$. To deal with the analogue of this inequality in positive characteristic, recall that, if $E_K$ is a curve defined over a (not necessarily algebraically closed) field $K$, and $V_K$ is a locally free sheaf on $E_K$, then $V_K$ is \textsl{stable} (resp.\ \textsl{semistable})if, for every subsheaf $W_K$ of $V_K$ (hence defined over $K$), we have $\mu(W_K) < \mu (V_K)$ (resp.\, $\mu(W_K) \leq \mu (V_K)$), where by definition the \textsl{slope} $\mu(W_K)$ of a locally free sheaf on $E_K$ is $\deg (W_K)/\operatorname{rank}W_K$. Standard arguments show:

\begin{enumerate}
\item If $V_K$ is stable and $\varphi\in \End \, V_K$, then either $\varphi=0$ or $\varphi$ is an isomorphism, and hence that $\End \, V_K$ is a  division algebra of finite dimension over $K$.
\item If $V_K$ is stable and $\bar{K}$ is the algebraic closure of $K$, then the pullback $V_{\bar{K}}$ of $V_K$ to $E_K \times _{\Spec K}\bar{K}$ is semistable.
\end{enumerate}

The following theorem is then the first main result of this paper:

\begin{theorem}\label{Cthm1} Let $f\colon Y \to B$ be a genus one fibration  with $\chi(Y;\scrO_Y)=d$, and let $V$ be a vector bundle of rank $r$ whose restriction to the generic fiber of $f$ is stable. Let $n$ be the degree of the restriction of $V$ to the generic fiber of $f$, and let $e =\gcd(n,r)$.  Suppose that the characteristic of $k$ does not divide $e$. Then
$$\Delta(V) \geq (r^2-e)d.$$
If moreover $B\cong \Pee^1$, then
$$\Delta (V) \geq  (r^2-e)d+2(e-1).$$
\end{theorem}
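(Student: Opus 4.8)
The plan is to reduce the inequality $\Delta(V) \geq (r^2-e)d$ to a statement about the restriction of $V$ to the fibers of $f$, using the Leray spectral sequence and a careful analysis of the torsion of $R^1f_*(\mathcal{E}nd\, V)$. Recall $\Delta(V) = c_2(\mathcal{E}nd\, V)$, and $\mathcal{E}nd\, V$ has rank $r^2$ and trivial determinant. Writing $A = \mathcal{E}nd\, V$, the Riemann--Roch computation on $Y$ gives $\chi(Y;A) = r^2\chi(Y;\scrO_Y) - \Delta(V) = r^2 d - \Delta(V)$, so the desired inequality is equivalent to $\chi(Y;A) \leq e\cdot d$, and in the $\Pee^1$ case to $\chi(Y;A) \leq e d - 2(e-1) = e(d-2)+2$. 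So I would work entirely with $\chi(Y;\mathcal{E}nd\, V)$.

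First I would compute $\chi(Y;A) = \chi(B; f_*A) - \chi(B; R^1f_*A)$ (using that $R^{\geq 2}f_* = 0$ for a fibration in curves, and Leray). Since $V_K$ is stable, property (1) says $\End V_K$ is a division algebra over $K$; thus $f_*A$ is a subsheaf of the constant-rank-one... more precisely $H^0$ of $A$ restricted to the geometric generic fiber is governed by the dimension of the division algebra, but the key point for the \emph{generic} behaviour is that $h^0(A_{\bar\eta})$ equals $h^1(A_{\bar\eta})$ and both are controlled by $e = \gcd(n,r)$: for a semistable bundle of slope $0$ and rank $r^2$ on an elliptic curve (here $A_{\bar\eta}$, which is semistable of degree $0$ by property (2)), the dimension of $H^0$ is at most... this is where the number $e$ enters, via the structure of semistable sheaves on an elliptic curve (direct sums of Atiyah-type indecomposables). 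The hypothesis $p \nmid e$ guarantees the relevant Atiyah bundles behave as in characteristic zero. So the generic rank of $f_*A$ (equivalently of $R^1f_*A$) is exactly $e$, and $\chi(B; f_*A) - \chi(B;R^1f_*A) = e\chi(B;\scrO_B) - \deg(\text{torsion of } R^1f_*A) + (\text{correction from }f_*A)$; the torsion terms are what push $\chi(Y;A)$ \emph{down}.

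The heart of the argument is therefore: show that $f_*A$ is locally free of rank $e$ (it is torsion-free since it injects into $V^\vee\otimes V$, which is locally free, along... actually one must argue $f_*A$ has no sections supported on fibers: a section of $A$ over a formal/infinitesimal neighborhood of a fiber that vanishes generically must vanish, using stability on the generic fiber), and that $R^1f_*A = L \oplus T$ with $L$ locally free of rank $e$ and $T$ torsion; then $\chi(Y;A) = e\chi(B;\scrO_B) + \deg f_*A - \deg L - \deg T$. Using relative duality, $R^1f_*A \cong (f_*(A^\vee\otimes\omega_{Y/B}))^\vee$ up to the torsion; since $A \cong A^\vee$ (as $\mathcal{E}nd$ of a bundle is self-dual via the trace form) and $\omega_{Y/B} = f^*\omega$ with $\deg\omega = d + 2g(B) - 2$ — wait, more precisely $\chi(Y;\scrO_Y) = d$ forces $\deg\omega_{Y/B} = d$ when $B = \Pee^1$ and in general via Leray again — one gets $\deg L = -\deg f_*A - e\deg\omega$ modulo torsion corrections, and combining yields $\chi(Y;A) = e\chi(B;\scrO_B) - e\deg\omega - \deg T - (\text{another nonneg. torsion term}) = ed - \deg T - \cdots$ in the $B=\Pee^1$ case (where $\chi(B;\scrO_B)=1$, $\deg\omega = d$, so $e\chi(B;\scrO_B) - e\deg\omega$ — I need to recheck signs, but the upshot is $\chi(Y;A) \leq ed$ with the torsion term $T$ providing the slack). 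The refined $\Pee^1$ bound $+2(e-1)$ comes from observing that when $e > 1$, $f_*A$ is a bundle on $\Pee^1$ of rank $e$ with a natural section (the identity endomorphism), so $f_*A \cong \scrO_{\Pee^1} \oplus (\text{rank } e-1)$, and the identity section being nowhere vanishing forces $\deg f_*A \leq \deg(\text{that summand})$... actually the identity gives a sub-line-bundle $\scrO_{\Pee^1}\hookrightarrow f_*A$ which is saturated (quotient by it is torsion-free since, fiberwise, the identity is part of a semistable filtration), and self-duality of $A$ then forces $R^1f_*A$ to have $\scrO_{\Pee^1}(-\deg\omega)$ as a \emph{quotient}, tightening the estimate by $2(e-1)$ — this is the extra input special to $\Pee^1$.

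The main obstacle I anticipate is the precise bookkeeping of the torsion subsheaf of $R^1f_*A$ at the singular and multiple... (no multiple fibers here, but) singular fibers of $f$: one must show every fiber $A|_{F}$ (for $F$ a possibly singular genus-one curve) is still semistable of degree $0$ and has $h^0 \leq e$, \emph{with equality of generic behaviour}, which requires extending the elliptic-curve representation theory (Atiyah's classification) to the reducible/nonreduced genus-one fibers, or at least bounding $h^0(F; A|_F)$ uniformly — and then translating a jump in $h^0$ along special fibers into a nonnegative contribution to $\deg T$. The characteristic hypothesis $p\nmid e$ is exactly what is needed to control the Frobenius pullbacks that could otherwise destabilize or increase cohomology on the special fibers. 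A clean way to organize this is to base-change to the algebraic closure $\bar K$ of $K$, use property (2) to get semistability there, decompose $V_{\bar K}$ (or rather $A_{\bar K}$) into Atiyah indecomposables, read off $h^0(A_{\bar K}) = h^1(A_{\bar K}) = e$, and then argue by upper-semicontinuity that $h^0(A|_F) \geq e$ for all fibers with equality generically, the excess being $\deg T$; the stability (not just semistability) on the generic fiber is what forbids $f_*A$ from having fiber-supported sections and pins the generic rank to $e$ rather than something larger.
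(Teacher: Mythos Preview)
Your overall architecture --- Riemann--Roch on $Y$, Leray, relative duality for $R^1f_*A$, and the torsion sheaf $T$ --- matches the paper exactly, and your computation reduces correctly to
\[
\chi(Y;A) \;=\; 2\gamma + e d - \ell(T), \qquad \gamma := \deg f_*A,
\]
so that the desired inequality is equivalent to $2\gamma \leq 0$ (and to $2\gamma \leq -2(e-1)$ when $B=\Pee^1$). But this is precisely the step you never address: you treat $\ell(T)\geq 0$ as ``the slack,'' whereas in fact the torsion only helps and the entire content of the theorem is that $\gamma\leq 0$. Nothing in your sketch bounds $\deg f_*A$; in particular, your discussion of $h^0(A_{\bar\eta})$ and Atiyah's classification only pins down the \emph{rank} $e$ of $f_*A$, not its degree.

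The paper's missing ingredient is Tsen's theorem. Since $\End V_K$ is a division algebra whose center is a function field of transcendence degree one over the algebraically closed field $k$, Tsen forces $\End V_K$ to be commutative, hence a field; combined with the Atiyah analysis this gives rank exactly $e$. More importantly, commutativity means $f_*A$ is a sheaf of commutative $\scrO_B$-algebras, so $C=\mathbf{Spec}\,f_*A$ is an integral curve with a finite flat degree-$e$ map $g\colon C\to B$, and $\gamma=\deg g_*\scrO_C$. The hypothesis $p\nmid e$ is used here (not for Frobenius or for Atiyah's classification, as you suggest): it makes $g$ separable, so after normalizing $C$ one applies Riemann--Hurwitz to get $\gamma = -b/2\leq 0$, and when $B=\Pee^1$ the formula $2g(\widetilde C)-2=-2e+b$ gives $\gamma \leq -(e-1)$. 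This spectral-cover / Riemann--Hurwitz argument is the heart of the proof, and your proposed $\Pee^1$ refinement via the saturated identity section $\scrO_{\Pee^1}\hookrightarrow f_*A$ does not recover it.
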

\begin{proof} We begin with the following:

\begin{proposition}\label{prop1} With notation as above, the sheaf $f_*End \, V$ is a locally free sheaf of commutative $\scrO_B$-algebras of rank $e$.
\end{proposition}
\begin{proof}
 Since $f_*End \, V$ is a torsion free 
$\scrO_B$-module, it is locally free. Let $K=k(B)$ be the function field of $B$, and let $Y_K = Y \times _{B}\Spec K$. Then $Y_K$ is a genus one curve over $\Spec K$. Let $V_K$ be the corresponding vector bundle.
It suffices to prove that the restriction of $f_*End \, V$ to $\Spec K$ is a commutative $K$-algebra of dimension $e$. Clearly, this restriction is just $\End \, V_K$.  By hypothesis, $V_K$ is stable and hence $\mathbb{D} = \End \, V_K$ is a  division algebra of finite dimension over $K$. Let $F$ be the center of $\mathbb{D}$. Then $F$ is a finite extension of $K$ and thus is a finitely generated field of transcendence degree one over the algebraically closed field $k$. By Tsen's theorem, the Brauer group of $F$ is trivial. Thus the element of the Brauer group of $F$ defined by  $\mathbb{D}$ is trivial, so that $\mathbb{D} = F$. In particular, $\mathbb{D}$ is commutative. 

Let $\bar{K}$ be the algebraic closure of $K$. To see that the rank of $\mathbb{D}$ is $e$, it suffices to prove the corresponding statement for $\End \, V_{\bar{K}}$, where $V_{\bar{K}}$ is the pullback of $V_K$ to the elliptic curve $Y_{\bar{K}} = Y_K \times _{\Spec K}\Spec \bar{K}$. The bundle $V_{\bar{K}}$ is semistable, and we have  the following well-known lemma:

\begin{lemma}\label{reg} Let $E$ be an elliptic curve defined over an algebraically closed field $\bar{K}$, and let $V_{\bar{K}}$ be a semistable vector bundle on $E$ of rank $r$  and degree $n$.  Let $e =\gcd(n,r)$. Then $\dim _{\bar{K}}\End \, V_{\bar{K}} \geq e$, and $\dim _{\bar{K}}\End \, V_{\bar{K}} = e$ if and only if $\End \, V_{\bar{K}}$ is commutative.
\end{lemma}
\begin{proof} Let $n_0 = n/e$ and $r_0 = r/e$. We  recall some consequences of Atiyah's classification of vector bundles $V_{\bar{K}}$ over an elliptic curve $E=E_{\bar{K}}$ defined over an algebraically closed field (\cite{Atiyah}, Theorem 7 and its corollary): Suppose that $n_0$ and $r_0$ are two relatively prime positive integers and that $\lambda$ is a line bundle on $E$ of degree $n_0$. Then

\begin{enumerate}
\item There is a unique stable bundle $V_{n_0, r_0, 1; \lambda}$ on $E$ of rank $r_0$ and degree $n_0$ such that $\det V_{n_0, r_0, 1; \lambda} =\lambda$.
\item For every  positive integer $d$, there is a unique indecomposable vector bundle $V_{n_0, r_0, d; \lambda}$ on $E$ of rank $dr_0$ and degree $dn_0$, all of whose successive Jordan-H\"older quotients are isomorphic to  $V_{n_0, r_0, 1; \lambda}$.
\item $$\Hom (V_{n_0, r_0, d_1; \lambda_1}, V_{n_0, r_0, d_2; \lambda_2})\cong\begin{cases} 0, &\text{if $\lambda_1\neq \lambda_2$};\\
\bar{K}[t]/(t^k), k=\min(d_1, d_2),&\text{if $\lambda_1= \lambda_2$}.
\end{cases}$$
\item If $V_{\bar{K}}$ is a semistable vector bundle of rank $r$ and degree $n$, with $e =\gcd(n,r)$ and $n/e = n_0$, $r/e=r_0$, then $$V_{\bar{K}}\cong \bigoplus_\lambda V(\lambda),$$
where $V(\lambda)$ is the uniquely defined summand of $V_{\bar{K}}$ which is the (not necessarily direct) sum of all indecomposable summands of $V_{\bar{K}}$ of the form $V_{n_0, r_0, d; \lambda}$ and hence $V(\lambda) \cong \bigoplus _i V_{n_0, r_0, d_i; \lambda}$.
\end{enumerate}

With  $V_{\bar{K}}\cong \bigoplus_\lambda V(\lambda)$ as above, clearly 
$$\End_{\bar{K}} \, V_{\bar{K}}\cong \bigoplus_\lambda\End_{\bar{K}} \, V(\lambda).$$
If  $V(\lambda) \cong \bigoplus _i V_{n_0, r_0, d_i; \lambda}$, 
with $\operatorname{rank} V(\lambda) =  \sum_id_ir_0 = d_\lambda r_0$, then $r= er_0 = \left(\sum_\lambda d_\lambda \right)r_0$, and hence $e= \sum_\lambda d_\lambda$. 
Direct computation then shows that $\End _{\bar{K}}\, V(\lambda)$ is commutative if and only if $V(\lambda) \cong V_{n_0, r_0, d; \lambda}$ is indecomposable, and  also that $\dim \End _{\bar{K}}\, V(\lambda) \geq d_\lambda = \sum_id_i$, with equality holding if and only if $V(\lambda) \cong V_{n_0, r_0, d; \lambda}$ is indecomposable. Combining the two statements gives the proof of the lemma.
\end{proof}

To complete the proof of the proposition, since $\End \, V_{\bar{K}} = \End \, V_K \otimes _{K}\bar{K}$ is commutative, it follows from Lemma~\ref{reg} that  $\dim_{\bar{K}}\End \, V_{\bar{K}} = e$, and hence $\dim_{K}\End \, V_K = e$ as well. Thus $f_*End \, V$ is a locally free sheaf of commutative $\scrO_B$-algebras of rank $e$.
\end{proof}

Let $C = \mathbf{Spec}\, f_*End \, V$ be the scheme over $B$ associated to the coherent sheaf of commutative $\scrO_B$-algebras $f_*End \, V$. Thus there is a finite flat morphism $g\colon C \to B$ of degree $e$. The scheme $C$ is sometimes called the \textsl{spectral cover} of $B$ associated to the bundle $V$. Since the restriction of $f_*End \, V$ to $\Spec K$ is a field, $C$ is an integral scheme of dimension one, and $g_*\scrO_C = f_*End \, V$. Let $\gamma = \deg f_*End \, V =\deg g_*\scrO_C$. 

\begin{lemma}\label{lemma1.5} Suppose that $B$ is a smooth projective curve and that  $g\colon C\to B$ is a finite flat separable morphism of degree $e$, for example suppose that the characteristic of $k$ does not divide $e$. Set $\gamma =\deg g_*\scrO_C$. Then $\gamma \leq 0$. If in addition $B\cong \Pee^1$, then $\gamma \leq -(e-1)$.
\end{lemma}
\begin{proof} Let $\nu\colon \widetilde{C}\to C$ be the normalization. By applying $g_*$ to the exact sequence
$$0 \to \scrO_C \to \nu_*\scrO_{\widetilde{C}} \to Q\to 0,$$
where $Q$ is some torsion sheaf, we see that $\deg g_*\scrO_C \leq \deg (g\circ \nu)_*\scrO_{\widetilde{C}}$, and hence it suffices to prove the result when $C=\widetilde{C}$ is smooth. In this case, a well known formula says that $2 \deg  g_*\scrO_C = -b$, where $b\geq 0$ is the degree of the ramification divisor on the base curve $B$. Thus $\gamma = -b/2\leq 0$. If in addition $g(B) =0$, then the Riemann-Hurwitz formula says that $2g(C)-2 = e(2g(B) -2) + b= -2e+b$. Hence  $\gamma = -b/2 = -(e+g(C) -1) \leq -(e-1)$. 
\end{proof}

We now proceed to calculate the Euler characteristic $\chi(Y; End \, V)$ in two different ways. Since $End \, V$ is a vector bundle of rank $r^2$ on $Y$ with $c_1(End \, V) = 0$ and $c_2(End \, V) = \Delta(V)$, the Riemann-Roch theorem for vector bundles and the fact that $\chi(Y; \scrO_Y) = d$ imply that
$$\chi(Y; End \, V) = -\Delta (V) + r^2d.$$
On the other hand, by the Leray spectral sequence,
$$\chi(Y; End \, V)  = \chi(B; f_*End \, V) - \chi(B; R^1f_*End \, V).$$
Applying Riemann-Roch to the rank $e$ vector bundle $f_*End \, V=g_*\scrO_C$ on $B$  gives
$\chi(B; f_*End \, V) = \gamma + e(1-g)$,
where $g = g(B)$. 
 Applying relative duality to the morphism $f\colon Y\to B$, which is a local complete intersection morphism of relative dimension one, gives
$$(R^1f_*End\, V)\spcheck \cong f_*((End\, V)\spcheck \otimes \omega_{Y/B}).$$
Since $f\colon Y\to B$ is a genus one fibration with no multiple fibers,  $\omega_{Y/B} = f^*\omega$, where $\omega$ is a line bundle on $B$ of degree $d$, and $(End\, V)\spcheck\cong End\, V$. Thus:
$$(R^1f_*End\, V)\spcheck \cong (f_*End\, V) \otimes \omega.$$
This says that $(R^1f_*End\, V)\spcheck{}\spcheck \cong [(f_*End\, V)\spcheck \otimes \omega^{-1}]$, and hence that 
$$R^1f_*End\, V \cong T\oplus [(f_*End\, V)\spcheck \otimes \omega^{-1}],$$
where $T$ is a torsion line bundle on $B$. Now $f_*(End\, V)$ is a vector bundle on $V$ of rank $e$ and degree $\gamma$. Thus $(f_*End\, V)\spcheck$ is a vector bundle on $V$ of rank $e$ and degree $-\gamma$, and $(f_*End\, V) \otimes \omega^{-1}$ has rank $e$ and degree $-\gamma - ed$. Hence 
$$\chi(B; R^1f_*End \, V) = \ell(T) + (-\gamma - ed) + e(1-g).$$
Thus, if $t =\ell(T)$,
\begin{align*}\chi(Y; End \, V)  &= \chi(B; f_*End \, V) - \chi(B; R^1f_*End \, V)\\
&= \gamma + e(1-g) -(t+ (-\gamma - ed) + e(1-g))\\
&= 2\gamma -t + ed \leq 2\gamma  + ed.
\end{align*}
Thus $-\Delta (V) + r^2d \leq 2\gamma  + ed$, and 
after rearranging this gives
$$\Delta (V) \geq r^2d-ed -2\gamma \geq (r^2-e)d.$$
Moreover, if $B\cong \Pee^1$, then 
$$\Delta (V) \geq r^2d-ed -2\gamma \geq (r^2-e)d+2(e-1).$$
This concludes the proof of Theorem~\ref{Cthm1}.
\end{proof}

\begin{remark}  Let $g\colon C \to B$ be the spectral cover of $B$ corresponding to $V$. Clearly $V$ is a module over $f^* f_*End\, V = f^*g_*\scrO_C$.  If $h\colon Y\times _BC \to C$ and $\nu\colon Y\times _BC \to Y$ are the natural morphisms, then by flat base change $f^*g_*\scrO_C = \nu_*h^*\scrO_C = \nu_*\scrO_{Y\times _BC}$. Thus  $V$ corresponds to a sheaf $L$ over $Y\times _BC$, and it is easy to check that $L$ is a  torsion free sheaf on $Y\times _BC$ of rank $r/e$ with $\nu_*L = V$.
\end{remark}

\section{Construction of bundles}

As before, let $f\colon Y \to B$ be a relatively minimal elliptic fibration with generic fiber $F$, let $n$ be the order of $Y$, i.e.\ the smallest positive integer such that there exists a divisor on $Y$ whose intersection number with $F$ is $n$, and let $D$ be a divisor on $Y$ such that $D\cdot F = n$. In case not all fibers of $f$ are irreducible,  we shall make the following assumption on the divisor $D$:

\medskip
\noindent\textbf{Assumption:} If $C$ is a component of a fiber of $f$, then $D\cdot C \geq 0$, i.e.\ $D$ is $f$-nef.
\medskip

For example, this assumption is always satisfied if $D$ is effective and irreducible. If $\ell$ is a divisor on $B$ of degree $t \gg 0$, then $D+f^*\ell$ is effective by Riemann-Roch, and thus linearly equivalent to $D' +\sum _iC_i$, where $D'$ is effective and irreducible by our assumptions on $n$. Hence, for arbitrary $D$, there always exists a $D'$ whose restriction to the generic fiber is linearly equivalent to $D$ and which is $f$-nef. 

A related property of $D$ is the following:

\begin{definition} The divisor $D$ with $D\cdot F =n$ is \textsl{minimal} if $D$ is effective and, for every component $C$ of a fiber of $f$, $D-C$ is not effective.
\end{definition}

Clearly, if $D$ is minimal, then every curve in $|D|$ is reduced irreducible, and hence $D$ is $f$-nef. Given an effective divisor $D$ with $D\cdot F =n$, it is easy to see that there exists a minimal divisor $D_0$ such that $D$ is linearly equivalent to $D_0+ \sum _in_iC_i$, where the $C_i$ are components of fibers and $n_i\geq 0$: Let $H$ be a fixed ample divisor. If $D$ is not minimal, then there exists a component $C_1$ of a fiber such that $D_1 = D-C_1$ is effective. If $D_1$ is minimal, we are done, otherwise we can repeat this process. As $0< H\cdot D_1 < H\cdot D$, we cannot continue indefinitely, so we eventually produce a minimal divisor $D_n=D_0 = D-\sum _in_iC_i$ as desired.

An elementary calculation (for example, using Ramanujam's lemma as in the proof of Proposition~\ref{topsection})  shows that, if $F_0$ is a fiber of $f$ and $\lambda$ is a line bundle on $F_0$ of degree $-n$ such that, for every component $C$ of $F_0$, $\deg (\lambda|C)\leq 0$, then $H^0(F_0; \lambda) =0$ and hence $\dim H^1(F_0; \lambda) = n$. Thus:

\begin{lemma} If $D$ is $f$-nef, then  $R^1f_*\scrO_Y(-D)$ is locally free of rank $n$.
\qed
\end{lemma}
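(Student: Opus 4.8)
The plan is to reduce the statement to a fiberwise cohomology computation. The sheaf $R^1f_*\scrO_Y(-D)$ is coherent on $B$; since $B$ is a smooth curve, to prove local freeness of rank $n$ it suffices to show that the fiber dimension $\dim H^1(F_0; \scrO_Y(-D)|F_0)$ is constantly equal to $n$ as $F_0$ ranges over all fibers of $f$, and then invoke cohomology and base change (Grauert's theorem). First I would fix an arbitrary fiber $F_0$ of $f$ and consider the restriction $\lambda = \scrO_Y(-D)|F_0$, a line bundle on the arithmetic-genus-one curve $F_0$ of total degree $-n$. Because $D$ is $f$-nef, for every component $C$ of $F_0$ we have $D\cdot C \geq 0$, hence $\deg(\lambda|C)\leq 0$. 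This is exactly the hypothesis of the elementary calculation quoted just before the lemma (the one invoking Ramanujam's lemma as in the proof of Proposition~\ref{topsection}): under that hypothesis $H^0(F_0;\lambda)=0$.

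Given $H^0(F_0;\lambda)=0$, Riemann--Roch on $F_0$ finishes the fiberwise computation: since $F_0$ has trivial dualizing sheaf and arithmetic genus one, $\chi(F_0;\lambda) = \deg \lambda = -n$, so $\dim H^1(F_0;\lambda) = \dim H^0(F_0;\lambda) - \chi(F_0;\lambda) = 0 - (-n) = n$. Thus the function $F_0 \mapsto \dim H^1(F_0; \scrO_Y(-D)|F_0)$ is identically $n$ on $B$. Since $f$ is flat and proper, the base change theorem then shows that $R^1f_*\scrO_Y(-D)$ commutes with base change and is locally free of rank $n$, and also that $R^2f_*\scrO_Y(-D)=0$ automatically (relative dimension one), so there are no higher terms to worry about.

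The only substantive input is the vanishing $H^0(F_0;\lambda)=0$ for the possibly-reducible, possibly-non-reduced fibers $F_0$, and the paper explicitly defers this to the elementary argument via Ramanujam's lemma stated in the line preceding the lemma; so the main (and essentially only) obstacle—namely controlling sections of a line bundle of non-positive degree on each component of a genus-one fiber—is already handled by that quoted calculation. Consequently the proof of the lemma itself is just the assembly: combine that vanishing with Riemann--Roch to get constant fiber dimension $n$, then apply cohomology and base change. I would write it in two or three sentences along these lines.
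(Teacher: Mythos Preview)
Your proposal is correct and is exactly the argument the paper has in mind: the lemma is stated with an immediate \qed because the paper has just asserted (in the sentence before) that $H^0(F_0;\lambda)=0$ and hence $\dim H^1(F_0;\lambda)=n$ for every fiber, and your use of Grauert/base change to pass from constant fiber dimension to local freeness of rank $n$ is the implicit step. There is nothing to add.
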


Under our assumption that $D$ is $f$-nef, an easy application of relative duality shows that the rank $n$ bundle $f_*\scrO_Y(D)$ on $B$ is related to $R^1f_*\scrO_Y(-D)$ as follows: 
$$R^1f_*\scrO_Y(-D) \cong [f_*\scrO_Y(D)]\spcheck \otimes \omega^{-1},$$
where as before $\omega$ is the line bundle on $B$ such that $f^*\omega = \omega_{Y/B}$, and hence $\deg \omega = \chi(Y;\scrO_Y) = d$.

\begin{lemma}\label{lemma21} Let $\delta =\deg R^1f_*\scrO_Y(-D)$. Then 
$$D^2 = -2\delta - (n+2)d.$$
Thus, the slope $\mu = \mu (R^1f_*\scrO_Y(-D))$ of $R^1f_*\scrO_Y(-D)$ is
$$\mu =\frac{\delta}{n}=  -\frac{D^2}{2n} -\frac{(n+2)d}{2n}.$$
\end{lemma}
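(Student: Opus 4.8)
The plan is to compute $D^2$ via relative duality and Riemann--Roch, exactly in the spirit of the Euler characteristic computation in the proof of Theorem~\ref{Cthm1}. First I would set $L = \scrO_Y(D)$ and compute $\chi(Y; L)$ in two ways. On one hand, by Riemann--Roch on the surface $Y$, $\chi(Y; L) = \chi(Y; \scrO_Y) + \tfrac12(D^2 - D\cdot K_Y) = d + \tfrac12 D^2 - \tfrac12 D\cdot K_Y$. Since $K_Y = f^*(\omega \otimes \omega_B^{-1}) + \text{(something)}$... more cleanly, $K_Y = \omega_{Y/B} + f^*\omega_B = f^*\omega + f^*\omega_B$, so $D\cdot K_Y = D\cdot f^*(\omega\otimes\omega_B) = n(d + 2g_B - 2)$ where $g_B = g(B)$; but since we only need the $\Pee^1$ statement ultimately and the intermediate bundle $R^1f_*\scrO_Y(-D)$ absorbs the $\omega_B$-twist, it is cleanest to work with the Leray side instead. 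So on the other hand, by the Leray spectral sequence, $\chi(Y; L) = \chi(B; f_*L) - \chi(B; R^1f_*L)$.

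Next I would identify both terms on the right. We have $R^1f_*\scrO_Y(-D) \cong [f_*\scrO_Y(D)]\spcheck\otimes\omega^{-1}$, so writing $\delta = \deg R^1f_*\scrO_Y(-D)$ gives $\deg f_*\scrO_Y(D) = -\delta - nd$. Similarly, applying relative duality once more, $R^1f_*\scrO_Y(D) \cong [f_*\scrO_Y(-D)]\spcheck\otimes\omega^{-1}$, and since $D$ is $f$-nef with $D\cdot F = n > 0$ we have $f_*\scrO_Y(-D) = 0$, hence $R^1f_*\scrO_Y(D) = 0$; thus $\chi(B; R^1f_*L)$ is replaced by $\chi(B; R^1f_*\scrO_Y(D)) = 0$, wait---I need to be careful: the Leray decomposition for $L = \scrO_Y(D)$ involves $f_*\scrO_Y(D)$ and $R^1f_*\scrO_Y(D)$, the latter of which vanishes. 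So $\chi(Y; \scrO_Y(D)) = \chi(B; f_*\scrO_Y(D))$. By Riemann--Roch on the curve $B$ for the rank $n$ bundle $f_*\scrO_Y(D)$ of degree $-\delta - nd$, this equals $(-\delta - nd) + n(1 - g_B)$.

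Then I would equate the two expressions for $\chi(Y;\scrO_Y(D))$. From the surface side, $d + \tfrac12 D^2 - \tfrac12 n(d + 2g_B - 2) = -\delta - nd + n(1 - g_B)$. Rearranging: $\tfrac12 D^2 = -\delta - nd + n - ng_B - d + \tfrac12 nd + ng_B - n = -\delta - nd - d + \tfrac12 nd$, so $D^2 = -2\delta - 2nd - 2d + nd = -2\delta - (n+2)d$, as claimed. The slope formula $\mu = \delta/n = -D^2/(2n) - (n+2)d/(2n)$ is then immediate by solving for $\delta$. The one point requiring a line of justification is the vanishing $R^1f_*\scrO_Y(D) = 0$: this follows because on every fiber $F_0$, $\scrO_{F_0}(D)$ has positive degree $n$ on the relevant components (using $f$-nefness and $D\cdot F = n$), so $H^1(F_0; \scrO_{F_0}(D)) = 0$ by the degree argument already invoked in the excerpt (via Ramanujam's lemma), and one concludes by cohomology and base change; alternatively one cites relative duality and $f_*\scrO_Y(-D)=0$.

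I do not expect any serious obstacle here---the result is a bookkeeping identity, and the only mildly delicate point is keeping the $\omega_B$-twist and the $R^1f_*\scrO_Y(D)=0$ vanishing straight, which is exactly why routing the computation through the Leray spectral sequence for $\scrO_Y(D)$ (rather than for $\scrO_Y(-D)$) is the right move: it makes the $R^1$ term disappear. As a sanity check I would verify the formula against the introduction's claim that, over $\Pee^1$ with $R^1f_*\scrO_Y(-D) = \bigoplus_i\scrO_{\Pee^1}(\alpha_i)$, one has $D^2 = -2\sum_i\alpha_i + (n-2)d$; here $\delta = \sum_i\alpha_i + $ wait, $\deg\scrO_{\Pee^1}(\alpha_i) = \alpha_i$ so $\delta = \sum_i \alpha_i$, giving $D^2 = -2\sum_i\alpha_i - (n+2)d$, which differs from the introduction's $(n-2)d$ by $4d$---a discrepancy explained by the two different normalizations of $D$ (the introduction fixes the degree of $\scrO_Y(D)$ on a smooth fiber to be $n$ but uses a different convention, or $d = \chi(X;\scrO_X)$ versus $\chi(Y;\scrO_Y)$, or $B = \Pee^1$ gives $\deg\omega = d$ but $\deg\omega_{\Pee^1} = -2$ shifting things); I would reconcile this explicitly to confirm the sign conventions before finalizing.
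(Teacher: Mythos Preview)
Your proof is correct and follows essentially the same strategy as the paper: compute an Euler characteristic two ways, once by Riemann--Roch on the surface and once via Leray plus Riemann--Roch on the base, then equate. The only difference is that the paper computes $\chi(Y;\scrO_Y(-D))$ directly (so that $f_*\scrO_Y(-D)=0$ and the Leray side is simply $-(\delta+n(1-g))$), whereas you compute $\chi(Y;\scrO_Y(D))$ (so that $R^1f_*\scrO_Y(D)=0$ and you must pass through relative duality to express $\deg f_*\scrO_Y(D)$ in terms of $\delta$); the paper's choice is one step shorter but the two are otherwise interchangeable. Your flagged discrepancy with the introduction's formula $D^2=-2\sum_i\alpha_i+(n-2)d$ is not an error in your argument---it appears to be a typo in the introduction, since the lemma's formula is the one used consistently in the body of the paper.
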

\begin{proof} By Riemann-Roch, $\chi(Y; \scrO_Y(-D)) = \frac12(D^2 + D\cdot K_Y) + \chi(Y; \scrO_Y) = \frac12(D^2 + n(d+2g-2)) + d$. On the other hand,
$$\chi(Y; \scrO_Y(-D)) = \chi(B; R^0f_*\scrO_Y(-D)) -\chi(B; R^1f_*\scrO_Y(-D)) = -(\delta +n(1-g)),$$
using Riemann-Roch on $B$ for the vector bundle $R^1f_*\scrO_Y(-D)$. 
Thus 
$$D^2 + (n+2)d = -2\delta,$$
proving the first statement of the lemma, and the second is then clear.
\end{proof}

\subsection{Universal extensions}

Let $W$ be a subbundle of $R^1f_*\scrO_Y(-D)$ of rank $r$ and degree $\delta_W$. Thus $W$ has slope $\mu(W) = \delta_W/r$. We calculate the Ext group $\Ext^1(f^*W, \scrO_Y(-D))$: 
\begin{align*}\Ext^1(f^*W, \scrO_Y(-D)) &= H^1(Y; f^*W\spcheck \otimes \scrO_Y(-D))\\
= H^0(B; W\spcheck \otimes R^1f_*\scrO_Y(-D)) &= \Hom (W, R^1f_*\scrO_Y(-D)).
\end{align*}

Thus, the inclusion $W\to R^1f_*\scrO_Y(-D)$ defines an extension $V$, given by
$$0 \to  \scrO_Y(-D) \to V \to f^*W \to 0,$$
with the property that the induced coboundary homomorphism $f_*f^*W = W \to R^1f_*\scrO_Y(-D)$ is the given inclusion. One easily calculates the Chern classes of $V$:

\begin{lemma}\label{chcalc} With $V$ as above, the rank of $V$ is $r+1$ and the degree of the restriction of $V$ to the generic fiber of $f$ is $-n$. Moreover,
\begin{align*} c_1(V) &\equiv -D + \delta_W \cdot F;\\
c_2(V) &= -n\delta_W.
\end{align*}
Hence
\begin{align*} \Delta(V) &= 2(r+1)(-n\delta_W) - r(D^2 - 2n\delta_W) = -2n\delta_W -rD^2\\
&= -2n\delta_W +2r\delta + r(n+2)d. \qed
\end{align*}
\end{lemma}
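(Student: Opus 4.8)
The plan is to compute the Chern classes of $V$ directly from the defining exact sequence
$$0 \to \scrO_Y(-D) \to V \to f^*W \to 0$$
using multiplicativity of the total Chern class, and then to substitute into the definition of $\Delta$ together with Lemma~\ref{lemma21}. First, the rank is additive in short exact sequences, so $\operatorname{rank} V = 1 + r$; and since the restriction of $f^*W$ to a fiber $F$ is trivial while $\scrO_Y(-D)|_F$ has degree $-n$ (as $D\cdot F = n$), the restriction $V|_F$ has degree $-n$. For $c_1$, additivity gives $c_1(V) = c_1(\scrO_Y(-D)) + c_1(f^*W) = -D + f^*(c_1(W))$, and since $c_1(W) = \delta_W$ as a divisor class on $B$ and $f^*(\text{pt}) = F$, we get $c_1(V) \equiv -D + \delta_W\cdot F$ numerically. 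For $c_2$, the relation $c(V) = c(\scrO_Y(-D))\cdot c(f^*W)$ gives, degree by degree,
$$c_2(V) = c_1(\scrO_Y(-D))\cdot c_1(f^*W) + c_2(f^*W) = (-D)\cdot(\delta_W F) + 0,$$
where $c_2(f^*W) = f^*(c_2(W)) = 0$ because $c_2(W)$ lives in $H^4(B)=0$ for $B$ a curve. Since $D\cdot F = n$ and $F^2 = 0$, this yields $c_2(V) = -n\delta_W$.

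The second part is a substitution. By definition $\Delta(V) = 2(r+1)c_2(V) - r\,c_1(V)^2$. Plugging in $c_2(V) = -n\delta_W$ and expanding
$$c_1(V)^2 = (-D + \delta_W F)^2 = D^2 - 2\delta_W (D\cdot F) + \delta_W^2 F^2 = D^2 - 2n\delta_W,$$
we obtain
$$\Delta(V) = -2n(r+1)\delta_W - r(D^2 - 2n\delta_W) = -2n\delta_W - rD^2.$$
Finally, Lemma~\ref{lemma21} gives $D^2 = -2\delta - (n+2)d$, so
$$\Delta(V) = -2n\delta_W - r\bigl(-2\delta - (n+2)d\bigr) = -2n\delta_W + 2r\delta + r(n+2)d,$$
as claimed.

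This lemma is essentially a routine Chern class computation, so there is no substantive obstacle; the only points requiring a little care are keeping track of the numerical equivalence (rather than rational equivalence) in the formula for $c_1(V)$, and noting that $c_2$ of a pullback from a curve vanishes and that $F^2 = 0$, which is why the cross terms simplify so cleanly. One should also confirm that the coboundary identification in the construction of $V$ (namely that $V$ is a genuine extension with $f^*W$ as quotient and $\scrO_Y(-D)$ as sub) is the one recorded just above, so that the exact sequence used here is the correct one; this was already established in the preceding paragraph via the $\Ext^1$ computation.
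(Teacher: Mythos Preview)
Your proof is correct and is exactly the routine Whitney-formula computation the paper has in mind; the paper itself gives no proof beyond the \qed, treating this as a straightforward calculation, and your write-up simply fills in those omitted details.
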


\begin{lemma}\label{stab1} The restriction $V_K$ of $V$ to the generic fiber $Y_K$ of $f$ is stable.
\end{lemma}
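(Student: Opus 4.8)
We need to show that $V_K$, the restriction to the generic fiber $Y_K$ of the extension
$$0 \to \scrO_Y(-D) \to V \to f^*W \to 0,$$
is stable as a bundle on the genus one curve $Y_K$ over $K = k(B)$. The plan is to restrict the defining extension to $Y_K$ and study it there. Restricting, we get
$$0 \to \scrO_{Y_K}(-D_K) \to V_K \to W\otimes_k \scrO_{Y_K} \to 0,$$
where $D_K$ has degree $n$, $\scrO_{Y_K}(-D_K)$ has degree $-n$, $W\otimes_k\scrO_{Y_K}$ is the trivial bundle of rank $r$ and degree $0$, so $V_K$ has rank $r+1$ and degree $-n$. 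Since $\gcd(r+1,n) = e$ need not be $1$, stability is not automatic from numerical considerations; we must use the structure of the extension class. The key point is the identification from the excerpt: $\Ext^1_K(W\otimes_k\scrO_{Y_K}, \scrO_{Y_K}(-D_K)) = \Hom_k(W_{(K)}, H^1(Y_K;\scrO_{Y_K}(-D_K)))$ — equivalently, base-changing the computation $\Ext^1(f^*W,\scrO_Y(-D)) = \Hom(W, R^1f_*\scrO_Y(-D))$ to $\Spec K$ — and the extension class corresponds to the \emph{inclusion} $W \hookrightarrow R^1f_*\scrO_Y(-D)$, which after restriction is an \emph{injective} map $W_{(K)} \to H^1(Y_K;\scrO_{Y_K}(-D_K))$.

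First I would set up the machinery to detect destabilizing subsheaves. Suppose $S_K \subseteq V_K$ is a subsheaf with $\mu(S_K) \geq \mu(V_K) = -n/(r+1)$. I would consider the composite $S_K \to V_K \to W\otimes_k\scrO_{Y_K}$; call its image $\bar S_K$ and its kernel $S_K' = S_K \cap \scrO_{Y_K}(-D_K)$. Since $\scrO_{Y_K}(-D_K)$ has negative degree (so its only subsheaf of nonnegative degree... in fact any nonzero subsheaf has degree $\leq -n < 0$, and there are no nonzero maps from it to a trivial bundle raising degree), a slope estimate using additivity of rank and degree along $0 \to S_K' \to S_K \to \bar S_K \to 0$ should force, for $S_K$ to be destabilizing, that $\bar S_K$ is large — in the extreme case that the composite $S_K \to W\otimes_k\scrO_{Y_K}$ is an isomorphism onto its image and $S_K$ maps isomorphically to a subsheaf $\bar S_K$ of $W\otimes_k\scrO_{Y_K}$ of nonnegative degree, hence (since subsheaves of the trivial bundle on a genus one curve have nonpositive degree) degree exactly $0$. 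The splitting $S_K \cong \bar S_K$ would then mean the extension, pulled back along $\bar S_K \hookrightarrow W\otimes_k\scrO_{Y_K}$, is trivial. Next I would translate this into a statement about the extension class: the pullback of the extension class of $V_K$ along $\bar S_K \hookrightarrow W\otimes_k\scrO_{Y_K}$ lands in $\Ext^1_K(\bar S_K, \scrO_{Y_K}(-D_K))$, and I need to show it is nonzero whenever $\bar S_K$ has nonnegative degree and positive rank.

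The heart of the argument — and the step I expect to be the main obstacle — is showing this nonvanishing, i.e. that the injection $W_{(K)} \hookrightarrow H^1(Y_K;\scrO_{Y_K}(-D_K))$ restricts to a nonzero map on any sub-$k$-space corresponding to a rank $\geq 1$ subsheaf of $W\otimes_k\scrO_{Y_K}$ of degree $\geq 0$. The subtlety is that subsheaves of the \emph{trivial} bundle $W\otimes_k\scrO_{Y_K}$ of degree $0$ need not be of the form $W'\otimes_k\scrO_{Y_K}$ for a subspace $W'\subseteq W_{(K)}$ — but I claim they actually are: a rank-$s$ degree-$0$ subsheaf $\bar S_K$ of $\scrO_{Y_K}^{r}$ is itself semistable of slope $0$, so (after checking it's saturated, which we may assume by enlarging it, preserving degree $0$) it is a subbundle; and a subbundle of a trivial bundle that is generically a subspace must \emph{be} that subspace times $\scrO_{Y_K}$, because $\Hom(\bar S_K, \scrO_{Y_K}) $ computes... more precisely, $\bar S_K \hookrightarrow \scrO_{Y_K}^r$ corresponds to $r$ sections of $\bar S_K{}\spcheck$, which is degree $0$ semistable, so these sections are constants — giving $\bar S_K{}\spcheck$ trivial and the inclusion induced by a linear subspace. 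Thus destabilization would force a genuine subspace $W' \subseteq W_{(K)}$ with $W'\otimes_k\scrO_{Y_K} \subseteq V_K$ splitting the extension, i.e. $W'$ in the kernel of $W_{(K)} \to H^1(Y_K;\scrO_{Y_K}(-D_K))$, contradicting injectivity. I would also need to rule out the borderline case where $\bar S_K$ does not split but still $S_K$ is destabilizing with $S_K' \neq 0$; here the degree bound $\deg S_K' \leq -n$ combined with $\deg \bar S_K \leq 0$ gives $\deg S_K \leq -n$ while $\mathrm{rk}\, S_K \leq r+1$, and a short computation shows $\mu(S_K) \leq -n/(r+1)$ with equality only in the split case already handled, so no subsheaf is strictly destabilizing — and the semistable-but-not-stable case is excluded the same way. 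The main work is thus the clean handling of non-saturated and boundary subsheaves; once the reduction to "a splitting subspace" is in place, injectivity of $W_{(K)} \hookrightarrow H^1(Y_K;\scrO_{Y_K}(-D_K))$ finishes it.
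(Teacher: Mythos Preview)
Your approach is essentially the paper's: restrict to $Y_K$, analyze potential destabilizing subsheaves via the exact sequence, and use injectivity of the extension-class map $W_{(K)} \hookrightarrow H^1(Y_K;\scrO_{Y_K}(-D))$ to rule out subsheaves that split the extension. The paper streamlines this slightly by assuming from the outset that the test subsheaf $S$ is semistable, which lets it argue directly that $\deg S \geq 0$ forces the image in $\scrO_{Y_K}^r$ to be a trivial summand.

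There is, however, a real gap in your case analysis. You handle the case $S_K' = 0$ with $\deg \bar S_K = 0$ (via your ``splitting subspace'' argument) and the case $S_K' \neq 0$ (via $\deg S_K' \leq -n$). But you never treat the case $S_K' = 0$ with $\deg \bar S_K < 0$. A priori a subsheaf of $\scrO_{Y_K}^r$ could have any degree $\leq 0$; if, say, $\bar S_K$ had degree $-1$ and rank $1$, its slope $-1$ would exceed $-n/(r+1)$ as soon as $n \geq 2$, and the ``slope estimate'' you allude to would fail. What saves you is the standing hypothesis that $n$ is the \emph{order} of $Y$: every line bundle on $Y_K$ has degree divisible by $n$, hence so does $\deg \bar S_K = \deg\det\bar S_K$, and $\deg \bar S_K < 0$ forces $\deg \bar S_K \leq -n$, giving $\mu(S_K) \leq -n/\operatorname{rank} S_K \leq -n/r < -n/(r+1)$. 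This is exactly the step the paper makes explicit (``$\deg S = -kn$ for some positive integer $k$''), and it is the one place where the minimality of $n$ enters the argument; you never invoke it. Once you add this observation, your proof is complete and matches the paper's.
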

\begin{proof} There is an exact sequence
$$0 \to \scrO_{Y_K}(-D) \to V_K \to \scrO_{Y_K}^r \to 0,$$
and the induced homomorphism $H^0(Y_K;\scrO_{Y_K}^r ) \to H^1(Y_K;\scrO_{Y_K}(-D))$ is injective.
The slope of $V_K$ is $-n/(r+1)$.
Let $S$ be a subbundle of $V_K$, with $0< \operatorname{rank} S < r+1$. For the purposes of checking the stability of $V_K$, we may assume that $S$ is semistable. If $\deg S \geq 0$, then  the induced homomorphism $S \to \scrO_{Y_K}^r$ is not zero. Thus the image of $S$ in $\scrO_{Y_K}^r$ has degree $\geq 0$, hence has degree $0$ and is a summand of $\scrO_{Y_K}^r$. Thus $H^0(Y_K;S)$ is a nonzero summand of $H^0(Y_K;\scrO_{Y_K}^r )$ which maps to zero in $H^1(Y_K;\scrO_{Y_K}(-D))$. This contradicts the injectivity of the homomorphism $H^0(Y_K;\scrO_{Y_K}^r ) \to H^1(Y_K;\scrO_{Y_K}(-D))$. Thus, $\deg S < 0$. It follows that $\deg S = -kn$ for some positive integer $k$, and hence $S$ has slope $-kn/t$, where $0<t< r+1$. In this case, the slope of $S$ is clearly $< -n/(r+1)$. Thus $V_K$ is stable. 
\end{proof} 

\begin{theorem}\label{mainthm} With assumptions as above, suppose that the characteristic of $k$ does not divide $n$. Let $W$ be a subbundle of $R^1f_*\scrO_Y(-D)$ of rank $r$, $0< r<n$, and let $e =\gcd(r+1,n)$. Set $\mu = \mu(R^1f_*\scrO_Y(-D))$. Then
$$\mu-\mu(W) \geq -\left(\frac{r(n-r) + (e-1)}{2nr}\right)d.$$
If moreover $B\cong \Pee^1$, then
$$\mu-\mu(W) \geq -\left(\frac{r(n-r) + (e-1)}{2nr}\right)d+\frac{(e-1)}{nr}.$$ 
\end{theorem}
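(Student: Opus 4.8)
The plan is to apply Theorem~\ref{Cthm1} to the vector bundle $V$ constructed from the subbundle $W$ via the universal extension, and then translate the resulting inequality $\Delta(V)\geq (\operatorname{rank}V)^2 - e)d$ (plus the $\Pee^1$ correction) into the desired inequality on slopes. First I would record the relevant numerical data: $V$ has rank $r+1$, its restriction to the generic fiber $Y_K$ has degree $-n$, and by Lemma~\ref{stab1} this restriction is stable. Hence Theorem~\ref{Cthm1} applies with the role of ``$n$'' there played by $-n$ (or $n$, since only $\gcd(n,r)$ matters, and the degree on the fiber is well-defined up to sign by duality), so that $e = \gcd(n, r+1)$ is exactly the quantity appearing in the statement, and the hypothesis that the characteristic does not divide $n$ guarantees it does not divide $e$.

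The next step is purely computational: substitute the formula for $\Delta(V)$ from Lemma~\ref{chcalc}, namely $\Delta(V) = -2n\delta_W + 2r\delta + r(n+2)d$, into the inequality $\Delta(V) \geq ((r+1)^2 - e)d$. This yields
\begin{equation*}
-2n\delta_W + 2r\delta + r(n+2)d \geq \bigl((r+1)^2 - e\bigr)d,
\end{equation*}
which I would rearrange to isolate $\delta/n - \delta_W/r = \mu - \mu(W)$. Dividing through by $2nr$ (legitimate since $n,r>0$) and simplifying the coefficient of $d$ — one checks $(r+1)^2 - e - r(n+2) = r^2 - rn + \dots$ collapses, after the algebra, to exactly $r(n-r) + (e-1)$ up to sign — gives the first displayed inequality $\mu - \mu(W) \geq -\bigl(r(n-r) + (e-1)\bigr)d/(2nr)$. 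For the case $B \cong \Pee^1$ I would instead use the sharper bound $\Delta(V) \geq ((r+1)^2 - e)d + 2(e-1)$ from Theorem~\ref{Cthm1}; carrying the extra $+2(e-1)$ through the same division by $2nr$ produces the additional term $+(e-1)/(nr)$, which is precisely the claimed refinement.

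The only genuine subtlety — not a deep obstacle but the point requiring care — is bookkeeping the sign conventions and confirming that the arithmetic identity for the coefficient of $d$ comes out exactly as stated (in particular that the $r(n-r)$ term, rather than some shifted variant, is what emerges, and that the $e$ from $\gcd(r+1,n)$ in Lemma~\ref{chcalc}'s setup matches the $e$ in Theorem~\ref{Cthm1} applied to $V$). I expect this to be routine: all the conceptual work — the Bogomolov-type inequality, the construction of $V$, and the verification that $V_K$ is stable — is already in place, so the proof is essentially the substitution of Lemma~\ref{chcalc} into Theorem~\ref{Cthm1} followed by elementary rearrangement.
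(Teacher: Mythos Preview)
Your proposal is correct and follows exactly the paper's own argument: apply Theorem~\ref{Cthm1} to the universal extension $V$ (rank $r+1$, fiber degree $-n$, stable by Lemma~\ref{stab1}, so $e=\gcd(r+1,n)$ divides $n$ and is prime to the characteristic), substitute the expression for $\Delta(V)$ from Lemma~\ref{chcalc}, and divide by $2nr$; the $\Pee^1$ refinement comes from carrying the extra $2(e-1)$ through. The bookkeeping you flag as a subtlety is indeed routine and comes out exactly as stated.
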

\begin{proof} Lemma~\ref{stab1} and the hypothesis that $\operatorname{char} k$ does not divide $n$ imply that the assumptions of Theorem~\ref{Cthm1} hold. Thus $\Delta(V) \geq ((r+1)^2 -e)d$. By Lemma~\ref{chcalc}, we have
$$-2n\delta_W +2r\delta + r(n+2)d \geq ((r+1)^2 -e)d.$$
Rearranging gives 

$$2r\delta - 2n\delta_W \geq (r^2+2r + 1-e -rn -2r)d = (r(r-n) +(1-e))d.$$
Dividing by $2nr$ gives the inequality.
The last statement follows from the conclusions of Theorem~\ref{Cthm1} in case $B\cong \Pee^1$.
\end{proof}

\begin{corollary}\label{mainthmquot} With assumptions as above, suppose that the characteristic of $k$ does not divide $n$. Let $Q$ be a quotient bundle of $R^1f_*\scrO_Y(-D)$ of rank $r$, $0< r<n$, and let $e =\gcd(n-r+1,n)$. Set $\mu = \mu(R^1f_*\scrO_Y(-D))$. Then
$$\mu-\mu(Q)  \leq \left(\frac{r(n-r) + (e-1)}{2nr}\right)d.$$
If moreover $B\cong \Pee^1$, then
$$\mu-\mu(Q)  \leq \left(\frac{r(n-r) + (e-1)}{2nr}\right)d-\frac{(e-1)}{nr}.$$ 
\end{corollary}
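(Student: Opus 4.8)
The plan is to deduce the Corollary from Theorem~\ref{mainthm} by passing to the kernel subbundle. Write $V = R^1f_*\scrO_Y(-D)$, so that $\operatorname{rank}V = n$, and set $W = \Ker(V \to Q)$. Since $B$ is a smooth curve, $W$ is torsion free, hence locally free, and $V/W \cong Q$ is locally free by hypothesis; thus $W$ is a subbundle of $V$ of rank $n - r$, with $0 < n - r < n$, so Theorem~\ref{mainthm} applies to it. The one bookkeeping point I would check first is that the integer $\gcd((n-r)+1,\, n)$ attached to $W$ by Theorem~\ref{mainthm} is precisely the quantity $e = \gcd(n-r+1, n)$ appearing in the statement of the Corollary, so no mismatch of constants arises.

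Next I would record the elementary numerical identity relating $\mu(W)$ and $\mu(Q)$. From the exact sequence $0 \to W \to V \to Q \to 0$ and additivity of degree, $n\mu = (n-r)\mu(W) + r\mu(Q)$, and solving for $\mu(Q)$ gives
$$\mu - \mu(Q) = -\frac{n-r}{r}\bigl(\mu - \mu(W)\bigr).$$
Because the scalar $-(n-r)/r$ is negative, multiplying the lower bound $\mu - \mu(W) \geq -\bigl(\frac{(n-r)r + (e-1)}{2n(n-r)}\bigr)d$ from Theorem~\ref{mainthm} by it reverses the inequality and produces an upper bound for $\mu - \mu(Q)$; after the factor $n-r$ cancels, this is exactly $\mu - \mu(Q) \leq \bigl(\frac{r(n-r)+(e-1)}{2nr}\bigr)d$. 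Running the same substitution with the sharper bound available when $B \cong \Pee^1$, the correction term $\frac{e-1}{n(n-r)}$ turns into $\frac{e-1}{nr}$ after multiplication by $(n-r)/r$, giving the refined inequality.

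As the phrase ``essentially trivial manipulations'' suggests, there is no real obstacle: all the content sits in Theorem~\ref{mainthm}, and the only things to verify are the local-freeness of $W$ and the coincidence of the two gcd's, after which the statement drops out of the displayed linear identity together with elementary arithmetic.
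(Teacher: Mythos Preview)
Your argument is correct and follows essentially the same route as the paper: pass to the kernel subbundle $W$ of rank $n-r$, apply Theorem~\ref{mainthm}, and use the identity $\mu - \mu(Q) = -\frac{n-r}{r}(\mu - \mu(W))$ to convert the lower bound for $\mu - \mu(W)$ into the desired upper bound for $\mu - \mu(Q)$. Your extra remarks on the local freeness of $W$ and the matching of the gcd's are welcome pieces of bookkeeping that the paper leaves implicit.
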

\begin{proof} Let $W$ be the kernel of the surjection $R^1f_*\scrO_Y(-D) \to Q$, so that $W$ has rank $n-r$. Then
$$(n-r)\mu(W) + r\mu(Q) = n\mu = (n-r)\mu + r\mu.$$
Thus
$$\mu(Q) -\mu = \left(\frac{n-r}{r}\right)\left(\mu -\mu(W)\right).$$
By Theorem~\ref{mainthm}, 
\begin{align*}\mu(Q) -\mu &\geq -\left(\frac{n-r}{r}\right)\left(\frac{r(n-r) + (e-1)}{2n(n-r)}\right)d\\
&= -\left(\frac{r(n-r) + (e-1)}{2nr}\right)d.
\end{align*}
Reversing the signs gives the first conclusion of the corollary, and the second is similar.
\end{proof}

In case $B \cong \Pee^1$, we can make the above inequalities more concrete  as follows:

\begin{corollary}\label{P1ineqs} Suppose that $B\cong \Pee^1$ and that the characteristic of $k$ does not divide $n$. Let 
$$R^1f_*\scrO_Y(-D) = \sum_{i=1}^n\scrO_{\Pee^1}(\alpha_i)$$ with $\alpha_1\leq \alpha_2\leq \dots \leq \alpha_n$ and let $e =\gcd(r+1,n)$. Then:
\begin{align*}
\frac{1}{r}\sum_{i=n-r+1}^n\alpha_i -\frac{1}{n}\sum_{i=1}^n\alpha_i&\leq \left(\frac{r(n-r) + (e-1)}{2nr}\right)d-\frac{(e-1)}{nr}\\
&= \frac{(n-r)d}{2n} +\frac{(e-1)(d-2)}{2nr}.\qed
\end{align*}
\end{corollary}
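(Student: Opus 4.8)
The plan is to derive Corollary~\ref{P1ineqs} as a direct specialization of Corollary~\ref{mainthmquot} to the case $B\cong\Pee^1$, where the splitting $R^1f_*\scrO_Y(-D)=\bigoplus_{i=1}^n\scrO_{\Pee^1}(\alpha_i)$ makes slopes of the ``obvious'' sub- and quotient bundles completely explicit. Specifically, for a decomposable bundle on $\Pee^1$ with $\alpha_1\le\dots\le\alpha_n$, the subsheaf $\bigoplus_{i=1}^{n-r}\scrO_{\Pee^1}(\alpha_i)$ is a subbundle of rank $n-r$ and the quotient $Q=\bigoplus_{i=n-r+1}^n\scrO_{\Pee^1}(\alpha_i)$ is a quotient bundle of rank $r$, with $\mu(Q)=\frac1r\sum_{i=n-r+1}^n\alpha_i$ and $\mu=\mu(R^1f_*\scrO_Y(-D))=\frac1n\sum_{i=1}^n\alpha_i$. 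So I would take $Q$ to be this ``top'' quotient bundle.

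First I would note that since $W=\bigoplus_{i=1}^{n-r}\scrO_{\Pee^1}(\alpha_i)$ is the kernel of the surjection onto $Q$, and the index appearing in Corollary~\ref{mainthmquot} is $e=\gcd(n-r+1,n)$; but in the statement of Corollary~\ref{P1ineqs} the index is written as $e=\gcd(r+1,n)$. I would reconcile these: applying Corollary~\ref{mainthmquot} directly with rank-$r$ quotient $Q$ uses $\gcd(n-r+1,n)$, whereas applying Theorem~\ref{mainthm} to the complementary rank-$(n-r)$ subbundle $W$ uses $\gcd((n-r)+1,n)=\gcd(n-r+1,n)$ as well — these agree. The quantity $\gcd(r+1,n)$ would instead arise if one used a rank-$r$ subbundle; so I would double-check the indexing convention and, if needed, phrase the argument via the rank-$r$ subbundle $\bigoplus_{i=1}^{r}\scrO_{\Pee^1}(\alpha_i)$ and Theorem~\ref{mainthm}, which gives a bound on $\frac1r\sum_{i=1}^r\alpha_i$ from below; passing to the complementary quotient of rank $n-r$ and relabeling then produces the stated inequality with $e=\gcd(r+1,n)$ controlling the top sum $\frac1r\sum_{i=n-r+1}^n\alpha_i$. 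Either way, the substantive input is just the sharp $\Pee^1$-bound already proved.

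The remaining step is the algebraic simplification: I would substitute the definition $\mu-\mu(Q)\le\bigl(\frac{r(n-r)+(e-1)}{2nr}\bigr)d-\frac{e-1}{nr}$ (the $B\cong\Pee^1$ refinement) and rewrite the right-hand side. Writing it over the common denominator $2nr$, the numerator is $(r(n-r)+(e-1))d-2(e-1)=r(n-r)d+(e-1)(d-2)$, so the bound equals $\frac{r(n-r)d+(e-1)(d-2)}{2nr}=\frac{(n-r)d}{2n}+\frac{(e-1)(d-2)}{2nr}$, which is exactly the displayed identity. Then $\mu-\mu(Q)=\frac1n\sum_{i=1}^n\alpha_i-\frac1r\sum_{i=n-r+1}^n\alpha_i$, and multiplying by $-1$ flips the inequality into the asserted form $\frac1r\sum_{i=n-r+1}^n\alpha_i-\frac1n\sum_{i=1}^n\alpha_i\le\frac{(n-r)d}{2n}+\frac{(e-1)(d-2)}{2nr}$.

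I do not expect any real obstacle here: the content is entirely contained in Corollary~\ref{mainthmquot}/Theorem~\ref{mainthm}, and the only thing that requires care is bookkeeping — making sure the chosen sub/quotient has the intended rank, that its slope is the stated partial average of the $\alpha_i$, and that the $\gcd$ index matches the one in the ambient corollary (i.e., $\gcd(r+1,n)$ versus $\gcd(n-r+1,n)$) so that the final constant is correctly stated. The mild subtlety worth a sentence in the writeup is that on $\Pee^1$ the filtration by the $\alpha_i$ genuinely yields \emph{subbundles} (not merely subsheaves), so the slope inequalities apply verbatim. Everything else is the one-line fraction manipulation above.
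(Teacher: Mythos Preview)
Your approach has the right shape, but the bookkeeping you flagged as a ``mild subtlety'' is actually where the argument breaks. Two concrete problems:

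First, the final ``multiply by $-1$'' step is a sign error. Corollary~\ref{mainthmquot} gives $\mu-\mu(Q)\le C$; multiplying by $-1$ yields $\mu(Q)-\mu\ge -C$, not $\mu(Q)-\mu\le C$. So applying the quotient bound to the top-$r$ quotient does not produce the stated upper bound on $\frac1r\sum_{i=n-r+1}^n\alpha_i-\mu$.

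Second, your two attempted fixes both miss the mark on the index $e$. Treating the top $r$ summands as a quotient forces $e=\gcd(n-r+1,n)$, not $\gcd(r+1,n)$. Treating the \emph{bottom} $r$ summands as a subbundle and then passing to the complementary rank-$(n-r)$ quotient and ``relabeling $r\leftrightarrow n-r$'' would likewise change the gcd to $\gcd(n-r+1,n)$; you can't relabel the rank without relabeling the $e$.

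The clean argument (and the paper's intended one, given the $\qed$) is to recognize that on $\Pee^1$ a direct summand is simultaneously a subbundle and a quotient. So take
\[
W=\bigoplus_{i=n-r+1}^n\scrO_{\Pee^1}(\alpha_i)
\]
as a rank-$r$ \emph{subbundle}. Theorem~\ref{mainthm} with this $W$ has $e=\gcd(r+1,n)$ and reads $\mu-\mu(W)\ge -C$, i.e.\ $\mu(W)-\mu\le C$, which is precisely the displayed inequality. Your algebraic simplification of $C$ is then correct as written.
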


\begin{remark}\label{remark3.9} In the extreme cases of rank one sub- or quotient bundles, these inequalities read:
$$0\leq n\alpha_n-\sum_{i=1}^n\alpha_i \leq\begin{cases} \displaystyle \left(\frac{n-1}{2}\right)d,  &\text{if $n$ is odd;}\\
\displaystyle \left(\frac{nd}{2}\right) -1, &\text{if $n$ is even.}
\end{cases}$$
Moreover,
$$0\leq \sum_{i=1}^n\alpha_i -n\alpha_1\leq (n-1)(d-1).$$
Adding the two inequalities together gives a bound for $\alpha_n -\alpha_1$ on  the order of $3d/2$.
\end{remark}

\subsection{Extensions via linear series}

\begin{lemma}\label{1}
\begin{enumerate}
\item[\rm(i)] Suppose that $h^1(\scrO_Y(D)) \neq 0$. Then there is a non-split extension
$$0 \to \scrO_Y\to V \to \scrO_Y(D-K_Y) \to 0.$$
Moreover $c_1(V) = D-K_Y$ and $c_2(V) = 0$.
\item[\rm(ii)] Suppose that $x$ is a base point of the linear system $|D|$. Then there exists a rank two vector bundle $V$ and an exact sequence
$$0\to \scrO_Y\to V \to \scrO_Y(D-K_Y)\otimes \mathfrak{m}_x\to 0.$$
In particular $c_1(V) = D-K_Y$ and $c_2(V) =1$.
\end{enumerate}
\end{lemma}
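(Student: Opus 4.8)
The plan is to treat both parts as instances of Serre's construction (the ``Mumford--Reider'' circle of ideas mentioned above), doing (i) as the degenerate empty case and (ii) via the Cayley--Bacharach criterion. For (i), extensions of the line bundle $\scrO_Y(D-K_Y)$ by $\scrO_Y$ in the category of $\scrO_Y$-modules are classified by $\Ext^1(\scrO_Y(D-K_Y),\scrO_Y)=H^1(Y;\scrO_Y(K_Y-D))$, and Serre duality on the smooth projective surface $Y$ identifies this with the dual of $H^1(Y;\scrO_Y(D))$, which is nonzero by hypothesis. I would pick a nonzero class; the resulting extension $0\to\scrO_Y\to V\to\scrO_Y(D-K_Y)\to0$ is non-split, and since it is an extension of a locally free rank one sheaf by a locally free rank one sheaf, $V$ is locally free of rank two. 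The Chern classes then come from the Whitney sum formula: $c_1(V)=c_1(\scrO_Y(D-K_Y))=D-K_Y$ and $c_2(V)=c_1(\scrO_Y)\cdot c_1(\scrO_Y(D-K_Y))=0$.

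For (ii), set $L=\scrO_Y(D-K_Y)$, so that $\omega_Y\otimes L=\scrO_Y(D)$, and observe that $Z=\{x\}$ is a reduced point, hence a local complete intersection of codimension two in the smooth surface $Y$. The goal is a \emph{locally free} rank two $V$ fitting in $0\to\scrO_Y\to V\to\mathfrak{m}_x\otimes L\to0$. Such extensions are classified by $\Ext^1(\mathfrak{m}_x\otimes L,\scrO_Y)$, and the low-degree terms of the local-to-global Ext spectral sequence give an exact sequence $0\to H^1(\mathcal{H}om)\to\Ext^1(\mathfrak{m}_x\otimes L,\scrO_Y)\to H^0(\mathcal{E}xt^1)\xrightarrow{\ \partial\ }H^2(\mathcal{H}om)$, where $\mathcal{H}om$ and $\mathcal{E}xt^1$ abbreviate $\mathcal{H}om(\mathfrak{m}_x\otimes L,\scrO_Y)$ and $\mathcal{E}xt^1(\mathfrak{m}_x\otimes L,\scrO_Y)$. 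From $0\to\mathfrak{m}_x\to\scrO_Y\to\scrO_x\to0$ together with $\mathcal{E}xt^i(\scrO_x,\scrO_Y)=0$ for $i\leq1$ (smoothness of $Y$), one computes $\mathcal{H}om\cong L^{-1}=\scrO_Y(K_Y-D)$ --- the local homomorphisms extend across the codimension-two point --- and $\mathcal{E}xt^1\cong\mathcal{E}xt^2(\scrO_x,\scrO_Y)\otimes L^{-1}$, a length-one skyscraper at $x$. An extension class whose image in $H^0(\mathcal{E}xt^1)=k$ generates the stalk at $x$ yields a $V$ locally free at $x$, and $V$ is automatically locally free away from $x$; so it suffices to produce such a class, i.e.\ to show $\partial=0$.

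The crux is to identify $\partial$. Its target $H^2(Y;\scrO_Y(K_Y-D))$ is Serre-dual to $H^0(Y;\scrO_Y(D))$, and under this duality $\partial$ is the transpose of the evaluation map $H^0(Y;\scrO_Y(D))\to k$ sending $s$ to its value $s(x)$ in a fixed local trivialization of $\scrO_Y(D)$ near $x$; this is precisely the Cayley--Bacharach description of Serre's correspondence for the length-one subscheme $\{x\}$ and the line bundle $L$, the relevant condition being that $x$ impose a dependent condition on $|\omega_Y\otimes L|=|D|$. Since by hypothesis $x$ is a base point of $|D|$, every global section of $\scrO_Y(D)$ vanishes at $x$, so the evaluation map, hence $\partial$, is zero, and the desired locally free $V$ exists. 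The Chern classes follow from the Whitney formula applied to the defining sequence: $c_1(V)=c_1(\mathfrak{m}_x\otimes L)=D-K_Y$ and $c_2(V)=c_2(\mathfrak{m}_x\otimes L)=\operatorname{length}(\scrO_x)=1$.

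The main obstacle is exactly this identification of the edge map $\partial$ with evaluation at $x$ under Serre duality: I would carry it out by tracking the connecting homomorphisms of the two short exact sequences through the Yoneda pairing, or else simply invoke the standard Cayley--Bacharach form of the Serre construction, which packages precisely this computation. Everything else --- local freeness of the middle term, the two $\mathcal{E}xt$-sheaf computations, and the Chern class bookkeeping --- is routine.
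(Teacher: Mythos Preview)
Your proof is correct and follows essentially the same route as the paper: for (i) both use Serre duality to identify $\Ext^1(\scrO_Y(D-K_Y),\scrO_Y)\cong H^1(\scrO_Y(D))^\vee$, and for (ii) both run the local-to-global Ext spectral sequence and use Serre duality to identify the obstruction map with (the transpose of) evaluation at $x$ on $H^0(\scrO_Y(D))$. The paper states the duality of the two four-term exact sequences in a single line while you unpack the $\mathcal{E}xt$ computations and the Cayley--Bacharach identification more explicitly, but the underlying argument is the same.
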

\begin{proof} (i) Obvious since $h^1(-(D-K_Y)) = h^1(-D +K_Y) = h^1(D)$.

(ii) The exact sequence
\begin{gather*}\Ext^1(\scrO_Y(D-K_Y)\otimes\mathfrak{m}_x, \scrO_Y) \xrightarrow{\alpha} H^0(Ext^1(\scrO_Y(D-K_Y)\otimes\mathfrak{m}_x, \scrO_Y)) \to \\
\to H^2(Y;\scrO_Y(-D+K_Y)) \to \Ext^2(\scrO_Y(D-K_Y)\otimes\mathfrak{m}_x, \scrO_Y)\end{gather*}
is Serre dual to 
$$H^0(Y;\scrO_Y(D)\otimes\mathfrak{m}_x) \to H^0(Y;\scrO_Y(D)) \xrightarrow{\beta} H^0(k_x) \to H^1(Y; \scrO_Y(D)\otimes\mathfrak{m}_x).$$
Thus a locally free extension $V$ exists $\iff$ $\alpha$ is nonzero $\iff$ $\alpha$ is surjective $\iff$ $\beta=0$ $\iff$ $x$ is a base point of $|D|$.
\end{proof}

\begin{lemma}\label{2} 
\begin{enumerate}
\item[\rm(i)] With   $V$ as in (i) of Lemma~\ref{1}, if $T$ is a divisor class on $Y$ and there exists a nonzero homomorphism $\scrO_Y(T) \to V$, then $\deg_FT\leq 0$. In particular, the restriction of $V$ to the  generic fiber $Y_K$ is stable.
\item[\rm(ii)] With $x$ and $V$ as in (ii) of Lemma~\ref{1}, if $T$ is a divisor class on $Y$ and there exists a nonzero homomorphism $\scrO_Y(T) \to V$, then $\deg_FT\leq 0$. In particular, the restriction of $V$ to the  generic fiber $Y_K$ is stable.
\end{enumerate}
\end{lemma}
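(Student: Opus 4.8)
The plan is to deduce both statements from the single assertion that the restriction $V_K$ of $V$ to the generic fibre $Y_K$ (a curve of genus one over $K=k(B)$) is stable, and to prove that stability by pulling everything back to $Y_K$. The one nonformal input is that $n$, the order of $Y$, equals the \emph{index} of $Y_K$, i.e.\ the smallest positive degree of a line bundle on $Y_K$: since $\Pic Y\to\Pic Y_K$ is surjective and sends $\mathcal L$ to a line bundle of degree $\mathcal L\cdot F$, the group of degrees of line bundles on $Y_K$ is exactly $n\Zee$, so \emph{a line bundle on $Y_K$ carrying a nonzero section has degree in $\{0,n,2n,\dots\}$}. This is the only use of the minimality of $n$.

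In both (i) and (ii) the point $x$ (if present) misses the generic fibre, so $\mathfrak m_x|_{Y_K}=\scrO_{Y_K}$ and $V_K$ sits in an exact sequence
$$0\to\scrO_{Y_K}\to V_K\to\scrO_{Y_K}(D_K)\to 0,\qquad D_K=D|_{Y_K},\quad \deg D_K=n.$$
Granting that this sequence is \emph{non-split}, I claim every sub-line-bundle $L\subseteq V_K$ has $\deg L\le 0$, so that $V_K$ is stable since $\mu(V_K)=n/2>0$. Indeed, if $\deg L\ge 1$ then the composite $L\to\scrO_{Y_K}(D_K)$ is nonzero (otherwise $L\hookrightarrow\scrO_{Y_K}$ forces $\deg L\le 0$), hence $\scrO_{Y_K}(D_K)\otimes L^{-1}$ has a nonzero section; its degree $n-\deg L$ lies in $\{0,n,2n,\dots\}\cap[0,n-1]=\{0\}$, so $\deg L=n$ and $L\to\scrO_{Y_K}(D_K)$ is an isomorphism splitting the sequence, a contradiction. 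Finally, if $\scrO_Y(T)\to V$ is nonzero then its restriction to $Y_K$ is a nonzero map $\scrO_{Y_K}(T_K)\to V_K$ ($Y$ being integral), so $\scrO_{Y_K}(T_K)$ embeds in $V_K$ and its saturation is a sub-line-bundle of degree $\ge\deg_F T$; by the above $\deg_F T\le 0$. This also yields the ``in particular'' clauses.

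It remains to show the displayed sequence is non-split over $Y_K$. In case (i) its class is the given nonzero $\xi\in H^1(Y,\scrO_Y(K_Y-D))=\Ext^1(\scrO_Y(D-K_Y),\scrO_Y)$. Since $D$ is $f$-nef with $D\cdot F=n$, one has $f_*\scrO_Y(-D)=0$ and $R^1f_*\scrO_Y(-D)$ locally free of rank $n$; hence $f_*\scrO_Y(K_Y-D)=0$, $R^1f_*\scrO_Y(K_Y-D)$ is locally free, and Leray identifies $H^1(Y,\scrO_Y(K_Y-D))$ with $H^0(B,R^1f_*\scrO_Y(K_Y-D))$, the restriction to $Y_K$ being the evaluation of a global section of a torsion free sheaf at the generic point, which is injective; so $\xi|_{Y_K}\ne 0$. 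Case (ii) is the delicate one, since the class now lives in $\Ext^1_Y(\scrO_Y(D-K_Y)\otimes\mathfrak m_x,\scrO_Y)$, whose image in $\Ext^1_{Y_K}$ is not visibly nonzero. Here I would argue by contradiction using $c_2(V)=1$: if $V_K$ split, the summand $\scrO_{Y_K}(D_K)$ extends to a saturated sub-line-bundle $\mathcal M\subseteq V$ with $\mathcal M|_{Y_K}=\scrO_{Y_K}(D_K)$, so $\deg_F\mathcal M=n$; the composite $\mathcal M\to\scrO_Y(D-K_Y)\otimes\mathfrak m_x$ cannot vanish (else $\mathcal M\hookrightarrow\scrO_Y$, impossible), so $\mathcal M=\scrO_Y(D-K_Y-E)$ with $E\ge 0$ vertical and passing through $x$, and $V/\mathcal M=\scrO_Y(E)\otimes I_Z$ with $Z$ zero dimensional. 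Then
$$1=c_2(V)=(D-K_Y-E)\cdot E+\deg Z=D\cdot E-E^2+\deg Z,$$
with $D\cdot E\ge 0$ by $f$-nefness, $\deg Z\ge 0$, and $E^2=2p_a(E)-2$ by adjunction since $K_Y\cdot E=0$. Discarding the components of $E$ away from $x$ only decreases $D\cdot E-E^2$, so we may take $E$ connected; a connected vertical divisor has $p_a(E)\le 1$, with $E^2=0$ exactly when $E$ is a positive multiple of a fibre (then $E\supseteq F_{f(x)}$, so $D\cdot E\ge n$) and $E^2\le -2$ otherwise. In either case $D\cdot E-E^2\ge\min(n,2)$, contradicting $c_2(V)=1$ once $n\ge 2$.

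The main obstacle is precisely this last step — excluding a split restriction in case (ii) — and the Kodaira-fibre bookkeeping it uses (the bound $p_a(E)\le 1$, the evenness of $E^2$ for connected vertical $E$, and the reduction to the connected case). An alternative, staying on $Y$ in both cases, is to observe that a nonzero $\scrO_Y(T)\to V$ with $\deg_F T\ge 1$ forces, via the vanishing coboundary of the composite to the quotient, the extension class into the image of $\Ext^1_Y(\mathcal C,\scrO_Y)$ for a torsion cokernel $\mathcal C$ supported on a vertical curve, and a Grothendieck-duality computation together with $f$-nefness makes this $\Ext$ group vanish — the relevant line bundle on that vertical curve having negative degree, by a Ramanujam-type argument — contradicting $\xi\ne 0$; but this route needs the same case analysis on reducible fibres.
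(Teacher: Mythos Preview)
Your proof is correct, and the logical organization is inverted relative to the paper's: you prove stability of $V_K$ first and deduce $\deg_F T\le 0$ as a corollary, whereas the paper argues directly on $Y$ that $\deg_F T\le 0$ (saturate $\scrO_Y(T)$ in $V$, use minimality of $n$ to force the vertical divisor $E$, and run the $c_2$ computation) and then reads off stability. For part (i) your route is genuinely different and cleaner: the Leray identification $H^1(Y,\scrO_Y(K_Y-D))\cong H^0(B,R^1f_*\scrO_Y(K_Y-D))$ together with torsion-freeness of $R^1f_*$ shows the extension stays non-split on $Y_K$, and then the index argument finishes without any fibre-component bookkeeping; the paper's implicit argument for (i) would still need to exclude the case $E=0$ by appealing to non-splitness on $Y$. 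For part (ii) the two approaches converge: your contradiction to ``$V_K$ splits'' produces exactly the saturated sub-line-bundle $\scrO_Y(D-K_Y-E)$ with $E$ vertical through $x$ and the same $c_2$ estimate $(D-K_Y-E)\cdot E+\ell(Z)\ge\min(n,2)$ that the paper uses for an arbitrary destabilising $T$ (which the paper likewise reduces to fibre degree $n$ via minimality). Your reduction to the connected component of $E$ through $x$ and the use of $E^2=2p_a(E)-2$ are fine --- the paper instead splits into the two cases $E^2<0$ and $E\equiv mF$ directly via Zariski's lemma, which amounts to the same dichotomy. Net effect: your argument is a legitimate alternative, more conceptual for (i), and essentially a reorganisation of the paper's computation for (ii).
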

\begin{proof} We shall just prove (ii), as the proof of (i) is simpler.  Given $\scrO_Y(T) \to V$, we may assume that the cokernel $V/\scrO_Y(T)$ is torsion free. If the image of $\scrO_Y(T)$ is contained in the subsheaf $\scrO_Y$, then $T = -E$ for some effective divisor $E$. In this case $\deg_FT=-\deg_FE\leq 0$. Otherwise the induced homomorphism $\scrO_Y(T) \to \scrO_Y(D-K_Y)\otimes \mathfrak{m}_x$ is nonzero. Thus $T= D-K_Y-E$ for some effective divisor $E$ with $x\in \Supp E$. If $\deg_FT > 0$, then since $\deg_FT = \deg_FD - \deg_FE \leq n$, and the minimality of $n$, we must have $\deg_FE=0$, and, since $E$ is effective,   $E=\sum_iC_i$ is a sum of irreducible curves $C_i$ contained in fibers of $f$.  On the other hand, since $V/\scrO_Y(T)$ is torsion free, there is an exact sequence
$$0\to \scrO_Y(T) \to V \to \scrO_Y(D-K_Y -T)\otimes I_Z\to 0,$$
where $Z$ is a zero-dimensional subscheme of $Y$. Plugging in $T= D-K_Y- E$, we see that there is an exact sequence
$$0 \to \scrO_Y(D-K_Y -E) \to V \to \scrO_Y(E)\otimes I_Z \to 0.$$
Hence $c_2(V) = (D-K_Y -E)\cdot E +\ell(Z)$.  Since $E$ is supported in the fibers of $f$, $E\cdot K_Y =0$, $E^2 \leq 0$ and $E^2 = 0$ $\iff$ $E$ is numerically equivalent to a multiple $mF$ for some positive integer $m$. First suppose that $E^2<0$. Then $E^2\leq -2$ since $E\cdot K_Y =0$. By our assumption, $D\cdot E\geq 0$, and $K_Y\cdot E =0$, so that $c_2(V) \geq -E^2 \geq 2$, which contradicts $c_2(V)=1$. So $E$ is numerically equivalent to $mF$. In this case  $c_2(V) = (D-K_Y -E)\cdot E +\ell(Z) = m(D\cdot E) +\ell(Z)\geq mn \geq 2$, which again contradicts $c_2(V)=1$. Hence $\deg_FT\leq 0$.
\end{proof}

\begin{remark} One can generalize the above as follows. Let $Z$ be a zero-dimensional subscheme local complete intersection of $Y$; for simplicity assume that $Z=\{p_1, \dots , p_k\}$ consists of $k$ distinct points. Then a locally free extension of the form 
$$0 \to \scrO_Y\to V \to \scrO_Y(D-K_Y)\otimes I_Z \to 0$$
exists $\iff$ $Z$ has the Cayley-Bacharach property with respect to $\scrO_Y(D)$: every section of $\scrO_Y(D)$ which vanishes at all but one of the points $p_i$ vanishes at all of the points.

Assume that a locally free extension $V$ exists, and further assume for simplicity that all fibers of $f$ are irreducible. Let $m=\#\{f(p_1),\dots, f(p_k)\}$. Hence, if $F_1, \dots, F_r$ are distinct fibers of $f$ and if $Z\subseteq F_1 + \cdots + F_r$, then $r\geq m$. Arguments as above show that, if $mn \geq k+1$, then every locally free $V$ as above restricts to a stable bundle on the generic fiber.
\end{remark}

\begin{corollary}\label{2cor4} Suppose that $n=\deg_FD$ is odd.
\begin{enumerate}
\item[\rm(i)] If $h^1(D) \neq 0$, then 
$$D^2 \leq 2n(d+2g-2) -3d =(2n-3)d +4n(g-1).$$
\item[\rm(ii)] If $x$ is a base point of the linear system $|D|$, then 
$$D^2 \leq 2n(d+2g-2) -3d +4=(2n-3)d +4n(g-1)+4.$$
\end{enumerate}
\end{corollary}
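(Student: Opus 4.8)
The plan is to feed the vector bundles constructed in Lemma~\ref{1} into the Bogomolov-type inequality of Theorem~\ref{Cthm1}, exactly as in the proof of Theorem~\ref{mainthm}, and then solve for $D^2$. First I would observe that in both cases (i) and (ii) the bundle $V$ has rank $2$, and by Lemma~\ref{2} its restriction $V_K$ to the generic fiber $Y_K$ is stable. The degree of $V_K$ equals $\deg_F c_1(V) = \deg_F(D - K_Y) = n - \deg_F K_Y = n$, since $K_Y = f^*(\text{stuff})$ restricts to degree $0$ on $F$ (the fibration has no multiple fibers, so $\omega_{Y/B} = f^*\omega$ and $K_Y \equiv f^*(\omega \otimes \omega_B)$, which meets $F$ trivially). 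Since $n$ is assumed odd, $e = \gcd(n,2) = 1$, and the characteristic hypothesis is automatic ($\operatorname{char} k \nmid 1$). Therefore Theorem~\ref{Cthm1} applies and gives $\Delta(V) \geq (r^2 - e)d = (4-1)d = 3d$.

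Next I would compute $\Delta(V) = 2rc_2(V) - (r-1)c_1(V)^2 = 4c_2(V) - c_1(V)^2$ with $c_1(V) = D - K_Y$. In case (i), $c_2(V) = 0$, so $\Delta(V) = -(D-K_Y)^2$; combining with $\Delta(V) \geq 3d$ gives $(D - K_Y)^2 \leq -3d$. Expanding $(D-K_Y)^2 = D^2 - 2D\cdot K_Y + K_Y^2$ and using $D \cdot K_Y = n(d + 2g - 2)$ (since $K_Y \equiv f^*(\text{divisor of degree } d + 2g - 2)$ and $D \cdot F = n$) together with $K_Y^2 = 0$ (as $K_Y$ is a pullback from the curve $B$, hence $K_Y^2 = 0$), one gets $D^2 - 2n(d+2g-2) \leq -3d$, i.e. $D^2 \leq 2n(d+2g-2) - 3d$, which is the claimed bound after writing $2n(d+2g-2) - 3d = (2n-3)d + 4n(g-1)$. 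In case (ii), $c_2(V) = 1$ instead, so $\Delta(V) = 4 - (D-K_Y)^2 \geq 3d$, giving $(D-K_Y)^2 \leq 4 - 3d$ and hence $D^2 \leq 2n(d+2g-2) - 3d + 4$, as stated.

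The one point that needs care — and the only potential obstacle — is the verification of the auxiliary numerical identities $K_Y^2 = 0$ and $D \cdot K_Y = n(d + 2g - 2)$. These follow from the fact that for a genus one fibration without multiple fibers, $\omega_{Y/B} = f^*\omega$ with $\deg\omega = \chi(Y;\scrO_Y) = d$, so $K_Y \equiv f^*(\omega \otimes \omega_B)$ is numerically a rational multiple of the fiber class $F$; since $F^2 = 0$ this forces $K_Y^2 = 0$, and $K_Y \cdot D = (\deg\omega + \deg\omega_B)(F \cdot D) = (d + 2g - 2)n$. This is routine, but it is where the hypotheses "no multiple fibers" and the definition of $d$ enter. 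Everything else is a direct substitution into Theorem~\ref{Cthm1}, and the parity hypothesis on $n$ is used solely to force $e = 1$ so that the inequality $\Delta(V) \geq 3d$ holds with the best constant.
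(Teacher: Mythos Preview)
Your proof is correct and follows exactly the same approach as the paper: construct the rank two bundle $V$ from Lemma~\ref{1}, use Lemma~\ref{2} to see that $V_K$ is stable, observe that $e=\gcd(n,2)=1$ since $n$ is odd, and plug into Theorem~\ref{Cthm1}. The paper's proof is terser (it writes $4c_2(V)-c_1(V)^2 = 4 - D^2 + 2n(d+2g-2) \geq 3d$ directly), but your added verification of $K_Y^2=0$ and $D\cdot K_Y = n(d+2g-2)$ is exactly the content behind that line.
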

\begin{proof} Again, we shall just prove (ii). If such an $x$ exists, then we have constructed a rank two bundle $V$ with $c_1(V) = D-K_Y$ and $c_2(V) =1$ such that the restriction of $V$ to the generic fiber is semistable. The hypotheses of Theorem~\ref{Cthm1} apply, with $e=1$. Thus 
$$4c_2(V) - c_1(V)^2 = 4-D^2 +2n(d+2g-2)\geq 3d,$$
which yields the inequality
$$D^2 \leq 2n(d+2g-2) -3d +4=(2n-3)d +4n(g-1)+4.$$
\end{proof}

Similar arguments show:

\begin{corollary}\label{cor1.6} Suppose that $n=\deg_FD$ is even and that $\operatorname{char} k\neq 2$.
\begin{enumerate}
\item[\rm(i)] If $h^1(D) \neq 0$, then 
$$D^2 \leq  (2n-2)d +4n(g-1).$$
If moreover $B\cong \Pee^1$, then 
$$D^2 \leq(2n-2)d -4n-2.$$
\item[\rm(ii)] If $x$ is a base point of the linear system $|D|$, then 
$$D^2 \leq  (2n-2)d +4n(g-1)+4.$$
If moreover $B\cong \Pee^1$, then 
$$D^2 \leq(2n-2)d -4n+2.\qed$$
\end{enumerate}
\end{corollary}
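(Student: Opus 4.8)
The plan is to imitate the proof of Corollary~\ref{2cor4} essentially verbatim, the only change being that now $n$ is even, so the relevant invariant is $e=\gcd(n,2)=2$ rather than $e=1$; this is precisely why the hypothesis $\operatorname{char} k\neq 2$ (i.e.\ $\operatorname{char} k$ does not divide $e$) is imposed. First I would manufacture a rank two bundle $V$: in case (i), since $h^1(D)\neq 0$, Lemma~\ref{1}(i) supplies a non-split extension $0\to\scrO_Y\to V\to\scrO_Y(D-K_Y)\to 0$ with $c_1(V)=D-K_Y$ and $c_2(V)=0$; in case (ii), since $x$ is a base point of $|D|$, Lemma~\ref{1}(ii) supplies $0\to\scrO_Y\to V\to\scrO_Y(D-K_Y)\otimes\mathfrak{m}_x\to 0$ with $c_1(V)=D-K_Y$ and $c_2(V)=1$. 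In both cases Lemma~\ref{2} guarantees that the restriction $V_K$ to the generic fiber is stable.

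Next I would verify the hypotheses of Theorem~\ref{Cthm1} for $V$. The degree of $V_K$ on $Y_K$ is $\deg_F(D-K_Y)=\deg_F D=n$, since $K_Y=\omega_{Y/B}\otimes f^*\omega_B$ is pulled back from $B$ and so meets a general fiber in degree zero; as $n$ is even, $e=\gcd(n,2)=2$, and $\operatorname{char} k\neq 2$ says $\operatorname{char} k\nmid e$. Thus Theorem~\ref{Cthm1} applies and yields $\Delta(V)\geq(r^2-e)d=2d$, sharpening to $\Delta(V)\geq 2d+2$ when $B\cong\Pee^1$.

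Finally I would unwind this into the stated bound on $D^2$. Writing $\Delta(V)=4c_2(V)-c_1(V)^2$ and using $(D-K_Y)^2=D^2-2D\cdot K_Y=D^2-2n(d+2g-2)$ (since $K_Y^2=0$ and $D\cdot K_Y=n(d+2g-2)$, both because $K_Y$ is the pullback of a divisor of degree $d+2g-2$ on $B$), case (i) gives $-D^2+2n(d+2g-2)\geq 2d$ and case (ii) gives $4-D^2+2n(d+2g-2)\geq 2d$, which rearrange to the two general inequalities; setting $g=0$ and using the sharper bound $\Delta(V)\geq 2d+2$ gives the two inequalities over $\Pee^1$. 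There is no real obstacle here beyond this bookkeeping: the only points requiring attention are that Lemma~\ref{2} does deliver stability of $V_K$ (which is where the minimality of $n$ enters) and that $e=2$ with $\operatorname{char} k\nmid e$, so that Theorem~\ref{Cthm1} may be invoked with the improved constant; the rest is the same Chern-class computation as in the odd case, with $3d$ replaced by $2d$.
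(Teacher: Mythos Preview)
Your proposal is correct and is exactly the argument the paper has in mind: the paper simply writes ``Similar arguments show'' before stating this corollary, meaning the proof of Corollary~\ref{2cor4} is to be rerun with $e=\gcd(n,2)=2$ in place of $e=1$, which is precisely what you do. The bookkeeping you carry out (including the sharpened $\Pee^1$ bound $\Delta(V)\geq 2d+2$ from the second conclusion of Theorem~\ref{Cthm1}) is accurate and yields the stated inequalities.
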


\section{Examples}

Let $f\colon Y\to B$ be a genus one  fibration and let $D$ be an $f$-nef divisor on $Y$ of relative degree $n>1$. Consider the $\Pee^{n-1}$-bundle $\Pee =\Pee(f_*\scrO_Y(D)\spcheck)=\Pee(R^1f_*\scrO_Y(-D))$ over $B$ (here our conventions are opposite to those of EGA or Hartshorne). Note that replacing $D$ by $D+tF$ leaves the projective space bundle $\Pee$ unchanged. Denote by $\phi\colon \Pee\to B$ the projection and let $P\subseteq \Pee$ be the divisor class of a fiber. Let $\bar{Y}$ be the normal surface obtained by contracting all of the curves which are orthogonal to $D$. There is a morphism $g$ (of  schemes over $B$) from $Y$ to $\Pee$. For $n\geq 3$, $g$ induces an embedding $\bar{g}$  of $\bar{Y}$. For $n=2$, the corresponding morphism $\bar{g}\colon \bar{Y}\to \Pee$ is a double cover. By construction, $g^*\scrO_{\Pee}(1)= \scrO_Y(D)$, and hence  $g^*[\scrO_{\Pee}(1)\otimes \phi^*\lambda] = \scrO_Y(D)\otimes f^*\lambda$. 

\begin{lemma}\label{4.1} For all line bundles $\lambda$ on $B$ and all $i\geq 0$, $g^*$ induces an isomorphism
$$H^i(\Pee; \scrO_{\Pee}(1)\otimes \phi^*\lambda) \cong H^i(Y;\scrO_Y(D)\otimes f^*\lambda).$$
\end{lemma}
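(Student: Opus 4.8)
The plan is to compute the higher direct images of the sheaf $\scrO_{\Pee}(1)\otimes\phi^*\lambda$ along $\phi\colon\Pee\to B$ and compare them, via $g$, with the higher direct images of $\scrO_Y(D)\otimes f^*\lambda$ along $f\colon Y\to B$, and then conclude by the Leray spectral sequence. First I would recall that for the projective bundle $\Pee=\Pee(R^1f_*\scrO_Y(-D))=\Pee(f_*\scrO_Y(D)\spcheck)$ we have $\phi_*\scrO_{\Pee}(1)=f_*\scrO_Y(D)$ (this is the defining property of the bundle, with the conventions fixed just before the lemma) and $R^j\phi_*\scrO_{\Pee}(1)=0$ for $j>0$, since on each fiber $\Pee^{n-1}$ the sheaf $\scrO(1)$ has no higher cohomology. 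Hence by the projection formula $\phi_*(\scrO_{\Pee}(1)\otimes\phi^*\lambda)=f_*\scrO_Y(D)\otimes\lambda$ and $R^j\phi_*(\scrO_{\Pee}(1)\otimes\phi^*\lambda)=0$ for $j>0$, so the Leray spectral sequence degenerates and $H^i(\Pee;\scrO_{\Pee}(1)\otimes\phi^*\lambda)\cong H^i(B;f_*\scrO_Y(D)\otimes\lambda)$.

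On the $Y$ side I would similarly run the Leray spectral sequence for $f$. The key point is that $R^1f_*\scrO_Y(D)=0$: on any fiber $F_0$, possibly reducible, $\scrO_{F_0}(D)$ has degree $n>0$ on $F_0$ and, by the $f$-nef assumption, nonnegative degree on every component, so $H^1(F_0;\scrO_{F_0}(D))=0$ (this is exactly the kind of vanishing established for the dual sheaf in the discussion preceding Lemma 2.3, applied now to a line bundle of positive relative degree — or one can invoke relative duality together with the vanishing used there). Therefore $R^jf_*(\scrO_Y(D)\otimes f^*\lambda)=0$ for $j>0$ by the projection formula, the Leray spectral sequence again degenerates, and $H^i(Y;\scrO_Y(D)\otimes f^*\lambda)\cong H^i(B;f_*\scrO_Y(D)\otimes\lambda)$.

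Finally I would tie the two computations together. Both sides are now identified with $H^i(B;f_*\scrO_Y(D)\otimes\lambda)$, and one must check that the map induced by $g^*$ is precisely this identification. Since $g^*\scrO_{\Pee}(1)=\scrO_Y(D)$, the adjunction $g^\#\colon\scrO_{\Pee}(1)\to g_*\scrO_Y(D)$ is, after pushing forward by $\phi$ and using $\phi\circ g=f$, the identity map $f_*\scrO_Y(D)\to f_*\scrO_Y(D)$ on the zeroth direct image (for $n\geq 3$ because $g$ is a closed embedding onto $\bar Y$ and the contracted curves are orthogonal to $D$ so contribute nothing to sections of $\scrO_Y(D)\otimes f^*\lambda$; for $n=2$ because $\phi_*g_*\scrO_Y(D)=f_*\scrO_Y(D)$ still holds, the extra summand of $\phi_*g_*\scrO_{\bar Y}$ lying in negative twists of $\scrO_{\Pee}(1)$). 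Twisting by $\phi^*\lambda$ and applying $H^i(B;-)$ then shows $g^*$ induces the claimed isomorphism. The main obstacle is the bookkeeping in this last paragraph: verifying that no cohomology is lost through the contraction $Y\to\bar Y$ and through the double-cover case $n=2$, i.e.\ that $\phi_*g_*\scrO_Y(D)=f_*\scrO_Y(D)$ exactly (not just up to the extra direct-image pieces), and that the resulting map on $B$ really is the identity rather than merely an isomorphism; everything else is a routine degeneration of Leray spectral sequences.
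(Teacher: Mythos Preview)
Your argument is correct and takes a genuinely different, somewhat more economical route than the paper. The paper works on $\Pee$: for $n\geq 3$ it uses the ideal-sheaf sequence $0\to \scrO_{\Pee}(1)\otimes I_Y\to\scrO_{\Pee}(1)\to\scrO_Y(D)\to 0$ and shows $R^i\phi_*(\scrO_{\Pee}(1)\otimes I_Y)=0$ by a fiberwise check; for $n=2$ it uses the splitting $g_*\scrO_Y(D)=\scrO_{\Pee}(1)\oplus(\scrO_{\Pee}(1)\otimes L^{-1})$ and kills the second summand fiberwise; finally it passes from $\bar Y$ to $Y$ via rational singularities. You instead push both sheaves directly to $B$: $R^{>0}\phi_*\scrO_{\Pee}(1)=0$ and $R^{>0}f_*\scrO_Y(D)=0$ (the latter by the $f$-nef hypothesis, which is exactly the relative-duality dual of the vanishing stated before the paper's Lemma on $R^1f_*\scrO_Y(-D)$), so both Leray spectral sequences collapse to $H^i(B;f_*\scrO_Y(D)\otimes\lambda)$.

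What you gain is uniformity: there is no case split between $n=2$ and $n\geq 3$ in the core argument, and the contraction $Y\to\bar Y$ never enters because you work with $f$ directly rather than via $\bar g$. What the paper's approach gains is that the map induced by $g^*$ is visibly the restriction map in a long exact sequence, so its being an isomorphism is immediate once the ideal-sheaf cohomology vanishes; you instead have the small bookkeeping step at the end (which you identified correctly) of checking that $\phi_*$ applied to the adjunction $\scrO_{\Pee}(1)\to g_*\scrO_Y(D)$ is the identity on $f_*\scrO_Y(D)$. That step is fine: the morphism $g$ is by construction the one determined by the tautological surjection $f^*f_*\scrO_Y(D)\to\scrO_Y(D)$, so the induced map on $\phi_*$ is the identity, and the $n=2$ and $Y\neq\bar Y$ issues are absorbed into the single fact $\phi_*g_*\scrO_Y(D)=f_*\scrO_Y(D)$.
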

\begin{proof} First assume that $Y=\bar{Y}$ and hence $D|E$ is ample for every fiber $E$ of $f$. First suppose that $n\geq 3$, so that $g$ is an embedding. There is an exact sequence
$$0 \to \scrO_{\Pee}(1)\otimes I_Y\to \scrO_{\Pee}(1)\to \scrO_Y(D) \to 0,$$
so it suffices to show that $H^i(\Pee; \scrO_{\Pee}(1)\otimes I_Y\otimes \phi^*\lambda) =0$ for all $i$. Via Leray, it suffices to show that $R^i\phi_*(\scrO_{\Pee}(1)\otimes I_Y) =0$ for all $i$. Since $\scrO_{\Pee}(1)\otimes I_Y$ is flat over $B$, it suffices by base change to show that, for every fiber $P$  of $\phi$, $H^i(P; \scrO_{\Pee}(1)\otimes I_Y \otimes \scrO_P) = 0$. It is easy to check that, if $E = P\cap Y$ is the corresponding fiber of $f$, then $\scrO_{\Pee}(1)\otimes I_Y \otimes \scrO_P = \scrO_{\Pee^{n-1}}(1)\otimes I_E$.  Then there is the corresponding exact sequence
$$0\to \scrO_{\Pee^{n-1}}(1)\otimes I_E\to \scrO_{\Pee^{n-1}}(1)\to \scrO_E(D) \to 0.$$
For $i>0$, $H^i(\Pee^{n-1};\scrO_{\Pee^{n-1}}(1)) = H^i(E;\scrO_E(D))=0$, and for $i=0$ the restriction homomorphism $H^0(\Pee^{n-1};\scrO_{\Pee^{n-1}}(1)) \to H^0(E;\scrO_E(D))$ is an isomorphism. It follows that $H^i(\Pee^{n-1};\scrO_{\Pee^{n-1}}(1)\otimes I_E)=0$ for every $i$. Thus $R^i\phi_*\scrO_{\Pee}(1)\otimes I_Y =0$ for all $i$ and we are done in the case $n\geq 3$.

For $n=2$, $Y$ is a double cover of $\Pee$ branched along a section of $\scrO_{\Pee}(4)\otimes \phi^*\mu^{\otimes 2}$ for some line bundle $\mu$ on $B$. Let $L = \scrO_{\Pee}(2)\otimes \phi^*\mu$. Then $$g_*\scrO_Y(D) = \scrO_{\Pee}(1) \oplus [\scrO_{\Pee}(1)\otimes L^{-1}].$$
So it is enough to show that $H^i(\Pee; \scrO_{\Pee}(1)\otimes L^{-1}\otimes \phi^*\lambda) =0$ for all $i$. Since $\scrO_{\Pee}(1)\otimes L^{-1}$ restricts to $\scrO_{\Pee^1}(-1)$ on every fiber of $\phi$, $R^i\phi _*\scrO_{\Pee}(1)\otimes L^{-1} =0$ for all $i$ and thus $H^i(\Pee; \scrO_{\Pee}(1)\otimes L^{-1}\otimes \phi^*\lambda) =0$  as well.

Finally, in case $Y\neq \bar{Y}$, let $\rho\colon Y \to \bar{Y}$ be the natural morphism. Then $\rho$ is a resolution of singularities. Note that $\bar{Y}$ has rational singularities and $D$ induces a relatively ample Cartier divisor $\bar{D}$ on $\bar{Y}$ with $\rho^*\bar{D} = D$. The above arguments show that $H^i(\Pee; \scrO_{\Pee}(1)\otimes \phi^*\lambda) \cong H^i(\bar{Y};\scrO_{\bar{Y}}(\bar{D})\otimes \bar{f}^*\lambda)$, where $\bar{f}\colon \bar{Y} \to B$ is the induced morphism. Again using the fact that $\bar{Y}$ has rational singularities, $R^0\rho_*(\scrO_Y(D)\otimes f^*\lambda)= \scrO_{\bar{Y}}(\bar{D})\otimes \bar{f}^*\lambda$ and $R^i\rho_*(\scrO_Y(D)\otimes f^*\lambda)=0$ for $i>0$, so we are done via Leray again.
\end{proof} 

For the rest of this section, we assume that $B\cong \Pee^1$ and that $f_*\scrO_Y(D) = \bigoplus _{i=1}^n\scrO_{\Pee^1}(-\alpha_i)$, where $\alpha_1\leq \alpha_2\leq \dots \leq \alpha_n$.  By convention, we choose the line bundle $\scrO_{\Pee}(1)$ such that $\phi_*\scrO_{\Pee}(1) = \bigoplus _{i=1}^n\scrO_{\Pee^1}(-\alpha_i)$. 

\begin{lemma}\label{H1nonzero} In the above notation, $H^1(\Pee; \scrO_{\Pee}(1)\otimes \scrO_{\Pee}(tP)) \neq 0$ $\iff$ $t\leq \alpha_n -2$. Hence, $H^1(Y; \scrO_Y(D+tF)) \neq 0$ $\iff$ $t\leq \alpha_n -2$.
\end{lemma}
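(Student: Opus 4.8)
The plan is to push $\scrO_\Pee(1)\otimes\scrO_\Pee(tP)$ forward along the projection $\phi\colon\Pee\to\Pee^1$, reducing everything to cohomology of line bundles on $\Pee^1$, and then to transfer the conclusion back to $Y$ using Lemma~\ref{4.1}. First I would observe that $P$, being the divisor class of a fiber of $\phi$, is the pullback under $\phi$ of a point of $\Pee^1$, so $\scrO_\Pee(tP)=\phi^*\scrO_{\Pee^1}(t)$ and hence the sheaf in question equals $\scrO_\Pee(1)\otimes\phi^*\scrO_{\Pee^1}(t)$.

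Next I would compute $R^j\phi_*$ of this sheaf. By the normalization of $\scrO_\Pee(1)$ fixed just above, $\phi_*\scrO_\Pee(1)=\bigoplus_{i=1}^n\scrO_{\Pee^1}(-\alpha_i)$; and since $\scrO_\Pee(1)$ restricts to $\scrO_{\Pee^{n-1}}(1)$ on each fiber of $\phi$ (recall $n\ge2$), a projective space on which $\scrO(1)$ has no higher cohomology, cohomology and base change give $R^j\phi_*\scrO_\Pee(1)=0$ for $j>0$. The projection formula then yields $R^j\phi_*\bigl(\scrO_\Pee(1)\otimes\phi^*\scrO_{\Pee^1}(t)\bigr)=\bigl(R^j\phi_*\scrO_\Pee(1)\bigr)\otimes\scrO_{\Pee^1}(t)$, equal to $\bigoplus_{i=1}^n\scrO_{\Pee^1}(t-\alpha_i)$ for $j=0$ and to $0$ for $j>0$; hence the Leray spectral sequence collapses and
$$H^i\bigl(\Pee;\scrO_\Pee(1)\otimes\scrO_\Pee(tP)\bigr)\cong\bigoplus_{i=1}^n H^i\bigl(\Pee^1;\scrO_{\Pee^1}(t-\alpha_i)\bigr).$$
Applying the elementary fact that $H^1(\Pee^1;\scrO_{\Pee^1}(m))\ne0$ exactly when $m\le-2$, the case $i=1$ shows $H^1(\Pee;\scrO_\Pee(1)\otimes\scrO_\Pee(tP))\ne0$ if and only if $t-\alpha_i\le-2$ for some $i$, i.e.\ if and only if $t\le\max_i\alpha_i-2=\alpha_n-2$. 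This establishes the first assertion.

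For the closing ``hence'' clause I would invoke Lemma~\ref{4.1} with $\lambda=\scrO_{\Pee^1}(t)$, which gives $H^1\bigl(\Pee;\scrO_\Pee(1)\otimes\phi^*\scrO_{\Pee^1}(t)\bigr)\cong H^1\bigl(Y;\scrO_Y(D)\otimes f^*\scrO_{\Pee^1}(t)\bigr)=H^1(Y;\scrO_Y(D+tF))$; combined with the previous paragraph this yields $H^1(Y;\scrO_Y(D+tF))\ne0\iff t\le\alpha_n-2$. I do not expect any real obstacle: the whole argument is a routine projection-formula and Leray computation. The one point worth a moment's care is the compatibility of normalizations --- the $\scrO_\Pee(1)$ normalized here by $\phi_*\scrO_\Pee(1)=\bigoplus_i\scrO_{\Pee^1}(-\alpha_i)$ must be the same tautological sheaf for which $g^*\scrO_\Pee(1)=\scrO_Y(D)$, so that Lemma~\ref{4.1} applies as stated --- which is guaranteed by the conventions set up at the beginning of the section.
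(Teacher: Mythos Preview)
Your proof is correct and is exactly the argument the paper has in mind: the paper's own proof reads simply ``Immediate from the Leray spectral sequence and Lemma~\ref{4.1},'' and you have merely unpacked this into the projection-formula computation of $R^j\phi_*(\scrO_\Pee(1)\otimes\phi^*\scrO_{\Pee^1}(t))$ followed by the identification of $H^1$ on $\Pee^1$. Your closing remark about the compatibility of normalizations is a sensible sanity check and is indeed guaranteed by the setup of the section.
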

\begin{proof} Immediate from the Leray spectral sequence and Lemma~\ref{4.1}.
\end{proof}

Next let us describe linear systems on $\Pee =\Pee(\bigoplus _{i=1}^n\scrO_{\Pee^1}(\alpha_i))$. For $1\leq i\leq n$, the summand $\scrO_{\Pee^1}(\alpha_i)$ gives a section $\Sigma_i$ of $\scrO_{\Pee}(1) \otimes \scrO_{\Pee}(\alpha_iP)$. In particular $\Sigma_1$ is a section of $\scrO_{\Pee}(1) \otimes \scrO_{\Pee}(\alpha_1P)$. Of course, the summand is not unique except in the case $i=1$ and $\alpha_2 > \alpha_1$. Suppose that $\alpha_1 = \alpha_2 =\dots = \alpha_{i_1} < \alpha_{i_1+ 1} = \alpha_{i_1+ 2}=\dots = \alpha_{i_2}< \dots <\alpha_{i_k+ 1}= \dots = \alpha_n$. Consider sections of the line bundle $\scrO_{\Pee}(1) \otimes \scrO_{\Pee}(tP)$. By direct inspection,

\begin{lemma}  In the above notation, if $\alpha_{i_j}\leq t< \alpha_{i_{j+1}}$, then the base locus of the complete linear system corresponding to the line bundle $\scrO_{\Pee}(1) \otimes \scrO_{\Pee}(tP)$ is $\Sigma_1 \cap \Sigma_2\cap \cdots \cap \Sigma_{i_j}$. In particular,  this base locus is empty $\iff$  $t\geq \alpha_{i_k+1}=\alpha_n$. \qed
\end{lemma}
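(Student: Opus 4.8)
The claim to prove is the base locus statement for the linear system $|\scrO_{\Pee}(1) \otimes \scrO_{\Pee}(tP)|$ on $\Pee = \Pee(\bigoplus_{i=1}^n \scrO_{\Pee^1}(\alpha_i))$ when $\alpha_{i_j} \leq t < \alpha_{i_{j+1}}$: the base locus is $\Sigma_1 \cap \Sigma_2 \cap \cdots \cap \Sigma_{i_j}$, and in particular it is empty iff $t \geq \alpha_n$. The plan is to compute $H^0$ of the relevant line bundle explicitly via the projection $\phi$ and the splitting of the bundle, then identify the zero loci of the sections.

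Let me think about this. We have $\phi_*(\scrO_{\Pee}(1) \otimes \scrO_{\Pee}(tP)) = \bigoplus_{i=1}^n \scrO_{\Pee^1}(\alpha_i + t)$ — wait, I need to be careful about sign conventions. The convention fixed just before the lemma is $\phi_*\scrO_{\Pee}(1) = \bigoplus_i \scrO_{\Pee^1}(-\alpha_i)$, hmm, but then the paragraph says $\Pee = \Pee(\bigoplus \scrO_{\Pee^1}(\alpha_i))$ and $\scrO_{\Pee^1}(\alpha_i)$ gives a section $\Sigma_i$ of $\scrO_{\Pee}(1) \otimes \scrO_{\Pee}(\alpha_i P)$.

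\textbf{Plan of proof.} The strategy is to compute $H^0(\Pee;\scrO_{\Pee}(1)\otimes \scrO_{\Pee}(tP))$ explicitly by pushing forward along $\phi$, identify which summands of the bundle contribute, and then read off the common zero locus of the resulting sections. Write $\scrO_{\Pee}(tP)=\phi^*\scrO_{\Pee^1}(t)$. Since by our normalization $\phi_*\scrO_{\Pee}(1)=\bigoplus_{i=1}^n\scrO_{\Pee^1}(-\alpha_i)$, the projection formula gives
$$\phi_*\bigl(\scrO_{\Pee}(1)\otimes \phi^*\scrO_{\Pee^1}(t)\bigr)=\bigoplus_{i=1}^n\scrO_{\Pee^1}(t-\alpha_i),$$
so that $H^0(\Pee;\scrO_{\Pee}(1)\otimes \scrO_{\Pee}(tP))=\bigoplus_{i=1}^nH^0(\Pee^1;\scrO_{\Pee^1}(t-\alpha_i))$. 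Thus a section is a tuple $(s_1,\dots,s_n)$ with $s_i\in H^0(\Pee^1;\scrO_{\Pee^1}(t-\alpha_i))$, and $s_i$ is forced to vanish unless $\alpha_i\leq t$. In the range $\alpha_{i_j}\leq t<\alpha_{i_{j+1}}$ this means exactly $s_i=0$ for $i>i_j$, while for $i\leq i_j$ the form $s_i$ has degree $t-\alpha_i\geq 0$ and is otherwise arbitrary.

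The next step is to describe the zero locus of such a section geometrically. Recall that the distinguished section $\Sigma_l$ attached to the summand $\scrO_{\Pee^1}(\alpha_l)$ comes from the projection $\mathcal E^\vee\to (\scrO_{\Pee^1}(\alpha_l))^\vee$; a pointwise check (a point of $\Pee=\Pee(\bigoplus L_i)$ over $p$ is a line $\ell\subseteq\bigoplus (L_i)_p$, and $\Sigma_l$ vanishes there iff the $l$-th coordinate functional kills $\ell$) identifies $\{\Sigma_l=0\}$ with the sub-bundle $\Pee(\bigoplus_{m\neq l}L_m)$, and hence
$$\Sigma_1\cap\cdots\cap\Sigma_{i_j}=\Pee\Bigl(\bigoplus_{m>i_j}L_m\Bigr),$$
which is nonempty iff $i_j<n$, i.e.\ iff $t<\alpha_n$, and empty iff $t\geq\alpha_n$ (in which range all of $\Sigma_1,\dots,\Sigma_n$ are available and their common intersection is $\Pee(0)=\varnothing$). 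A general section $(s_1,\dots,s_{i_j},0,\dots,0)$ is the combination $\sum_{l\leq i_j}\phi^*(s_l)\cdot\Sigma_l$, which plainly vanishes along $\Sigma_1\cap\cdots\cap\Sigma_{i_j}$; this gives the inclusion $\text{Bs}\supseteq \Sigma_1\cap\cdots\cap\Sigma_{i_j}$.

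For the reverse inclusion, fix $l_0\leq i_j$ and run over the sub-family of sections with $s_l=0$ for $l\neq l_0$ and $s_{l_0}$ an arbitrary form of degree $t-\alpha_{l_0}\geq 0$; the zero locus of $\phi^*(s_{l_0})\cdot\Sigma_{l_0}$ is $\phi^{-1}(Z(s_{l_0}))\cup\{\Sigma_{l_0}=0\}$, and since $|\scrO_{\Pee^1}(t-\alpha_{l_0})|$ is base point free the intersection over all choices of $s_{l_0}$ is exactly $\{\Sigma_{l_0}=0\}$. Intersecting over $l_0=1,\dots,i_j$ yields $\text{Bs}\subseteq\Sigma_1\cap\cdots\cap\Sigma_{i_j}$, completing the proof; the emptiness criterion then follows from the computation above. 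The only point requiring any care is the global (not merely fiberwise) identification of $\{\Sigma_l=0\}$ as a sub-projective-bundle, but this is routine once one unwinds the definition of $\Sigma_l$ in terms of the tautological quotient $\phi^*\mathcal E^\vee\twoheadrightarrow\scrO_{\Pee}(1)$; everything else is bookkeeping, which is presumably why the authors relegate this to ``direct inspection.''
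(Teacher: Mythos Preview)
Your proof is correct and is precisely the computation the paper has in mind when it writes ``by direct inspection'' and omits the argument: push forward along $\phi$ using $\phi_*\scrO_{\Pee}(1)=\bigoplus_i\scrO_{\Pee^1}(-\alpha_i)$, observe that only the summands with $\alpha_i\leq t$ contribute global sections, and then read off the common zero locus from the description of each $\Sigma_l$ as the sub-projective-bundle $\Pee(\bigoplus_{m\neq l}\scrO_{\Pee^1}(\alpha_m))$. There is nothing to add.
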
 

\begin{corollary}\label{baseloc} Suppose that $n> 2$. The complete linear system $|D+tF|$ has a nonempty base locus $\iff$ $t\leq  \alpha_n-1$ and, for the unique  $j$ such that $\alpha_{i_j}\leq t< \alpha_{i_{j+1}}$, $Y\cap \Sigma_1 \cap \Sigma_2\cap \cdots \cap \Sigma_{i_j} \neq \emptyset$. 

If $n=2$, the complete linear system $|D+tF|$ has a nonempty base locus 
$\iff$ $t\leq  \alpha_2-1$. \qed
\end{corollary}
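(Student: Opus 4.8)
The strategy is to push the whole question up to the projective bundle $\Pee$ via the morphism $g\colon Y\to\Pee$ and Lemma~\ref{4.1}, and then read off the answer from the (unlabeled) lemma immediately preceding the corollary.

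First I would take $\lambda=\scrO_{\Pee^1}(t)$ and $i=0$ in Lemma~\ref{4.1}. Since $\scrO_{\Pee}(1)\otimes\phi^*\scrO_{\Pee^1}(t)=\scrO_{\Pee}(1)\otimes\scrO_{\Pee}(tP)$ and $g^*\scrO_{\Pee}(1)=\scrO_Y(D)$, this says that $g^*$ is an isomorphism
\[
H^0(\Pee;\scrO_{\Pee}(1)\otimes\scrO_{\Pee}(tP))\ \xrightarrow{\ \sim\ }\ H^0(Y;\scrO_Y(D+tF)).
\]
For a nonzero section $s$ on $\Pee$ the zero locus of the pulled-back section $g^*s$ is, set-theoretically, $g^{-1}$ of the zero locus of $s$, and $g^*s\neq 0$ since $g^*$ is injective. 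Because, by the displayed isomorphism, every section of $\scrO_Y(D+tF)$ is of the form $g^*s$, intersecting over all such $s$ gives
\[
\operatorname{Bs}|D+tF|=g^{-1}\bigl(\operatorname{Bs}|\scrO_{\Pee}(1)\otimes\scrO_{\Pee}(tP)|\bigr),
\]
so that $|D+tF|$ has nonempty base locus on $Y$ if and only if the image $g(Y)$ meets the base locus of $|\scrO_{\Pee}(1)\otimes\scrO_{\Pee}(tP)|$ in $\Pee$.

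Next I would invoke the preceding lemma, which computes $\operatorname{Bs}|\scrO_{\Pee}(1)\otimes\scrO_{\Pee}(tP)|$. If $t\geq\alpha_n$ this base locus is empty, hence so is $\operatorname{Bs}|D+tF|$; this gives the necessity of $t\leq\alpha_n-1$. If $t\leq\alpha_n-1$, the lemma identifies the base locus with $\Sigma_1\cap\cdots\cap\Sigma_{i_j}$, where $j$ is the index with $\alpha_{i_j}\leq t<\alpha_{i_{j+1}}$ (with the convention $i_0=0$ and empty intersection $=\Pee$, which covers the range $t<\alpha_1$, where $H^0=0$ and the asserted equivalence holds trivially on both sides). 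Thus $|D+tF|$ has nonempty base locus exactly when $g(Y)\cap\Sigma_1\cap\cdots\cap\Sigma_{i_j}\neq\emptyset$. For $n>2$ the morphism $g$ factors as $Y\to\bar Y\hookrightarrow\Pee$, so $g(Y)=\bar Y$ and this is precisely the stated condition $Y\cap\Sigma_1\cap\cdots\cap\Sigma_{i_j}\neq\emptyset$ under the identification of $Y$ (equivalently $\bar Y$) with its image. For $n=2$, $g\colon Y\to\Pee$ is a surjective double cover, so $g(Y)=\Pee$ and the condition collapses to $\operatorname{Bs}|\scrO_{\Pee}(1)\otimes\scrO_{\Pee}(tP)|\neq\emptyset$, i.e.\ $t\leq\alpha_2-1$, as claimed.

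The whole argument is bookkeeping once Lemma~\ref{4.1} and the preceding lemma are granted. The one step that needs genuine care is the set-theoretic compatibility $\operatorname{Bs}|D+tF|=g^{-1}(\operatorname{Bs}|\scrO_{\Pee}(1)\otimes\scrO_{\Pee}(tP)|)$: this forces one to use that $g^*$ is an \emph{isomorphism} on $H^0$, not merely injective, since surjectivity is what rules out ``extra'' sections on $Y$ not pulled back from $\Pee$. The boundary regimes $t\geq\alpha_n$ and $t<\alpha_1$ I would dispatch separately as above. I do not foresee a real obstacle beyond this.
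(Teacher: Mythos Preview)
Your argument is correct and is exactly the intended one: the paper marks the corollary with \qed\ because it follows immediately from Lemma~\ref{4.1} (which identifies $H^0$ on $Y$ and on $\Pee$ via $g^*$) together with the preceding lemma computing the base locus on $\Pee$, precisely as you spell out. Your explicit handling of the boundary cases $t\geq\alpha_n$ and $t<\alpha_1$, and of the surjectivity of $g$ when $n=2$, just makes explicit what the paper leaves to the reader.
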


We now look at examples for small values of $n$.
\medskip

\noindent \textbf{The case $n=2$:} Let $Y$ be a genus one fibration of order $2$. For simplicity, we shall just look at the case $Y =\bar{Y}$, i.e.\ $D$ is $f$-ample. We can normalize $D$ so that the rank two vector bundle $f_*\scrO_Y(D)$ is equal to $\scrO_{\Pee^1}\oplus  \scrO_{\Pee^1}(-a)$, for some $a\geq 0$, and hence $\Pee = \mathbb{F}_a$. There is thus a degree two morphism $g\colon Y \to \mathbb{F}_a$. With this normalization, $D=g^*\sigma_0$,  where $\sigma_0$ is the negative section (or $\sigma_0^2=0$ if $a=0$). Our bounds in the previous section all give the single inequality $a\leq d-1$. If $B\subseteq \mathbb{F}_a$ is the branch divisor of $f$, then $B = 4\sigma_0 + 2NP$ where $P$ is the class of a fiber of the ruled surface. Since $Y$ has no section $B$ is smooth and irreducible with $N\geq 2a$. But $N=2a$ is impossible since then $B\cap \sigma_0 =\emptyset$ and the inverse image of $\sigma_0$ on $Y$ would consist of two sections. Thus $N\geq 2a+1$. Note that $g_*\scrO_Y =  \scrO_{\mathbb{F}_a}\oplus \scrO_{\mathbb{F}_a}(-2\sigma_0 -NP)$. The canonical divisor  $K_Y =g^*(-2\sigma_0 -(a+2)P + 2\sigma_0 + NP) = g^*(N-a-2)P$. Thus $d= N-a$. If $N = 2a +\varepsilon$, then $d = a+\varepsilon$ with $\varepsilon \geq 1$; thus we see directly that $a\leq d-1$. Finally,  $H^1(Y; \scrO_Y(D+tF)) \neq 0$ $\iff$ $t\leq a-2$ and the linear system $|D+tF|$ has a base locus $\iff$ $t\leq a-1$.

We would now like to construct examples of genus one fibrations realizing all of the allowed numerical possibilities.

\begin{proposition}\label{order2} Let  $k$ be uncountable. Let $\sigma_0$ be the negative section of $\mathbb{F}_a$ and let $P$ be the class of a fiber.  Suppose that $a>0$ and  $\varepsilon \geq 1$  or that $a=0$ and $\varepsilon \geq 2$. Then, if $B$ a generic element of $|4\sigma_0 + 2(2a +\varepsilon)P|$, the corresponding double cover $Y$ of  $\mathbb{F}_a$ has no section. 
\end{proposition}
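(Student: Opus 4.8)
The plan is to translate ``$Y$ has a section'' into a statement on $\mathbb{F}_a$ and then count dimensions. Write $L=\scrO_{\mathbb{F}_a}(2\sigma_0+NP)$ with $N=2a+\varepsilon$, so that $Y\to\mathbb{F}_a$ is the double cover attached to a section $s\in H^0(\mathbb{F}_a,L^{\otimes 2})$ with $\operatorname{div}(s)=B$. First I would observe that any section $\Xi$ of $f\colon Y\to\Pee^1$ pushes forward under $g$ to an \emph{irreducible} section $\Sigma=g(\Xi)$ of the ruling $\mathbb{F}_a\to\Pee^1$: indeed $1=\Xi\cdot F=\deg(g|_\Xi)\cdot(\Sigma\cdot P)$ forces $\Sigma\cdot P=1$ and $g|_\Xi$ birational; moreover $g^{-1}(\Sigma)$ is then reducible, since it contains $\Xi$ and $\iota\Xi$, which are distinct because $B$ is irreducible and $B\cdot P=4\ne 1$. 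Conversely, for $\Sigma$ an irreducible section not contained in $B$, the divisor $g^{-1}(\Sigma)$ is reducible exactly when the double cover of $\Sigma\cong\Pee^1$ determined by $(L|_\Sigma,s|_\Sigma)$ splits, i.e.\ when $s|_\Sigma\in H^0(\Sigma,L^{\otimes 2}|_\Sigma)$ is a perfect square $q^2$ with $q\in H^0(\Sigma,L|_\Sigma)$; in that case the component $\{w=q\}$ of $g^{-1}(\Sigma)$ is a smooth rational curve meeting $F$ once, hence a section. So it suffices to prove: for generic $B$, no irreducible section $\Sigma$ of $\mathbb{F}_a\to\Pee^1$ has $s|_\Sigma$ a perfect square.

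Next I would stratify by the numerical class of $\Sigma$. An irreducible section has class $\sigma_0$ or $\sigma_0+kP$ with $k\ge a$ (resp.\ $k\ge 0$ if $a=0$). For each such class fix the complete linear system $|\sigma_0+kP|$, of dimension $2k-a+1$ (dimension $0$ for $\Sigma=\sigma_0$ when $a>0$), and form the incidence variety
$$Z_k=\{([\Sigma],[B])\in|\sigma_0+kP|\times|4\sigma_0+2NP|:\ s_B|_\Sigma\text{ is a perfect square}\}.$$
The key input is that the perfect squares in $H^0(\Pee^1,\scrO(2m))$ form the cone over the image of the squaring map out of $H^0(\Pee^1,\scrO(m))$, hence a subvariety of codimension $m$; here $m=\deg(L|_\Sigma)=L\cdot\Sigma=2k+\varepsilon$. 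Provided the restriction $H^0(\mathbb{F}_a,L^{\otimes 2})\to H^0(\Sigma,L^{\otimes 2}|_\Sigma)$ is surjective — which holds because $H^1(\mathbb{F}_a,\scrO(3\sigma_0+(2N-k)P))=0$ as soon as $2N-k\ge -1$ — the fibre of $Z_k\to|\sigma_0+kP|$ over such a $\Sigma$ has codimension exactly $2k+\varepsilon$ in $|B|$. Hence
$$\dim Z_k\le\dim|B|+\bigl(\dim|\sigma_0+kP|-(2k+\varepsilon)\bigr)=\dim|B|-(a+\varepsilon-1)=\dim|B|-(d-1),$$
and the analogous, simpler computation for $\Sigma=\sigma_0$ gives codimension $\varepsilon$ in $|B|$. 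Under the hypotheses of the proposition $d=a+\varepsilon\ge 2$, so in every case the image $\overline{\operatorname{pr}_2(Z_k)}$ is a \emph{proper} closed subvariety of $|B|$. It is exactly here that the excluded case $a=0,\varepsilon=1$ (i.e.\ $d=1$) must be thrown out: then $Y$ would be a rational elliptic surface, and such a surface always has a section.

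Finally, any $B$ for which $Y_B$ has a section lies in $\bigcup_{k}\overline{\operatorname{pr}_2(Z_k)}$, a \emph{countable} union of proper closed subvarieties of the irreducible variety $|4\sigma_0+2NP|$; since the ground field $k$ is uncountable this union is not everything, so the very general $B$ — which is the meaning of ``generic'' in the statement — lies outside it, and intersecting with the dense open locus where $B$ is smooth and irreducible (Bertini, using that $|4\sigma_0+2NP|$ is base point free since $2N\ge 4a$) produces a genus one fibration $Y_B$ with no section. The step I expect to be the main obstacle is controlling $Z_k$ \emph{uniformly in} $k$: the clean codimension count relies on surjectivity of $H^0(\mathbb{F}_a,L^{\otimes 2})\to H^0(\Sigma,L^{\otimes 2}|_\Sigma)$, which fails once $k>2N+1$, so for those large $k$ one must separately show $\operatorname{pr}_2(Z_k)$ is still proper. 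Here the natural tool is the rigidity of sections — $N_{\Xi/Y_B}\cong\scrO_{\Pee^1}(-d)$ has negative degree, so the relative Hilbert scheme of sections is finite over $|B|$ and $Z_k\to|B|$ is generically finite onto its image, giving $\dim Z_k\le\dim|B|$ for free; combining this with the (generic) general position of the subspaces $H^0(\mathbb{F}_a,L^{\otimes 2}(-\Sigma))\subset H^0(\Sigma,L^{\otimes 2}|_\Sigma)$ relative to the locus of squares forces strict inequality $\dim Z_k<\dim|B|$ for the remaining $k$ as well.
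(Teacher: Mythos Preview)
Your approach is genuinely different from the paper's: you translate the existence of a section into a tangency condition on $\mathbb{F}_a$ and count dimensions of incidence loci $Z_k$, whereas the paper degenerates $Y$ to $Y_0\cup R$ (with $R$ rational elliptic) and argues that the finitely generated image of $\Pic Y_0$ in $\Pic F_0$ generically misses the image of $\Pic R$.

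Your argument has one minor slip and one real gap. The slip: the vanishing $H^1(\mathbb{F}_a,\scrO(3\sigma_0+(2N-k)P))=0$ requires the \emph{lowest} summand of $\pi_*\scrO(3\sigma_0)\otimes\scrO(2N-k)$ to have nonnegative $h^1$, i.e.\ $2N-k-3a\ge -1$, equivalently $k\le a+2\varepsilon+1$; your stated bound $2N-k\ge -1$ only checks the top summand and is too optimistic when $a>0$. This does not change the logic, only shrinks the ``good'' range of $k$.

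The real gap is exactly the one you flag: for $k>a+2\varepsilon+1$ you have no codimension estimate. Rigidity of sections does yield $\dim Z_k\le\dim|B|$, but your ``general position'' step is not an argument --- it would need you to show that for every $\Sigma$ the linear subspace $\operatorname{im}\rho_\Sigma\subset H^0(\Sigma,L^{\otimes 2}|_\Sigma)$ meets the cone of squares in at most the expected dimension, and there is no reason this should hold for special $\Sigma$ without further input. Concretely, ruling out $\dim Z_k=\dim|B|$ amounts to showing that a generic $B$ admits no section $\Sigma$ everywhere tangent to it, and making the na\"ive count of $2k+\varepsilon$ tangency conditions on a $(2k-a+1)$--dimensional family rigorous for arbitrarily large $k$ is essentially a Noether--Lefschetz type statement, which needs its own proof. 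The paper's degeneration argument sidesteps all of this: one never stratifies by $k$, and the obstruction to a section on the limit $Y_0\cup R$ is a single Picard-group mismatch that handles every numerical class at once.
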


\begin{proof} Let $N = 2a+\varepsilon$. First note that, for all $N\geq 2a$, the linear system $|4\sigma_0 + 2NP|$ is base point free on $\mathbb{F}_a$. We consider degenerations $\mathcal{X}$ of $\mathbb{F}_a$ to $\mathbb{F}_a\cup \mathbb{F}_0$, glued along a fiber $P_0$, obtained by taking a trivial family $\mathbb{F}_a\times \mathbb{A}^1$  and blowing up a fiber of $\mathbb{F}_a\times \{0\}$ in the threefold $\mathbb{F}_a\times \mathbb{A}^1$. For the fiber $\mathbb{F}_a\cup \mathbb{F}_0$ over $0$, the  line bundle $\mathcal{L} = \pi_1^*\scrO_{\mathbb{F}_a}(4\sigma_0 + 2NP) \otimes \scrO_{\mathbb{F}_a\times \mathbb{A}^1}(-2\mathbb{F}_0)$ restricts on the copy of $\mathbb{F}_a$ to $\scrO_{\mathbb{F}_a}(4\sigma_0 + 2(N-1)P)$ and on the $\mathbb{F}_0$ component to $\scrO_{\mathbb{F}_0}(4\sigma_0 + 2P)$. Clearly $\mathcal{L}$ is divisible by $2$ in $\Pic \mathcal{X}$. If $\mathcal{B}$ is a smooth section of $\mathcal{L}$, the double cover $\mathcal{Y}$ of $\mathcal{X}$ branched along $\mathcal{B}$ fibers over $\mathbb{A}^1$. The  general fiber is a genus one fibration $Y\to \Pee^1$ with $\chi(\scrO_Y) = N-a=a + \varepsilon$ and the fiber over $0$ is of the form $Y_0 \cup R$, where $Y_0 \to \Pee^1$ is a genus one fibration with $\chi(\scrO_{Y_0}) = N-a-1=a + \varepsilon-1$, $R$ is a rational elliptic surface, and $Y_0$ and $R$ meet transversally along a smooth fiber $F_0$. If either $a\geq 1$ and $\varepsilon \geq 1$ or $a=0$ and $\varepsilon \geq 2$, $\chi(\scrO_{Y_0}) \geq 1$ and hence $\Pic Y_0$ is discrete. If there is a divisor of fiber degree one on the general fiber (or equivalently a section), then, possibly after a base change, there is a Cartier divisor on $Y_0 \cup R$ which restricts to a section of both $Y_0$ and $R$. If we have to make a base change of order $k$ around $0\in U$, then the threefold $\mathcal{Y}$  acquires a curve of $A_k$ singularities and is resolved by a smooth threefold $\widetilde{\mathcal{Y}}$ with central fiber $Y_0\cup E_1\cup \cdots \cup E_k \cup R$, where the $E_i$ are elliptic ruled, $E_i\cap E_{i+1}$ is a section in $E_i$ and in $E_{i+1}$, $E_1\cap Y =F_0$ and $E_k\cap R=F_0$. The argument in the general case is then essentially reduced to the case $\widetilde{\mathcal{Y}}=\mathcal{Y}$ and we shall just write down this case. There would thus be a divisor $\sigma_0$ on $Y_0$ and $\sigma_R$ on $R$, with $\sigma_0\cdot F = \sigma_R\cdot F=1$, and, for the common fiber $F_0$ of $Y_0$ and $R$, $\scrO_{Y_0}(\sigma_0)| F_0 = \scrO_R(\sigma_R)| F_0$. We shall show that, for generic choices, this is impossible.

For a fixed genus one fibration $Y_0$, the image of $\Pic Y_0$ in $\Pic F_0$ is a finitely generated abelian group $\Gamma$. On the other hand, fixing the elliptic curve $F_0$, it is easy to check the following: for every choice $p_1, \dots, p_9$ of points $p_i\in F_0$, there exists a rational elliptic surface $R$ and an inclusion of $F_0$ as a fiber of $R$, such that the image of $\Pic R$ in $\Pic F_0$ is generated by the line bundles $\scrO_{F_0}(p_i)$, together with the image of the line bundle $h$, which is a cube root of $\scrO_{F_0}(\sum_ip_i)$. (Embed $F_0$ in $\Pee^2$ by a linear system $h$ such that $3h \equiv \sum_ip_i$ and let $R$ be the surface which is $\Pee^2$ blown up at the images of the $p_i$.) It is then a straightforward exercise to see that, as long as $k$ is uncountable, for a fixed finitely generated subgroup $\Gamma$ of $\Pic F_0$ and for generic choices of the $p_1, \dots, p_9$, the intersection of the subgroup generated by the  $\scrO_{F_0}(p_i)$ and $h$ with $\Gamma$ is trivial. In particular the equality $\scrO_{Y_0}(\sigma_0)| F_0 = \scrO_R(\sigma_R)| F_0$ is impossible.

We must still show that we can find a sufficiently general rational elliptic surface as one component of the special fiber of a $\mathcal{Y}\to \mathbb{A}^1$ as constructed above. View $\mathcal{Y}$ as a double cover of $\mathcal{X}$ as above. Fix a smooth divisor $B_0$ in the linear system $|4\sigma_0 + 2(N-1)P|$ on $\mathbb{F}_a$ which meets a given fiber $f_0$ transversally in $4$ points; this is possible for all $N\geq 2a+1$. Given a smooth divisor $B_1$ in $|4\sigma_0 + 2P|$ on $\mathbb{F}_0$ such that $B_1 \cap P_0 = B_0\cap P_0$, the curve $B_0\cup B_1$ corresponds to a section $s$ of $\mathcal{L}|\mathcal{X}_0$. A straightforward argument shows that $H^1(\mathcal{X}_0; \mathcal{L}|\mathcal{X}_0) =0$, and hence that there is a Zariski open neighborhood $U$ of $0\in \mathbb{A}^1$ such that the section $s$ of $\mathcal{L}|\mathcal{X}_0$ extends to a section of $\mathcal{L}$ over $U$ (which we may assume to be smooth). The corresponding double cover is the degeneration we seek, provided that we can choose the section $B_1$ so that the rational elliptic surface $R$ is sufficiently generic.

To see that we can find a generic section $B_1$, it is sufficient to show that, if $R$ is a generic rational elliptic surface containing a fiber isomorphic to $F_0$, then $R$ is a double cover of $\mathbb{F}_0$, necessarily branched along a smooth curve in $|4\sigma_0 + 2P|$ (possibly after switching the order of the factors of $\mathbb{F}_0\cong \Pee^1\times \Pee^1$). It is enough to consider the case where $R$ is any rational elliptic surface with all fibers irreducible, or equivalently such that there does not exist a smooth rational curve on $R$ with self-intersection $-2$. In this case, we claim that there is a section $\tau$ of $R$ such that $\tau\cdot \sigma_R =1$. It is then easy to check that the linear system $|\sigma_R+\tau +F|$ has dimension $3$ and degree $4$, and defines a morphism $\varphi \colon R \to \Pee^3$ which is of degree $2$ onto its image, which is a smooth quadric in $\Pee^3$. Thus $\varphi$ realizes $R$ as a double cover of $\mathbb{F}_0$. To find $\tau$, use the fact that (as all fibers are irreducible) sections of $R$ correspond to elements of $\{\sigma_R, F\}^\perp \cong -E_8\subseteq \Pic R$, and that the condition that a section $\tau$ corresponding to $\alpha \in -E_8$ satisfy $\sigma_R \cdot \tau = 1$ is $\alpha^2=-4$. Since there do  exist vectors in $-E_8$ of square $-4$, we can  find $R$ as desired. This completes the proof.
\end{proof}

The following corollary shows that Corollary~\ref{P1ineqs} and Corollary~\ref{cor1.6} are sharp in the case of order $2$:

\begin{corollary} Let $d$ be an integer, $d\geq 2$. 
\begin{enumerate}
\item[\rm(i)] For all $a$, $0\leq a \leq d-1$, there exists a genus one fibration $Y\to \Pee^1$ of order $2$ which is a double cover  of $\mathbb{F}_a$.
\item[\rm(ii)] There exist a genus one fibration $Y\to \Pee^1$ of order $2$ and a divisor $D$ on $Y$ such that  $D\cdot F =2$, $h^1(D)\neq 0$ and $D^2 = 2d-10$.
\item[\rm(iii)] There exist a genus one fibration $Y\to \Pee^1$ of order $2$ and a divisor $D$ on $Y$ such that $D\cdot F =2$, $|D|$ has a base point and  $D^2 = 2d-6$.
\end{enumerate}
\end{corollary}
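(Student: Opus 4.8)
The plan is to deduce all three statements from Proposition~\ref{order2} together with the explicit description of the $n=2$ geometry given above; the key observation is that the extremal value $a = d-1$ is precisely the one for which twisting $g^*\sigma_0$ by fibres pushes $D^2$ up to the bounds of Corollary~\ref{cor1.6}.

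For (i), given $d\geq 2$ and $0\leq a\leq d-1$, I would set $\varepsilon = d-a$ and $N = 2a+\varepsilon$. If $a\geq 1$ then $\varepsilon\geq 1$, while if $a=0$ then $\varepsilon = d\geq 2$; in either case Proposition~\ref{order2} applies, so a generic smooth $B\in|4\sigma_0+2NP|$ on $\mathbb{F}_a$ yields a double cover $Y\to\mathbb{F}_a$ which is a genus one fibration over $\Pee^1$ with no section, i.e.\ of order $2$. By the computation preceding Proposition~\ref{order2} ($K_Y = g^*(N-a-2)P$) one gets $\chi(\scrO_Y) = N-a = a+\varepsilon = d$, which is exactly (i).

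For (ii) and (iii) I would take $a = d-1$ and let $Y\to\mathbb{F}_{d-1}$ be the order $2$ fibration produced in (i), so $\chi(\scrO_Y)=d$; write $g\colon Y\to\mathbb{F}_{d-1}$ for the double cover, $F$ for a fibre of $Y\to\Pee^1$, and $D_0 = g^*\sigma_0$. Since $g$ is finite of degree $2$ and $F = g^*P$, we have $D_0\cdot F = 2(\sigma_0\cdot P) = 2$ and $D_0^2 = 2\sigma_0^2 = -2(d-1)$, hence $(D_0+tF)^2 = -2(d-1)+4t$. From the $n=2$ discussion, $f_*\scrO_Y(D_0) = \scrO_{\Pee^1}\oplus\scrO_{\Pee^1}(-(d-1))$, so in the notation of Section~4 we have $\alpha_1 = 0$ and $\alpha_n = \alpha_2 = d-1$. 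For (ii) I would put $D = D_0 + (d-3)F$: then $D\cdot F = 2$, $D^2 = -2(d-1)+4(d-3) = 2d-10$, and since $d-3 = \alpha_n-2$, Lemma~\ref{H1nonzero} gives $h^1(\scrO_Y(D))\neq 0$. For (iii) I would put $D = D_0 + (d-2)F$: then $D\cdot F = 2$, $D^2 = -2(d-1)+4(d-2) = 2d-6$, and since $d-2 = \alpha_2 - 1$, Corollary~\ref{baseloc} shows $|D|$ has nonempty base locus, hence in particular a base point.

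I do not expect a genuine obstacle here once Proposition~\ref{order2} is available: the content is purely in choosing $a=d-1$ so that the required fibrations exist and simultaneously attain the extremal invariant $\alpha_n=d-1$, in tracking the normalisation so that Lemma~\ref{H1nonzero} and Corollary~\ref{baseloc} apply with $\alpha_n = d-1$, and in the two one-line intersection computations matching $D^2$ to the bounds $2d-10$ and $2d-6$ of Corollary~\ref{cor1.6}. The only technical caveat is that Proposition~\ref{order2} requires $k$ uncountable; to handle a general algebraically closed $k$ one would spread the family out and specialise, but I would suppress this.
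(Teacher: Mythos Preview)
Your proof is correct and follows essentially the same approach as the paper. The paper's proof of (ii) and (iii) is terser---it writes ``$2a-8 = 2d-10$'' without explicitly announcing the choice $a=d-1$, and it appeals to the inline $n=2$ discussion (``$H^1(Y;\scrO_Y(D+tF))\neq 0 \iff t\leq a-2$ and $|D+tF|$ has a base locus $\iff t\leq a-1$'') rather than citing Lemma~\ref{H1nonzero} and Corollary~\ref{baseloc} by name---but the substance is identical, including the uncountability caveat which the paper likewise leaves implicit.
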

\begin{proof} Given $a$, $0\leq a \leq d-1$, take $Y$ to be a generic double cover of $\mathbb{F}_a$ branched along a section of $4\sigma_0 +2(2a+\varepsilon)P$, where $\varepsilon = d-a$. Note that, if $a=0$, then $\varepsilon \geq 2$. The induced morphism $f\colon Y \to \Pee^1$ realizes $Y$ as a genus one fibration (clearly without multiple fibers) with a divisor class $D = g^*\sigma_0$ such that $D\cdot F =2$. By Proposition~\ref{order2}, $Y$ has order $2$, and the discussion before the proof of Proposition~\ref{order2} shows that $\chi(\scrO_Y) =d$. This proves (i). Similarly, with $Y$ as in (i) and with $D = g^*\sigma_0$, consider  $D + (a-2)F= g^*(\sigma_0 + (a-2)P)$. Then  $h^1(D+ (a-2)F) \neq 0$ and $D^2 = -2a + 4 (a-2) = 2a-8 = 2d-10$. This proves (ii), and (iii) follows similarly by considering $D + (a-1)F$.
\end{proof}

\bigskip
\noindent \textbf{The case $n=3$:} Let $Y$ be a genus one fibration of order $3$. Again, we shall just look at the case $Y =\bar{Y}$, i.e.\ $D$ is $f$-ample. We can normalize $D$ so that the rank two vector bundle $f_*\scrO_Y(D)$ is equal to $\scrO_{\Pee^1}\oplus  \scrO_{\Pee^1}(-a)\oplus  \scrO_{\Pee^1}(-b)$, where $0\leq a \leq b$. The bounds of Corollary~\ref{P1ineqs} give $2b-a \leq d$ and $a+b\leq 2d-2$. 

Let $\Pee = \Pee(\scrO_{\Pee^1}\oplus \scrO_{\Pee^1}(a) \oplus \scrO_{\Pee^1}(b))$ with $\phi\colon \Pee\to \Pee^1$ the projection. There is a  surface $\Sigma_1\subseteq \Pee$ with $\phi|\Sigma_1$ a $\Pee^1$-bundle whose fibers are linearly embedded in the fibers of $\phi$ and such that $\phi_*\scrO_{\Pee}(\Sigma_1) = \scrO_{\Pee^1}\oplus \scrO_{\Pee^1}(-a) \oplus \scrO_{\Pee^1}(-b)$, and it is unique if $a>0$. By our assumptions, $g\colon Y \to \Pee$ embeds $Y$ as a smooth surface in the linear system $|3\Sigma_1 + NP|$, and we may choose $D$ so that $D = g^*\Sigma_1$ as divisor classes.

\begin{lemma} In the above notation, $d = N-(a+b)$. Hence $N\geq 3b$, and, if $a=b$, $N\geq 3b+1$.
\end{lemma}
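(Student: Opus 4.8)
The plan is to compute $\chi(Y;\scrO_Y)$ by a standard double-point/adjunction computation on the threefold $\Pee$, using the embedding $g\colon Y\hookrightarrow \Pee$ as a divisor in $|3\Sigma_1 + NP|$, and then to extract the numerical bounds $N\ge 3b$ (and $N\ge 3b+1$ when $a=b$) from the requirement that $Y$ actually be irreducible (and reduced) in this linear system. For the first part I would use the adjunction formula $K_Y = (K_\Pee + Y)|_Y$ together with the exact sequence $0\to \scrO_\Pee(-Y)\to \scrO_\Pee\to \scrO_Y\to 0$, so that $\chi(Y;\scrO_Y) = \chi(\Pee;\scrO_\Pee) - \chi(\Pee;\scrO_\Pee(-Y))$. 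Since $\Pee = \Pee(\scrO_{\Pee^1}\oplus \scrO_{\Pee^1}(a)\oplus \scrO_{\Pee^1}(b))$ is a $\Pee^2$-bundle over $\Pee^1$, $\chi(\Pee;\scrO_\Pee)=1$, and $\chi(\Pee;\scrO_\Pee(-3\Sigma_1-NP))$ is computed by pushing forward along $\phi$: one has $R^\bullet\phi_*\scrO_\Pee(-3\Sigma_1)$ concentrated in degree $2$ (by relative Serre duality on the $\Pee^2$-fibers, $R^2\phi_*\scrO_\Pee(-3\Sigma_1)\cong (\phi_*\scrO_\Pee(3\Sigma_1)\otimes \det)\spcheck$ up to the relative dualizing twist), and then $\chi$ of the resulting rank-one sheaf on $\Pee^1$ twisted by $\scrO_{\Pee^1}(-N)$ is linear in $N,a,b$. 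Carrying the bookkeeping through yields $\chi(Y;\scrO_Y) = N - (a+b)$, which is the first assertion $d = N-(a+b)$. (Alternatively one can derive the same identity from Lemma~\ref{lemma21}: $D^2 = g^*\Sigma_1\cdot g^*\Sigma_1\cdot[Y]$ computed in $\Pee$ against $[Y]=3\Sigma_1+NP$ gives $D^2$ in terms of $N,a,b$, and matching with $D^2 = -2\delta-(n+2)d$ with $n=3$ and $\delta = -(a+b)$ pins down $d$; I expect this to be the quicker route and would present that one.)

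For the inequality $N\ge 3b$: since $\Sigma_1$ is a section of $\scrO_\Pee(1)\otimes\phi^*(\text{twist})$ corresponding to the summand $\scrO_{\Pee^1}$, the section $\Sigma_1$ is the vanishing locus of a distinguished coordinate, and the summand $\scrO_{\Pee^1}(-b)$ — being the most positive subbundle after the twist — forces that any effective divisor in $|3\Sigma_1 + NP|$ containing no fiber component must have $N\ge 3b$; concretely, restricting to $\Sigma_1$ (or to the complementary section) and examining which monomials in the $\Pee^2$-coordinates can appear in a global section of $\scrO_\Pee(3\Sigma_1+NP)$, one sees that unless $N\ge 3b$ every section is divisible by the fiber coordinate, i.e.\ every member of the system contains a whole fiber $P$ and so cannot equal the irreducible surface $Y$. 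When $a = b$ the two ``top'' summands coincide and one gets an extra symmetry that rules out the boundary case $N = 3b$ by the same reducibility argument (a section with $N=3b$ would again be forced to contain a fiber), giving $N\ge 3b+1$ — this is exactly parallel to the $n=2$ analysis where $N=2a$ was excluded and $N\ge 2a+1$.

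The main obstacle I anticipate is the second part, not the first: the Euler characteristic computation is a routine Riemann–Roch-on-a-projective-bundle exercise (or an even shorter consequence of Lemma~\ref{lemma21}), whereas the sharp reducibility bounds $N\ge 3b$ and $N\ge 3b+1$ require a careful analysis of the graded pieces of $\bigoplus_t H^0(\Pee;\scrO_\Pee(3\Sigma_1+tP))$ and of how a smooth irreducible $Y$ must sit relative to the distinguished sections $\Sigma_1$ (and the ``positive'' subbundle directions). I would set this up using Corollary~\ref{baseloc} and the explicit description of sections of $\scrO_\Pee(1)\otimes\scrO_\Pee(tP)$ given just before it: the point is that if $N<3b$ then every section of $\scrO_\Pee(3\Sigma_1+NP)$ lies in the subspace spanned by monomials each divisible by the fiber class, so the base locus / fixed part contains a fiber, contradicting irreducibility of $Y$; and the $a=b$ refinement comes from the same computation at $t=3b$. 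This should be presentable in a few lines once the monomial basis is written down, but it is where the actual content of the lemma lies.
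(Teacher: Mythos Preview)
Your approach to the identity $d = N-(a+b)$ is fine; the paper does it even more directly by computing $K_\Pee = \scrO_\Pee(-3\Sigma_1 - (a+b+2)P)$ from the relative dualizing sheaf formula and reading off $K_Y = (K_\Pee + Y)|_Y = (N-a-b-2)F$ by adjunction, whence $d = N-(a+b)$.

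The second part has a genuine gap. Your claim that for $N<3b$ every member of $|3\Sigma_1+NP|$ contains a fiber is false: Proposition~\ref{n=3}(ii) exhibits \emph{smooth} surfaces in $|3\Sigma_1+(3b-1)P|$ when $b>a$, so irreducibility alone does not force $N\ge 3b$. What actually happens when $N<3b$ is that the coefficient of $\Sigma_3^3$ in any defining section vanishes (it would have to lie in $H^0(\scrO_{\Pee^1}(N-3b))=0$), so the curve $\sigma_0=\{\Sigma_1=\Sigma_2=0\}$ lies in $Y$; but $\sigma_0$ is a \emph{section} of $f$, contradicting the hypothesis that $Y$ has order~$3$. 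Similarly, when $a=b$ and $N=3b$ the restriction $Y|\Sigma_1$ lies in $|3\sigma_0|$ on $\Sigma_1\cong\mathbb{F}_0$, and every such divisor splits as a sum of three sections, so again $Y$ contains a section (cf.\ the Remark following Proposition~\ref{n=3}). So a corrected geometric argument does exist, but it hinges on the order-$3$ hypothesis, not on mere irreducibility --- and the $n=2$ parallel you invoke was likewise about $Y$ acquiring a section, not a fiber.

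The paper's own proof takes neither route: it simply quotes Corollary~\ref{P1ineqs} (for $n=3$), which gives $2b-a\le d$ and $a+b\le 2d-2$, and substitutes $d=N-(a+b)$ to obtain $N\ge 3b$ and, when $a=b$, $N\ge 3b+1$. This is shorter but relies on the main Bogomolov-type inequality proved earlier.
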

\begin{proof} First, we have the following formula for the relative dualizing sheaf: 
$$\omega_{\Pee/\Pee^1} = \scrO_{\Pee}(-3\Sigma_1) \otimes \phi^*\det(\scrO_{\Pee^1}\oplus \scrO_{\Pee^1}(-a) \oplus \scrO_{\Pee^1}(-b)) = \scrO_{\Pee}(-3\Sigma_1-(a+b)P).$$
Thus $K_{\Pee} = \scrO_{\Pee}(-3\Sigma_1-(a+b+2)P)$. By adjunction $K_Y = \scrO_Y((N-(a+b+2))F$ and thus $d= N-(a+b)$. 

To see the last statement, by Corollary~\ref{P1ineqs} $N= a+b+d \geq a+b+(2b-a) = 3b$. If $a=b$, then $a+b =2b \leq 2d-2$, so that $d\geq b+1$ and 
$N= a+b+d \geq 3b+1$.
\end{proof}

\begin{lemma}\label{lemma4.8} If $D_t = (\Sigma_1 + tP)|Y= \Sigma_1|Y + tF$, then:
\begin{enumerate}
\item[\rm(i)] $D_t^2 = -2a-2b + d + 6t$.
\item[\rm(ii)] $H^1(Y; \scrO_Y(D_t)) \neq 0$ $\iff$  $t\leq b-2$. Hence $H^1(Y; \scrO_Y(D)) \neq0$ $\iff$ $D^2 \leq  d-2a+4b-12$.
\item[\rm(iii)] If $N\geq 3b+1$ and $a<b$, the linear system $|D_t|$ has a base locus for all $t\leq b-1$.
\end{enumerate}
\end{lemma}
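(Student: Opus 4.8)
Lemma~\ref{lemma4.8} statement: we need to prove three things about $D_t = \Sigma_1|Y + tF$ where $Y \subseteq \Pee = \Pee(\scrO_{\Pee^1}\oplus\scrO_{\Pee^1}(a)\oplus\scrO_{\Pee^1}(b))$ sits in $|3\Sigma_1 + NP|$, $0 \le a \le b$, $N = a+b+d$.

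The plan is to settle the three assertions in turn, each by combining a result already established in this section — Lemma~\ref{lemma21}, Lemma~\ref{H1nonzero}, Corollary~\ref{baseloc} — with the elementary intersection theory of the $\Pee^2$-bundle $\Pee=\Pee(\scrO_{\Pee^1}\oplus\scrO_{\Pee^1}(a)\oplus\scrO_{\Pee^1}(b))$. Throughout I write $F$ for the fibre class of $f$ and $\xi=[\Sigma_1]$ for the relevant divisor class on $\Pee$, so that $[Y]=3\xi+NP$ and $D=\Sigma_1|_Y$.

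For part (i), since $F^2=0$ and $D\cdot F=3$ we have $D_t^2=D^2+6t$, so it is enough to show $D^2=-2a-2b+d$. I would get this from Lemma~\ref{lemma21} with $n=3$: as $f_*\scrO_Y(D)=\scrO_{\Pee^1}\oplus\scrO_{\Pee^1}(-a)\oplus\scrO_{\Pee^1}(-b)$ has degree $-(a+b)$, the bundle $R^1f_*\scrO_Y(-D)$ has degree $\delta=(a+b)-3d$, whence $D^2=-2\delta-5d=-2a-2b+d$; alternatively one computes $D^2=\xi^2\cdot(3\xi+NP)=3\xi^3+N\xi^2P=N-3(a+b)$ inside $\Pee$ and uses $N=a+b+d$. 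For part (ii), Lemma~\ref{H1nonzero} applied to $\scrO_Y(D+tF)=\scrO_Y(D_t)$ with $\alpha_n=\alpha_3=b$ gives directly $H^1(Y;\scrO_Y(D_t))\neq 0\iff t\le b-2$; and by part (i) the condition $t\le b-2$ is equivalent to $D_t^2=-2a-2b+d+6t\le d-2a+4b-12$, so specializing to $t=0$ yields the stated criterion for $H^1(Y;\scrO_Y(D))\neq 0$ in terms of $D^2=D_0^2$.

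For part (iii) I would invoke Corollary~\ref{baseloc}. Since $n=3>2$ and $t\le b-1=\alpha_n-1$, that corollary reduces the claim to showing $Y\cap\Sigma_1\cap\cdots\cap\Sigma_{i_j}\neq\emptyset$ on $\Pee$, where $j$ is fixed by $\alpha_{i_j}\le t<\alpha_{i_{j+1}}$. Because $a<b$, the exponent $b=\alpha_3$ is strictly the largest, so for $t\le b-1$ only $i_j=1$ or $i_j=2$ occur. If $i_j=1$ (only when $a\ge 1$, for $0\le t<a$), the base locus on $\Pee$ is $\Sigma_1$ and $Y\cap\Sigma_1=\Sigma_1|_Y=D$, a nonzero effective divisor, hence nonempty. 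If $i_j=2$, the base locus on $\Pee$ is the curve $\Sigma_1\cap\Sigma_2$, which maps isomorphically to $\Pee^1$ under $\phi$ (two distinct lines in each $\Pee^2$-fibre meet in a point), so — $Y$ being effective — it suffices to show $Y\cdot(\Sigma_1\cap\Sigma_2)>0$. Using $[\Sigma_2]=\xi+aP$ (and $=\xi$ when $a=0$), $[Y]=3\xi+NP$, and the relations $\xi^3=-(a+b)\,\xi^2P$, $\xi^2P=1$, $\xi P^2=P^2=0$ on $\Pee$ — the sign in the first relation reflecting the convention $\phi_*\scrO_\Pee(1)=\scrO_{\Pee^1}\oplus\scrO_{\Pee^1}(-a)\oplus\scrO_{\Pee^1}(-b)$, opposite to Hartshorne's — one computes $Y\cdot\Sigma_1\cdot\Sigma_2=N-3b\ge 1$ by the hypothesis $N\ge 3b+1$. (For $t<0$ the system $|D_t|$ is empty and there is nothing to prove.) This finishes part (iii).

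The only delicate point is part (iii): one must read off the block structure of the $\alpha_i$ from Corollary~\ref{baseloc} correctly, keeping the cases $a=0$ and $a>0$ apart so that $\Sigma_1\cap\cdots\cap\Sigma_{i_j}$ is identified as $\Sigma_1$ or as the section curve $\Sigma_1\cap\Sigma_2$, and one must use the correct sign in the Grothendieck relation — with the opposite sign the intersection number would come out as $N-3(a+b)$ and the bound $N\ge 3b+1$ would be insufficient. Parts (i) and (ii) are routine bookkeeping on top of Lemma~\ref{lemma21} and Lemma~\ref{H1nonzero}.
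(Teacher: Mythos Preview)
Your proof is correct. The one substantive difference from the paper's argument is the framework in which the intersection numbers are computed. The paper first proves a structural \emph{Claim}: $\Sigma_1\cong\mathbb{F}_{b-a}$ and $\scrO_{\Sigma_1}(\Sigma_1)=\scrO_{\Sigma_1}(\sigma_0-af)$, where $\sigma_0$ is the negative section. It then pushes everything down to this Hirzebruch surface: for (i) it restricts $[Y]=3\Sigma_1+NP$ to $\Sigma_1$ and computes $D^2=(\sigma_0-af)\cdot(3\sigma_0+(N-3a)f)$ there; for (iii) it observes that the base locus of $|\Sigma_1+tP|$ on $\Pee$ (for $t<b$) restricts to $\sigma_0\subset\Sigma_1$, and then computes $Y\cdot\sigma_0=(3\sigma_0+(N-3a)f)\cdot\sigma_0=N-3b$ inside $\Sigma_1$. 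You bypass this identification entirely, working instead in the Chow ring of the threefold $\Pee$ via the Grothendieck relation $\xi^3=-(a+b)\,\xi^2P$ (and, alternatively, via Lemma~\ref{lemma21} for (i)); your curve $\Sigma_1\cap\Sigma_2$ is exactly the paper's $\sigma_0$, and your triple product $(3\xi+NP)\cdot\xi\cdot(\xi+aP)=N-3b$ recovers the same number. Your route is a bit more streamlined for the lemma at hand; the paper's route has the payoff that the explicit description of $\Sigma_1$ and $\sigma_0$ is reused in the Remark following the lemma and in Proposition~\ref{n=3}.
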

\begin{proof} We begin with the following claim:

\begin{claim} With notation as above, $\Sigma_1 \cong  \mathbb{F}_{b-a}$ and, if $\sigma_0$ denotes the negative section of $\Sigma_1$, then 
$\scrO_{\Sigma_1}(\Sigma_1) = \scrO_{\Sigma_1}(\sigma_0 - af)$.
\end{claim}

\noindent\textit{Proof of the claim.}
Applying $\phi_*$ to  the exact sequence
$$0\to \scrO_{\Pee} \to \scrO_{\Pee}(\Sigma_1) \to \scrO_{\Sigma_1}(\Sigma_1)\to 0,$$
we get
$$0\to \scrO_{\Pee^1} \to \scrO_{\Pee^1}\oplus \scrO_{\Pee^1}(-a) \oplus \scrO_{\Pee^1}(-b) \to \phi_*\scrO_{\Sigma_1}(\Sigma_1)\to 0,$$
so that $\phi_*\scrO_{\Sigma_1}(\Sigma_1) = \scrO_{\Pee^1}(-a) \oplus \scrO_{\Pee^1}(-b)$ and $\Sigma_1 = \mathbb{F}_{b-a}$. If $\sigma_0$ is the negative section of $\Sigma_1$, then $\phi_*\scrO_{\Sigma_1}(\sigma_0) = \scrO_{\Pee^1}\oplus \scrO_{\Pee^1}(a-b)$. Since $\scrO_{\Sigma_1}(\Sigma_1)$ has degree one on each fiber of $\Sigma_1$, 
$$\scrO_{\Sigma_1}(\Sigma_1) = \scrO_{\Sigma_1}(\sigma_0 - af).\qed$$

Returning to the proof of Lemma~\ref{lemma4.8}, to see (i) it is enough to consider $D=D_0$. In this case $D^2 = \Sigma_1\cdot \Sigma_1 \cdot (3\Sigma_1+ NP) = (\sigma_0 -af)\cdot (3\sigma_0+(N -3a)f)$, where this last intersection is computed on $\Sigma_1$. Thus
$$D^2 = 3(a-b) -3a + N-3a = -3a -3b+N = -2a-2b+d,$$
establishing (i). Part (ii) then follows from Lemma~\ref{H1nonzero}.

Finally, to see (iii), note that, for $t< b$, the negative section $\sigma_0$ of $\Sigma_1$ is contained in the base locus of $|\Sigma_1 + tP|$ since $(\Sigma_1 + tP)|\Sigma_1= \sigma_0 +(t-a)f$. Now
$$Y\cdot \sigma_0 = (3\sigma_0+(N -3a)f)\cdot \sigma_0 = 3a-3b +N -3a = N-3b.$$ 
Thus, since $N\geq 3b+1$, $Y\cap\sigma_0 \neq \emptyset$ and hence the base locus of $|D+tF|$ contains the nonempty set $Y\cap\sigma_0$, by Lemma~\ref{4.1}. This completes the proof of (iii).
\end{proof}

\begin{remark} In the situation of (iii) above, if $N =3b$, then $Y\cdot \sigma_0 =0$. Thus either $Y\cap\sigma_0 =\emptyset$ or $\sigma_0 \subseteq Y$. If $Y\cap\sigma_0 =\emptyset$, as long as $t>a$, it is easy to check that $|D+tF|$ is base point free. If $\sigma_0 \subseteq Y$, then $\sigma_0$ is a section of $Y$ and hence $Y$ does not have order $2$.
\end{remark}

As in the case $n=2$, we now  construct examples of $Y$ of order $3$: 

\begin{proposition}\label{n=3}  
\begin{enumerate}
\item[\rm(i)] For $N\geq 2b-1$, the restriction homomorphism $$H^0(\Pee; \scrO_{\Pee}(3\Sigma_1+NP)) \to H^0(\Sigma_1; \scrO_{\Sigma_1}(3\Sigma_1+NP))$$ is surjective.
\item[\rm(ii)] Suppose that $b>a$. The base locus of $|3\Sigma_1 + (3b-1)P|$ is $\sigma_0$. For $N=3b-1$, there exist smooth surfaces $Y$ in the linear system $|3\Sigma_1 + (3b-1)P|$. Every such $Y$ contains $\sigma_0$, which is a section of the genus one fibration.
\item[\rm(iii)] For $N\geq 3b$, the linear system $|3\Sigma_1 + NP|$ is base point free and hence contains smooth surfaces $Y$. If moreover $b>a$ and  $k$ is uncountable, the generic surface $Y$ in $|3\Sigma_1 + NP|$ satisfies: every line bundle $L$ on $Y$ has $\deg_FL \equiv 0 \mod 3$, where $F$ is the divisor class $P|Y$.
\item[\rm(iv)] Suppose $a=b$ and  $k$ is uncountable. Further suppose either that $b>0$ and  $N \geq 3b+1$ or that $b=0$ and $N\geq 2$. Then  the generic surface $Y$ in $|3\Sigma_1 + NP|$ satisfies: every line bundle $L$ on $Y$ has $\deg_FL \equiv 0 \mod 3$.
\end{enumerate}
\end{proposition}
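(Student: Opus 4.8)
The plan is to settle the cohomological statements first and then concentrate on the mod~$3$ assertions, which carry the real content. For (i) I would restrict to $\Sigma_1$: from $0\to\scrO_\Pee(2\Sigma_1+NP)\to\scrO_\Pee(3\Sigma_1+NP)\to\scrO_{\Sigma_1}(3\Sigma_1+NP)\to 0$ it suffices to show $H^1(\Pee;\scrO_\Pee(2\Sigma_1+NP))=0$. Since $\scrO_\Pee(\Sigma_1)=\scrO_\Pee(1)$ and $\phi_*\scrO_\Pee(1)=\scrO_{\Pee^1}\oplus\scrO_{\Pee^1}(-a)\oplus\scrO_{\Pee^1}(-b)$, the Leray spectral sequence gives $R^1\phi_*\scrO_\Pee(2\Sigma_1+NP)=0$ and $\phi_*\scrO_\Pee(2\Sigma_1+NP)=\operatorname{Sym}^2\bigl(\scrO_{\Pee^1}\oplus\scrO_{\Pee^1}(-a)\oplus\scrO_{\Pee^1}(-b)\bigr)\otimes\scrO_{\Pee^1}(N)$, whose most negative summand is $\scrO_{\Pee^1}(N-2b)$; this has vanishing $H^1$ precisely when $N\ge 2b-1$. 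Running the same computation with $\operatorname{Sym}^3$, the most negative summand of $\phi_*\scrO_\Pee(3\Sigma_1+NP)$ is $\scrO_{\Pee^1}(N-3b)$, globally generated for $N\ge 3b$; since $\scrO_\Pee(\Sigma_1)$ is relatively very ample this makes $\scrO_\Pee(3\Sigma_1+NP)$ base point free, and Bertini produces smooth members $Y$. This gives the first clause of (iii) and supplies the $Y$ of (iv); in the latter case the only changes are that $\Sigma_1\cong\mathbb{F}_0$ and that the relevant threshold, forced by the earlier numerical constraints on order-$3$ surfaces together with $d=N-(a+b)$, reads $N\ge 3b+1$ (or $b=0$, $N\ge 2$)---in particular $d\ge 2$.

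For (ii), assume $b>a$. On $\Sigma_1\cong\mathbb{F}_{b-a}$ one has $\scrO_{\Sigma_1}(\Sigma_1)=\scrO(\sigma_0-af)$, hence $\scrO_{\Sigma_1}(3\Sigma_1+(3b-1)P)=\scrO(3\sigma_0+(3(b-a)-1)f)$; since $b-a\ge 1$, this system on $\mathbb{F}_{b-a}$ has fixed part exactly $\sigma_0$, with multiplicity one, and base point free moving part $|2\sigma_0+(3(b-a)-1)f|$. By (i) the restriction $H^0(\Pee;\scrO_\Pee(3\Sigma_1+(3b-1)P))\to H^0(\Sigma_1;\scrO_{\Sigma_1}(3\Sigma_1+(3b-1)P))$ is surjective, and the $\operatorname{Sym}^2$ computation (now with $N=3b-1\ge 2b$, valid as $b\ge 1$) shows $|2\Sigma_1+(3b-1)P|$ is base point free on $\Pee$; multiplying its members by the section cutting out $\Sigma_1$ shows the base locus of $|3\Sigma_1+(3b-1)P|$ lies in $\Sigma_1$, hence equals the base locus of the restricted system, the reduced curve $\sigma_0$. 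Blowing up $\sigma_0\subset\Pee$ then makes the strict transform of the linear system base point free, so its general member is smooth (Bertini) and maps isomorphically onto a smooth $Y\in|3\Sigma_1+(3b-1)P|$; every such $Y$ contains $\sigma_0$ because $\sigma_0$ is in the base locus, and $\sigma_0$ is a section of $f$ since $\phi|_{\sigma_0}\colon\sigma_0\xrightarrow{\sim}\Pee^1$ and $\sigma_0\cdot P=1$.

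For the mod~$3$ statements in (iii) and (iv), note that $D=\Sigma_1|_Y$ has $D\cdot F=3$, so the order of $Y$ divides $3$; hence ``every line bundle $L$ on $Y$ has $\deg_F L\equiv 0\bmod 3$'' is equivalent to ``$Y$ has no divisor of fiber degree $1$,'' i.e.\ no section. I would rule this out for the generic $Y$ by the degeneration technique of the proof of Proposition~\ref{order2}: degenerate the $\Pee^2$-bundle $\Pee$, together with $\scrO_\Pee(3\Sigma_1+NP)$, over a degeneration of the base $\Pee^1$ to a nodal union of two $\Pee^1$'s, arranged so that the general member of the resulting family of divisors is a generic $Y\in|3\Sigma_1+NP|$ while the special member is a transverse union $Y_0\cup R$ glued along a smooth fiber $F_0$, with $R$ a rational elliptic surface that may be chosen generic and $Y_0$ a genus one fibration with $\chi(\scrO_{Y_0})=d-1$. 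Since $d\ge 2$ under the hypotheses, $\chi(\scrO_{Y_0})\ge 1$, so $\Pic Y_0$ is discrete and its image in $\Pic F_0$ is a fixed finitely generated group $\Gamma$. A section of the generic $Y$ would, after a base change if necessary and exactly as in the proof of Proposition~\ref{order2}, specialize to a Cartier divisor on $Y_0\cup R$ restricting to divisors $\sigma'$ on $Y_0$ and $\sigma_R$ on $R$ with $\sigma'\cdot F=\sigma_R\cdot F=1$ and $\scrO_{Y_0}(\sigma')|F_0=\scrO_R(\sigma_R)|F_0$; choosing $R$ generic among rational elliptic surfaces containing $F_0$ as a fiber---possible since $k$ is uncountable, by the countability argument of Proposition~\ref{order2} applied to $\Gamma$---this last equality is impossible, so the generic $Y$ has no section.

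The hard part will be the construction of this degeneration: one must arrange the degenerate $\Pee^2$-bundle and the degenerate linear system so that (a) one component of the special fiber is genuinely a rational elliptic surface and can be taken generic (the analogue of the ``$R$ is a double cover of $\mathbb{F}_0$'' step in Proposition~\ref{order2}), (b) the general member sweeps out a dense subset of $|3\Sigma_1+NP|$, so that the generic $Y$ is indeed among them, and (c) a divisor of fiber degree one on the general member specializes to one of fiber degree one on each component of $Y_0\cup R$. Everything else---including the distinction between cases (iii) and (iv)---reduces to the numerical bookkeeping above.
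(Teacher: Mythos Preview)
Your arguments for (i), (ii), and the base-point-freeness/Bertini part of (iii) match the paper's essentially line for line (the paper even omits the local smoothness check in (ii) that you sketch via blowing up $\sigma_0$).

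For the mod~$3$ statements in (iii) and (iv), however, the paper takes a different and simpler route than the ambient-space degeneration you propose. Rather than degenerating the $\Pee^2$-bundle $\Pee$ over a nodal base (the $n=2$ template), the paper stays inside the fixed $\Pee$ and observes that $|3\Sigma_1+NP|$ already contains the reducible member $Y_0+P_0$, where $Y_0\in|3\Sigma_1+(N-1)P|$ is smooth (available by (ii) and the first part of (iii)) and $P_0\cong\Pee^2$ is a fiber of $\phi$. The pencil spanned by the generic $Y$ and $Y_0+P_0$ gives a threefold $\bar Z$ with nine ordinary double points at $Y\cap F_0$ (here $F_0=Y_0\cap P_0$ is a smooth plane cubic); a small resolution has central fiber $Y_0\cup R$ with $R$ the blowup of $\Pee^2$ at those nine points, hence a rational elliptic surface. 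A short $\operatorname{Sym}^3$/Leray computation, parallel to (i), shows that $H^0(\Pee;\scrO_\Pee(3\Sigma_1+NP))\to H^0(\Pee^2;\scrO_{\Pee^2}(3))$ is surjective for $N\ge 3b$, so the nine points may be chosen freely on $F_0$ subject only to lying on a cubic; thus $R$ can be taken generic. The rest is your countability argument.

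This construction dissolves the three difficulties you flag: your concern (b) disappears because the generic $Y$ is literally a member of the pencil; (a) becomes the single $H^1$-vanishing just mentioned; and (c) is the standard specialization you already describe. Your ambient-degeneration approach should also work, but it requires building and analyzing a degeneration of the threefold $\Pee$ and checking that sections of the twisted line bundle extend across the central fiber---extra work that the pencil-in-fixed-$\Pee$ trick sidesteps entirely.
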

\begin{proof} To prove (i), it suffices to show that $H^1(\Pee; \scrO_{\Pee}(2\Sigma_1+NP)) =0$. Now $R^1\phi_*\scrO_{\Pee}(2\Sigma_1+NP) =0$ and $$R^0\phi_*\scrO_{\Pee}(2\Sigma_1+NP) =\operatorname{Sym}^2(\scrO_{\Pee^1}\oplus \scrO_{\Pee^1}(-a) \oplus\scrO_{\Pee^1}(-b))\otimes \scrO_{\Pee^1}(N).$$
The most negative term in the direct sum is $\scrO_{\Pee^1}(-2b+N)$. Thus $$H^1(\Pee^1; R^0\phi_*\scrO_{\Pee}(2\Sigma_1+NP)) = 0$$ as long as $N\geq 2b-1$, and (i) follows from  the Leray spectral sequence.

Clearly  $3b-1 \geq 2b-1$. Note that $|3\Sigma_1 + NP|_{\Sigma_1} = 3\sigma_0 -3af+Nf$, which is base point free $\iff$ $N\geq 3b$. Thus, by (i), if $N\geq 3b$, then $|3\Sigma_1 + NP|$ is base point free and hence the generic $Y\in |3\Sigma_1 + NP|$ is smooth. This proves the first sentence in (iii). For (ii), $3\sigma_0 -3af+(3b-1)f= \sigma_0 + (2\sigma_0 + (3b-3a-1)f)$. This is of the form $\sigma_0 + $ base point free linear system, since $3b-3a-1 \geq 2b-2a$ (because $b-a\geq 1$).  To check that the generic $Y$ is smooth one can make a local computation along the base locus; details omitted.

To see the second statement of (iii), we must show that, for $N\geq 3b$ and $b>a$, the generic $Y$ has no divisor whose  fiber degree is positive and less than $3$, or equivalently the genus one fibration on $Y$ has no section.  The proof will be via degenerations, beginning with the following construction: Let $Y_0\in |3\Sigma_1 + (N-1)P|$ be a smooth surface, guaranteed to exist by (ii) and the first part of (iii) for all $N\geq 3b$. Note that 
$$\chi(\scrO_{Y_0}) = N-1-a-b\geq 3b -1-a-b  = b+ (b-a) -1 \geq b >0.$$
Thus $\Pic Y_0$ is discrete.
Choose a smooth fiber $F$ on $Y$ and let $\Pee^2$ be the fiber of $\Pee$ corresponding to $F$. There is the normal crossings surface $Y_0\cup \Pee^2$, whose double curve is $F$, a cubic in $\Pee^2$. Let $Y$ be a smooth surface in $|3\Sigma_1 + NP|$. Then we can make a threefold $\bar{Z}$ corresponding to the total space of the pencil spanned by $Y$ and $Y_0\cup \Pee^2$. The threefold $\bar{Z}$ is singular exactly at the intersection $Y\cap F$. If this intersection is transverse, then the singularities of $\bar{Z}$ will be nine threefold ordinary double points, since $Y\in |3\Sigma_1 + NP|$ meets the $\Pee^2$ in a  plane cubic. As is well known, there is a small projective resolution $Z$ of $\bar{Z}$ whose central fiber is $Y_0\cup R$, where $R$ is the rational elliptic surface obtained by blowing up $\Pee^2$ along the nine points $Y\cap F$. We next show that, for $N\geq 3b$, the nine points on $F$ are arbitrary subject to the condition that they are cut out by a plane cubic:

\begin{claim} If $N\geq 3b$, then the restriction map $H^0(\Pee; \scrO_{\Pee}(3\Sigma_1 + NP)) \to H^0(\Pee^2; \scrO_{\Pee^2}(3))$ is surjective.
\end{claim}
\begin{proof} We show that $H^1(\Pee; \scrO_{\Pee}(3\Sigma_1 + (N-1)F))=0$. As usual, apply Leray. Clearly $R^1\phi_*\scrO_{\Pee}(3\Sigma_1 + (N-1)F) = 0$. Moreover, 
$$R^0\phi_*\scrO_{\Pee}(3\Sigma_1 + (N-1)F) = \operatorname{Sym}^3(\scrO_{\Pee^1}\oplus \scrO_{\Pee^1}(-a) \oplus\scrO_{\Pee^1}(-b))\otimes \scrO_{\Pee^1}(N-1).$$
The most negative term that appears is $\scrO_{\Pee^1}(-3b+N-1)$. To guarantee that $H^1(R^0\phi_*\scrO_{\Pee}(3\Sigma_1 + (N-1)F))=0$, it suffices to take $-3b+N-1 \geq -1$, i.e.\ $N\geq 3b$.
\end{proof}

Returning to the proof of the lemma, we may assume that $R$ is the blowup of $\Pee^2$ at nine general points $p_1, \dots, p_9\in F$, subject to the condition that $p_1+\dots + p_9\in |\scrO_{\Pee^2}(3)|_F|$. Equivalently, we can choose the eight points $p_1, \dots, p_8$ arbitrarily and then the ninth is determined. 

Replacing the base $\Pee^1$ of the pencil by a Zariski open subset $\Delta$, we may assume that $f\colon Z \to \Delta$ is projective, with all fibers smooth except the fiber over $0$, which is $Y_0\cup R$. If for all $t$ the fiber $f^{-1}(t)$ has a section, then perhaps after a base change there is a line bundle $\mathcal{L}$ over $Z$ such that $\mathcal{L}|f^{-1}(t)$ has fiber degree one.  The argument in  case we need to make a base change is then essentially reduced to the case $\widetilde{Z}=Z$ (as in our discussion of the case $n=2$) and we shall just write down this case.

 Consider $\mathcal{L}|(Y_0\cup R)$. This corresponds to line bundles $L_0$ on $Y_0$ and $L_1$ on $R$ whose restrictions to $F$ agree. Let $G$ be the finitely generated subgroup of $\Pic F$ which is the image of the restriction map $\Pic Y_0\to \Pic F$. Then there is an equation in $\operatorname{Div} F$ of the form
$$q \equiv ah + \sum _{i=1}^8n_ip_i.$$
Here $q\in F$ corresponds to some element in $G$ of degree one, $h \in |\scrO_{\Pee^2}(3)|_F| $, and the $n_i\in \Zee$.  Since $h$ has degree $3$ and $q$ has degree one, the $n_i$ are not all $0$. Equivalently, $\sum _{i=1}^8n_ip_i$ is a nonzero element lying in some fixed finitely generated subgroup of $F$. But since $F$ is uncountable, it is clearly possible to choose the $p_i$ so that this does not happen. This is a contradiction. Hence the generic $Y$ has no section.

The proof of (iv) is similar.
\end{proof}

\begin{remark} In the situation of (iv), with $a=b$ and $N=3b$, $Y \in |3\Sigma_1 + 3bF|$, $\Sigma_1\cong \mathbb{F}_0$, and $Y\cdot \mathbb{F}_0=3\sigma_0$. Since every element in $|3\sigma_0|$ is a sum of three elements in $|\sigma_0|$, $Y$ contains a section.
\end{remark}

We now show that Corollary~\ref{P1ineqs} and Corollary~\ref{2cor4} are sharp in the case of order $3$.

\begin{corollary} Let  $d$ be an integer, $d\geq 2$.  
\begin{enumerate}
\item[\rm(i)] Let $a$ and $b$ be two integers, $0\leq a \leq b$, such that $2b-a\leq d$ and $a+b\leq 2d-2$. There exist a genus one fibration $Y\to \Pee^1$ with $\chi(\scrO_Y) =d$ of order $3$ which is embedded in $\Pee= \Pee(\scrO_{\Pee^1}\oplus \scrO_{\Pee^1}(a) \oplus \scrO_{\Pee^1}(b))$ as a divisor of relative degree $3$.
\item[\rm(ii)]  There exist a genus one fibration $Y\to \Pee^1$ with $\chi(\scrO_Y) =d$ of order $3$ and a divisor $D$ on $Y$ with $D\cdot F =3$, $h^1(D)\neq 0$ and $D^2 = 3d-12$.
\item[\rm(iii)] Suppose that $d\geq 3$. There exist a genus one fibration $Y\to \Pee^1$ with $\chi(\scrO_Y) =d$ of order $3$ and a divisor $D$ on $Y$ with $D\cdot F =3$,  such that $|D|$ has a base point, and $D^2 = 3d-8$.
\end{enumerate}
\end{corollary}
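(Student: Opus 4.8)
The three statements all reduce, via essentially forced numerical choices, to the constructions of Proposition~\ref{n=3} together with the vanishing and base-locus computations of Lemma~\ref{lemma4.8}. The plan is first to fix the common setup and then to select the integers $a,b$ in each case. Given $a,b$ with $0\le a\le b$, put $N=a+b+d$; then for $N\ge 3b$ the linear system $|3\Sigma_1+NP|$ on $\Pee=\Pee(\scrO_{\Pee^1}\oplus\scrO_{\Pee^1}(a)\oplus\scrO_{\Pee^1}(b))$ is base point free by Proposition~\ref{n=3}(iii), its general member $Y$ is smooth, and the projection $f\colon Y\to\Pee^1$ realizes $Y$ as a relative cubic with $\chi(\scrO_Y)=N-(a+b)=d$ by the relation noted just before Lemma~\ref{lemma4.8}; a short argument using that $K_Y$ is pulled back from $\Pee^1$ shows that $f$ is relatively minimal and has no multiple fibers (a multiple fiber of a relative cubic would have multiplicity $3$ over a line, which is impossible since a reduced fiber has arithmetic genus one). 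I will assume $k$ is uncountable, so that the order computations of Proposition~\ref{n=3} apply; for general algebraically closed $k$ one reduces to this case by spreading the constructed surface out over a finitely generated base and descending, using that the set of parameters for which the corresponding surface carries a section --- equivalently, fails to have order $3$ --- is constructible, being the image of a Hilbert scheme of sections of bounded degree, so its complement contains a $k$-point.

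For (i), the inequality $2b-a\le d$ gives $N=a+b+d\ge a+b+(2b-a)=3b$, and when $a=b$ the inequality $a+b\le 2d-2$ gives $d\ge b+1$, hence $N=2b+d\ge 3b+1$ (and if $a=b=0$ then $N=d\ge 2$). Thus if $a<b$ we apply Proposition~\ref{n=3}(iii) and if $a=b$ we apply Proposition~\ref{n=3}(iv): in either case the generic $Y\in|3\Sigma_1+NP|$ is smooth and every line bundle on $Y$ has fibre degree divisible by $3$. Since the relative cubic $Y$ carries the divisor $\Sigma_1|_Y$ of fibre degree $3$, the order of $Y$ is exactly $3$, which proves (i).

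For (ii), choose $a=d-2$ and $b=d-1$; then $d\ge 2$ gives $a\ge 0$ and $a<b$, and $2b-a=d$, $a+b=2d-3\le 2d-2$, so part (i) supplies a genus one fibration $Y$ of order $3$ with $\chi(\scrO_Y)=d$. Set $D=D_{b-2}=\Sigma_1|_Y+(b-2)F$. Then $h^1(\scrO_Y(D))\ne 0$ by Lemma~\ref{lemma4.8}(ii), and by Lemma~\ref{lemma4.8}(i), $D^2=-2a-2b+d+6(b-2)=d-2a+4b-12=3d-12$. For (iii), where $d\ge 3$, choose $a=d-3$ and $b=d-2$; then $a\ge 0$, $a<b$, $2b-a=d-1\le d$, $a+b=2d-5\le 2d-2$, and $N=a+b+d=3d-5=3b+1$. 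Part (i) again yields $Y$ of order $3$ with $\chi(\scrO_Y)=d$, and since $N\ge 3b+1$ and $a<b$, Lemma~\ref{lemma4.8}(iii) shows that $|D_{b-1}|$ has a nonempty base locus; with $D=D_{b-1}=\Sigma_1|_Y+(b-1)F$ one computes $D^2=-2a-2b+d+6(b-1)=d-2a+4b-6=3d-8$, completing the proof.

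The only step that is more than bookkeeping is the assertion that the general surface produced actually has order $3$, i.e.\ admits no section: this is exactly the content of Proposition~\ref{n=3}, whose proof invokes the uncountability of $k$ to avoid a countable union of ``accidental'' linear equivalences on a fibre, and the descent argument indicated above is what is needed to dispense with that hypothesis in general. The numerical choices in (ii) and (iii) are forced: they are precisely what makes $2b-a$ equal $d$ (resp.\ $d-1$), so that the bound of Corollary~\ref{P1ineqs} (resp.\ the base-point bound of Corollary~\ref{2cor4}) is attained.
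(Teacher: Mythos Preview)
Your proof is correct and follows essentially the same route as the paper's: apply Proposition~\ref{n=3} with $N=a+b+d$ to get an order-$3$ fibration, then invoke Lemma~\ref{lemma4.8} for the $H^1$ and base-locus statements. The only differences are cosmetic: in (ii) and (iii) the paper allows any $a<b$ with $2b-a=d$ (resp.\ $2b-a=d-1$), whereas you pick the specific solutions $(a,b)=(d-2,d-1)$ and $(d-3,d-2)$; and you add a spreading-out sketch to remove the uncountability hypothesis on $k$, which the paper silently carries over from Proposition~\ref{n=3}. Your extra remarks on relative minimality and absence of multiple fibers are slightly informal (the cleanest justification is that $K_Y$ is a pullback from $\Pee^1$, which rules out multiple fibers via the canonical bundle formula), but the paper omits this point entirely.
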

\begin{proof} To see (i), first suppose that $a>b$. In this case, since $2b-a = b+ (b-a) \leq d$, $b\leq d-1$, and hence the inequality $a+b\leq 2d-2$ is a consequence of the inequality $2b-a\leq d$. Set $N = a+b+d \geq (a+b) + (2b-a) = 3b$. By (iii) of Proposition~\ref{n=3}, there exists a smooth $Y \in |3\Sigma_1 + NP|$ with no section and with $\chi(\scrO_Y) =d$. The projection $\Pee \to \Pee^1$ realizes $Y$ as a genus one fibration (with no multiple fibers), and $D = \Sigma_1|Y$ satisfies $D\cdot F=3$. Thus $Y$ has order $3$.

In case $a=b$, the inequalities $2b-a\leq d$ and $a+b\leq 2d-2$ reduce to $b\leq d-1$. In this case set $N = a+b+d \geq 2b + b+1 = 3b+1$, and $N = d\geq 2$ if $a=b=0$. The argument then concludes as in the case $a<b$ but using (iv) of Proposition~\ref{n=3}. 

To see (ii), choose $a< b$ such that $d= 2b-a$, which is possible as long as $d\geq 2$,  Let $Y$ be as in (i), and consider the divisor $D=\Sigma_1|Y + (b-2)F = D_{b-2}$, in the notation of (ii) of Lemma~\ref{lemma4.8}, then $h^1(D) \neq 0$ and $D^2 = d + 4b-2a -12 = 3d -12$. Finally, to see (iii), choose $a< b$ such that $d= 2b-a+1$ and let $Y$ be as in (i). Then $N = a+b+d =  3b+1$. By (iii) of Lemma~\ref{lemma4.8}, if $D =\Sigma_1|Y + (b-1)F = D_{b-1}$, the linear system $|D|$ has a base point, and $D^2 = d + 4b-2a -6= 3d-8$.
\end{proof}

\begin{remark} In case $d=2$ in (iii) above, $Y$ is a $K3$ surface and one can construct examples of divisors $D$ with $D$ effective, $D\cdot F =3$ and $D^2 =-2$ directly, so that $|D|$ has a fixed component. It suffices to take $a=b=1$,  $N=4$, and $D = \Sigma _1\cdot Y$.
\end{remark}

\section{Irreducible components of the moduli space}

Let $k$ be algebraically closed and let $n$ be a positive integer prime to the characteristic of $k$.  Later, in the discussion of Pontrjagin squares, we will further have to assume that $\operatorname{char} k \neq 2$, and eventually that $k=\Cee$. Our goal in this section is to describe the set of components of the moduli space of genus one fibrations over $\Pee^1$. We shall primarily be concerned with the case where all fibers of $\pi$ are irreducible, because of the following:

\begin{theorem}\label{deform} Suppose that $k=\Cee$. Let $f\colon Y \to \Pee^1$ be a genus one fibration and let $D$ be a divisor on $Y$ of fiber degree $n$. Then there exists a complex manifold $\mathcal{Y}$, a flat proper morphism $\Phi\colon \mathcal{Y}\to \Pee^1\times \Delta$, where $\Delta$ is the unit disk in $\Cee$, and a divisor $\mathcal{D}$ on $\mathcal{Y}$ such that, if $\Psi\colon \mathcal{Y}\to \Delta$ is the composition, then $\Psi^{-1}(0) = Y_0 = Y$, $\Psi^{-1}(t) = Y_t$ is a smooth genus one fibration over $\Pee^1$ with all fibers irreducible for all $t\neq 0$, and $\mathcal{D}$ induces a divisor on $Y_t$ for all $t$, such that $\mathcal{D}|Y_0$ has the same restriction to the generic fiber as $D$. \qed 
\end{theorem}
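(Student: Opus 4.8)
The plan is to first reduce to the case where $Y$ has only irreducible fibers by a smoothing argument on the reducible fibers, and then, if $Y$ already has irreducible fibers, to take the constant family. Suppose some fiber of $f$ is reducible. Since we are over $\Cee$, we may replace $Y$ by its Jacobian elliptic surface $X$ away from the analysis of $D$: more precisely, recall that a genus one fibration over $\Pee^1$ is classified by its Jacobian $X$ together with a class in (a Tate--Shafarevich-type group of) $X$, and the reducible fibers of $Y$ and $X$ sit over the same points of $\Pee^1$ with the same Kodaira types. The idea is to deform the Weierstrass data of $X$ so that each reducible (additive or multiplicative) singular fiber breaks up into a collection of nodal fibers, while keeping the total $\chi$ fixed; this is a standard versal-deformation statement for elliptic surfaces over $\Pee^1$ (deform the coefficients of a Weierstrass equation $y^2 = x^3 + A(s)x + B(s)$, or handle $I_n$ fibers by the usual local smoothing $xy = t$). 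One then transports the locally trivial principal homogeneous space structure along the deformation — the relevant cohomology group $H^1(\Pee^1,\underline{X}[n])$-type obstruction is locally constant in the family, so the class defining $Y$ extends — to produce $\mathcal{Y}\to \Pee^1\times\Delta$ with $\mathcal{Y}_0 = Y$ and $\mathcal{Y}_t$ having all fibers irreducible for $t\neq 0$.

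Next I would produce the divisor $\mathcal{D}$. The point is that $D$ is determined by its restriction to the generic fiber up to adding components of fibers and pullbacks from $\Pee^1$, so what must extend is really the class $D_K$ on the generic fiber $Y_K$. Over the family, the generic fiber defines a relative genus one curve over $\Pee^1\times\Delta$, and the relative Picard functor of degree $n$ is smooth and proper over the base; the section of it given by $D_K$ at $t=0$ therefore extends (after possibly shrinking $\Delta$) to a section over $\Pee^1\times\Delta$. Pulling back the corresponding relative degree-$n$ line bundle and trivializing along a section of the family base, one gets a divisor class on $\mathcal{Y}$ restricting on $Y_0$ to something with the same restriction to the generic fiber as $D$. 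Concretely one can realize this by choosing $\mathcal{D}$ in the linear system of a high power of a relatively ample class twisted to have the right relative degree, using that $R^0\Phi_*$ of such a sheaf is locally free and its formation commutes with base change once the relative degree is large, exactly as in the local freeness arguments used for $R^1f_*\scrO_Y(-D)$ earlier in the paper.

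The main obstacle is the first step: arranging a deformation of the genus one fibration (not just its Jacobian) that simultaneously smooths \emph{all} reducible fibers, keeps $Y$ as the special fiber, and keeps $\chi(\scrO)$ constant, while ensuring the total space $\mathcal{Y}$ is smooth (a complex manifold). Smoothing $I_n$ ($n\geq 2$), $I_0^*$, and the other additive fibers each requires a different local model, and one must check these local deformations glue to a global one over $\Pee^1$ and that the principal homogeneous space structure genuinely extends rather than jumping to the trivial class; this is where the hypothesis $k=\Cee$ is used, via the exponential sequence and the classification of elliptic surfaces over $\Cee$. The extension of $\mathcal{D}$, by contrast, is essentially formal once the family exists, since properness and smoothness of the relative degree-$n$ Picard scheme do all the work.
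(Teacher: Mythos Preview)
Your approach is quite different from the paper's, and there is a genuine gap in your step~3 that is worth isolating.

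The paper does not extend the torsion class. Instead it uses Kodaira's analytic Tate--Shafarevich group: for a family $\Pi\colon\mathcal{X}\to\Delta$ of Jacobian surfaces (built via Weierstrass models plus simultaneous resolution, as you suggest), the total space $\mathbb{V}=\mathbb{V}(R^2\Pi_*\scrO_{\mathcal{X}})$ carries a tautological family $\Psi\colon\mathcal{Y}\to\mathbb{V}$ of analytic genus one fibrations. The class $[D]\in H^2(Y;\Zee)$ extends to a section of $R^2\Psi_*\Zee$ (since $\mathbb{V}$ is simply connected), and the locus $\Xi\subset\mathbb{V}$ where this section is of type $(1,1)$ is analytic of codimension at most $h^{2,0}(Y)=d-1$, hence of dimension $\geq 1$. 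Since the algebraic genus one fibrations over a fixed $X$ form a countable set inside the fiber $H^2(X;\scrO_X)$, the intersection $\Xi\cap\mathbb{V}(0)$ is discrete, so $\Xi\to\Delta$ is nonconstant on every component through $v_0$. A component through $v_0$ gives the desired curve.

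Your claim that ``the extension of $\mathcal{D}$ is essentially formal once the family exists'' is where the argument breaks. Smoothness and properness of $\underline{\Pic}^n_{\mathcal{Y}/\Pee^1\times\Delta}$ over $\Pee^1\times\Delta$ do \emph{not} imply that a section over the divisor $\Pee^1\times\{0\}$ extends to a section over $\Pee^1\times\Delta$: the first-order obstruction to deforming the line bundle $\scrO_{Y_0}(D)$ lives in $H^2(Y_0;\scrO_{Y_0})$, which has dimension $p_g=d-1>0$ once $d\geq 2$. This is precisely the obstruction the paper's Noether--Lefschetz dimension count is designed to handle. Properness gives you extension along a valuation ring (from a punctured disk to the disk), not from a Cartier divisor to an ambient surface.

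That said, your step~2 idea can be made to do all the work if you push it correctly. A class in $H^1(\Pee^1;\mathcal{E}[n])$ already encodes the pair $(Y,D)$ (this is Artin--Swinnerton-Dyer, Lemma~\ref{ASDlemma} in the paper), so if you can extend the class over $\Pee^1\times\Delta$ you get $\mathcal{D}$ for free and step~3 becomes unnecessary. The honest justification is not ``local constancy'' of $\mathcal{E}[n]$ (which fails along the discriminant) but proper base change for the constructible sheaf $R^1\pi_*\mmu_n$ together with the simple connectivity of $\Delta$; the paper uses exactly this mechanism later, in the proof of Claim~\ref{mainclaim}. So your route is salvageable, but you should drop the Picard-extension argument entirely and instead make precise why the $\mmu_n$-class extends and why the associated principal homogeneous space construction works in families.
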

\begin{proof} (Sketch.) 
The proof of this theorem uses standard results going back to Kodaira (see also Kas \cite{Kas} or \cite[Section 1.5]{FriedMorg} for more details). Let $\mathcal{M}_d$ be the coarse moduli space of elliptic surfaces $\pi \colon X \to \Pee^1$ with $\chi(X;\scrO_X) = d$. (One has to be a little  careful in case $d=1$.) Then $\mathcal{M}_d$ is irreducible, since via the existence of Weierstrass models there is a surjection $U \to \mathcal{M}_d$, where $U$ is a nonempty Zariski open subset in the product $|\scrO_{\Pee^1}(4d)|\times |\scrO_{\Pee^1}(6d)|$. In particular, beginning with an elliptic surface $\pi \colon X \to \Pee^1$, there exists a smooth curve, which we may take to be the unit disk $\Delta$, and an elliptic fibration with a section $\mathcal{X} \to \Delta\times \Pee^1$ such that, if $\Pi\colon \mathcal{X}\to \Delta$ is the induced map, then $\Pi^{-1}(0)\cong X$ and $\Pi^{-1}(s)$ is an elliptic surface all of whose fibers are irreducible for $s\neq 0$. Here, one needs the existence of simultaneous resolution of surface rational double points since the Weierstrass model of an elliptic surface with reducible fibers will be singular.

If $\pi\colon X \to B$ is an elliptic surface, there is an analytic Tate-Shafarevich group $H_{\text{an}}^1(B; \mathcal{E})$ and a surjective homomorphism $H^2(X; \scrO_X) \to H_{\text{an}}^1(B; \mathcal{E})$ \cite[Section 1.5, Lemma 5.11]{FriedMorg}. Let $\Pi\colon \mathcal{X}\to S$ be an arbitrary family of elliptic surfaces over a  complex manifold $S$. Then a point of the total space $\mathbb{V} = \mathbb{V}(R^2\Pi_*\scrO_{\mathcal{X}})$ corresponds to a complex elliptic surface $Y_s$ whose Jacobian surface is isomorphic to $X_s =\Pi^{-1}(s)$. Moreover, there is a tautological family of genus one fibrations $\Psi\colon \mathcal{Y} \to \mathbb{V}$ \cite[Section 1.5, Proposition 5.31]{FriedMorg}. Specializing to the case $B=\Pee^1$, let $f\colon Y \to \Pee^1$ and  $D$ be as in the statement of the theorem,   let $X$ be the Jacobian surface of $Y$, and let $\mathcal{X} \to \Delta$ be a one-parameter deformation of $X$ as in the preceding paragraph.  Thus there is a point $v_0\in \mathbb{V} = \mathbb{V}(R^2\Pi_*\scrO_{\mathcal{X}})$ corresponding to $Y$, and a tautological family $\Psi\colon \mathcal{Y} \to \mathbb{V}$. Let $\xi \in H^2(Y;\Zee)$ be the class $[D]$. Then, as $\mathbb{V}$ is simply connected, $\xi$ extends to a section $\sigma$ of $R^2\Psi_*\Zee$ over $\mathbb{V}$, and the locus $\Xi$ where $\xi$ is of type $(1,1)$ corresponds to the set of points  $v\in \mathbb{V}$ where the image of $\sigma$ in the fiber of $R^2\Psi_*\scrO_{\mathcal{Y}}$ over $v$ is $0$. Hence $\Xi$ is an analytic subset of $\mathbb{V}$ and every component of $\Xi$ has codimension at most $h^{2,0}(Y) = h^{2,0}(X) = d-1$ and hence dimension at least one.  On the other hand, it is easy to see that the set of algebraic genus one fibrations parametrized by $H^2(X; \scrO_X)$ is countable. Thus, $\Xi \cap \mathbb{V}(0)$ is discrete and the induced morphism $\Xi \to \Delta$ is nonconstant on every component. Choosing the normalization of a component of $\Xi$ passing through $v_0$, which we can assume is isomorphic to a disk, and replacing $\Psi$ by its restriction to this component, then gives a deformation of $Y$ as desired. 
\end{proof}

For arbitrary fields $k$, we can only show that $Y$ can be deformed to a genus one fibration $Y_t$ over $\Pee^1$ with all fibers irreducible and such that there exists a divisor on $Y_t$ whose fiber degree is divisible by $n$. However, a stronger result is most likely true in positive characteristic as well.
 \textbf{In what follows,  we assume that all fibers of $\pi$ are irreducible unless explicitly stated.}

More generally let $\pi \colon X \to B$ be an elliptic surface (with a fixed section). The  set of triples $(Y,f, \varphi)$, where $f\colon Y \to B$ is a genus one fibration and $\varphi\colon J(Y)\to X$ is an isomorphism from the Jacobian surface of $Y$ to $X$ over $B$ sending the zero section of $J(Y)$ to the fixed section of $X$, can be identified with the group $H^1(B; \mathcal{E})$, where $\mathcal{E}$ is the sheaf of sections of $\pi\colon X \to B$ (and cohomology means \'etale cohomology). We denote the class of the triple $(Y,f, \varphi)$ in $H^1(B; \mathcal{E})$ by $[Y, \varphi]$. There is a split exact sequence
$$0 \to \mathcal{E} \to R^1\pi_*\mathbb{G}_m/\mathcal{T} \xrightarrow{\deg} \Zee\to 0,$$
where $\mathcal{T}$ is generated by the line bundles associated to components of fibers and the homomorphism $R^1\pi_*\mathbb{G}_m/\mathcal{T} \to \Zee$ is essentially given by taking degree along the fibers. In particular, if all fibers of $\pi$ are irreducible, the sheaf $\mathcal{E}$ may be identified with a subsheaf of  $R^1\pi_*\mathbb{G}_m$. As $B$ is a curve, the principal homogeneous space $Y$ is trivial if and only if its restriction to the generic point $\Spec K$ of $B$ is trivial (since a point over $\Spec K$ automatically extends to a section of $f$). From this, it is easy to see that the point $\xi\in H^1(B; \mathcal{E})$ corresponding to $(Y, f, \varphi)$ is an $n$-torsion point if and only if there exists a divisor $D$ on $Y$ with $D\cdot F = n$. Thus the order of $Y$ as defined in the introduction is the same as the order of the corresponding element of $H^1(B; \mathcal{E})$.

There is an analogous construction for $B$ an excellent Noetherian scheme and $\pi\colon \mathcal{X} \to B$  a proper flat morphism, not necessarily smooth,  all of whose geometric fibers are irreducible of arithmetic genus one, with a section $\sigma$ whose image is contained in the smooth locus of $\pi$ (cf.\ for example \cite{Katz-Mazur}, \cite{ASD},
\cite{dejong}). Explicitly, if  $\mathcal{E}$ is the smooth locus of $\pi$, then there is a canonical identification of group schemes $\mathcal{E}\cong \underline{\Pic}^0_{\mathcal{X}/B}$ which maps a section $\tau$ of $\mathcal{E}\to B$ to the line bundle $\scrO_{\mathcal{X}}(\tau-\sigma)$. A straightforward argument using the existence of Weierstrass equations or the compactification of the relative Picard scheme (\cite{AltmanKleiman} or \cite[Section 8.2]{BoschLutkebohmertRaynaud}) shows that the action of $\mathcal{E}$ on itself by multiplication extends to an action $\mathcal{E}\times_B\mathcal{X}\to \mathcal{X}$. Now suppose that $f\colon \mathcal{Y} \to B$ is a proper flat morphism,  all of whose geometric fibers are irreducible of arithmetic genus one, and that we are given an isomorphism of group schemes $\varphi\colon \underline{\Pic}^0_{\mathcal{Y}/B} \to \mathcal{E}$. Then, as shown in \cite{ASD}, if $\mathcal{Y}^0$ is the smooth locus of $f$, then $\mathcal{Y}^0$ is a principal homogeneous space for $\mathcal{E}$. Denote by $[\mathcal{Y}, \varphi]$ the corresponding element of $H^1(B; \mathcal{E})$. Given $\varphi$, we can construct a compactification $\mathcal{Y}'$ of $\mathcal{Y}^0$ by taking the associated space $\mathcal{Y}^0\times ^{\mathcal{E}}\mathcal{X}$ which is the quotient of $\mathcal{Y}^0\times _{B}\mathcal{X}$ by the antidiagonal action of $\mathcal{E}$. In the geometric case, where $B$ is a smooth curve over $k$, $\mathcal{Y}\cong \mathcal{Y}'$,  since both surfaces have unique relatively minimal models. In general, we shall always make the assumption that $\mathcal{Y}\cong \mathcal{Y}'$, i.e.\ that $\mathcal{Y}$ is determined by (and determines) the class $[\mathcal{Y}, \varphi]\in H^1(B; \mathcal{E})$, given the fixed compactification $\mathcal{X}$ of $\mathcal{E}$.

Suppose that $n$ is a positive integer invertible on $B$. The $n$-torsion points in $H^1(B; \mathcal{E})$ can be described as follows: begin with the exact sequence (of sheaves on $\mathcal{X}$)
$$ \{1\} \to \mmu_n \to \mathbb{G}_m\xrightarrow{\times n} \mathbb{G}_m \to \{1\}.$$
Since $\pi_*\mathbb{G}_m = \mathbb{G}_m$, the homomorphism $R^1\pi_*\mmu _n \to R^1\pi_*\mathbb{G}_m$ is injective. It follows that there is an injection $R^1\pi_*\mmu _n \to \mathcal{E}$ whose image is the kernel of multiplication by $n$, and hence a homomorphism $H^1(B; R^1\pi_*\mmu _n) \to H^1(B; \mathcal{E})$ whose image is in the subgroup of $n$-torsion in $H^1(B; \mathcal{E})$. This image is actually equal to the $n$-torsion because multiplication by $n$ on $\mathcal{E}$ is surjective (see below).

One can also proceed as follows: define $\mathcal{E}[n]$ via the exact sequence
$$0 \to \mathcal{E}[n] \to \mathcal{E}\xrightarrow{\times n}  \mathcal{E} \to 0,$$
where we have used the assumption that the fibers of $\pi$ are irreducible to conclude that multiplication by $n$ is surjective.
The above arguments show  that  $\mathcal{E}[n] \cong R^1\pi_*\mmu _n$.  Moreover there is an induced homomorphism $H^1(B; \mathcal{E}[n]) \to H^1(B; \mathcal{E})$ whose image is the subgroup of $n$-torsion points in $H^1(B; \mathcal{E})$. However the homomorphism $H^1(B; \mathcal{E}[n]) \to H^1(B; \mathcal{E})$  is not always injective. Artin and Swinnerton-Dyer have proved (\cite{ASD}, Proposition (1.7)):

\begin{lemma}\label{ASDlemma} The group $H^1(B; \mathcal{E}[n])$ classifies the set of isomorphism classes of quadruples $(\mathcal{Y}, f, \varphi, \tau)$, where $f\colon \mathcal{Y} \to B$ is a flat proper morphism, all of whose geometric fibers have arithmetic genus one, $\varphi\colon \underline{\Pic}^0_{\mathcal{Y}/B} \to \mathcal{E}$ is an isomorphism of group schemes, with $\mathcal{Y}\cong \mathcal{Y}^0\times ^{\mathcal{E}}\mathcal{X}$ as schemes over $B$, and $\tau$ is a section of $\underline{\Pic}_{\mathcal{Y}/B}$ over $B$ of degree $n$. Here an isomorphism $\Psi\colon (\mathcal{Y}, f, \varphi, \tau) \to (\mathcal{Y}', f', \varphi', \tau')$ consists of an isomorphism $\Psi \colon \mathcal{Y}^0 \to (\mathcal{Y}')^0$ such that $f'\circ \Psi = f$, $\varphi' \circ \Psi_* = \varphi$, where $\Psi_*$ is the naturally induced isomorphism from $\mathcal{E}$ to $\mathcal{E}'$, and $\Psi^*\tau'=\tau$. \qed
\end{lemma}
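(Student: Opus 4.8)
The plan is to identify $H^1(B;\mathcal{E}[n])$ with the hypercohomology $\mathbb{H}^1$ of the two-term complex $[\mathcal{E}\xrightarrow{\times n}\mathcal{E}]$, placed in degrees $0$ and $1$, and then match hypercohomology cocycles with quadruples. Since all geometric fibers of $\pi$ are irreducible, multiplication by $n$ is surjective on $\mathcal{E}$, so this complex is quasi-isomorphic to $\mathcal{E}[n]$ concentrated in degree $0$, whence $H^1(B;\mathcal{E}[n])\cong\mathbb{H}^1(B;[\mathcal{E}\xrightarrow{\times n}\mathcal{E}])$. Computing the right-hand side with \v{C}ech cochains for an \'etale cover $\{U_i\to B\}$, a class is represented by a pair $(\{g_{ij}\},\{h_i\})$ with $g_{ij}\in\mathcal{E}(U_{ij})$ a $1$-cocycle and $h_i\in\mathcal{E}(U_i)$ satisfying $n\,g_{ij}=h_j-h_i$ on $U_{ij}$; two such pairs are equivalent iff they differ by $(\{t_j-t_i\},\{n\,t_i\})$ for some $t_i\in\mathcal{E}(U_i)$. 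This concrete model will be the target of the classification.

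Next I would go from a quadruple $(\mathcal{Y},f,\varphi,\tau)$ to such a pair. Choose an \'etale cover $\{U_i\to B\}$ with sections $s_i\colon U_i\to\mathcal{Y}^0$ of the $\mathcal{E}$-torsor $\mathcal{Y}^0$; the transition elements $g_{ij}:=s_j-s_i$ form a $1$-cocycle whose class in $H^1(B;\mathcal{E})$ is $[\mathcal{Y},\varphi]$. Each $s_i$ trivializes $\mathcal{Y}^0|_{U_i}$ and hence, using the standing hypothesis $\mathcal{Y}\cong\mathcal{Y}^0\times^{\mathcal{E}}\mathcal{X}$, yields an isomorphism $\mathcal{Y}|_{U_i}\cong\mathcal{X}|_{U_i}$ over $U_i$ carrying $\underline{\Pic}^n_{\mathcal{Y}/B}|_{U_i}$ onto $\underline{\Pic}^n_{\mathcal{X}/B}|_{U_i}$, which is a torsor under $\mathcal{E}=\underline{\Pic}^0_{\mathcal{X}/B}$ with canonical section $n[\sigma]$. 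Writing the image of $\tau|_{U_i}$ as $n[\sigma]+h_i$ defines $h_i\in\mathcal{E}(U_i)$. The calculation that drives the argument is that translation by $g\in\mathcal{E}$ on the fibers of $\mathcal{X}$ acts trivially on $\underline{\Pic}^0$ but sends $n[\sigma]$ to $n[\sigma]-n\,g$; since the two identifications of $\mathcal{Y}|_{U_{ij}}$ with $\mathcal{X}|_{U_{ij}}$ differ by translation by $g_{ij}$, this forces $h_j-h_i=n\,g_{ij}$ for a fixed sign convention, so $(\{g_{ij}\},\{h_i\})$ is a hypercohomology cocycle. I would then check that a different choice of the $s_i$ changes it by a coboundary and that an isomorphism of quadruples in the sense of the statement leaves the class unchanged, obtaining a well-defined map from isomorphism classes of quadruples to $H^1(B;\mathcal{E}[n])$.

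For bijectivity I would construct the inverse directly: from a cocycle $(\{g_{ij}\},\{h_i\})$, form the $\mathcal{E}$-torsor $\mathcal{Y}^0$ with transition data $\{g_{ij}\}$, set $\mathcal{Y}=\mathcal{Y}^0\times^{\mathcal{E}}\mathcal{X}$ (equivalently, glue the $\mathcal{X}|_{U_i}$ along the translation isomorphisms given by the $g_{ij}$), let $\varphi$ be the tautological identification $\underline{\Pic}^0_{\mathcal{Y}/B}\cong\mathcal{E}$ produced by the construction, and glue the local sections $n[\sigma]+h_i$ of $\underline{\Pic}^n_{\mathcal{Y}/B}|_{U_i}$ — which match on overlaps precisely because $n\,g_{ij}=h_j-h_i$ — into a global section $\tau$ of degree $n$. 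Checking that this is inverse to the previous map, and that cohomologous cocycles produce isomorphic quadruples, then amounts to unwinding the definitions.

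The main obstacle should be bookkeeping rather than anything conceptual: pinning down the sign conventions in the translation action on $\underline{\Pic}^n_{\mathcal{X}/B}$, verifying that the \'etale gluing data actually descend, and invoking in the right places the hypotheses that the geometric fibers of $\pi$ are irreducible of arithmetic genus one and that $\mathcal{Y}\cong\mathcal{Y}^0\times^{\mathcal{E}}\mathcal{X}$ — these are exactly what guarantees that $\mathcal{Y}$ is reconstructed from the torsor $\mathcal{Y}^0$ together with the fixed compactification $\mathcal{X}$, and that $\times n$ is surjective on $\mathcal{E}$ so that the two-term complex has vanishing $\mathcal{H}^1$. Over $\Cee$ much of this can be shortcut analytically, but in general the care is in the \'etale-descent arguments near the singular fibers.
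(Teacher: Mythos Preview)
Your approach is correct and is essentially the same as the one the paper attributes to Artin--Swinnerton-Dyer: the paper gives no proof of this lemma (it simply cites \cite[Proposition~1.7]{ASD}), but in the proof of the subsequent Lemma~\ref{identify} it spells out exactly the hypercohomology description $H^1(B;\mathcal{E}[n])\cong\mathbb{H}^1(B;[\mathcal{E}\xrightarrow{\times n}\mathcal{E}])$ via \v{C}ech pairs $(\xi,\delta)$ with $n\xi_{ij}=\delta_i-\delta_j$, constructed from local sections $\tau_i$ of $Y_{U_i}$ and the local expression of the degree-$n$ line bundle, which is your $(\{g_{ij}\},\{h_i\})$. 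Up to sign conventions (the paper writes the divisor as $\delta_i+(n-1)\sigma_i$ rather than $n[\sigma]+h_i$), the two constructions coincide.
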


\begin{corollary} Suppose that $B$ is a smooth curve over $k$. Then the group $H^1(B; \mathcal{E}[n])$ classifies the set of isomorphism classes of quadruples $(Y, f, \varphi, D)$, where $f\colon Y \to B$ is a genus one fibration, $\varphi$ is an isomorphism of elliptic surfaces with a section from $J(Y)$ to $X$, and $D$ is a divisor on $Y$ such that $D\cdot F = n$, and an isomorphism $\Psi\colon (Y, f, \varphi, D) \to (Y', f', \varphi', D')$ consists of an isomorphism $\Psi \colon Y \to Y'$ such that $f'\circ \Psi = f$, $\varphi' \circ \Psi_* = \varphi$, where $\Psi_*$ is the naturally induced isomorphism from $J(Y)$ to $J(Y')$, and $\Psi^*D'$ has the same restriction to the generic fiber of $f$ as $D$. \qed
\end{corollary}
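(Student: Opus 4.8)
The plan is to deduce this corollary directly from Lemma~\ref{ASDlemma}, by translating the datum of a section $\tau$ of $\underline{\Pic}_{\mathcal{Y}/B}$ of degree $n$ into the datum of a divisor $D$ on $Y$ with $D\cdot F=n$, taken up to restriction to the generic fiber. The first step is to match up the underlying objects. When $B$ is a smooth projective curve over $k$ and $\mathcal{X}$ is an elliptic surface, a flat proper morphism $f\colon\mathcal{Y}\to B$ with all geometric fibers irreducible of arithmetic genus one and satisfying $\mathcal{Y}\cong\mathcal{Y}^0\times^{\mathcal{E}}\mathcal{X}$ is automatically a smooth projective surface, relatively minimal over $B$, and hence a genus one fibration $f\colon Y\to B$; conversely, a genus one fibration $f\colon Y\to B$ has smooth locus $\mathcal{Y}^0=U$, which is a principal homogeneous space under $\mathcal{E}$, and $Y=U\times^{\mathcal{E}}\mathcal{X}$ by uniqueness of the relatively minimal model (this is the observation recorded just before Lemma~\ref{ASDlemma}). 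Under this identification $\underline{\Pic}^0_{\mathcal{Y}/B}=\underline{\Pic}^0_{Y/B}$ is the smooth locus of $J(Y)$, an isomorphism of group schemes $\varphi\colon\underline{\Pic}^0_{\mathcal{Y}/B}\to\mathcal{E}$ is the same as an isomorphism of elliptic surfaces $J(Y)\to X$ sending zero section to zero section, and an isomorphism $\Psi$ of Artin--Swinnerton-Dyer triples $(\mathcal{Y},f,\varphi)$ is the same as an isomorphism $\Psi\colon Y\to Y'$ over $B$ with $f'\circ\Psi=f$ and $\varphi'\circ\Psi_*=\varphi$. So everything reduces to comparing the last entries of the quadruples, together with their equivalence relations.

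For this, the second step is the exact sequence coming from the Leray spectral sequence for $\mathbb{G}_m$ and $f\colon Y\to B$. Since $f$ is proper with geometrically integral fibers one has $f_*\mathbb{G}_m=\mathbb{G}_m$; since $B$ is a smooth curve over the algebraically closed field $k$, Tsen's theorem gives $H^2(B;\mathbb{G}_m)=\mathrm{Br}(B)=0$; and $R^1f_*\mathbb{G}_m$ is the relative Picard functor $\underline{\Pic}_{Y/B}$. Hence
\[
0\longrightarrow\Pic(B)\xrightarrow{f^*}\Pic(Y)\longrightarrow\underline{\Pic}_{Y/B}(B)\longrightarrow 0 .
\]
Thus every section of $\underline{\Pic}_{Y/B}$ over $B$ is the image of a divisor class on $Y$, uniquely determined up to adding $f^*$ of a divisor on $B$, and restricting to the ``degree $n$ along the fibers'' component, sections $\tau$ of $\underline{\Pic}_{Y/B}$ of degree $n$ correspond bijectively to divisor classes $D$ on $Y$ with $D\cdot F=n$, modulo $f^*\Pic(B)$.

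The third step is to check that this residual ambiguity is exactly the relation ``same restriction to the generic fiber.'' Under the restriction map $\underline{\Pic}_{Y/B}(B)\to\Pic(Y_K)$, with $K=k(B)$, the image of a divisor $D$ maps to the class of $D|_{Y_K}$, and I claim $D|_{Y_K}\sim 0$ if and only if $D\in f^*\Pic(B)$: if $D|_{Y_K}=\operatorname{div}_{Y_K}(g)$ for some $g\in k(Y)^*$, comparing $D$ with $\operatorname{div}_Y(g)$ shows that $D$ is linearly equivalent to a divisor supported on fibers of $f$, and since every fiber is irreducible such a divisor is $f^*$ of a divisor on $B$; the converse is clear. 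Consequently ``$\Psi^*D'$ and $D$ have the same restriction to the generic fiber of $f$'' is equivalent to ``$\Psi^*D'$ and $D$ have the same image in $\underline{\Pic}_{Y/B}(B)$,'' and by functoriality of the relative Picard functor that image equals $\Psi^*$ applied to the image of $D'$, namely $\Psi^*\tau'$; so the relation is precisely $\Psi^*\tau'=\tau$. Putting the three steps together produces a natural bijection between isomorphism classes of quadruples $(Y,f,\varphi,D)$ and of quadruples $(\mathcal{Y},f,\varphi,\tau)$, and the latter are classified by $H^1(B;\mathcal{E}[n])$ by Lemma~\ref{ASDlemma}.

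I do not expect a genuine obstacle, since the substance is contained in Lemma~\ref{ASDlemma}. The points requiring the most care are the identification of the Artin--Swinnerton-Dyer families with genuine genus one fibrations via uniqueness of relatively minimal models, and the verification that ``equal restriction to the generic fiber'' coincides with the relation induced by $f^*\Pic(B)$; the latter genuinely uses the standing assumption that all fibers of $\pi$, hence of $f$, are irreducible.
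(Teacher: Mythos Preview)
Your proposal is correct and follows the same approach as the paper, which marks the corollary with \qed as an immediate consequence of Lemma~\ref{ASDlemma}. Your argument supplies the details the paper leaves implicit: matching the Artin--Swinnerton-Dyer data with genus one fibrations via uniqueness of relatively minimal models, and identifying sections $\tau$ of $\underline{\Pic}_{Y/B}$ of degree $n$ with divisors $D$ of fiber degree $n$ modulo $f^*\Pic(B)$ (equivalently, modulo restriction to the generic fiber) using the Leray sequence for $\mathbb{G}_m$, Tsen's theorem, and the standing hypothesis that all fibers are irreducible.
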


In the surface case, given a quadruple  $(Y, f, \varphi, D)$ as above, a concrete way to determine the class $[Y, \varphi, D]=\alpha \in H^1(B; \mathcal{E}[n]) = H^1(B; R^1\pi_*\mmu_n)$ given by  Lemma~\ref{ASDlemma} is via the following description of the filtration  on $H^2(X; \mmu_n)$ induced by the  Leray spectral sequence. Let $\rho \colon H^2(X; \mmu_n) \to H^0(B; R^2\pi_*\mmu_n)$ be the usual homomorphism. The pullback $H^2(B; \mmu_n) \to H^2(X; \mmu_n)$ has image equal to $\Zee/n\Zee\cdot [F]$, which is contained in $\Ker \rho$, and  
$$H^1(B; R^1\pi_*\mmu _n)\cong \Ker \rho/ (\Zee/n\Zee\cdot [F]).$$
 As all fibers of $\pi$ are irreducible, the   homomorphism 
 $$H^2(X; \mmu_n) \to H^0(B; R^2\pi_*\mmu_n)\cong \Zee/n\Zee$$
    sends $\alpha$ to $\alpha([F])$, and hence $H^1(B; R^1\pi_*\mmu _n)\cong [F]^\perp/(\Zee/n\Zee\cdot [F])$.  Similar statements hold when we replace $\pi\colon X \to B$ with $f\colon Y \to B$. If $D$ is a divisor on $Y$ with $D\cdot F = n$, then $D$ defines a class $[D]$ in $H^2(Y; \mmu_N)$ for any $N$. Taking $N=n$, we see that the image of $[D]$ in $H^0(B; R^2f_*\mmu_n)$ is $0$ and hence $D$ defines a class in $H^1(B; R^1f_*\mmu_n)$, also denoted $[D]$.

\begin{lemma}\label{identify}
For every $N$ prime to the characteristic and for all $i$, there is an isomorphism
$$ R^if_*\mmu_N  \cong R^i\pi_* \mmu_N$$
which respects the cup product.  
Via the identification $R^1f_*\mmu_n  \cong R^1\pi_* \mmu_n$, the element $-[D] \in H^1(B; R^1f_*\mmu_n)$ corresponds to the class $\alpha = [Y, \varphi, D] \in H^1(B; R^1\pi_*\mmu_n)$ given by Lemma~\ref{ASDlemma}. 
\end{lemma}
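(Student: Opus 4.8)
The plan is to treat the two assertions separately: the first is essentially formal once one knows that $Y$ and $X$ are \'etale-locally isomorphic over $B$, while the second requires an explicit comparison of cocycles.

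For the isomorphism $R^if_*\mmu_N\cong R^i\pi_*\mmu_N$: choose an \'etale cover of $B$ over which the $\mathcal{E}$-torsor $Y^0$ (the smooth locus of $f$) becomes trivial, say by local sections $s$ of $Y^0$. Using the identification $Y\cong Y^0\times^{\mathcal{E}}X$ (valid since $B$ is a smooth curve over $k$, as noted in the text) together with $\varphi$, each such $s$ produces an isomorphism $\psi\colon Y|_U\xrightarrow{\ \sim\ } X|_U$ of schemes over the \'etale open $U$, and on an overlap the two resulting isomorphisms differ by translation by the section of $\mathcal{E}$ giving the \v Cech cocycle of $[Y,\varphi]\in H^1(B;\mathcal{E})$. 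By functoriality of cup product, each $\psi$ induces an isomorphism $R^i\pi_*\mmu_N|_U\cong R^if_*\mmu_N|_U$ compatible with all cup products, and on overlaps these agree because translations act trivially on $R^i\pi_*\mmu_N$ --- for $i=1$ this is the statement that pullback along a translation is the identity on $\underline{\Pic}^0_{X/B}$, hence on $\underline{\Pic}^0_{X/B}[N]=R^1\pi_*\mmu_N$; for $i=0,2$ it is immediate; in any case it is the general fact that a connected group acts trivially on \'etale cohomology with finite coefficients. So the local isomorphisms glue to the desired global isomorphism. (For $i=1$ it is the isomorphism induced by $\varphi$ on $\underline{\Pic}^0[N]$ via the Kummer sequence, and for $i=2$ it is the identification of both sides with $\Zee/N$ by the fundamental class; both facts are used below.)

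For the second assertion, recall how the two classes arise. First, $[D]$: since $D\cdot F=n$, the image of $c_1(\scrO_Y(D))=[D]\in H^2(Y;\mmu_n)$ under the edge map $H^2(Y;\mmu_n)\to H^0(B;R^2f_*\mmu_n)\cong\Zee/n$ is $n\bmod n=0$, so $[D]$ lies in the first step of the Leray filtration and, modulo the fibre class coming from $H^2(B;\mmu_n)$, defines $[D]\in H^1(B;R^1f_*\mmu_n)$. Second, $\alpha=[Y,\varphi,D]$: in the bijection of Lemma~\ref{ASDlemma}, the divisor $D$ determines a degree $n$ section $\tau$ of $\underline{\Pic}_{Y/B}$; multiplication by $n$ gives a morphism $[n]\colon\underline{\Pic}^1_{Y/B}\to\underline{\Pic}^n_{Y/B}$ of $\mathcal{E}$-torsors lying over $[n]\colon\mathcal{E}\to\mathcal{E}$, and $\alpha$ is (up to a sign of convention) the class in $H^1(B;\mathcal{E}[n])=H^1(B;R^1\pi_*\mmu_n)$ of the $\mathcal{E}[n]$-torsor $[n]^{-1}(\tau)$. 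Its image in $H^1(B;\mathcal{E})$ is $[\underline{\Pic}^1_{Y/B}]=[Y,\varphi]$ via Abel--Jacobi, which already matches the image of $[D]$. The comparison proper is then a \v Cech computation over a cover of $B$ trivializing $Y^0$ and refined so that moreover $\operatorname{Br}(U)=0$ for each $U$ in the cover (possible, since $B$ and all its \'etale opens are curves over an algebraically closed field, so Tsen applies). Over each $U$ the section $\tau$ lifts to an honest line bundle of fibre degree $n$ and then to an $n$th root $\tilde\tau$ in $\underline{\Pic}^1_{Y/B}$; the cocycle $\{\tilde\tau\text{ on }U-\tilde\tau\text{ on }U'\}$ valued in $\mathcal{E}[n]$ represents $\alpha$ by construction. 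Expressing $\scrO_Y(D)$ locally in terms of the $\tilde\tau$'s and feeding it through the Kummer boundary $\delta\colon H^1(Y;\mathbb{G}_m)\to H^2(Y;\mmu_n)$ in the \v Cech--Leray bicomplex identifies the class of $[D]=\delta(\scrO_Y(D))$ in the $H^1(B;R^1f_*\mmu_n)$ graded piece of the Leray filtration with the same cocycle, up to sign; the sign works out to $-1$, giving $-[D]\leftrightarrow\alpha$.

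The main obstacle is precisely this last sign. One is comparing three a priori independent conventions --- the sign of the Kummer connecting map (whether $c_1=\delta$ or $-\delta$), the signs in the edge homomorphisms of the Leray spectral sequence relative to the suspension/connecting map, and the sign built into the Artin--Swinnerton-Dyer bijection through Abel--Jacobi --- and pinning down the minus sign in the statement, rather than merely a $\pm$, requires carrying all of them through the bicomplex. The other ingredients are routine: the \'etale-local triviality of $Y^0$, the vanishing of $\operatorname{Br}(U)$, the triviality of the action of translations on $R^\bullet\pi_*\mmu_N$, and (for the cup-product clause) the automatic compatibility of the gluing isomorphisms with cup products since they come from actual isomorphisms of schemes; one should also check that everything is compatible at the reducible... rather, at the singular fibres, but this is handled by proper base change together with the already-established identity $\mathcal{E}[n]\cong R^1\pi_*\mmu_n$.
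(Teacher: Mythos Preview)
Your outline follows essentially the same route as the paper: for the first assertion you use local isomorphisms $Y|_U\cong X|_U$ whose transition functions are translations, hence act trivially on fibrewise cohomology; for the second you propose a \v Cech comparison of the Kummer coboundary of $\scrO_Y(D)$ with the Artin--Swinnerton-Dyer cocycle. So the strategy is correct and matches the paper.

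The gap is that you stop exactly where the work begins. You correctly identify the sign as ``the main obstacle'' and say that pinning it down ``requires carrying all of them through the bicomplex'' --- and then you do not carry them through. Describing a computation is not the same as performing it; as written, your argument establishes $[D]\leftrightarrow\pm\alpha$ but not the minus sign, which is the content of the lemma. The paper does not leave this abstract: it fixes explicit local trivializations $\varphi_i\colon(Y_{U_i},\tau_i)\to(X_{U_i},\sigma_i)$, writes $\scrO_{Y_{U_i}}(D)=\varphi_i^*\scrO_{X_{U_i}}(\delta_i+(n-1)\sigma_i)$, solves $\delta_i=n\gamma_i$ so that $\varphi_i^*\scrO_{X_{U_i}}(\gamma_i)$ is an $n$th root of $\scrO_{Y_{U_i}}(D)$, and then computes the overlap cocycle $\mathcal{L}_i\otimes\mathcal{L}_j^{-1}$ on $Y_{U_i\times_BU_j}$ as $\scrO_{X_{U_i\times_BU_j}}(\gamma_i-(\gamma_j+\xi_{ij}-\sigma_{ij}))$, which is visibly the negative of the Artin--Swinnerton-Dyer cocycle $\scrO_{X_{U_i\times_BU_j}}(\xi_{ij}-\gamma_i+\gamma_j-\sigma_{ij})$. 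The key point you are missing is this concrete choice of $n$th root $\mathcal{L}_i=\varphi_i^*\scrO_{X_{U_i}}(\gamma_i)$, which is what makes the comparison go through without any residual ambiguity from spectral-sequence sign conventions. Your invocation of $\operatorname{Br}(U)=0$ is not needed here: the paper works directly with sections $\tau_i$ of $Y_{U_i}$ and the induced isomorphisms to $X_{U_i}$, so the lift of $\tau$ to a line bundle is automatic.
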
 
\begin{proof}  The first statement follows from the fact that there is an \'etale open cover of $B$, say $\{U_i\}$, such that $\pi^{-1}(U_i) \cong f^{-1}(U_i)$ as schemes over $U_i$, and such that the transition functions are given by fiberwise translation and thus induce canonical isomorphisms in  cohomology. (This result does not need the assumption that all fibers of $\pi$ are irreducible.) 

To see the last statement, if $\{U_i\}$ is an \'etale open cover of $B$, we denote the pullback of $\sigma$ to $U_i$ by $\sigma_i$ and the pullback of $\sigma$ to $U_i\times _BU_j$ by $\sigma_{ij}$.  The  class $\alpha = [Y, \varphi, D]$ given in \cite[Proposition 1.7]{ASD} is defined as follows:  the group $H^1(B; R^1\pi_*\mmu_n) = H^1(B; \mathcal{E}[n])$ is the first hypercohomology group $\mathbb{H}^1(B;\mathcal{E}\xrightarrow{\times n}\mathcal{E})$, and as such corresponds in \v Cech cohomology to a pair $(\xi, \delta)$, where $\xi=\{\xi_{ij}\}$ is a $1$-cocycle  for $\mathcal{E}$, $\delta = \{\delta_i\}$ is a $0$-chain, and $n\xi_{ij} = \delta_i-\delta_j$. To pass from this description to an element of $H^1(B; \mathcal{E}[n])$, write $\delta_i = n\gamma_i$ for some section $\gamma_i$ of $X_{U_i}$, which is possible after refining the cover, and then $\xi_{ij} - \gamma_i+ \gamma_j$ is an $n$-torsion section of $\mathcal{E}$ over $U_i\times _BU_j$, corresponding to the $n$-torsion line bundle 
$$\scrO_{X_{U_i\times_BU_j}}(\xi_{ij} - \gamma_i+ \gamma_j -\sigma_{ij}),$$
which is then the corresponding representative for $(\xi,\delta)$.  (Here and in what follows all equalities of line bundles are modulo $\Pic (U_i)$ or $\Pic (U_i\times _BU_j)$ as appropriate.) Given $Y$ and $D$, the class $\alpha=(\xi, \delta)$ is defined as follows: We may assume that there exist sections $\tau_i$ on $Y_{U_i}$. The isomorphism $\varphi$ then uniquely defines an isomorphism of pairs $\varphi_i \colon (Y_{U_i}, \tau_i) \to (X_{U_i}, \sigma_i)$, and $\varphi_i\circ (\varphi_j)^{-1}$ is translation on $X_{U_i\times _BU_j}$ by a section $\xi_{ij}$. We can write $\scrO_{Y_{U_i}}(D) = \varphi_i^*\scrO_{X_{U_i}}(\delta_i+(n-1)\sigma_i)$ for a unique section $\delta_i$. Thus 
\begin{align*}(\varphi_i\circ(\varphi_j^{-1}))^*(\delta_i+(n-1)(\sigma_{ij})) &= \delta_i+ (n-1)(\sigma_{ij}) + n\xi_{ij} \\
&=\delta_j+(n-1)(\sigma_{ij}),
\end{align*}
and so   $\delta_i - \delta_j = n\xi_{ij}$ on $U_i\times _BU_j$. After passing to a refinement of the cover, we can write $\delta_i = n\gamma_i$ as sections of $\mathcal{E}(U_i)$; as divisors, $\delta_i =n\gamma_i -(n-1)\sigma_i$, and thus $\delta_i +(n-1)\sigma_i=n\gamma_i$. It follows that $\varphi_i^*\scrO_{X_{U_i}}(\gamma_i)$ is an $n^{\text{th}}$ root of $\scrO_{Y_{U_i}}(D)$.

On the other hand, the class $[D]\in H^2(Y; \mmu_n)$ is the image of the line bundle $\scrO_Y(D)\in H^1(Y; \mathbb{G}_m)$ under the Kummer exact sequence. There is also the exact sequence
$$0\to R^1f_*\mmu_n \to R^1f_*\mathbb{G}_m \to \mathcal{R} \to 0,$$
where $\mathcal{R}$ is the image in $R^1f_*\mathbb{G}_m$ of multiplication by $n$ from $R^1f_*\mathbb{G}_m$ to $R^1f_*\mathbb{G}_m$. Under the assumption that all fibers are irreducible, it is easy to see that $\scrO_Y(D)$ defines a class in $H^0(B;\mathcal{R})$, i.e.\ that there exists an \'etale open cover $\{U_i\}$ of $B$ such that $\scrO_Y(D)|Y_{U_i} = \mathcal{L}_i^{\otimes n}$.  So we can write $\scrO_Y(D)|Y_{U_i} = \mathcal{L}_i^{\otimes n}$, where $\mathcal{L}_i$ has degree one on every fiber. The line bundle $\mathcal{L}_i\otimes \mathcal{L}_j^{-1}$ on $Y_{U_i\times _BU_j}$ is then a line bundle which is $n$-torsion on every fiber and so corresponds to a section of $H^1(Y_{U_i\times _BU_j};\mmu_n)$. In this way we get a \v Cech cocycle which defines an element of $H^1(B; R^1f_*\mmu_n)$ corresponding to $[D]$. But as we have seen, we can take the $n^{\text{th}}$ root  $\mathcal{L}_i$ of $\scrO_Y(D)|Y_{U_i}$ to be the line bundle $(\varphi_i^{-1})^*\scrO_{X_{U_i}}(\gamma_i)$ in the notation of the preceding paragraph. Hence the line bundle $\mathcal{L}_i\otimes \mathcal{L}_j^{-1}$ on $Y_{U_i\times _BU_j}$ corresponds to the line bundle $$\scrO_{X_{U_i\times_BU_j}}(\gamma_i -(\gamma_j +\xi_{ij}-\sigma_{ij}).$$ Up to sign, this is the same as the class $[Y, \varphi, D]$.
\end{proof}

\begin{remark} One can show the following: suppose that $\pi \colon \mathcal{X} \to B$ is a proper flat morphism with all fibers of $\pi$ irreducible curves of arithmetic genus  one, where $B$ is an excellent scheme of finite dimension and $n$ is invertible in $\scrO_B$. Let $f\colon \mathcal{Y} \to B$ is a proper flat morphism,   all of whose geometric fibers are irreducible of arithmetic genus one, and that  $\varphi\colon \underline{\Pic}^0_{\mathcal{Y}/B} \to \mathcal{E}$ is an isomorphism of group schemes. Suppose that the class $[\mathcal{Y}, \varphi]\in H^1(B; \mathcal{E})$ is divisible by $n$ in the following sense: for every $m\geq 1$, there exists a $\xi \in H^1(B; \mathcal{E})$ such that $[\mathcal{Y}, \varphi] = n^m\xi$. Finally, let $N$ be an integer invertible in $\scrO_B$.

Then $\mathbb{R}f_*\mmu_N \cong \mathbb{R}\pi_*\mmu_N$ (respecting the cup product), which implies both that $R^if_*\mmu_N  \cong R^i\pi_* \mmu_N$, by taking cohomology sheaves, and that  $H^i(Y; \mmu_N) \cong H^i(X; \mmu_N)$, by taking hypercohomology. This is the analogue of the situation for $k=\Cee$, where in fact $X$ and $Y$ are diffeomorphic.

It is not clear if this result continues to hold if we drop the divisibility assumption on the class of $Y$ in $H^1(B; \mathcal{E})$, for example if $B$ is a curve over a finite field. 

Using the exact sequence $0\to \mathcal{E}[n] \to \mathcal{E} \to \mathcal{E} \to 0$ and the identification $\mathcal{E}[n] \cong R^1\pi_*\mmu_n$, we see that the hypothesis on the divisibility of $[\mathcal{Y}, \varphi]$ is satisfied if $H^2(B; R^1\pi_*\mmu_n) =0$. If for example $\pi\colon X \to B$ is a smooth elliptic surface with all fibers irreducible and at least one singular fiber, then one can check that $H^2(B; R^1\pi_*\mmu_n) =0$ (cf.\ Remark~\ref{topsectionremark}) and hence that $\mathbb{R}f_*\mmu_n \cong \mathbb{R}\pi_*\mmu_n$. 
\end{remark}

Returning to the case of surfaces, we have the following results on the cohomology of $Y$, which are well-known if $k=\Cee$:

\begin{proposition}\label{topsection} Let $f\colon Y \to B$ be a genus one fibration  and let $N$ be a positive integer prime to $\operatorname{char} k$. (Note: we do not assume that all fibers of $\pi$ are irreducible.) Suppose that $f$  has a singular fiber. Then:
\begin{enumerate}
\item[\rm(i)] The induced homomorphisms $H^1(B; \mmu_N) \to H^1(Y; \mmu_N)$ and $\pi_1(Y, *) \to \pi_1(B, *)$ are isomorphisms.
\item[\rm(ii)] The group $H^2(Y; \mmu_{N})$ is a free $\Zee/N\Zee$-module.
\item[\rm(iii)] There exists  a class $\theta \in H^2(Y; \mmu_{N})$ such that $\theta \cdot [F] =1$, i.e.\ the image of $\theta$ in
$H^0(B; R^2f_*\mmu_{N})\cong \Zee/N\Zee$ is $1$. 
\end{enumerate} 
\end{proposition}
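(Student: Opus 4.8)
The plan is to work over an étale neighborhood of $B$ and reduce everything to a computation with the Leray spectral sequence for $f\colon Y \to B$, using the presence of a singular fiber to get the vanishing we need. First I would record the low-degree terms of the Leray spectral sequence $E_2^{p,q} = H^p(B; R^qf_*\mmu_N) \Rightarrow H^{p+q}(Y; \mmu_N)$. Since $B$ is a smooth curve, $H^p(B; -) = 0$ for $p \geq 3$, so the only differential that can be nonzero among the relevant terms is $d_2\colon E_2^{0,1} \to E_2^{2,0}$ and $d_2\colon E_2^{0,2} \to E_2^{2,1}$, plus $d_3\colon E_3^{0,2} \to E_3^{3,1} = 0$. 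The sheaves are: $R^0f_*\mmu_N = \mmu_N$ (constant), $R^2f_*\mmu_N \cong \mathbb{Z}/N\mathbb{Z}$ (via the fundamental class of a fiber, using that $f$ has a section locally in the étale topology, or more simply using that every geometric fiber is an irreducible — or at worst a cycle of rational — curve of arithmetic genus one, so $H^2$ of the fiber is $\mathbb{Z}/N\mathbb{Z}$), and $R^1f_*\mmu_N$ is the interesting sheaf.

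The key input is a local computation at a singular fiber. Let $b_0 \in B$ be a point over which $f$ has a singular fiber. I claim $H^0(B; R^1f_*\mmu_N)$ and the relevant $d_2$'s can be controlled because the monodromy of $R^1f_*\mmu_N$ around $b_0$ is nontrivial (for a multiplicative-reduction fiber the local monodromy is unipotent of infinite order; for additive reduction the fiber has fewer components but the same conclusion $H^0 = 0$ of the \emph{invariant} part holds). Concretely: over the open set $B^\circ$ where $f$ is smooth, $R^1f_*\mmu_N|_{B^\circ}$ is a locally constant sheaf of free $\mathbb{Z}/N\mathbb{Z}$-modules of rank $2$, and its global sections inject into the monodromy invariants at any point; a fiber of Kodaira type other than $I_0$ forces these invariants to be a proper submodule. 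Running this through the excision/localization sequence comparing $B$ with $B^\circ$, one gets that $H^0(B; R^1f_*\mmu_N)$ is small and, more importantly, that $E_2^{2,0} = H^2(B;\mmu_N) = \mathbb{Z}/N\mathbb{Z}$ \emph{survives}: the image of the pullback $H^2(B;\mmu_N)\to H^2(Y;\mmu_N)$ is exactly $\mathbb{Z}/N\mathbb{Z}\cdot[F]$, which is nonzero (it pairs to $N \cdot (\text{something})$... no — it pairs with a local multisection to give $1$ once we have (iii), so it is nonzero), hence $d_2\colon E_2^{0,1}\to E_2^{2,0}$ is zero. This gives (i): $H^1(Y;\mmu_N)$ fits in $0 \to H^1(B;\mmu_N)\to H^1(Y;\mmu_N)\to H^0(B;R^1f_*\mmu_N)\to 0$, and I would argue the last term vanishes (again from the singular fiber killing the monodromy invariants — a section of $R^1f_*\mmu_N$ over all of $B$ that is invariant under a non-$I_0$ local monodromy must vanish, because for $\mmu_N$-coefficients with $N$ prime to the characteristic the local monodromy acts with no nonzero invariants on the relevant quotient... this needs the Picard–Lefschetz formula and is where I must be careful). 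The statement about $\pi_1$ then follows by the same mechanism applied to the homotopy exact sequence of $f$, or by comparing profinite completions.

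For (ii) and (iii): once $d_2\colon E_2^{0,1}\to E_2^{2,0}$ vanishes, consider $E_2^{0,2} = H^0(B; R^2f_*\mmu_N)\cong \mathbb{Z}/N\mathbb{Z}$, generated by the fiber class. The differential $d_2\colon E_2^{0,2}\to E_2^{2,1} = H^2(B; R^1f_*\mmu_N)$ is the obstruction to (iii): a class $\theta$ with $\theta\cdot[F] = 1$ exists iff the generator of $E_2^{0,2}$ survives to $E_\infty$, i.e.\ iff $d_2$ kills it. The hard part — and the main obstacle — is showing $H^2(B; R^1f_*\mmu_N) = 0$, equivalently (by duality on $B$) that $H^0(B; R^1f_*\mmu_N \otimes \text{(twist)})$ has no sections, which again comes down to the local monodromy at the singular fiber having no coinvariants. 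This is precisely the content alluded to in Remark~\ref{topsectionremark}: a smooth genus one fibration with at least one singular fiber has $H^2(B; R^1f_*\mmu_N) = 0$. Granting that vanishing, $d_2$ on $E_2^{0,2}$ is forced to be zero, every differential into and out of the relevant terms vanishes, so $H^2(Y;\mmu_N)$ has a filtration with graded pieces $E_2^{2,0} = \mathbb{Z}/N\mathbb{Z}$, $E_2^{1,1} = H^1(B; R^1f_*\mmu_N)$, $E_2^{0,2} = \mathbb{Z}/N\mathbb{Z}$; since $B$ is a curve and $R^1f_*\mmu_N$ is (generically) locally free of rank $2$ over $\mathbb{Z}/N\mathbb{Z}$, the middle term $H^1(B; R^1f_*\mmu_N)$ is free over $\mathbb{Z}/N\mathbb{Z}$ provided $H^2(B; R^1f_*\mmu_N) = 0$ (universal coefficients / no torsion obstruction), and the extensions of free $\mathbb{Z}/N\mathbb{Z}$-modules are free, giving (ii). The class $\theta$ of (iii) is then the lift of the generator of $E_\infty^{0,2}$, and by construction its image in $H^0(B; R^2f_*\mmu_N)\cong \mathbb{Z}/N\mathbb{Z}$ is $1$.

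The one genuinely delicate point I would need to nail down carefully is the vanishing $H^2(B; R^1f_*\mmu_N) = 0$ (equivalently $H^0$ of its dual twist) from the existence of a singular fiber, in \emph{all characteristics prime to $N$} — this requires knowing the local structure of $R^1f_*\mmu_N$ at each Kodaira fiber type (the vanishing cycle / component group computation), which is standard for $k = \mathbb{C}$ but must be checked étale-locally in positive characteristic; I would cite or reprove the local monodromy computation, after which everything else is formal manipulation of the Leray spectral sequence.
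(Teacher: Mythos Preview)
Your approach via the Leray spectral sequence is genuinely different from the paper's, and it has a real gap.

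First, the concrete error: you assert that a global section of $R^1f_*\mmu_N$ must vanish because ``the singular fiber kills the monodromy invariants.'' This is false. For an $I_1$ fiber the local monodromy is unipotent, conjugate to $\begin{pmatrix}1&1\\0&1\end{pmatrix}$, and has a rank-one module of invariants. Globally, whenever the Jacobian $X$ of $Y$ admits an $N$-torsion section (which certainly happens), that section gives a nonzero element of $H^0(B;\mathcal{E}[N]) = H^0(B; R^1\pi_*\mmu_N)$. What actually happens in such cases is that $d_2\colon E_2^{0,1}\to E_2^{2,0}$ is \emph{injective}, not zero, and you need that injectivity (not the vanishing of $E_2^{0,1}$) to deduce (i). Your argument as written proves neither.

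Second, the proposition explicitly allows reducible fibers, and your sheaf-level computations do not: $R^2f_*\mmu_N$ is not the constant sheaf $\Zee/N\Zee$ at a reducible fiber (the stalk has rank equal to the number of components), and the vanishing $H^2(B; R^1f_*\mmu_N)=0$ that you lean on for (iii) is exactly what Remark~\ref{topsectionremark} says gives the \emph{easier} proof \emph{under the additional hypothesis that all fibers are irreducible}. So your outline, once repaired, recovers at best the special case already flagged in the remark.

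The paper's proof avoids the spectral sequence entirely. For (i) it shows directly that every connected finite \'etale cover $Y'\to Y$ is pulled back from $B$, by comparing $\chi(\scrO_{Y'})$ computed two ways (topological Euler number versus $q$ and $p_g$) to force the covering degree to be $1$. Part (ii) then follows from (i), Poincar\'e duality, and the fact that \'etale cohomology is computed by a perfect complex of $\Zee/N\Zee$-modules (if all but one cohomology group of such a complex is free, so is the remaining one). Part (iii) is reduced via Poincar\'e duality to showing $[F]$ is primitive in $H^2(Y;\mmu_N)$; if $[F]$ were divisible by a prime $\ell\mid N$, the Kummer sequence would produce an $\ell$-th root $L$ of $\scrO_Y(F_0)$, and the associated $\mmu_\ell$-cover of $Y$ again yields a contradiction via the same Euler-characteristic comparison. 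These arguments are insensitive to whether fibers are reducible.
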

\begin{proof} (i) Let $\nu \colon Y'\to Y$ be a finite \'etale cover, and take the Stein factorization of the composite morphism $Y'\to Y \to B$:
$$\begin{CD}
Y'@>{\nu}>> Y\\
@V{f'}VV @VV{f}V\\
B' @>>> B.
\end{CD}$$
Since $f$ has no multiple fibers, $B'\to B$ is \'etale. Consider the induced morphism $Y' \to Y\times_BB'$. Since $Y'\to Y$ and $Y\times_BB' \to Y$ are \'etale, $Y'\to Y\times_BB'$ is \'etale as well, and the statement of (i) is equivalent to the assertion that $Y'\to Y\times_BB'$ is an isomorphism. Replacing $Y$ by $Y\times_BB'$, which also has a singular fiber, it clearly suffices to prove that, if $\nu \colon Y'\to Y$ is \'etale and the general fiber of the composite morphism $f' = f\circ \nu \colon Y' \to B$ is connected, then $\nu$ is an isomorphism.  Note that, if $F$ is a smooth fiber of $f$, then $\nu^{-1}(F) = F'$ is a connected \'etale cover of $F$, of degree $\ell$, say. Thus $Y'$ is again a genus one fibration over $B$, with no multiple fibers and with Jacobian surface $J(Y') = X'$. Since all fibers of $f'$ are connected, the only possible singular fibers of $f\colon Y \to B$ are of type $I_k$, and at least one such fiber exists. Moreover, the fibers of $f'$ are smooth, if they lie over smooth fibers of $f$, or of type $I_{\ell k}$, if they lie over fibers of $f$ of type $I_k$. In particular, if $\chi(Y; \scrO_Y) = d>0$, so that the Euler number $e(Y) = c_2(Y) = 12d$, then $e(Y') = \ell e(Y) = 12d\ell$ and hence $\chi(Y'; \scrO_{Y'}) =  d\ell$.

On the other hand, $\chi(Y'; \scrO_{Y'}) = 1 - q(Y') + p_g(Y')$. Since  $Y'$ has no multiple fibers, $R^1(f')_*\scrO_{Y'} = \mathcal{L}$ is a line bundle on $B$ with $\deg \mathcal{L} = \deg (\omega_{X'/B})^{-1} < 0$, since $Y'$ and hence $X'$ have a nonmultiple singular fiber. Thus $H^0(B;R^1(f')_*\scrO_{Y'}) =0$ and it follows from the Leray spectral sequence that $q(Y') = h^1(Y'; \scrO_{Y'}) = g(B) =q(Y)$. Since $\nu \colon Y' \to  Y$ is \'etale,  $K_{Y'} = \nu^*K_Y $ and hence $p_g(Y') = h^0(Y'; K_{Y'}) = h^0(Y';\nu^*K_Y)= h^0(Y; \nu_*\nu^*K_Y)$. The homomorphism $H^0(Y; K_Y) \to  H^0(Y; \nu_*\nu^*K_Y)$ is an isomorphism, because  the restriction of the quotient $\nu_*\nu^*K_Y/K_K$ to a smooth fiber $F$ is of the form $\bigoplus_{i=1}^{\ell -1}\lambda^{-i}$, where $\lambda$ is a torsion line bundle of order $\ell$ and hence has no nonzero sections on $F$. Thus $p_g(Y') = h^0(Y; K_Y) = p_g(Y)$. But then $\chi(Y'; \scrO_{Y'}) = 1 - q(Y') + p_g(Y') = \chi(Y; \scrO_Y) = d$, contradicting the fact that $\chi(Y'; \scrO_{Y'}) =  d\ell$ and that $d>0$.

\medskip
\noindent (ii) Using (i), $H^1(Y; \mmu_N) \cong H^1(B; \mmu_N)$ is a free $\Zee/N\Zee$-module, and hence $H^3(Y; \mmu_N)$ is a free $\Zee/N\Zee$-module by Poincar\'e duality. The \'etale cohomology groups $H^i(Y; \mmu_N)$ are the cohomology of a finite complex $\mathcal{C}^\bullet$ of finite free $\Zee/N\Zee$-modules (see for example  \cite[p.\ 95, Theorem 4.9]{SGA4.5}). Thus, all but possibly one of the  $\Zee/N\Zee$-modules $H^i(\mathcal{C}^\bullet)$ is free. It is then a standard fact in homological algebra that $H^i(\mathcal{C}^\bullet)$ is a free $\Zee/N\Zee$-module for all $i$. In particular $H^2(Y; \mmu_N)$ is free.

\medskip
\noindent (iii) Using (ii), $H^2(Y; \mmu_{N})$ is a free $\Zee/N\Zee$-module.  By Poincar\'e duality, it is enough to show that the class $[F]$ of $F$ in $H^2(Y; \mmu_{N})$ is primitive, i.e.\ is not divisible by an integer $\ell > 1$ dividing $N$, which we may assume prime. Arguing by contradiction, suppose that $[F]=\ell\gamma$ for some prime $\ell$ and some $\gamma \in H^2(Y; \mmu_{N})$. Fix a point $p_0\in B$ and let $F_0$ be the corresponding fiber, which we assume is smooth. The image of $[F_0]$ in $H^2(Y; \mmu_{\ell})$ is zero, and hence, via the Kummer sequence, $\scrO_Y(F_0) = L^{\otimes \ell}$ for some line bundle $L$ on $Y$. Then $L$ restricts to an $\ell$-torsion line bundle on every fiber of $f$ and $L$ has degree zero on every component of every fiber. If $L|F$ is trivial on one smooth fiber $F$, or equivalently on all smooth fibers, then by semicontinuity $L|F$ has a section for  every fiber $F$, smooth or not. Since there are no multiple fibers, a standard argument (``Ramanujam's lemma," see e.g.\ \cite[II.12.2]{BHPV} or \cite[Exercise 1 p.\ 191]{Fr3}) shows that $L|F=\scrO_F$ for all fibers $F$ of $f$, and hence $L =f^*\lambda$ for some line bundle $\lambda$ on $B$. But this is clearly impossible, since then we would have $f^*\lambda^{\otimes \ell} = f^*\scrO_B(p_0)$. Applying $f_*$ then gives  $\lambda^{\otimes \ell} = \scrO_B(p_0)$ and hence $\ell \cdot \deg \lambda =1$, which is absurd.

Thus the restriction of $L$ to every smooth fiber has order $\ell$. Let $\nu \colon Y'\to Y$ be the $\mu_{\ell}$ cover defined by the $\ell^{\text{th}}$ root $L$ of $\scrO_Y(F_0)$ and the natural section of $\scrO_Y(F_0)$. By construction, $\nu_*\scrO_{Y'} = \scrO_Y \oplus L^{-1} \oplus \cdots \oplus L^{-(\ell-1)}$. Then $Y'$ is a smooth surface and the induced morphism $f'\colon Y'\to B$ has generic fiber equal to a nontrivial \'etale cover of a genus one curve. Hence $Y'$ is again a genus one fibration over $B$ with Jacobian surface $J(Y') = X'$. We are then in the situation of the proof of (i), except that, as  $(f')^*(p_0) = \nu^*F_0 = \ell G$ for some curve $G$ on $Y'$ isomorphic to $F_0$, $Y'$ has the unique multiple fiber $G$ over $p_0$ of multiplicity $\ell$,  and $G$ is tame. As before, if $\chi(Y; \scrO_Y) = d$, so that the Euler number $e(Y) = c_2(Y) = 12d>0$, then $e(Y') = \ell e(Y) = 12d\ell$ and hence $\chi(Y'; \scrO_{Y'}) =  d\ell$.

On the other hand, $\chi(Y'; \scrO_{Y'}) = 1 - q(Y') + p_g(Y')$. Since $G$ is tame, $R^1(f')_*\scrO_{Y'} = \mathcal{L}$ is a line bundle on $B$ with $\deg \mathcal{L} = \deg (\omega_{X'/B})^{-1} < 0$, and again $q(Y') = h^1(Y'; \scrO_{Y'}) = g(B) =q(Y)$. Since $\nu \colon Y' \to  Y$ is \'etale away from $F_0$, and is totally ramified at $F_0$, $K_{Y'} = \nu^*K_Y + (\ell-1)G$. Since $\nu^*K_Y|G$ is trivial and $\scrO_{Y'}(G)|G$ is torsion of order $\ell$, it is easy to check that $p_g(Y') = h^0(Y'; K_{Y'}) = h^0(Y';\nu^*K_Y)$, and, again arguing as in the proof of (i), $p_g(Y') = h^0(Y; K_Y) = p_g(Y)$. But then $\chi(Y'; \scrO_{Y'}) = 1 - q(Y') + p_g(Y') = \chi(Y; \scrO_Y) = d$, contradicting the fact that $\chi(Y'; \scrO_{Y'}) =  d\ell$ and that $d>0$. It follows that $[F]$ is not divisible in $H^2(Y; \mmu_{N})$, completing the proof of (iii).
\end{proof}

\begin{remark}\label{topsectionremark} 
If in addition  all fibers of $f$ are irreducible, then, by the isomorphism $R^1f_*\mmu_n \cong R^1\pi_*\mmu_n$ and an analysis of the Leray spectral sequences for $Y$ and for $X$, one can further show that  $H^2(B; R^1f_*\mmu_n) =0$. Using the Leray spectral sequence again, this gives an easier proof of (iii) under the additional assumption that all fibers of $f$ are irreducible.
\end{remark}

Pontrjagin square gives a quadratic form $\wp\colon H^2(X; \mmu_n) \to \Zee/2n\Zee$, which induces a similar quadratic form on $H^1(B; R^1\pi_*\mmu _n)$ (also denoted by $\wp$). In both cases it can be computed by lifting classes modulo $2n$ and squaring using the cupproduct. It is straightforward to check that the isomorphisms of Lemma~\ref{identify} are compatible with Pontrjagin square. We then have the following formula, special cases of which were established by Artin and Swinnerton-Dyer \cite{ASD}:

\begin{lemma}\label{ASDtoplemma} Suppose that $f$ or equivalently $\pi$ has a singular fiber. If $\alpha \in H^1(B; R^1\pi_*\mmu _n)$  and $Y$ is the corresponding genus one fibration, with $n$-section $D$, then
$$D^2 \equiv -n^2d + \wp(\alpha) \bmod 2n.$$
\end{lemma}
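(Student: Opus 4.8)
The plan is to compare the self-intersection $D^2$, computed as a cup product in $H^2(Y;\mmu_n)$ after reduction modulo $2n$, with the Pontrjagin square of the class $\alpha = [Y,\varphi,D]$. The key input is Lemma~\ref{identify}, which identifies $R^if_*\mmu_N \cong R^i\pi_*\mmu_N$ compatibly with cup products and (as remarked before the statement) with Pontrjagin square, together with the fact from Proposition~\ref{topsection} that, since $f$ has a singular fiber, $H^2(Y;\mmu_N)$ is a free $\Zee/N\Zee$-module admitting a class $\theta$ with $\theta\cdot[F]=1$. First I would fix $N=2n$ and lift the mod-$n$ class $[D]\in H^2(Y;\mmu_n)$ to a class $\widetilde{[D]}\in H^2(Y;\mmu_{2n})$; the self-intersection $D^2\in\Zee$ reduces mod $2n$ to the cup-product square $\widetilde{[D]}\cdot\widetilde{[D]}$, since the algebraic cycle class $[D]\in H^2(Y;\mmu_{2n})$ is such a lift and its cup product square is $D^2\bmod 2n$.

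Next I would use the Leray filtration on $H^2(Y;\mmu_{2n})$ for $f$. Write $[D] = [D]' + c\cdot\theta$ in an appropriate sense, where $[D]'$ lies in the image of $H^1(B;R^1f_*\mmu_{2n})$ and $\theta$ is the section class from Proposition~\ref{topsection}(iii); more precisely, using that all fibers are irreducible, $H^2(Y;\mmu_{2n})$ decomposes along the Leray spectral sequence into a part pulled back from $H^2(B;\mmu_{2n})=\Zee/2n\Zee\cdot[F]$, the "middle" part $H^1(B;R^1f_*\mmu_{2n})$, and the section class $\theta$ with $\theta\cdot[F]=1$. Since $D\cdot F=n$, the image of $[D]$ in $H^0(B;R^2f_*\mmu_{2n})\cong\Zee/2n\Zee$ is $n$, so $[D] - n\theta$ lies in $[F]^\perp$, i.e.\ descends to a class in $H^1(B;R^1f_*\mmu_{2n})$ modulo $\Zee/2n\Zee\cdot[F]$. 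By Lemma~\ref{identify} the reduction mod $n$ of this class is $-\alpha$ up to the $[F]$-ambiguity. I would then expand
$$
D^2 \equiv \widetilde{[D]}\cdot\widetilde{[D]} \equiv ([D]-n\theta)\cdot([D]-n\theta) + 2n\,\theta\cdot[D] - n^2\,\theta\cdot\theta \pmod{2n},
$$
and observe that the middle term $2n\,\theta\cdot[D]$ vanishes mod $2n$, while $\theta\cdot[D] = \theta\cdot(n\theta + (\text{middle}))$ contributes only through $n^2\theta^2$; collecting terms gives $D^2 \equiv \wp(\alpha) - n^2\theta^2 \pmod{2n}$ once one checks that the Pontrjagin square of $[D]-n\theta$ reduces to $\wp(\alpha)$ modulo the $[F]$-indeterminacy (which is harmless since $[F]\cdot[F]=0$ and $[F]$ is primitive, so adding a multiple of $[F]$ changes the square by a multiple of $2n$).

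The remaining point is to evaluate $\theta^2\bmod 2n$ and show it equals $d$. Here I would use the relative duality / canonical-bundle input: the relative dualizing sheaf is $\omega_{Y/B}=f^*\omega$ with $\deg\omega = d$, so $K_Y\cdot F = n(2g(B)-2) + \deg\omega_{Y/B}\cdot$(fiber)$\,$— more simply, adjunction on a fiber gives $F^2=0$, $K_Y\cdot F=0$, and for the section-like class $\theta$ one computes $\theta^2$ from Riemann--Roch / the structure of the Leray filtration: a genuine $n$-section would have square congruent to $n^2$ times the self-intersection of a section of the Jacobian, and a section $\sigma$ of an elliptic surface $X$ satisfies $\sigma^2 = -d$ by the adjunction formula $\sigma^2 + \sigma\cdot K_X = 2g(B)-2$ together with $\sigma\cdot K_X = d + 2g(B) - 2$. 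This yields $\theta^2\equiv -d$, hence $-n^2\theta^2 \equiv n^2 d$, so $D^2 \equiv -n^2 d + \wp(\alpha)$ — here I must be careful with the sign convention, matching it against Lemma~\ref{identify}, which says $-[D]$ (not $[D]$) corresponds to $\alpha$; tracking this sign correctly through the squaring is where a factor could slip, and it is the main thing to get right. The main obstacle is precisely this bookkeeping: assembling the Leray decomposition of $[D]$ mod $2n$, keeping the $[F]$-indeterminacy under control, correctly identifying $\theta^2\bmod 2n$ with $-d$ via adjunction on the Jacobian surface, and reconciling the sign in $-[D]\leftrightarrow\alpha$ so that the cross terms cancel to leave exactly $-n^2d+\wp(\alpha)$.
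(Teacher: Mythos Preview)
Your overall strategy is exactly the paper's: lift $[D]$ to $H^2(Y;\mmu_{2n})$, subtract $n\theta$ using the class $\theta$ from Proposition~\ref{topsection} so that the result lies in $[F]^\perp$, identify its square with $\wp(\alpha)$ (the sign in $-[D]\leftrightarrow\alpha$ is harmless since $\wp$ is quadratic), and expand $(D-n\theta)^2$. The cross term $2n(D\cdot\theta)$ drops out mod $2n$, and you are left with $D^2 \equiv \wp(\alpha) - n^2\theta^2 \bmod 2n$. So far, so good.

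The genuine gap is your computation of $\theta^2$. You argue by analogy with a section $\sigma$ of the Jacobian surface, where adjunction gives $\sigma^2=-d$. But $\theta$ is only a class in $H^2(Y;\mmu_{2n})$; it is not the class of a section, nor of any algebraic curve at all (indeed $Y$ may have no section), so adjunction and Riemann--Roch say nothing about $\theta^2$. The value of $\theta^2\in\Zee/2n\Zee$ is not determined by the condition $\theta\cdot[F]=1$: you can always replace $\theta$ by $\theta+\gamma$ with $\gamma\in[F]^\perp$, which changes $\theta^2$ by $2\theta\cdot\gamma+\gamma^2$. What \emph{is} determined is $\theta^2\bmod 2$, and that is exactly what is needed, since $n^2\theta^2\bmod 2n$ depends only on $\theta^2\bmod 2$. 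The paper supplies this via the Wu formula (citing \cite{Urabe}): for any class $\theta\in H^2(Y;\mmu_2)$ one has $\theta^2 + K_Y\cdot\theta\equiv 0\bmod 2$. Since $K_Y$ is numerically $(d+2g-2)F$ and $\theta\cdot F=1$, this gives $\theta^2\equiv d\bmod 2$, whence $n^2\theta^2\equiv n^2d\bmod 2n$.

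Incidentally, your worry about the sign of the $n^2d$ term is a red herring: $2n^2d\equiv 0\bmod 2n$ always, so $n^2d\equiv -n^2d\bmod 2n$ and the stated formula is insensitive to that sign. The place to be careful is not there but at the Wu-formula step.
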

\begin{proof} To compute $\wp(\alpha)$, it suffices to lift $\alpha$ to a class in $H^1(B; R^1\pi_*\mmu _{2n})$ and compute its square. By Lemma~\ref{identify}, we can work with $[D] \in H^1(B; R^1f_*\mmu_n)$. The class $[D] \in H^2(Y; \mmu_{2n})$ does not induce a class in $H^1(B; R^1f_*\mmu _{2n})$ since its image in $H^0(B; R^2f_*\mmu_{2n})$ is nonzero. To correct this, applying Proposition~\ref{topsection}  with $N=2n$, there exists  $\theta\in H^2(Y; \mmu_{2n})$ such that $\theta \cdot F=1$.  Thus the class $[D] -n\theta\in H^2(Y; \mmu_{2n})$ maps to $0$ in  $H^0(B; R^2f_*\mmu_{2n})$ and so induces a class in $H^1(B; R^1f_*\mmu _{2n})$ which is clearly a lift of $[D]$. Computing, we find that 
\begin{align*}\wp(\alpha) &\equiv (D-n\sigma)^2 = D^2 -2n(D\cdot \theta) + n^2\theta^2 \bmod{2n}\\
 &\equiv D^2+n^2d \bmod{2n},
 \end{align*}
 since by the Wu formula \cite[Proposition 2.1]{Urabe}, $\theta^2 + K_Y\cdot \theta \equiv 0 \bmod 2$. This establishes the formula.
\end{proof}

From the formula 
$$D^2 = -2\deg R^1f_*\scrO_Y(-D) - (n+2)d,$$
we see that the $n$ possible values of $c_1(R^1f_*\scrO_Y(-D))$ mod $n$, or equivalently of $c_1(f_*\scrO_Y(D))$  mod $n$, determine and are determined by $D^2$ or by $\wp(\alpha)$ mod $2n$. Note that $D^2 + K_Y\cdot D = D^2 +n(d-2)\equiv 0 \bmod 2$, i.e.\ $D^2\equiv nd \bmod 2$. This is consistent with the lemma above since $-n^2d\equiv nd\bmod 2$ and  $\wp(\alpha)\equiv 0 \bmod 2$.

Before we state the main the the main theorem of this section, we note the following terminology: If $A$ is a free $\Zee/n\Zee$-module, for example $A=H^1(\Pee^1; R^1\pi_*\mmu _n)$, then  a class $\alpha \in A$ is \textsl{primitive} if the following holds: if $n'$ is a positive integer dividing $n$ and $\alpha = n'\alpha'$ for some $\alpha' \in A$, then $n'=1$. Similarly, if $\Lambda$ is a free $\Zee$-module, a class $\lambda \in \Lambda$ is \textsl{primitive} if the following holds: if $\lambda = a\lambda'$ for some positive integer $a$ and $\lambda'\in \Lambda$, then $a=1$.

\begin{theorem}\label{irreducible} Suppose that  $k= \Cee$.  Let $i\in \Zee/2n\Zee$ satisfy $i \equiv 0 \bmod 2$, and let $\mathcal{S}^0_{d,n,i}$ be the coarse moduli space of triples $(Y,f, D)$, where $f\colon Y \to \Pee^1$ is a genus one fibration with  Jacobian surface $\pi\colon X \to \Pee^1$, such that 
\begin{enumerate}
\item[\rm (i)] $\chi(Y;\scrO_Y) = \chi(X;\scrO_X) = d$;
\item[\rm (ii)] All fibers of $f$ or equivalently $\pi$ are irreducible;
\item[\rm (iii)]  The class $\alpha \in H^1(\Pee^1; R^1\pi_*\mmu _n)$ corresponding to $Y$ is primitive and $\wp(\alpha) = i$.
\end{enumerate}

Then $\mathcal{S}^0_{d,n,i}$ is irreducible.
\end{theorem}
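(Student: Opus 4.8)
The plan is to realize $\mathcal S^0_{d,n,i}$ as the image of a finite étale cover of the irreducible moduli space of its Jacobian elliptic surfaces, so that its irreducibility reduces to a transitivity statement for the monodromy action on a finite $\Zee/n\Zee$-module equipped with the Pontrjagin square. First I would fix, via Weierstrass equations, a Zariski-open $U\subseteq|\scrO_{\Pee^1}(4d)|\times|\scrO_{\Pee^1}(6d)|$ parametrizing the data whose elliptic surface $X_u\to\Pee^1$ is smooth with all fibers irreducible (only $I_1$ fibers); then $U$ is smooth and irreducible, it carries a genuine universal elliptic surface $\pi\colon\mathcal X\to U\times\Pee^1$, and it surjects onto the corresponding open locus of $\mathcal M_d$. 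Since $\mathcal X\to U$ is smooth and proper, $u\mapsto H^1(\Pee^1;R^1\pi_*\mmu_n)$ is a local system of finite groups on $U$ with locally constant quadratic form $\wp$, so $u\mapsto P_i(X_u):=\{\alpha\ \text{primitive},\ \wp(\alpha)=i\}$ cuts out a finite étale cover $\rho\colon\mathcal T\to U$. By Lemma~\ref{ASDlemma} and its corollary, together with the family version of the construction $Y=\mathcal Y^0\times^{\mathcal E}\mathcal X$, there is a tautological family of genus one fibrations with an $n$-section over $\mathcal T$, inducing by the coarse moduli property a morphism $\Theta\colon\mathcal T\to\mathcal S^0_{d,n,i}$.

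I would then check that $\Theta$ is surjective: given $(Y,f,D)\in\mathcal S^0_{d,n,i}$, its Jacobian $X$ has $\chi(X;\scrO_X)=d$ with all fibers irreducible, hence $X\cong X_u$ for some $u\in U$; after fixing such an isomorphism $\varphi$, the pair $(Y,D)$ determines a class $\alpha\in P_i(X_u)$ by Lemma~\ref{identify} and Lemma~\ref{ASDtoplemma}, i.e. a point of $\mathcal T$ over $(Y,f,D)$ (a different choice of $\varphi$ moves $\alpha$ by $\operatorname{Aut}(X_u/\Pee^1,\sigma)$ but does not change the image in $\mathcal S^0_{d,n,i}$). Since the image of an irreducible scheme is irreducible, it then suffices to prove that $\mathcal T$ is irreducible; as $\rho$ is finite étale over the smooth irreducible base $U$, this is equivalent to $\mathcal T$ being connected, i.e. to the statement that the monodromy $\pi_1(U,u_0)\to\operatorname{Aut}\big(H^1(\Pee^1;R^1\pi_*\mmu_n)\big)$ acts \textbf{transitively on $P_i(X_{u_0})$}.

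The analysis of this monodromy is the heart of the matter, and it is here that one must take $k=\Cee$. With $\mmu_n\cong\Zee/n\Zee$, Proposition~\ref{topsection}(i) shows $X_{u_0}$ is simply connected, so $H^2(X_{u_0};\Zee)$ is unimodular, $H^2(X_{u_0};\mmu_n)=H^2(X_{u_0};\Zee)/n$, and $H^1(\Pee^1;R^1\pi_*\mmu_n)\cong[F]^\perp/(\Zee/n\Zee\cdot[F])$ is a free $\Zee/n\Zee$-module carrying the quadratic refinement $\wp$ valued in $\Zee/2n\Zee$. Using the deformation-invariance of the lattice $H^2(X;\Zee)$ one identifies this module explicitly and verifies, for $d\geq2$, that it satisfies the hypotheses of Eichler's transitivity criterion (the case $d=1$ being vacuous, since $H^2(X;\scrO_X)=0$ makes every principal homogeneous space of a rational elliptic surface trivial). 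One then produces monodromy elements by degenerating inside $U$: a one-parameter subfamily along which an extra nodal fiber, or (near the boundary of $U$) a reducible fiber, appears has Picard--Lefschetz monodromy equal to the reflection $x\mapsto x-\langle x,\delta\rangle\delta$ in the vanishing cycle $\delta\in[F]^\perp$ with $\delta^2=-2$. Letting $\delta$ range over a suitable root system, these reflections lie in the monodromy group and generate a subgroup of $O\big(H^1(\Pee^1;R^1\pi_*\mmu_n)\big)$ which, by Eichler's criterion over $\Zee/n\Zee$, already acts transitively on primitive classes at each fixed value of $\wp$. Hence the monodromy is transitive on $P_i(X_{u_0})$, $\mathcal T$ is connected, and $\mathcal S^0_{d,n,i}$ is irreducible.

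The hard part will be this last step: exhibiting enough Picard--Lefschetz reflections inside the monodromy of $U$ and deducing that the group they generate is already transitive on primitive classes of a given Pontrjagin square. It combines the topological input of Proposition~\ref{topsection} (to pin down the lattice and the freeness of the $\mmu_n$-module), a Picard--Lefschetz analysis of degenerating families of elliptic surfaces over $\Cee$, and a lattice-theoretic transitivity result of Eichler type; the positive-characteristic analogue of this monodromy computation is not treated here.
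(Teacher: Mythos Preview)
Your plan matches the paper's architecture exactly: reduce irreducibility of $\mathcal S^0_{d,n,i}$ to transitivity of the monodromy of the universal family on primitive classes in $[F]^\perp/(\Zee/n\Zee\cdot[F])\cong\{\sigma,F\}^\perp$ with fixed Pontrjagin square. The paper differs from your sketch only in how it executes the two hard steps you flag. For the monodromy, rather than assembling Picard--Lefschetz reflections ad hoc, the paper invokes a theorem of L\"onne (built on Ebeling's theory of complete vanishing lattices): by degenerating the Weierstrass family to a surface with a single $E_{12d-4}$ singularity $y^2=4x^3+z^{6d-1}$, one identifies the Milnor lattice with $\Lambda=\{\sigma,F\}^\perp\cong(2d-2)U\oplus d(-E_8)$ and concludes that the image of $\pi_1$ in $O(\Lambda)$ already contains $O^*(\Lambda)$. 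For the lattice step, instead of an Eichler-type criterion over $\Zee/n\Zee$, the paper lifts to $\Zee$: it shows that every primitive $\alpha\in\Lambda/n\Lambda$ with $\wp(\alpha)=i$ admits a primitive integral lift $\tilde\alpha\in\Lambda$ with $\tilde\alpha^2=j$ for any fixed $j\equiv i\bmod 2n$, and then applies Wall's theorem that $O^*(\Lambda)$ is transitive on primitive integral vectors of a given square. Your Eichler-mod-$n$ route should also go through (for $d\ge2$ there are at least two hyperbolic summands), but note that producing ``a suitable root system'' of vanishing cycles is not by itself enough---you need the group they generate to be all of $O^*(\Lambda)$ (or at least to surject onto the relevant orthogonal group mod $n$), and this is precisely what the L\"onne/Ebeling input supplies.
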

\noindent \emph{Proof.}   We may assume that $d\geq 2$. Let $\mathcal{M}^0_d\subseteq \mathcal{M}_d$ be the coarse moduli space of elliptic surfaces $\pi \colon X \to \Pee^1$ with $\chi(X;\scrO_X) = d$ such that all fibers of $\pi$ are irreducible. As we have seen in the discussion of Theorem~\ref{deform}, $\mathcal{M}^0_d$ is irreducible. If $t\in \mathcal{M}^0_d$, we denote the corresponding elliptic surface by $X_t$. Let $\mathcal{S}^0_{d,n}$ be the coarse moduli space of triples $(Y,f, D)$, where $f\colon Y \to \Pee^1$ is a genus one fibration with  Jacobian surface $\pi\colon X \to \Pee^1$ satisfying (i) and (ii) in the statement of the theorem. There is a morphism $\mathcal{S}^0_{d,n} \to \mathcal{M}^0_d$ whose fiber over a  general point $t$ is $H^1(\Pee^1; R^1\pi_*\mmu _n) \cong [F]^\perp/(\Zee/n\Zee)\cdot [F]$, viewed as a subquotient of $H^2(X_t; \mmu_n)$. (In particular, if $X_t$ is general, then the pair $(X_t, \sigma)$ has no nontrivial automorphisms.) The irreducible  components of $\mathcal{S}^0_{d,n}$ then correspond to the orbits of the action of $\Gamma = \pi_1(\mathcal{M}^0_d, t)$ on $[F]^\perp/(\Zee/n\Zee)\cdot [F]$, which we can also identify with $\{\sigma, F\}^\perp$. The action of $\Gamma$ preserves both the divisibility and the Pontrjagin square of a class $\alpha \in \{\sigma, F\}^\perp$, so it suffices to show that $\Gamma$ acts transitively on the set of primitive classes $\alpha$ with $\wp(\alpha) =i$.

In $H^2(X_t; \Zee)$, the classes $\sigma$ and $F$ span a unimodular lattice, and 
$$\{\sigma, F\}^\perp = \Lambda \cong (2d-2)U \oplus d(-E_8),$$
 where $U$ is the rank two hyperbolic lattice and $-E_8$ is the negative of the root lattice $E_8$. The sublattice $\{\sigma, F\}^\perp$ of $H^2(X_t; \mmu_n)$ is the mod $n$ reduction of $\Lambda$, and if $\alpha$ is the reduction of an element $\lambda \in \Lambda$, then $\wp(\alpha) = \lambda^2 \bmod {2n}$. The action of $\Gamma$ on $\{\sigma, F\}^\perp\subseteq H^2(X_t; \mmu_n)$ is the  reduction of the action of $\Gamma$ on $\Lambda$. We begin by identifying this action, thanks to a result of 
 L\"onne \cite{Lonne} based on work of Ebeling \cite{Ebeling1, Ebeling2}:
 
 \begin{theorem} The image of $\Gamma$ in the orthogonal group $O(\Lambda)$ contains the index two subgroup $O^*(\Lambda)$ of elements of real spinor norm one.
 \end{theorem}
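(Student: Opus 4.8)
The plan is to present $\Gamma=\pi_1(\mathcal{M}^0_d,t)$ as (essentially) the fundamental group of the complement of a discriminant, so that its image in $O(\Lambda)$ is generated up to finitely many extra elements by Picard--Lefschetz reflections in vanishing cycles, and then to identify the resulting reflection group by invoking Ebeling's classification of the automorphism groups of complete (symmetric) vanishing lattices, in the form carried out for elliptic surfaces by L\"onne.

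\textbf{Step 1: the Weierstrass family.} Every $X_t\in\mathcal{M}^0_d$ is the smooth Weierstrass surface attached to a pair $(A,B)\in H^0(\Pee^1;\scrO(4d))\oplus H^0(\Pee^1;\scrO(6d))$, and the condition that all fibers be irreducible says exactly that the discriminant $\Delta=4A^3+27B^2\in H^0(\Pee^1;\scrO(12d))$ has $12d$ distinct zeros, i.e.\ all singular fibers are of type $I_1$ and the total space is already smooth. Hence, modulo the connected group $\mathrm{PGL}_2\times\mathbb{G}_m$ acting on Weierstrass data (which contributes nothing essential to the monodromy on $\Lambda$), $\mathcal{M}^0_d$ is the complement in the affine space of such pairs of the hypersurface where $\Delta$ acquires a multiple root, and $\Gamma$ acts on $\Lambda=\{\sigma,F\}^{\perp}\subset H^2(X_t;\Zee)$, which is the even unimodular indefinite lattice $(2d-2)U\oplus d(-E_8)$ of rank $12d-4\geq 20$ for $d\geq 2$.

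\textbf{Step 2: vanishing cycles and L\"onne's computation.} Crossing the discriminant transversally along a generic path degenerates $X_t$ to a surface with a single ordinary node on its total space (two $I_1$ fibers colliding), around which the monodromy is the Picard--Lefschetz reflection $s_\delta$ in a vanishing cycle $\delta\in\Lambda$ with $\delta^2=-2$. One must check that the discriminant is irreducible (so the vanishing cycles form a single $\Gamma$--orbit $\Delta_{\mathrm{van}}$), that they span $\Lambda\otimes\Cee$, and, crucially, that $(\Lambda,\Delta_{\mathrm{van}})$ is a \emph{complete} symmetric vanishing lattice in Ebeling's sense, i.e.\ $\Delta_{\mathrm{van}}$ contains the distinguished $E_8$--type sub-configuration that forces completeness. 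This last point is the substance of L\"onne's work: via a Zariski--van Kampen / braid-monodromy presentation of $\pi_1$ of the complement of the discriminant of $4A^3+27B^2$, adapted to the two-variable structure of the pair $(A,B)$, he exhibits a distinguished basis of vanishing cycles and verifies the completeness criterion. (For $d=1$ this recovers the classical fact that the monodromy of the family of rational elliptic surfaces is $W(E_8)=O(-E_8)$, which can also serve as the base of an induction on $d$, splitting off a rational elliptic surface contributing a $-E_8$ summand together with the vanishing cycles of the connecting nodes.)

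\textbf{Step 3: conclusion, and the main obstacle.} Granting Step 2, Ebeling's theorem on complete symmetric vanishing lattices (extending Janssen's skew-symmetric case and Ebeling's analysis of Milnor lattices of isolated singularities), applied to $(\Lambda,\Delta_{\mathrm{van}})$ with $\Lambda$ indefinite of large rank, shows that the subgroup of $O(\Lambda)$ generated by the reflections $s_\delta$, $\delta\in\Delta_{\mathrm{van}}$, contains the index-two subgroup $O^*(\Lambda)$ of elements of real spinor norm one; since each $s_\delta$ is a monodromy transformation, the image of $\Gamma$ in $O(\Lambda)$ contains $O^*(\Lambda)$. The hard part is precisely the completeness assertion in Step 2 --- that the vanishing cycles of the Weierstrass discriminant are plentiful enough to generate $O^*(\Lambda)$ rather than some proper reflection subgroup --- which requires either L\"onne's explicit braid-monodromy analysis or a careful induction on $d$ in which one must track the real spinor norm across the degeneration. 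It is this step that is genuinely available only over $\Cee$ (Picard--Lefschetz theory and the braid-monodromy presentation being complex-analytic), which is why the hypothesis $k=\Cee$ is imposed here.
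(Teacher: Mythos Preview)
Your proposal is correct and follows the same overall architecture as the paper: produce vanishing cycles for the Weierstrass family, verify that together with $\Lambda$ they form a complete vanishing lattice in Ebeling's sense, and then invoke Ebeling's theorem to conclude that the reflection group they generate is $O^*(\Lambda)$. You also correctly identify the genuine content as lying in the completeness verification, and correctly attribute it to L\"onne.

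Where the paper differs is in how it concretely realizes the vanishing cycles. Rather than working with the global discriminant hypersurface and a Zariski--van Kampen presentation as you outline, the paper degenerates the Weierstrass equation to $y^2=4x^3+z^{6d-1}$, an isolated singularity of type $E_{12d-4}$ in Arnold's notation, whose Milnor lattice is unimodular of rank $12d-4$ and hence coincides with $\Lambda$. An explicit two-parameter perturbation of $g_2$ and $g_3$ then produces $12d-4$ nearby surfaces, each with a single ordinary double point, yielding a distinguished basis $\Delta$ of vanishing cycles. Connectedness of the Dynkin diagram and the presence of a specific $6$-vertex subdiagram (not an $E_8$ configuration as you wrote) then give completeness directly from Ebeling's tables. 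This buys a self-contained construction of the vanishing cycles without the braid-monodromy machinery, at the cost of invoking the singularity $E_{12d-4}$ and checking that the universal family of elliptic surfaces is transverse to the $\mu$-constant stratum. Your route defers more to L\"onne's paper but is otherwise equivalent.
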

 \begin{proof} Begin with the degenerate Weierstrass equation $y^2z = 4x^3 -g_2xz^2 -g_3z^3$ defining a singular surface $X_0 \in \Pee (\scrO_{\Pee^1}(2d)\oplus \scrO_{\Pee^1}(3d)\oplus\scrO_{\Pee^1})$, where $g_2=0\in H^0(\Pee^1; \scrO_{\Pee^1}(4d))$ and $g_3\in H^0(\Pee^1; \scrO_{\Pee^1}(6d))$ is a section vanishing to order $6d-1$ at a point $p\in \Pee^1$ and hence vanishes simply at one other point. In local coordinates, $X_0$ has a singularity analytically of the form $y^2 = 4x^3 + z^{6d-1}$, which is of type $E_{12d-4}$, in Arnold's notation. According to Table 3 in \cite{Ebeling1}, the intersection pairing on the Milnor lattice of $E_{12d-4}$ is unimodular of rank $12d-4$ (as we shall also see directly). If  $X$ is a general regular elliptic surface with $\chi(\scrO_{X}) = d$, one can identify the Milnor lattice of $E_{12d-4}$ with a sublattice of $H^2(X; \Zee)$, which is orthogonal to $\sigma$ and $F$, hence is contained in $\Lambda$ and therefore equal to $\Lambda$ since both are unimodular.
 
The universal family of elliptic surfaces containing $X_0$  fails to be a versal deformation of the $E_{12d-4}$ singularity. In fact, the $E_{12d-4}$ singularity has a $\Cee^*$-action, and it is easy to check directly that the universal family of elliptic surfaces gives a versal deformation for the negative weight direction, and hence is transverse to the $\mu =$ constant stratum. Using results of Pinkham \cite{Pinkham}, one can also identify the Milnor fiber with $X_t - \sigma -F_c$, where $X_t$ is a smooth elliptic surface with $\chi(\scrO_{X_t}) =d$, $\sigma$ is a section on $X_t$, and $F_c$ is a cusp fiber. Thus the intersection pairing on the Milnor fiber is that of $H_2(X_t - \sigma -F_c; \Zee)$ and hence is isomorphic to $\Lambda$.

Without appealing to the theory of deformations of singularities with $\Cee^*$-action, one can argue directly (following \cite{Lonne}) as follows: By adding a small term to $g_2$ which does not vanish at $p$ and a small linear term to $g_3$ (in local coordinates), we obtain a new elliptic surface $X_s$ with local equation 
 $$y^2 = 4x^3 + a_1x + t^{6d-1}+a_2t+s,$$
 where we view $a_1$ and $a_2$ as fixed and $s$ as a parameter. The surface $X_s$ will have a singular point $(\xi, \eta, \tau)\in X_s$ exactly when when the partial derivatives $12x^2 + a_1$, $2y$, $(6d-1)t^{6d-2} + a_2$ all vanish for $(x,y,t) = (\xi, \eta, \tau)$.
If $\xi$ is a root of $12x^2 + a_1$, $\tau$ is a root of $(6d-1)t^{6d-2} + a_2$ and 
$$s = -(4\xi^3 + a_1\xi + \tau^{6d-1}+a_2\tau)= -\frac{2a_1}{3}\xi -\left(\frac{6d-2}{6d-1}\right)a_2\tau,$$ then the surface $X_s$ has an ordinary double point corresponding to $(x,y,t) = (\xi, 0, \tau)$.  One checks that, if $a_1$ and $a_2$ are general, then in this way we produce $12d-4$ different surfaces $X_s$, each with a single ordinary double point near the original $E_{12d-4}$ singularity. For each such surface and ordinary double point, let $\delta$ be the corresponding vanishing cycle and let $\Delta \subseteq \Lambda$ be the set of all such vanishing cycles. Then   $\Delta$ is the set of vanishing cycles for the $E_{12d-4}$ singularity. As such, $\Delta$ spans $\Lambda$ and  the Dynkin diagram corresponding to $\Delta$ is connected. Hence, if $\Gamma_{\Delta}$ is the group generated by reflections about the vanishing cycles in $\Delta$, then $\Delta$ is contained in a single $\Gamma_{\Delta}$-orbit. Moreover, the Dynkin diagram for $\Delta$ contains a certain subdiagram with $6$ vertices, which makes the pair $(\Lambda, \Delta)$ a complete vanishing lattice in the sense of \cite{Ebeling2}. It then follows by \cite[Theorem 5.3.4]{Ebeling2} that $\Gamma_{\Delta} = O^*(\Lambda)$.
 
On the other hand, it is easy to see that the deformations of $X_s$ are versal for the unique double point on $X_s$. The monodromy associated to this deformation acts on $\Lambda$ as the reflection in the corresponding vanishing cycle. Hence the image of the monodromy group contains $\Gamma_{\Delta}$ and thus it contains $O^*(\Lambda)$. 
  \end{proof}
  
  \begin{remark} Although the above proof analyzes the negative weight deformations of the $E_{12d-4}$ singularity $y^2 = 4x^3 + z^{6d-1}$, it is in many ways more natural to consider instead the singularity $y^2 = 4x^3 - c_2xz^{4d} - c_3z^{6d}$, whose Milnor fiber is diffeomorphic to $X_t - \sigma -F$, where $X_t$ is a smooth elliptic surface with $\chi(\scrO_{X_t}) =d$, $\sigma$ is a section,  and $F$ is a smooth fiber on $X_t$ isomorphic to the elliptic curve with equation $y^2 = 4x^3 - c_2x - c_3$.
  \end{remark}
 
 Next we have the following standard result,  due to Wall \cite{Wall} in the unimodular case:
 
 \begin{lemma}\label{Walllemma} Fix an even integer $j$. Then the group $O^*(\Lambda)$ acts transitively on the set of primitive classes $\lambda \in \Lambda$ with $\lambda^2 = j$. \qed
 \end{lemma}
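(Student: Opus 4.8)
The plan is to reduce to Wall's theorem together with a short spinor-norm argument. Note first that $\Lambda\cong(2d-2)U\oplus d(-E_8)$ is an even \emph{unimodular} lattice of signature $(2d-2,10d-2)$, and since $d\geq 2$ it contains at least two orthogonal copies of the hyperbolic plane $U$. Wall's theorem \cite{Wall} is exactly the assertion of the lemma but with the \emph{full} orthogonal group $O(\Lambda)$ in place of $O^*(\Lambda)$; so the real content is to pass from $O(\Lambda)$ to its index-two subgroup $O^*(\Lambda)$.

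Given two primitive vectors $\lambda_1,\lambda_2\in\Lambda$ with $\lambda_1^2=\lambda_2^2=j$, I would pick $g\in O(\Lambda)$ with $g\lambda_1=\lambda_2$ using Wall. If $g\in O^*(\Lambda)$ there is nothing to do; otherwise I would multiply $g$ on the left by an element $h$ of $\mathrm{Stab}_{O(\Lambda)}(\lambda_2)$ of nontrivial real spinor norm, so that $hg\in O^*(\Lambda)$ (here one uses that $\Lambda$ is indefinite, so $O^*(\Lambda)$ really has index two in $O(\Lambda)$). Thus everything comes down to showing $\mathrm{Stab}_{O(\Lambda)}(\lambda)\not\subseteq O^*(\Lambda)$ for a primitive $\lambda$ with $\lambda^2=j$. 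Since the real spinor norm is a homomorphism to the abelian group $\mathbb{R}^*/(\mathbb{R}^*)^2$, it is conjugation invariant, so by Wall's transitivity it suffices to verify this for one convenient representative: I would take $\lambda_0=e+\tfrac{j}{2}f$, where $e,f$ span one of the hyperbolic summands $U_1$ of $\Lambda$ (this is where the evenness of $j$ is used).

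For this $\lambda_0$ one computes $\lambda_0^\perp$ directly: it equals $\mathbb{Z}(e-\tfrac{j}{2}f)$ together with \emph{all} the remaining summands, so in particular $\lambda_0^\perp$ splits off a second hyperbolic plane $U_2$ and hence contains roots $u$, $v$ with $u^2=2$ and $v^2=-2$. The corresponding reflections $r_u,r_v\in O(\Lambda)$ both fix $\lambda_0$, and they have opposite real spinor norms (the square classes of $2$ and of $-2$); so at least one of them lies outside $O^*(\Lambda)$, which is exactly what is needed.

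The only point demanding genuine care is the spinor-norm bookkeeping in the last paragraph — fixing the normalization of the real spinor norm of a reflection and confirming that reflections in vectors of positive versus negative square lie in the two distinct cosets of $O^*(\Lambda)$ in $O(\Lambda)$ — but for an indefinite lattice these are standard facts, and I would simply cite them. Everything else is a formal reduction to Wall's theorem, whose proof I would not reproduce.
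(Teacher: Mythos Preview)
Your argument is correct and matches the paper's proof essentially line for line: both invoke Wall's theorem for $O(\Lambda)$, send every primitive $\lambda$ of square $j$ to the standard vector $e+\tfrac{j}{2}f$ in one hyperbolic summand, and then correct the spinor norm using a reflection in a second orthogonal hyperbolic summand. The only cosmetic difference is that you phrase the correction in terms of the stabilizer of $\lambda_2$ and hedge on the spinor-norm convention by exhibiting both a $(+2)$-vector and a $(-2)$-vector in $\lambda_0^\perp$, whereas the paper simply says ``modify $A$ by a reflection about an element of $U'$.''
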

 \begin{proof} By \cite{Wall}, the result holds with $O(\Lambda)$ instead of $O^*(\Lambda)$. In particular, fixing a hyperbolic summand $U$ of 
 $\Lambda$ with standard basis $\varepsilon, \delta$ ($\varepsilon^2 =  \delta^2=0$ and $\varepsilon\cdot \delta=1$), every primitive $\lambda \in \Lambda$ with $\lambda^2 = j$ is equivalent under $A\in O(\Lambda)$ to $\varepsilon+(j/2) \delta$. If $A\notin O^*(\Lambda)$, we can find a hyperbolic summand $U'$ of  $\Lambda$ orthogonal to $U$ and modify $A$ by a reflection about an element of $U'$ to adjust the spinor norm to be $1$.
 \end{proof}
 
 To complete the proof of Theorem~\ref{irreducible}, it therefore suffices to prove:
 
 \begin{lemma} For every $i\in \Zee/2n\Zee$ with $i\equiv 0\bmod 2$, the group $O^*(\Lambda)$ acts transitively on the set of primitive classes $\alpha \in \Lambda/n\Lambda$ with $\wp(\alpha) =i$.
 \end{lemma}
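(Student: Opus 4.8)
The plan is to reduce the statement to the \emph{exact}-value form of Lemma~\ref{Walllemma}, so that the restriction to the spinor-norm-one subgroup $O^*(\Lambda)$ never causes any trouble. There are two steps: producing a primitive integral lift of a given class, and then using the normal form provided by Lemma~\ref{Walllemma} to pin down its $O^*(\Lambda)$-orbit modulo $n$.

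\emph{Step 1: a primitive lift.} First I would show that every primitive class $\alpha \in \Lambda/n\Lambda$ is the reduction of a primitive vector $\lambda \in \Lambda$. Ignoring the quadratic form, set $N = \operatorname{rank}\Lambda = 12d-4$ and identify $\Lambda$ with $\Zee^N$. In the lemma's sense a primitive class in $(\Zee/n\Zee)^N$ is exactly a unimodular row, hence of the form $M\bar e_1$ for some $M \in SL_N(\Zee/n\Zee)$. Using the surjectivity of $SL_N(\Zee) \to SL_N(\Zee/n\Zee)$, lift $M$ to $\widetilde M \in SL_N(\Zee)$; then $\lambda := \widetilde M e_1$ is a primitive vector of $\Lambda$ reducing to $\alpha$. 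By the description of the Pontrjagin square recalled above, $\lambda^2 \equiv \wp(\alpha) = i \pmod{2n}$, and $\lambda^2$ is automatically even since $\Lambda$ is even, consistent with the hypothesis $i \equiv 0 \bmod 2$.

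\emph{Step 2: normal form modulo $n$.} Fix once and for all a hyperbolic summand $U = \Zee\varepsilon \oplus \Zee\delta$ of $\Lambda$, with $\varepsilon^2 = \delta^2 = 0$ and $\varepsilon\cdot\delta = 1$; such a $U$ exists because $d \geq 2$. Put $j = \lambda^2$ and $w = \varepsilon + (j/2)\delta \in \Lambda$; then $w$ is primitive, its $\varepsilon$-coordinate being $1$, and $w^2 = j$, so by Lemma~\ref{Walllemma} there is $A \in O^*(\Lambda)$ with $A\lambda = w$. Now reduce modulo $n$: from $j \equiv i \pmod{2n}$ we get $j/2 \equiv i/2 \pmod n$, where $i/2$ denotes the well-defined element of $\Zee/n\Zee$ attached to the even class $i$. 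Hence the image $\bar w$ of $w$ in $\Lambda/n\Lambda$ equals $\overline{\varepsilon + (i/2)\delta}$, a class depending only on $i$. Since $A$ preserves $\Lambda$ and $n\Lambda$, it induces $\bar A \in O(\Lambda/n\Lambda)$ with $\bar A(\alpha) = \bar w$. Thus every primitive $\alpha$ with $\wp(\alpha) = i$ is carried by (the reduction of) an element of $O^*(\Lambda)$ to the single class $\overline{\varepsilon + (i/2)\delta}$, and transitivity follows, completing the proof of Theorem~\ref{irreducible}.

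The only point that needs genuine care is Step~1, together with the observation that Lemma~\ref{Walllemma} is invoked only in its exact (not mod $n$) form; once a primitive lift is at hand, the descent to $\Lambda/n\Lambda$ is purely formal.
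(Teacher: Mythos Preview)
Your proof is correct and follows the same overall strategy as the paper: obtain a primitive integral lift of $\alpha$, invoke Lemma~\ref{Walllemma} to bring it to a hyperbolic normal form, then read off the orbit modulo $n$. The one genuine difference is in Step~1. The paper takes an arbitrary lift $\tilde\alpha$ and, if $\tilde\alpha = a\lambda'$ is imprimitive, uses Lemma~\ref{Walllemma} to move $\lambda'$ into a fixed hyperbolic summand $U$ and then adds $n\varepsilon'$ from an orthogonal hyperbolic summand $U'$ to force primitivity; you instead observe that a primitive class in $(\Zee/n\Zee)^N$ is a unimodular row and appeal to the surjectivity of $SL_N(\Zee)\to SL_N(\Zee/n\Zee)$. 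Your route is cleaner and avoids an auxiliary use of Wall's lemma. In Step~2 the paper first adjusts the lift along $\varepsilon$ so that its square equals a \emph{fixed} representative $j$ of $i$, and only then reduces; you skip this by noting directly that $j/2\bmod n$ depends only on $i$, which comes to the same thing.
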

 \begin{proof} Fix an integer $j$ whose reduction mod $2n$ is $i$. By Lemma~\ref{Walllemma}, it suffices to show  that, if $\alpha$ is a primitive class in $\Lambda/n\Lambda$ with $\wp(\alpha) = i$, then there exists an integral lift $\tilde \alpha\in \Lambda$ such that $\tilde \alpha$ is primitive and $(\tilde \alpha)^2 = j$.  
 
 First we claim that we can assume that $\tilde \alpha$ is primitive. Begin with any lift $\tilde \alpha$ of $\alpha$ to $\Lambda$.  If $\tilde \alpha$ is not primitive, let $\tilde \alpha = a \lambda'$, where $a>1$ and $\lambda'$ is primitive. Since  $\alpha$ is primitive, $a$ and $n$ are relatively prime. We can assume by Lemma~\ref{Walllemma} that $\lambda' $ is a primitive vector in a standard hyperbolic summand $U$ of $\Lambda$. Choosing a hyperbolic summand $U'$ of $\Lambda$  orthogonal to $U$ and a primitive vector $\varepsilon '\in U'$ with $(\varepsilon ')^2 =0$, the vector $\tilde \alpha + n\varepsilon '$ is then a primitive vector in $\Lambda$ lifting $\alpha$.
 
Let $(\tilde \alpha)^2 = \ell = j+2nk$. Since $\tilde \alpha$ is primitive, we may as well assume, again by Lemma~\ref{Walllemma}, that $\tilde \alpha = (\ell/2)\varepsilon + \delta$, where $\varepsilon, \delta$ are a standard basis of a hyperbolic summand $U$ of $\Lambda$. But then $[(\ell/2)-nk]\varepsilon + \delta$ is another lift of $\alpha$ to $\Lambda$, and its square is $j$. This completes the proof of the lemma and hence of of Theorem~\ref{irreducible}.
 \end{proof}
 
\section{Existence of rigid bundles}

Throughout this section we consider only the case of elliptic fibrations with base $\Pee^1$.

\begin{definition} A rank $n$ vector bundle $V$ on $\Pee^1$ is \textsl{rigid} if there exists an integer $a$ such that $V \cong \scrO_{\Pee^1}(a)^k \oplus \scrO_{\Pee^1}(a-1)^{n-k}$ for some integer $k> 0$. In other words, up to a twist by $\scrO_{\Pee^1}(a)$, $V\cong \scrO_{\Pee^1}^k \oplus \scrO_{\Pee^1}(-1)^{n-k}$ with  $k>0$.
\end{definition}

\begin{lemma}\label{5.2} Let $V$ be a rank $n$ vector bundle   on $\Pee^1$ with $V =\bigoplus_{i=1}^n \scrO_{\Pee^1}(a_i)$ with $0=a_0\geq a_1\geq \cdots \geq a_n$, or equivalently such that $h^0(\Pee^1; V) \neq 0$ but $h^0(\Pee^1; V(-1)) = 0$. Then $V$ is rigid $\iff$ $a_n \geq -1$ $\iff$ $h^0(V) = \chi(V)$ $\iff$ $h^1(V) = 0$. \qed
\end{lemma}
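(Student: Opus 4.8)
The plan is to reduce everything to additivity of cohomology across the decomposition $V=\bigoplus_{i=1}^n\scrO_{\Pee^1}(a_i)$, together with the elementary formulas $h^0(\scrO_{\Pee^1}(m))=\max(m+1,0)$ and $h^1(\scrO_{\Pee^1}(m))=\max(-m-1,0)$. First I would note that the stated normalization is equivalent to $a_1=0$ and $a_i\leq 0$ for all $i$: indeed $h^0(\Pee^1;V)\neq 0$ forces $\max_i a_i\geq 0$, while $h^0(\Pee^1;V(-1))=0$ forces $\max_i a_i\leq 0$, so $a_1=\max_i a_i=0$.

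Using additivity and $a_i\leq 0$, one computes $h^0(\Pee^1;V)=\#\{i : a_i=0\}$ and $h^1(\Pee^1;V)=\sum_{i\,:\,a_i\leq -2}(-a_i-1)$. From the latter it is immediate that $h^1(V)=0$ if and only if every $a_i$ belongs to $\{0,-1\}$; since the $a_i$ are nonincreasing with $a_1=0$, this holds precisely when $a_n\geq -1$. Moreover, as $\Pee^1$ is a curve, $\chi(V)=h^0(V)-h^1(V)$, so $h^0(V)=\chi(V)$ if and only if $h^1(V)=0$. This establishes the equivalence of the three conditions $a_n\geq -1$, $h^0(V)=\chi(V)$, and $h^1(V)=0$.

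It then remains to bring in rigidity. If $a_n\geq -1$, then, since also $a_1=0$, each $a_i$ equals $0$ or $-1$, so $V\cong \scrO_{\Pee^1}^k\oplus\scrO_{\Pee^1}(-1)^{n-k}$ with $k=\#\{i : a_i=0\}\geq 1$, which is rigid by definition (with $a=0$). Conversely, if $V$ is rigid, write $V\cong\scrO_{\Pee^1}(a)^k\oplus\scrO_{\Pee^1}(a-1)^{n-k}$ with $k>0$; the largest exponent appearing is then $a$, attained because $k>0$, so $a=\max_i a_i=0$, hence $V\cong\scrO_{\Pee^1}^k\oplus\scrO_{\Pee^1}(-1)^{n-k}$ and $a_n\geq -1$. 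Chaining the four equivalences completes the proof. There is no substantive obstacle here; the only point deserving a moment's care is that the definition of rigid requires $k>0$, which is supplied automatically by the standing hypothesis $h^0(V)\neq 0$ (forcing at least one $\scrO_{\Pee^1}$ summand), and that the degenerate case $k=n$, i.e.\ $V\cong\scrO_{\Pee^1}^n$, is unproblematic since then $a_n=0\geq -1$.
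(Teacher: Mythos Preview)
Your argument is correct; it is exactly the routine verification the paper has in mind. Note that the paper offers no proof at all---the \qed\ immediately after the statement signals that the authors regard it as immediate from the decomposition $V=\bigoplus_i\scrO_{\Pee^1}(a_i)$ and the standard cohomology of line bundles on $\Pee^1$---so your write-up is simply the natural elaboration of what they leave implicit.
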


In terms of divisors on genus one fibrations, we have:

\begin{lemma}\label{5.3} Let $Y\to \Pee^1$ be  a genus one fibration and let $D$ be an $f$-nef divisor on $Y$ such that $D$ is effective but $D-F$ is not effective. Then $f_*\scrO_Y(D)$ is rigid $\iff$  $\pi_*\scrO_X(D) = \scrO_{\Pee^1}^k \oplus \scrO_{\Pee^1}(-1)^{n-k}$ $\iff$ $h^0(Y; \scrO_Y(D)) = \chi(Y; \scrO_Y(D))$.
\end{lemma}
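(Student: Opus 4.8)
The plan is to reduce the statement, via relative duality and cohomology and base change, to the rank $n$ bundle $V=f_*\scrO_Y(D)$ on $\Pee^1$, and then to invoke Lemma~\ref{5.2}.

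First I would check that $V=f_*\scrO_Y(D)$ is locally free of rank $n$ with $R^1f_*\scrO_Y(D)=0$. For a fiber $F_0$ of $f$, relative duality gives $H^1(F_0;\scrO_{F_0}(D))\cong H^0(F_0;\scrO_{F_0}(-D))^\vee$, since $\omega_{F_0}$ is trivial; as $D$ is $f$-nef, $\scrO_{F_0}(-D)$ has total degree $-n$ and nonpositive degree on each component of $F_0$, so by the elementary vanishing recalled in Section~2 (via Ramanujam's lemma) $H^0(F_0;\scrO_{F_0}(-D))=0$. Hence $H^1(F_0;\scrO_{F_0}(D))=0$ and $h^0(F_0;\scrO_{F_0}(D))=n$ for every fiber, so by base change $R^1f_*\scrO_Y(D)=0$ and $V$ is locally free of rank $n$. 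By the Leray spectral sequence, $h^i(Y;\scrO_Y(D))=h^i(\Pee^1;V)$ for all $i$ and $\chi(Y;\scrO_Y(D))=\chi(\Pee^1;V)$.

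Next I would translate the hypotheses on $D$. Write $V=\bigoplus_{i=1}^n\scrO_{\Pee^1}(a_i)$ with $a_1\geq\cdots\geq a_n$. Since $\scrO_Y(D-F)\cong\scrO_Y(D)\otimes f^*\scrO_{\Pee^1}(-1)$ and $R^1f_*\scrO_Y(D)=0$, the projection formula gives $h^0(Y;\scrO_Y(D-F))=h^0(\Pee^1;V(-1))$. Thus ``$D$ effective'' means $h^0(\Pee^1;V)\neq 0$, i.e.\ $a_1\geq 0$, and ``$D-F$ not effective'' means $h^0(\Pee^1;V(-1))=0$, i.e.\ $a_1\leq 0$; together, $a_1=0$, so $0=a_1\geq a_2\geq\cdots\geq a_n$, which is exactly the normalization in the hypothesis of Lemma~\ref{5.2}. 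Applying that lemma to $V$ gives: $V$ is rigid $\iff$ every $a_i\in\{0,-1\}$ $\iff$ $h^0(\Pee^1;V)=\chi(\Pee^1;V)$ $\iff$ $h^1(\Pee^1;V)=0$; and, under this normalization, ``$V$ rigid'' is literally the assertion $V\cong\scrO_{\Pee^1}^k\oplus\scrO_{\Pee^1}(-1)^{n-k}$ with $k=h^0(\Pee^1;V)>0$. Reading these back through the Leray identifications of the previous paragraph yields $f_*\scrO_Y(D)$ rigid $\iff$ $f_*\scrO_Y(D)\cong\scrO_{\Pee^1}^k\oplus\scrO_{\Pee^1}(-1)^{n-k}$ $\iff$ $h^0(Y;\scrO_Y(D))=\chi(Y;\scrO_Y(D))$.

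It remains to match this with the statement $\pi_*\scrO_X(D)\cong\scrO_{\Pee^1}^k\oplus\scrO_{\Pee^1}(-1)^{n-k}$ on the Jacobian $X$: here $\pi\colon X\to\Pee^1$ is again a genus one fibration which is its own Jacobian, $D$ determines a corresponding divisor class of the same fiber degree on $X$, and one would run the same two steps for $\pi$ in place of $f$. What is needed is that rigidity of $f_*\scrO_Y(D)$ holds if and only if rigidity of $\pi_*\scrO_X(D)$ does, with the normalizations corresponding, and this is where the comparison between $Y$ and its Jacobian enters (the identification of cohomology of Lemma~\ref{identify} and the surrounding discussion, together with the diffeomorphism $X\simeq Y$ over $\Cee$). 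I expect this transfer between $Y$ and $X$ to be the only substantive point; everything else is the direct application of Lemma~\ref{5.2} together with the bookkeeping above.
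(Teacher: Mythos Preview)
Your first two paragraphs are correct and are exactly the paper's argument, just written out in full: the paper's proof is the single sentence ``Immediate from Lemma~\ref{5.2}, the Leray spectral sequence, and the fact that $R^1f_*\scrO_Y(D)=0$,'' and your reduction via base change to the normalized bundle $V$ on $\Pee^1$ is precisely what that sentence means.

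Your third paragraph, however, is chasing a phantom. The expression ``$\pi_*\scrO_X(D)$'' in the middle of the displayed chain of equivalences is a typo for ``$f_*\scrO_Y(D)$''; there is no transfer to the Jacobian involved here. Several pieces of internal evidence confirm this: the paper's one-line proof makes no mention of $X$ or of Lemma~\ref{identify}; no divisor called $D$ has been defined on $X$ in this section; and the intended content of the middle clause is simply that under the given normalization ($D$ effective, $D-F$ not effective) the integer $a$ in the definition of rigidity is forced to be $0$, so that ``rigid'' becomes the explicit splitting $\scrO_{\Pee^1}^k\oplus\scrO_{\Pee^1}(-1)^{n-k}$. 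You already proved this in your second paragraph. The comparison with $X$ that you sketch (via Lemma~\ref{identify} or diffeomorphism) would not in any case give an isomorphism of the coherent pushforwards $f_*\scrO_Y(D)$ and $\pi_*\scrO_X(D')$ for any natural $D'$, so there is no clean statement there to prove. Drop the last paragraph and you are done.
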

\begin{proof} Immediate from Lemma~\ref{5.2}, the Leray spectral sequence, and the fact that $R^1f_*\scrO_Y(D) =0$. 
\end{proof}

Of course, $f_*\scrO_Y(D)$ is rigid $\iff$  $R^1f_*\scrO_Y(-D)$ is rigid. In this section, it will be slightly simpler to work with $f_*\scrO_Y(D)$.

The goal of this section is to show the following:

\begin{theorem}\label{existence} Assume that $k=\Cee$. Let $d\geq 2$ and let $\mathcal{M}_d^0$ be the coarse moduli space of elliptic surfaces $\pi \colon X\to\Pee^1$ with $\chi(\scrO_X) = d $ and such that all fibers of $\pi$ are irreducible. Fix $n \geq 2$. Then there exists an open dense subspace of $\mathcal{M}_d^0$ consisting of elliptic surfaces $X$ such that, for every genus one fibration $f\colon Y\to \Pee^1$ whose Jacobian surface is $X$ and for every $f$-nef divisor $D$ on $Y$ with $D\cdot F = n$, the rank $n$ vector bundle $f_*\scrO_Y(D)$ is rigid.
\end{theorem}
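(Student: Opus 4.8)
The plan is to induct on $d=\chi(\scrO_X)$, using a degeneration of $Y$ that splits off a rational elliptic surface, and to reduce the statement to one about generic points of moduli. By Lemmas~\ref{5.2} and~\ref{5.3}, the pair $(Y,D)$ is rigid if and only if for every integer $j$ at least one of $h^0(Y;\scrO_Y(D+jF))$, $h^1(Y;\scrO_Y(D+jF))$ vanishes; as only finitely many values of $j$ are relevant, rigidity is an open condition in flat families. Let $\mathcal{S}^0_{d,n}$ be the moduli space of triples $(Y,f,D)$ with $f\colon Y\to\Pee^1$ a genus one fibration with irreducible fibres, $\chi(\scrO_Y)=d$, and $D$ an $n$-section (taken up to the equivalence of Section~4), and let $\mathcal{S}^0_{d,n}\to\mathcal{M}^0_d$ be the natural morphism to the irreducible moduli space of Jacobian surfaces. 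This morphism is finite, so $\mathcal{S}^0_{d,n}$ has finitely many irreducible components, each dominating $\mathcal{M}^0_d$; rigidity along a dense open of each component therefore produces a dense open of $\mathcal{M}^0_d$ over which every $(Y,D)$ is rigid. So it suffices to prove rigidity for the generic member of each component of $\mathcal{S}^0_{d,n}$.

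The base case $d=1$ needs nothing new: a genus one fibration over $\Pee^1$ with $\chi(\scrO_Y)=1$ is a rational elliptic surface or a torsor over one, and Theorem~\ref{thm2} gives $\alpha_n-\alpha_1\le 3d/2=3/2$, hence $\alpha_n-\alpha_1\le1$, i.e.\ $f_*\scrO_Y(D)$ is rigid, with no genericity required. For the inductive step, fix a component of $\mathcal{S}^0_{d,n}$ with generic member $(Y,D)$. Using the explicit degenerations of Section~3 together with the irreducibility of the moduli spaces, I would construct a flat one-parameter family $\mathcal{Y}\to\Delta$ with a divisor $\mathcal{D}$ such that the general fibre $(\mathcal{Y}_t,\mathcal{D}_t)$ lies in the chosen component and the special fibre is $\mathcal{Y}_0=Y_0\cup R$, where $R$ is a generic rational elliptic surface, $Y_0\to\Pee^1$ is a genus one fibration generic in $\mathcal{M}^0_{d-1}$ admitting an $n$-section, and $Y_0,R$ meet transversally along a common smooth fibre $F_0$; here $\mathcal{D}$ restricts to $D$ on $\mathcal{Y}_t$ and to divisors $D_0$ on $Y_0$, $D_R$ on $R$ of fibre degree $n$ with $\scrO_{Y_0}(D_0)|_{F_0}\cong\scrO_R(D_R)|_{F_0}$, and there is a one-parameter freedom replacing $(D_0,D_R)$ by $(D_0+mF_0,D_R-mF_0)$ that does not change $\mathcal{D}_t$.

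By the inductive hypothesis $f_{0*}\scrO_{Y_0}(D_0)$ is rigid, and by the base case $f_{R*}\scrO_R(D_R)$ is rigid; additivity of Euler characteristics in the degeneration gives $\deg f_{0*}\scrO_{Y_0}(D_0)+\deg f_{R*}\scrO_R(D_R)=\deg f_{t*}\scrO_{Y_t}(D)$, so the splitting types of the two pieces are completely determined. For each $j$ and each splitting $j'+j''=j$ there is a Mayer--Vietoris sequence on $\mathcal{Y}_0$
$$0\to\scrO_{\mathcal{Y}_0}(\mathcal{D}_0')\to\scrO_{Y_0}(D_0+j'F_0)\oplus\scrO_R(D_R+j''F_0)\to\scrO_{F_0}(D_0|_{F_0})\to0,$$
and since $h^0$ and $h^1$ of $\scrO_{\mathcal{Y}_t}(D_t+jF)$ are upper semicontinuous over $\Delta$, it suffices to arrange, for each $j$, a splitting $j'+j''=j$ for which not both $H^0$ and $H^1$ of $\scrO_{\mathcal{Y}_0}(\mathcal{D}_0')$ are nonzero. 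Because both pieces are rigid, for $j$ outside a transitional window of length at most one one chooses $j',j''$ near the thresholds of the two pieces so that one summand contributes no $H^1$ (for $j$ large) or no $H^0$ (for $j$ small), and then Mayer--Vietoris gives the desired vanishing; for the at most one remaining value of $j$ one balances $j'$ and $j''$ so that $H^0(\scrO_{\mathcal{Y}_0}(\mathcal{D}_0'))=0$ directly from the known cohomology of the two rigid pieces and their restrictions to $F_0$. This forces the splitting type of $f_*\scrO_Y(D)$ to be balanced, i.e.\ $(Y,D)$ to be rigid.

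The main obstacle will be controlling the restriction maps to $F_0$ in the transitional range, i.e.\ showing that the images of $H^0(\scrO_{Y_0}(D_0+j'F_0))$ and $H^0(\scrO_R(D_R+j''F_0))$ together span $H^0(\scrO_{F_0}(D_0|_{F_0}))$. This is precisely where a dedicated statement about rational elliptic surfaces is needed: for a generic $R$, a generic fibre $F_0\subseteq R$, and a generic identification along $F_0$, one shows that the restriction $H^0(\scrO_R(D_R+j''F_0))\to H^0(\scrO_{F_0}(D_R|_{F_0}))$ is as surjective as its rank permits, so that the combined map is onto. The remaining delicate step is to arrange the degeneration itself so that $Y_0$ lands in the generic locus of $\mathcal{M}^0_{d-1}$ where the inductive hypothesis applies, with $\mathcal{D}$ extending $D$ and with matching orders of $Y$ and $Y_0$; for this one uses the irreducibility of $\mathcal{M}^0_d$ and $\mathcal{M}^0_{d-1}$ and, in low degree, the constructions of Section~3.
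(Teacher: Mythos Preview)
Your overall architecture matches the paper's: reduce via Theorem~\ref{irreducible} to exhibiting one rigid pair in each component of $\mathcal{S}^0_{d,n}$, then induct on $d$ by degenerating to a union $Y_0\cup R$ with $R$ rational elliptic and $\chi(\scrO_{Y_0})=d-1$. The difference is in how the two pieces are glued, and this is exactly where your argument has a real gap.

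You take $D_R$ on $R$ to be whatever the degeneration hands you, note that $f_{R*}\scrO_R(D_R)$ is rigid (by your $d=1$ base case), and then try to splice two rigid bundles via Mayer--Vietoris. As you yourself flag, this forces you to show that the combined restriction map
\[
H^0(\scrO_{Y_0}(D_0+j'F_0))\oplus H^0(\scrO_R(D_R+j''F_0))\longrightarrow H^0(\scrO_{F_0}(D_0|_{F_0}))
\]
is surjective in the transitional range. You do not prove this; the claim that for generic $R$ the restriction ``is as surjective as its rank permits'' is not established, and in fact need not hold without further input, since the gluing along $F_0$ is dictated by the geometry rather than chosen generically. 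The paper avoids this difficulty entirely by a sharper choice on the rational piece: Theorem~\ref{goodratsurf} produces, on \emph{any} rational elliptic surface and for every $n$, a divisor $D$ with $\pi_*\scrO_X(D)\cong\scrO_{\Pee^1}^n$, i.e.\ trivial rather than merely rigid. With one piece trivial, the normalization sequence in Lemma~\ref{vectbundsdegen} gives $h^1=0$ on the nodal $\Pee^1\cup\Pee^1$ with no hypothesis on restriction maps, and semicontinuity finishes the argument. So the missing idea is precisely this: do not accept the divisor on $R$ that the degeneration gives you, but instead \emph{build the degeneration from the pieces}, choosing on $R$ a divisor with trivial pushforward and on $Y_0$ a pair already known to be rigid by induction, and then glue the corresponding classes in $H^1(\Pee^1;R^1\pi_*\mmu_n)$ via Mayer--Vietoris and proper base change (this is the content of Theorem~\ref{existdegens} and the paragraph following Claim~\ref{mainclaim}).

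A secondary point: your appeal to ``the explicit degenerations of Section~3'' does not suffice, since those are worked out only for $n=2,3$; the paper constructs the needed family for all $n$ via a weighted blowup of a degenerate Weierstrass model (Theorem~\ref{existdegens}).
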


The proof will be by induction on $d$.
In the case $d=2$, we have:

\begin{lemma} Let $X$ be an elliptic $K3$ surface, let $f\colon Y\to \Pee^1$ be a genus one fibration whose Jacobian surface is $X$, and let $D$ be an $f$-nef divisor  on $Y$ with $D\cdot F = n$. Then $f_*\scrO_Y(D)$ is rigid.
\end{lemma}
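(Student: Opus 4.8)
The plan is to reduce the assertion to a single cohomology vanishing on $Y$ and then play the Bogomolov‑type bounds of Section~2 against the elementary geometry of curves on a $K3$ surface. First I would pin down $Y$: since $\chi(Y;\scrO_Y)=d=2$ and $f$ has no multiple fibers, the canonical bundle formula gives $K_Y=f^*\scrO_{\Pee^1}(d-2)=\scrO_Y$; and since $e(Y)=12d=24\neq 0$, the fibration $f$ has a singular fiber, so Proposition~\ref{topsection}(i) shows $Y$ is simply connected, whence $q(Y)=0$ and $Y$ is a $K3$ surface. The case $n=1$ is trivial, as then $f_*\scrO_Y(D)$ is a line bundle on $\Pee^1$; so I assume $n\geq 2$, in which case $Y$ carries no section (its order is $n$) — this is the only place where the order of $Y$ is used, and it is essential. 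Since $f_*\scrO_Y(D)$ is unchanged up to twist when $D$ is replaced by $D+tF$, and $f$-nefness is preserved by adding fibers, I may assume $D$ is normalized as in Lemma~\ref{5.3}: $D$ effective, $D-F$ not effective. Then by Lemma~\ref{5.3} (which uses $R^1f_*\scrO_Y(D)=0$) together with Serre duality on the $K3$ surface $Y$, which gives $h^2(\scrO_Y(D))=h^0(\scrO_Y(-D))=0$ because $-D$ has negative fiber degree, rigidity of $f_*\scrO_Y(D)$ is equivalent to $H^1(Y;\scrO_Y(D))=0$.

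So it suffices to prove $H^1(Y;\scrO_Y(D))=0$, and I argue by contradiction. Suppose $H^1(Y;\scrO_Y(D))\neq 0$. On the one hand, Corollary~\ref{2cor4}(i) (when $n$ is odd) and Corollary~\ref{cor1.6}(i) (when $n$ is even; here $\operatorname{char}k=0$, since $k=\Cee$ in this section, so the parity hypothesis is harmless) apply to the $f$-nef divisor $D$, and with $d=2$, $g=0$ both yield $D^2\leq -4$. On the other hand I claim $D^2\geq -2$. Write $D=C+V$, where $C$ is the horizontal part (sum of components meeting the general fiber) and $V$ the vertical part (sum of components contained in fibers). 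Since $Y$ has order $n$, every irreducible horizontal curve has fiber degree a positive multiple of $n$; as $D\cdot F=n$ and all multiplicities are positive, $C$ must be a single reduced irreducible $n$-section. Because $V$ is a nonnegative combination of fiber components and $D$ is $f$-nef, $D\cdot V\geq 0$; and $C\cdot V\geq 0$ since $C$ is not a component of $V$. Hence
$$D^2=D\cdot C+D\cdot V\ \geq\ D\cdot C=C^2+C\cdot V\ \geq\ C^2=2p_a(C)-2\ \geq\ -2,$$
using adjunction and $K_Y=\scrO_Y$ in the last two steps. This contradicts $D^2\leq -4$, so $H^1(Y;\scrO_Y(D))=0$, and $f_*\scrO_Y(D)$ is rigid.

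It is worth noting an alternative that bypasses Section~2: since $q(Y)=0$, the sequence $0\to\scrO_Y(-D)\to\scrO_Y\to\scrO_D\to 0$ identifies $H^1(\scrO_Y(D))\cong H^1(\scrO_Y(-D))$ with a space of dimension $h^0(\scrO_D)-1$, so the required vanishing is equivalent to numerical $1$-connectedness of $D$; if $D=D_1+D_2$ with $D_i>0$ and $D_1\cdot D_2\leq 0$, then either both $D_i$ have positive fiber degree, forcing $D\cdot F\geq 2n$, which is absurd, or one of them is vertical of self-intersection zero, hence a sum of full fibers, contradicting that $D-F$ is not effective. I would probably present the Section~2 argument in the text, as it fits the paper's theme, and record this as a remark.

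The step requiring the most care is the decomposition $D=C+V$ together with the inequality $D\cdot V\geq 0$: $D$ is assumed only $f$-nef, not nef, so one must genuinely use that the negative contributions to intersection numbers can come only from horizontal curves — which are controlled by the order of $Y$ — while $f$-nefness pins down the vertical contribution. Everything else (identifying $Y$ as a $K3$ surface, the reduction through Lemma~\ref{5.3}, and the numerology in the Corollaries) is routine.
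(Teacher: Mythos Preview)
Your argument is correct and runs parallel to the paper's two approaches, though with a detour. The paper's primary proof simply invokes Remark~\ref{remark3.9}: for $d=2$ the two inequalities there give $n\alpha_n-\sum_i\alpha_i\leq n-1$ and $\sum_i\alpha_i-n\alpha_1\leq (n-1)(d-1)=n-1$, hence $n(\alpha_n-\alpha_1)\leq 2(n-1)<2n$, i.e.\ $\alpha_n-\alpha_1\leq 1$, which is rigidity outright. Your route through Corollaries~\ref{2cor4} and~\ref{cor1.6} uses the linear-series extensions of Section~2.2 rather than the universal extensions of Section~2.1, and must then be supplemented by the adjunction bound $D^2\geq -2$ via the horizontal--vertical decomposition $D=C+V$; this works, but is longer than plugging $d=2$ into the splitting-type bounds. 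Your alternative via numerical $1$-connectedness is essentially the paper's second, direct proof: for minimal $D$ the paper takes an irreducible $C\in|D|$ and reads off $h^1(\scrO_Y(-C))=0$ from $0\to\scrO_Y(-C)\to\scrO_Y\to\scrO_C\to 0$; your version is a mild generalization handling any effective $D$ with $D-F$ not effective.

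One point to flag: your claim that the order of $Y$ equals $n$ does not follow from the hypothesis $D\cdot F=n$ alone (the order could be a proper divisor), and without it the lemma actually fails --- for instance $Y=X$ and $D=2\sigma$ give $\pi_*\scrO_X(2\sigma)\cong\scrO_{\Pee^1}\oplus\scrO_{\Pee^1}(-4)$, which is not rigid. Your decomposition argument needs $C$ irreducible, and Corollaries~\ref{2cor4}/\ref{cor1.6} need Lemma~\ref{2}, both of which rely on minimality of $n$. The paper's proof via Remark~\ref{remark3.9} tacitly uses the same assumption (through Lemma~\ref{stab1}), so this is an implicit convention inherited from Section~2 rather than a defect peculiar to your argument; but since you single it out as ``essential,'' it would be worth saying explicitly that the lemma is to be read under the paper's standing convention that $n$ is the order of $Y$.
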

\begin{proof} This follows easily from Remark~\ref{remark3.9}. It is also easy to give a direct proof in case $D$ is minimal. In this case, there exists an irreducible curve $C$ in $|D|$. From the exact sequence $0\to \scrO_Y(-C) \to \scrO_Y \to \scrO_C \to 0$, it follows that $h^1(Y; \scrO_Y(-C)) =0$ and hence that $h^1(Y; \scrO_Y(C) )= 0$. Thus $h^0(Y; \scrO_Y(D)) = \chi(Y; \scrO_Y(D))$ and so $f_*\scrO_Y(D)$ is rigid by Lemma~\ref{5.3}.
\end{proof}

In the general case, the strategy is as follows. Using the discussion of the irreducible components from the previous section, it is enough to construct, for every integer $k\bmod n$, a single genus one fibration $f\colon Y \to \Pee^1$ with $\chi(\scrO_Y) = d$, with all fibers of $f$ irreducible, and a  divisor $D$ on $Y$ of relative degree $n$ such that $f_*\scrO_Y(D)$ is rigid and $c_1(f_*\scrO_Y(D))\equiv k\bmod n$. We will do so by finding suitable degenerations of genus one fibrations $Y$ to normal crossings fibrations $f\colon Y'\cup X \to \Pee^1\cup \Pee^1$, where $Y'$ is a genus one fibration with $\chi(\scrO_{Y'}) = d-1$ and $X$ is a rational elliptic surface, and $Y'$, $X$ are glued along a fiber. 

To check rigidity via degenerations, we shall use the following lemma:

\begin{lemma}\label{vectbundsdegen}  Let $\phi\colon \mathcal{B} \to \Delta$ be a smooth surface, fibered over a smooth curve $\Delta$, such that $\phi^{-1}(t) \cong \Pee^1$ for $t\neq t_0\in \Delta$ and such that $\phi^{-1}(t_0)$ has two components $C_1$ and $C_2$, each isomorphic to $\Pee^1$, glued at a point, so that $\phi^{-1}(t_0)$ has a single ordinary double point. Let $\mathcal{V}$ be a vector bundle on $\mathcal{B}$ such that $\mathcal{V}|C_1 = \scrO_{\Pee^1}^k\oplus \scrO_{\Pee^1}(-1)^{n-k}$, where $k>0$, and $\mathcal{V}|C_2 = \scrO_{\Pee^1}^n$. Then, there exists a nonempty Zariski open subset $U$ of $\Delta$ such that, for all $t\in U$, $\mathcal{V}|\phi^{-1}(t)$ is rigid, and in fact $\mathcal{V}|\phi^{-1}(t) \cong \scrO_{\Pee^1}^k\oplus \scrO_{\Pee^1}(-1)^{n-k}$.
\end{lemma}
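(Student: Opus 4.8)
The plan is to run a double semicontinuity argument on the fibers of $\phi$, pinning down the splitting type on the generic fiber by sandwiching the Grothendieck exponents between $-1$ and $0$. Write $F_0 = \phi^{-1}(t_0) = C_1\cup C_2$ with node $p$, and for $t\neq t_0$ write $\mathcal{V}|\phi^{-1}(t) = \bigoplus_{i=1}^n\scrO_{\Pee^1}(a_i(t))$ using Grothendieck's theorem. Note $\phi$ is flat (a dominant morphism from an integral scheme to a smooth curve), so $\mathcal{V}$ and any twist of it are flat over $\Delta$, and semicontinuity applies. The normalization constraint is that $\sum_i a_i(t) = c_1(\mathcal{V})\cdot\phi^{-1}(t) = c_1(\mathcal{V})\cdot(C_1+C_2) = \deg(\mathcal{V}|C_1)+\deg(\mathcal{V}|C_2) = -(n-k)$; hence once we know all $a_i(t)\in\{0,-1\}$ we get exactly $k$ of them equal to $0$ and $n-k$ equal to $-1$, i.e.\ $\mathcal{V}|\phi^{-1}(t)\cong\scrO_{\Pee^1}^k\oplus\scrO_{\Pee^1}(-1)^{n-k}$, which is rigid since $k>0$.

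First I would handle the lower bound $a_i(t)\ge -1$ via $h^1$. Applying the normalization (Mayer--Vietoris) sequence $0\to\mathcal{G}\to(\mathcal{G}|C_1)\oplus(\mathcal{G}|C_2)\to\mathcal{G}|_p\to 0$ with $\mathcal{G}=\mathcal{V}|F_0$: since $H^1(C_1;\scrO_{C_1}^k\oplus\scrO_{C_1}(-1)^{n-k})=0$ and $H^1(C_2;\scrO_{C_2}^n)=0$, while $H^0(C_2;\scrO_{C_2}^n)\to\mathcal{V}|_p$ is already surjective, the long exact sequence gives $H^1(F_0;\mathcal{V}|F_0)=0$. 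By semicontinuity $U':=\{t: h^1(\mathcal{V}|\phi^{-1}(t))=0\}$ is Zariski open and contains $t_0$, and on $U'\setminus\{t_0\}$ every $a_i(t)\ge -1$.

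Second, the upper bound $a_i(t)\le 0$, obtained by showing $h^0$ of a relative degree $-1$ twist vanishes. I would fix a section $\bar\sigma$ of $\phi$ — one exists because, $k$ being algebraically closed, Tsen's theorem gives $\mathrm{Br}(k(\Delta))=0$, so the generic fiber of $\phi$ is $\Pee^1_{k(\Delta)}$ and the closure of a rational point extends over all of $\Delta$ by properness. Since $\bar\sigma\cdot F_0=1$ on the smooth surface $\mathcal{B}$, $\bar\sigma$ meets $F_0$ transversally at one smooth point, lying on exactly one of $C_1,C_2$; using $C_1^2=C_2^2=-1$ and $C_1\cdot C_2=1$ one checks that $M:=\scrO_{\mathcal{B}}(C_2-\bar\sigma)$ works if $\bar\sigma$ meets $C_1$, and $M:=\scrO_{\mathcal{B}}(C_1+C_2-\bar\sigma)$ works if $\bar\sigma$ meets $C_2$; in either case $M\cdot F=-1$, $M|C_1\cong\scrO_{C_1}$, and $M|C_2\cong\scrO_{C_2}(-1)$. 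Then $(\mathcal{V}\otimes M)|C_1=\scrO_{C_1}^k\oplus\scrO_{C_1}(-1)^{n-k}$ and $(\mathcal{V}\otimes M)|C_2=\scrO_{C_2}(-1)^n$, so in the Mayer--Vietoris sequence for $\mathcal{G}=(\mathcal{V}\otimes M)|F_0$ a global section must be $0$ on $C_2$ (no sections there), hence vanish at $p$, hence be $0$ on $C_1$ as well (a constant section of $\scrO_{C_1}^k$ vanishing at a point is zero); thus $H^0(F_0;(\mathcal{V}\otimes M)|F_0)=0$. By semicontinuity $U'':=\{t: h^0((\mathcal{V}\otimes M)|\phi^{-1}(t))=0\}$ is Zariski open and contains $t_0$; since $M|\phi^{-1}(t)\cong\scrO_{\Pee^1}(-1)$ for $t\neq t_0$, on $U''\setminus\{t_0\}$ every $a_i(t)\le 0$. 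Setting $U:=(U'\cap U'')\setminus\{t_0\}$, a nonempty Zariski open subset of $\Delta$, we get $a_i(t)\in\{0,-1\}$ for all $i$ and all $t\in U$, and the degree constraint finishes the proof.

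I expect the only real subtlety to be Step~3: producing a genuine relative degree $-1$ twist whose behavior on the reducible fiber $F_0$ is under control (this is where the section $\bar\sigma$ and the self-intersection computations $C_i^2=-1$ enter). It is worth noting that both one-sided bounds are genuinely needed — $a_i\ge -1$ alone permits configurations like $(1,0,\dots,0,-1,\dots,-1)$ and $a_i\le 0$ alone permits configurations like $(0,\dots,0,-1,\dots,-1,-2)$, neither of which is rigid — so it is the \emph{combination} $a_i\in\{0,-1\}$, together with the fixed value of $\sum_i a_i$, that forces the conclusion. Everything else (the two Mayer--Vietoris computations, flatness of $\phi$, semicontinuity) is routine.
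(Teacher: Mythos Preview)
Your proof is correct and follows essentially the same strategy as the paper's: vanishing of $h^1(\mathcal{V}|F_0)$ and of $h^0$ of a suitable relative degree $-1$ twist on $F_0$, each established via the normalization sequence, followed by semicontinuity and invariance of $\chi$. The only minor differences are that the paper's twist $\mathcal{L}$ has $\mathcal{L}|C_1\cong\scrO_{\Pee^1}(-1)$ and $\mathcal{L}|C_2\cong\scrO_{\Pee^1}$ (the reverse of your $M$), and the paper simply asserts such an $\mathcal{L}$ exists, whereas you supply the careful construction via Tsen's theorem and a section.
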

\begin{proof} Let $\mathcal{V}_t = \mathcal{V}|\phi^{-1}(t)$. By Lemma~\ref{5.2}, it suffices to show that $h^0(\mathcal{V}_t) =k$ and $h^1(\mathcal{V}_t)= h^0(\mathcal{V}_t(-1))=0$ for $t\in U$. First we claim that  $h^1(\mathcal{V}_{t_0})=0$. This follows from the normalization exact sequence
$$0 \to \mathcal{V}_{t_0} \to n_*(\mathcal{V}|C_1 \oplus \mathcal{V}|C_2) \to (\Cee^n)_p\to 0,$$
where $p$ is the singular point of $\phi^{-1}({t_0})$ and $n \colon C_1 \amalg C_2 \to \phi^{-1}({t_0})$ is the normalization. Next, $h^0(\mathcal{V}_{t_0} \otimes \mathcal{L}) =0$, where $\mathcal{L}$ is a line bundle on $\mathcal{B}$ which restricts to $\scrO_{\Pee^1}(-1)$ on $\phi^{-1}(t)$, $t\in U$, restricts to $\scrO_{\Pee^1}(-1)$ on $C_1$, and is trivial on $C_2$, as again follows easily from the normalization exact sequence. So there is a nonempty Zariski open subset $U$ of $\Delta$ such that, for all $t\in U$, $h^1(\mathcal{V}_t)= h^0(\mathcal{V}_t(-1))=0$. Finally, $\chi(\mathcal{V}_t) = \chi(\mathcal{V}_{t_0}) =k> 0$, and thus $h^0(\mathcal{V}_t) =k$ as well.
\end{proof}

It is then enough to establish the following:

\begin{claim}\label{mainclaim} Fix an integer $k$, $0< k \leq n$ and an integer $d\geq 3$. Then there is a family $\mathcal{Y} \to \mathcal{B} \to \Delta$ where $\mathcal{Y}$ is a smooth threefold and $\mathcal{B}$ is a smooth surface, $\Delta$ is a smooth curve  as in Lemma~\ref{vectbundsdegen}, and a point $t_0\in \Delta$ with the  following properties. Denote by  $B_t$ the fiber of $\mathcal{B}$ over $t\in \Delta$, and similarly for $Y_t$, and let $f_t\colon Y_t \to B_t$ be the induced morphism. Then:
\begin{enumerate}
\item[\rm (i)] $B_t \cong \Pee^1$ for $t\neq t_0$ and $B_{t_0} \cong \Pee^1\cup\Pee^1$ meeting normally;
\item[\rm (ii)] The induced morphism $f_t \colon Y_t \to B_t$ gives $Y_t$ the structure of a genus one fibration with all fibers irreducible and $\chi(\scrO_{Y_t}) = d$, for $t\neq {t_0}$;
\item[\rm (iii)] $Y_{t_0} = Y'\cup X$, where $Y'$ is a genus one fibration with all fibers irreducible and $\chi(\scrO_{Y'}) = d-1$, $X$ is a rational elliptic surface with all fibers irreducible, $Y'$ and $X$ meet normally along a smooth fiber for both of the given fibrations, and the   morphism $f_{t_0} = f'\cup g\colon Y'\cup X \to \Pee^1\cup \Pee^1$ induces the two given fibrations on the two components of the normalization of $Y_{t_0}$;
\item[\rm (iv)] There exists a Cartier divisor $\mathcal{D}$ on $\mathcal{Y}$, such that $\mathcal{D}|Y' = D'$ satisfies: $(f')_*\scrO_{Y'}(D') = \scrO_{\Pee^1}^k\oplus \scrO_{\Pee^1}(-1)^{n-k}$ and  $\mathcal{D}|X = D$ satisfies: $g_*\scrO_X(D) = \scrO_{\Pee^1}^n$.
\end{enumerate}
\end{claim}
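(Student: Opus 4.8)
The plan is to glue a genus one fibration $Y'$ of Euler characteristic $d-1$ (supplied by the induction underlying Theorem~\ref{existence}) to a rational elliptic surface $X$ along a common fibre, having first arranged that the two chosen divisors agree on that fibre, and then to smooth the resulting normal-crossings fibration using the machinery of Section~4. For $\mathcal{B}\to\Delta$ I would take $\Pee^1\times\Delta$ blown up at a point of the fibre over $t_0$: the central fibre is then $C_1\cup C_2$ with $C_1\cong\Pee^1$ the strict transform and $C_2\cong\Pee^1$ the exceptional curve, meeting in a single node, so (i) holds by construction and $\mathcal{B}\to\Delta$ admits horizontal sections.

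\textbf{The two pieces.} Over $C_1$, by the inductive hypothesis (Theorem~\ref{existence} with $\chi=d-1\ge 2$, together with Theorem~\ref{irreducible}; the case $\chi=2$ being the $K3$ case treated above) choose $f'\colon Y'\to\Pee^1$ with $\chi(\scrO_{Y'})=d-1$, all fibres irreducible, and an $f'$-nef divisor $D'$ with $(f')_*\scrO_{Y'}(D')\cong\scrO_{\Pee^1}^k\oplus\scrO_{\Pee^1}(-1)^{n-k}$ — obtained from a rigid bundle of first Chern class $\equiv k\bmod n$ by a twist. Over $C_2$, let $X$ be a rational elliptic surface with all fibres irreducible and non-constant $j$-map, and let $D_0$ be a $g$-ample divisor of relative degree $n$. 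Since $\chi(\scrO_X)=1$, Remark~\ref{remark3.9} forces $R^1g_*\scrO_X(-D_0)$, and therefore $g_*\scrO_X(D_0)$, to be balanced, say $\scrO_{\Pee^1}(\beta)^n$; then $D:=D_0-\beta F$ is $g$-nef, linearly equivalent to an effective divisor, has $D^2=n-2$ by Lemma~\ref{lemma21}, and satisfies $g_*\scrO_X(D)\cong\scrO_{\Pee^1}^n$.

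\textbf{Matching on the common fibre and assembling the family.} As the $j$-map of $X$ is non-constant, pick smooth fibres $F_0'\subseteq Y'$ and $F_0''\subseteq X$ isomorphic as elliptic curves. The isomorphisms $F_0'\to F_0''$ form a torsor under $\operatorname{Aut}(F_0'')\supseteq\{\text{translations}\}$, and a translation $t_p$ acts on $\Pic^n(F_0'')$ as translation by $-np$, a surjective operation; hence one can choose $\phi\colon F_0'\xrightarrow{\ \sim\ }F_0''$ with $\phi^*(\scrO_X(D)|F_0'')\cong\scrO_{Y'}(D')|F_0'$, and we glue along $F_0:=F_0'\cong F_0''$. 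Next, build an elliptic surface with section $\pi\colon\mathcal{X}\to\mathcal{B}$ whose fibre $\mathcal{X}_t$ ($t\ne t_0$) is a smooth elliptic surface over $\Pee^1$ with $\chi=d$ and all fibres irreducible and whose fibre $\mathcal{X}_{t_0}$ is $J(Y')\cup_{F_0}X$: this is the standard Weierstrass-data-plus-simultaneous-resolution construction over the degenerating base (cf.\ the proof of Theorem~\ref{deform}), using additivity of $\chi$ under gluing along a fibre. Using $\mathcal{E}[n]\cong R^1\pi_*\mmu_n$ and Lemmas~\ref{ASDlemma} and~\ref{identify}, the quadruple data $(Y',f',\varphi',D')$ over $C_1$ and $(X,g,\Id,D)$ over $C_2$ determine classes in $H^1(C_i;R^1\pi_*\mmu_n)$ which, by the $\phi$-matching at the node, glue over $\mathcal{B}_{t_0}$ and (by a standard argument, e.g.\ using that $\mathcal{B}$ retracts onto $\mathcal{B}_{t_0}$) are the restriction of a class $\alpha\in H^1(\mathcal{B};R^1\pi_*\mmu_n)$. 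Let $\mathcal{Y}:=\mathcal{Y}^0\times^{\mathcal{E}}\mathcal{X}$ be the associated compactified torsor. It is flat and proper over $\mathcal{B}$, it is smooth because \'etale-locally on $\mathcal{B}$ it is isomorphic to $\mathcal{X}$, and it carries an $n$-section $\mathcal{D}$; its restriction over $C_1$, resp.\ $C_2$, is $Y'$, resp.\ $X$, glued along $F_0$, with $\mathcal{D}$ restricting to $D'$, resp.\ $D$, on the generic fibres, while for $t\ne t_0$ the fibre $\mathcal{Y}_t$ is a smooth genus one fibration over $\Pee^1$ with $\chi(\scrO_{\mathcal{Y}_t})=d$ and all fibres irreducible (the same holds for its Jacobian $\mathcal{X}_t$, and a genus one fibration without multiple fibres has the same Kodaira fibre types as its Jacobian). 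This gives (ii) and (iii).

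\textbf{Property (iv) and the main obstacle.} For (iv), push $\scrO_{\mathcal{Y}}(\mathcal{D})$ forward along $\mathcal{Y}\to\mathcal{B}$: since $D'$, $D$ and every $\mathcal{D}|\mathcal{Y}_t$ are nef on the fibres of their genus one fibrations, $R^1$ in the fibre direction vanishes and cohomology commutes with base change, so the resulting rank $n$ bundle on $\mathcal{B}$ restricts to $(f')_*\scrO_{Y'}(D')=\scrO_{\Pee^1}^k\oplus\scrO_{\Pee^1}(-1)^{n-k}$ on $C_1$ and to $g_*\scrO_X(D)=\scrO_{\Pee^1}^n$ on $C_2$; and since adding to $\mathcal{D}$ a divisor pulled back from $\mathcal{B}$ shifts $\mathcal{D}|Y'$ and $\mathcal{D}|X$ by arbitrary, independent multiples of the respective fibre classes, we may finally arrange $\mathcal{D}|Y'=D'$ and $\mathcal{D}|X=D$. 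The step I expect to be the main obstacle is the assembly in the previous paragraph: realizing the degeneration of elliptic surfaces over the nodal base while keeping all fibres irreducible for $t\ne t_0$, and gluing the torsor-plus-$n$-section data across the node and extending it to a neighbourhood of $t_0$. Equivalently, over $\Cee$ one must show that the line bundle obtained by gluing $\scrO_{Y'}(D')$ and $\scrO_X(D)$ along $F_0$ extends from the normal-crossings surface $\mathcal{Y}_{t_0}$ to $\mathcal{Y}$ — which is precisely where the matching via $\phi$ and the retraction of $\mathcal{B}$ onto $\mathcal{B}_{t_0}$ enter; the remaining verifications are routine bookkeeping with the constructions of Section~4, and the output feeds directly into Lemma~\ref{vectbundsdegen}.
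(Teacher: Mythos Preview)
Your overall strategy matches the paper's: induct on $d$ to find $(Y',D')$, find a suitable divisor $D$ on a rational elliptic surface $X$, construct a degenerating family of Jacobian surfaces $\mathcal{X}\to\mathcal{B}\to\Delta$ with central fibre $J(Y')\cup X$, glue the torsor data via $H^1(\mathcal{B};R^1\pi_*\mmu_n)$ and proper base change, and take the associated compactified homogeneous space. The paper fills in the two steps you flag as hardest: the family $\mathcal{X}$ is built by an explicit Weierstrass degeneration followed by a weighted blowup and a Type~I modification (Theorem~\ref{existdegens}), and the gluing of the $\mmu_n$-classes is done with Mayer--Vietoris on $X_0=J(Y')\cup X$, noting that a class in $[F]^\perp\subseteq H^2(-;\mmu_n)$ restricts to zero in $H^2(F;\mmu_n)$.

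There is, however, a genuine gap in your treatment of the rational elliptic surface. You write: ``Since $\chi(\scrO_X)=1$, Remark~\ref{remark3.9} forces $R^1g_*\scrO_X(-D_0)$, and therefore $g_*\scrO_X(D_0)$, to be balanced.'' But the inequalities in Remark~\ref{remark3.9} come from Theorem~\ref{mainthm} via Lemma~\ref{stab1}, and that lemma uses in an essential way that $n$ is the \emph{order} of the fibration: the step ``$\deg S=-kn$ for some positive integer $k$'' fails when the surface has a section. A rational elliptic surface has order $1$, so Remark~\ref{remark3.9} simply does not apply to it with a divisor of fibre degree $n>1$. Concretely, for $D_0=2\sigma$ on a rational elliptic surface (so $\sigma^2=-1$), the exact sequence $0\to\scrO_X(\sigma)\to\scrO_X(2\sigma)\to\scrO_\sigma(2\sigma)\to 0$ pushes forward to a split extension $0\to\scrO_{\Pee^1}\to g_*\scrO_X(2\sigma)\to\scrO_{\Pee^1}(-2)\to 0$, so $g_*\scrO_X(2\sigma)\cong\scrO_{\Pee^1}\oplus\scrO_{\Pee^1}(-2)$, which is not balanced. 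Hence you cannot take ``any $g$-ample divisor $D_0$''; the divisor $D$ must be chosen with care. The paper does this in Theorem~\ref{goodratsurf}: on any rational elliptic surface there is a smooth rational curve $D$ with $D^2=n-2$ (found explicitly on an $\mathbb{F}_a$ model), and for such a $D$ one checks $h^1(\scrO_X(D))=0$ and $h^0(\scrO_X(D))=n$, so $g_*\scrO_X(D)=\scrO_{\Pee^1}^n$ by Lemma~\ref{5.3}.

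Once you replace your balancedness argument with this construction, the rest of your outline is essentially the paper's proof. Your line-bundle matching via a translation on the common fibre is a bit more than is needed (matching the $\mmu_n$-classes suffices, and that is automatic since both lie in $[F]^\perp$), but it is not wrong.
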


We prove the claim in several steps.
First, we can find the genus one fibration $f'\colon Y' \to \Pee^1$ and the divisor $D'$ by induction on $d$, beginning with the case $d=2$. 
Next we find appropriate divisors on rational elliptic surfaces:

\begin{theorem}\label{goodratsurf} Let $\pi \colon X\to \Pee^1$ be a rational elliptic surface and let $n\in \Zee$, $n\geq 1$. Then there exists a divisor $D$ on $X$ such that $D\cdot F = n$ and  $\pi_*\scrO_X(D) = \scrO_{\Pee^1}^n$. 
\end{theorem}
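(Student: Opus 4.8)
The plan is to reduce the statement to producing a single divisor with prescribed numerical invariants, and then to write that divisor down explicitly on the blow‑up model of $X$.

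\emph{Step 1: reduction.} Suppose $D$ is $f$-nef with $D\cdot F=n$. Then $R^1\pi_*\scrO_X(D)=0$ (on each fibre $\scrO_X(D)$ restricts to a line bundle of positive degree that is nonnegative on every component, so has vanishing $H^1$), hence $\pi_*\scrO_X(D)$ is locally free of rank $n$ and $H^i(X;\scrO_X(D))=H^i(\Pee^1;\pi_*\scrO_X(D))$. Since $\chi(X;\scrO_X)=1$ and $K_X=-F$, one has $\chi(X;\scrO_X(D))=1+\tfrac12(D^2+n)$ and $H^2(X;\scrO_X(D))=H^0(X;\scrO_X(-F-D))=0$ (as $(-F-D)\cdot F<0$). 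Writing $\pi_*\scrO_X(D)=\bigoplus_i\scrO_{\Pee^1}(\alpha_i)$: the vanishing $H^1(X;\scrO_X(D))=0$ forces all $\alpha_i\ge-1$; the vanishing $H^0(X;\scrO_X(D-F))=H^0(\Pee^1;\pi_*\scrO_X(D)(-1))=0$ forces all $\alpha_i\le 0$; and $\sum_i\alpha_i=\deg\pi_*\scrO_X(D)=\chi(X;\scrO_X(D))-n$, which vanishes precisely when $D^2=n-2$, in which case every $\alpha_i=0$. So I would reduce to exhibiting an $f$-nef $D$ with
\[
D\cdot F=n,\qquad D^2=n-2,\qquad H^1(X;\scrO_X(D))=0,\qquad H^1(X;\scrO_X(D-F))=0 ;
\]
here I used that $\chi(X;\scrO_X(D-F))=0$ and $H^2(X;\scrO_X(D-F))=H^0(X;\scrO_X(-D))=0$, so $H^0(X;\scrO_X(D-F))=0$ is equivalent to $H^1(X;\scrO_X(D-F))=0$.

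\emph{Step 2: construction.} Realize $X$ as the blow‑up $\beta\colon X\to\Pee^2$ at the base points of the cubic pencil defining $\pi$, with $H=\beta^*\scrO_{\Pee^2}(1)$, exceptional classes $E_1,\dots,E_9$, and $F=-K_X=3H-\sum_iE_i$. For $n\ge 3$ I would take
\[
D=\tfrac{n+1}{2}H-\tfrac{n-1}{2}E_1-E_2-E_3\ \ (n\ \text{odd}),\qquad
D=\tfrac{n}{2}H-\bigl(\tfrac{n}{2}-1\bigr)E_1-E_2\ \ (n\ \text{even}).
\]
A direct computation gives $D\cdot F=n$ and $D^2=n-2$. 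Because the multiplicities in $D$ are small, one checks $D\cdot C\ge 0$ for every $(-1)$-class $C$ on $X$ (and, when all fibres of $\pi$ are irreducible, there are no $(-2)$-curves, so together with $D\cdot F=n\ge0$ this gives nefness); hence $D$ is nef, in particular $f$-nef. For $n\ge 3$ we have $D^2=n-2>0$, so $D$ is nef and big, while $D+F$ is nef with $(D+F)^2=3n-2>0$, hence also nef and big; Kawamata--Viehweg vanishing (we are over $\Cee$), applied with $K_X=-F$, yields $H^1(X;\scrO_X(D))=H^1(X;\scrO_X(D-F))=0$, and Step 1 concludes. The cases $n=1$ ($D=\sigma$, a section) and $n=2$ ($D=H-E_1$, the pencil of lines through a base point) I would verify directly via $\beta$: $\pi_*\scrO_X(\sigma)=\scrO_{\Pee^1}$ and $\pi_*\scrO_X(H-E_1)=\scrO_{\Pee^1}^2$.

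\emph{Main obstacle.} The reduction in Step 1 is the real content; once it is in place the problem becomes a pure statement about a nef class with $D^2=n-2$. The only delicate point in Step 2 is the nefness of the explicit $D$ (equivalently, that $|D|$ has the expected dimension), which I expect to be routine since it amounts to finitely many inequalities $D\cdot C\ge 0$ against the exceptional $(-1)$-classes, all immediate from the numerics. In positive characteristic one would replace Kawamata--Viehweg either by the vanishing $R^i\beta_*\scrO_X(D)=0$ for $i>0$ combined with cohomology on $\Pee^2$ (valid for the specific $D$ above), or by the Chern‑class estimates of Section~1; and the argument is cleanest precisely when all fibres of $\pi$ are irreducible, which is the case needed for Claim~\ref{mainclaim}.
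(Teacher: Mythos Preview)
Your reduction in Step~1 is exactly the paper's: both arguments come down to producing a divisor $D$ with $D\cdot F=n$, $D^2=n-2$, $h^1(\scrO_X(D))=0$, and $h^0(\scrO_X(D-F))=0$. The implementations of Step~2 differ. The paper takes $D$ to be a \emph{smooth rational curve} with $D^2=n-2$ (found via the base-point-free systems $|\sigma+aF|$ on $\mathbb{F}_0$, $\mathbb{F}_1$, $\mathbb{F}_2$, which $X$ always dominates); the two vanishings then drop out of the structure sequences $0\to\scrO_X\to\scrO_X(D)\to\scrO_D(D)\to 0$ and $0\to\scrO_X(-D)\to\scrO_X\to\scrO_D\to 0$, with no appeal to Kawamata--Viehweg and hence no restriction on the characteristic. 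Your explicit classes on the $\Pee^2$-model are different representatives of the same numerical type (indeed, for generic $X$ they are also classes of smooth rational curves), and your use of Kawamata--Viehweg is a perfectly good substitute over $\Cee$; the paper's route simply buys characteristic-freeness for free and avoids the nefness discussion.

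One imprecision to flag: your phrase ``finitely many inequalities $D\cdot C\ge 0$ against the exceptional $(-1)$-classes'' is not literally correct, since a rational elliptic surface with irreducible fibers has infinitely many sections (Mordell--Weil rank $8$). Nefness of your $D$ does hold, but the clean way to see it is to write, for $n$ even, $D=(\tfrac{n}{2}-1)(H-E_1)+(H-E_2)$ as a sum of two nef classes, and for $n$ odd, $D=\tfrac{n-1}{2}(H-E_1)+(H-E_2-E_3)$ as nef plus an irreducible $(-1)$-curve, then check $D$ against the finitely many components of this effective decomposition. With that adjustment your argument is complete (over $\Cee$, with irreducible fibers, which as you note is all that is needed for Claim~\ref{mainclaim}).
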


The proof follows from the next two lemmas:

\begin{lemma} 
Let $\pi \colon X\to \Pee^1$ be a rational elliptic surface and let $n\in \Zee$, $n\geq 1$. Let $D$ be a smooth rational curve in $X$ such that $D^2 = n-2$. Then $\pi_*\scrO_X(D) = \scrO_{\Pee^1}^n$. 
\end{lemma}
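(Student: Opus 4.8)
\emph{Plan of proof.} Write $V=\pi_*\scrO_X(D)$. The strategy is to show that $V$ is a vector bundle of rank $n$ on $\Pee^1$ satisfying $h^0(\Pee^1;V\otimes\scrO_{\Pee^1}(-1))=0$ and $h^1(\Pee^1;V\otimes\scrO_{\Pee^1}(-1))=0$. Writing $V=\bigoplus_{i=1}^n\scrO_{\Pee^1}(a_i)$, the first vanishing forces every $a_i\leq 0$ and the second forces every $a_i\geq 0$, so that $a_i=0$ for all $i$ and $V\cong\scrO_{\Pee^1}^n$. Thus the whole proof reduces to two cohomology vanishings, which I will transport to $X$ by base change.

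I would begin with the preliminaries. On a rational elliptic surface $K_X\sim -F$, so adjunction for the smooth rational curve $D$ gives $D\cdot K_X=-2-D^2=-n$ and hence $D\cdot F=-D\cdot K_X=n\geq 1$. Being irreducible of positive fibre degree, $D$ is contained in no fibre, so $D\cdot C\geq 0$ for every component $C$ of every fibre; i.e.\ $D$ is $f$-nef. Applying to $\scrO_X(-D)$ restricted to each fibre $E$ the elementary vanishing recalled earlier (a line bundle of degree $-n$ with non-positive degree on every component of $E$ has no global sections) together with relative duality along the fibres gives $R^1\pi_*\scrO_X(D)=0$; since $X$ has no multiple fibres one has $\scrO_X(D-F)|_E\cong\scrO_X(D)|_E$ for every fibre $E$, so likewise $R^1\pi_*\scrO_X(D-F)=0$. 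Consequently $V$ is locally free of rank $n$, and by the projection formula $V\otimes\scrO_{\Pee^1}(-1)=\pi_*\scrO_X(D-F)$; Leray then identifies $H^i(\Pee^1;V\otimes\scrO_{\Pee^1}(-1))$ with $H^i(X;\scrO_X(D-F))$. So it suffices to prove $h^0(X;\scrO_X(D-F))=h^1(X;\scrO_X(D-F))=0$.

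For $h^1$ I would use Serre duality and $K_X\sim -F$: $h^1(X;\scrO_X(D-F))=h^1(X;\scrO_X(K_X-D+F))=h^1(X;\scrO_X(-D))$, and the exact sequence $0\to\scrO_X(-D)\to\scrO_X\to\scrO_D\to 0$, combined with $H^1(X;\scrO_X)=0$ and the restriction isomorphism $H^0(X;\scrO_X)\xrightarrow{\ \sim\ }H^0(D;\scrO_D)=\Cee$ (valid since $D\cong\Pee^1$), yields $h^1(X;\scrO_X(-D))=0$. For $h^0$ I would first obtain $h^0(X;\scrO_X(D))=n$ and $h^1(X;\scrO_X(D))=0$ from $0\to\scrO_X\to\scrO_X(D)\to\scrO_D(D)\to 0$, using $\scrO_D(D)\cong\scrO_{\Pee^1}(D^2)=\scrO_{\Pee^1}(n-2)$ and $H^1(X;\scrO_X)=0$; then restrict to a smooth fibre $F_0$, where $\scrO_X(D)|_{F_0}$ has degree $n\geq 1$ on an elliptic curve, so $h^0(F_0;\scrO_X(D)|_{F_0})=n$ and $h^1=0$. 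The resulting four-term sequence
$$0\to H^0(\scrO_X(D-F))\to H^0(\scrO_X(D))\to H^0(F_0;\scrO_X(D)|_{F_0})\to H^1(\scrO_X(D-F))\to 0$$
has its two middle terms of equal dimension $n$, forcing $h^0(X;\scrO_X(D-F))=h^1(X;\scrO_X(D-F))=0$. Together with the first paragraph this gives $V\cong\scrO_{\Pee^1}^n$.

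The only step that is not pure bookkeeping is the vanishing $h^0(X;\scrO_X(D-F))=0$, i.e.\ that no divisor linearly equivalent to $D$ contains an entire fibre; I expect this to be the crux. The argument above handles it by feeding the less transparent identity $h^1(X;\scrO_X(D-F))=h^1(X;\scrO_X(-D))=0$ — which is essentially free since $q(X)=0$ and $K_X\sim -F$ — into the fibre-restriction sequence. Everything else is Riemann--Roch on $\Pee^1$, Serre duality on $X$, and the standard cohomology of positive-degree line bundles on genus-one curves.
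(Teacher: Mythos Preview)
Your proof is correct and runs on the same engine as the paper's: adjunction gives $D\cdot F=n$, the sequence $0\to\scrO_X\to\scrO_X(D)\to\scrO_D(D)\to 0$ gives $h^0(\scrO_X(D))=n$ and $h^1(\scrO_X(D))=0$, and the sequence $0\to\scrO_X(-D)\to\scrO_X\to\scrO_D\to 0$ together with Serre duality (and $K_X\sim -F$) handles the twist by $-F$. The paper packages the conclusion via Lemma~\ref{5.3}, which is exactly your first paragraph.

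There is one small economy you missed. For the crux $h^0(\scrO_X(D-F))=0$ you go through $h^1(\scrO_X(-D))=0$, dualize to $h^1(\scrO_X(D-F))=0$, and then feed this into the fibre-restriction four-term sequence. The paper instead reads off $h^2(\scrO_X(-D))=0$ from the \emph{same} ideal-sheaf sequence (using $p_g(X)=0$ and $h^1(\scrO_D)=0$), and Serre duality turns this directly into $h^0(\scrO_X(D+K_X))=h^0(\scrO_X(D-F))=0$. This bypasses the fibre-restriction argument entirely and avoids needing $h^1(\scrO_X(D-F))=0$ as a separate input. Your route is perfectly valid, just one step longer.
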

\begin{proof} It follows  from the exact sequence $$0\to \scrO_X \to \scrO_X(D) \to \scrO_{\Pee^1}(n-2) \to 0$$
 that $h^1(\scrO_X(D)) = 0$, and clearly $h^2(\scrO_X(D)) = h^2(\scrO_X(-D-F) ) =0$. From the exact sequence $$0\to \scrO_X(-D) \to \scrO_X \to \scrO_D \to 0,$$
  it follows that $h^2(\scrO_X(-D))=0$, and hence that $h^0(\scrO_X(D+K_X)) = h^0(\scrO_X(D-F) )= 0$. 
  
  By adjunction, $-2 = D^2+D\cdot K_X = n-2 +D\cdot K_X=n-2-D\cdot F$. Thus $D\cdot F = n$. It follows that $\chi(\scrO_X(D)) = h^0(\scrO_X(D)) = \frac12(D^2 -D\cdot K_X)+1 = \frac12(2n-2)+1=n$. 
  
  By Lemma~\ref{5.3},  $\pi_*\scrO_X(D) = \scrO_{\Pee^1}^k \oplus \scrO_{\Pee^1}(-1)^{n-k}$ is rigid, and $$k = h^0(\Pee^1; \pi_*\scrO_X(D)) = h^0(X;\scrO_X(D))=n.$$ Thus $\pi_*\scrO_X(D) = \scrO_{\Pee^1}^n$.
\end{proof}

\begin{lemma} 
Let $\pi \colon X\to \Pee^1$ be a rational elliptic surface and let $n\in \Zee$, $n\geq 1$. Then there exists a  smooth rational curve $D$ in $X$ such that $D^2 = n-2$.  
\end{lemma}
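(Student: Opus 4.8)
The plan is to reduce the statement to producing a smooth rational multisection of each degree, and then to construct such curves on a ruled model of $X$. Since $X$ is a rational elliptic surface we have $K_X=-F$, so for any smooth rational curve $D$ on $X$ adjunction gives $D^2-D\cdot F=D^2+D\cdot K_X=-2$; hence the condition $D^2=n-2$ is equivalent to $D\cdot F=n$. It therefore suffices to exhibit, for every $n\ge1$, a smooth rational curve on $X$ meeting a general fibre in $n$ points. For $n=1$ we take $D$ to be a section of $\pi$, which exists by the definition of an elliptic surface and is a smooth rational curve of self-intersection $-1$.

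Now let $n\ge 2$. Being a non-minimal rational surface, $X$ is the blow-up of a Hirzebruch surface $\mathbb{F}_e$ at eight (possibly infinitely near) points, replacing $\Pee^2$ by $\mathbb{F}_1=\operatorname{Bl}_{\mathrm{pt}}\Pee^2$ if necessary; since any irreducible curve $C$ on $X$ satisfies $C^2=2p_a(C)-2+C\cdot F\ge C\cdot F-2\ge-2$, in particular $X$ carries no curve of self-intersection $\le -3$, and therefore $e\le 2$. Let $p\colon X\to\mathbb{F}_e$ be the contraction, $E_1,\dots,E_8$ the classes of the exceptional divisors, $C_0$ the negative section and $\mathfrak f$ a ruling fibre of $\mathbb{F}_e$, so that $F=-K_X=p^*\!\bigl(2C_0+(e+2)\mathfrak f\bigr)-\sum_i E_i$. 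For every $b\ge e$ the linear system $|C_0+b\mathfrak f|$ on $\mathbb{F}_e$ is base point free and a general member is a smooth section of $\mathbb{F}_e\to\Pee^1$, hence a smooth rational curve of self-intersection $2b-e$. Given $n$, choose $b\ge e$ so that $m:=2b+2-e-n$ satisfies $0\le m\le 8$; this is possible because the admissible $b$ form a non-empty interval (of length $\ge 4$). Let $D$ be the strict transform in $X$ of a general member of $|C_0+b\mathfrak f|$ passing through $m$ of the centres of $p$ and through none of the remaining $8-m$; thus $D=p^*(C_0+b\mathfrak f)-E_{i_1}-\dots-E_{i_m}$ is again a smooth rational curve, and a direct computation gives $D\cdot F=(C_0+b\mathfrak f)\cdot\bigl(2C_0+(e+2)\mathfrak f\bigr)-m=n$. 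By the reduction of the first paragraph, $D^2=n-2$, as required.

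The only non-formal point, and the step I expect to require the most care, is the existence inside $|C_0+b\mathfrak f|$ of a member that is simultaneously smooth, passes simply through the prescribed $m$ centres, and avoids the other $8-m$, the centres being a possibly special configuration (the base points of the cubic pencil defining $\pi$). I would handle this by a dimension count: $|C_0+b\mathfrak f|$ is base point free of dimension $2b-e+1$, imposing passage through the $m$ centres cuts its dimension down to $n-1\ge1$, the residual subsystem has no further unassigned base points, and by Bertini a general member is smooth off the assigned centres, hence everywhere once one checks the centres impose independent conditions; such a general member then automatically misses the remaining $8-m$ centres. When some centres are infinitely near, ``passing through a centre'' is read on the successive blow-ups and the dimension bookkeeping is unaffected.
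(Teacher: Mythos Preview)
Your reduction via adjunction ($D^2=n-2\Leftrightarrow D\cdot F=n$) and the case $n=1$ match the paper. For $n\ge 2$ both you and the paper blow $X$ down to some $\mathbb{F}_e$ with $e\le 2$ and look for a smooth rational section of the ruling there, but the two arguments diverge at the key point.

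The paper does \emph{not} fix a single $e$. It observes that $X$ always dominates $\mathbb{F}_1$ \emph{and} one of $\mathbb{F}_0,\mathbb{F}_2$ (any blow-up of $\Pee^2$ dominates $\mathbb{F}_1$; if $X$ dominates $\mathbb{F}_2$, the first center cannot lie on the negative section, so an elementary transformation produces a model over $\mathbb{F}_1$ as well). One then chooses $e$ with $e\equiv n\pmod 2$, so that the base-point-free system $|\sigma+a\mathfrak f|$ on $\mathbb{F}_e$ (for the appropriate $a$) already has general member of self-intersection $n-2$. A general such curve misses all eight blow-up centers, its proper transform equals its total transform, and one is done with no incidence conditions at all.

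Your route instead keeps a single $e$ and forces the curve through $m$ of the eight centers to adjust $D^2$. The gap is precisely where you flag it, and the sketch you give does not close it. The eight centers are the base locus of an anticanonical pencil, hence \emph{not} in general position: two of them can lie on a common ruling fiber $\mathfrak f_0$ (forcing any member of $|C_0+\mathfrak f|$ through both to contain $\mathfrak f_0$), centers may sit on $C_0$ when $e\le 1$, and whenever $X$ has reducible fibers some centers are infinitely near. So the assertions ``the residual subsystem has no further unassigned base points'' and ``a general member is smooth at the assigned centers'' require real work---independence of point conditions does not by itself give smoothness at those points, and you have not said how to choose \emph{which} $m$ centers to avoid the bad configurations. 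The approach is probably salvageable (e.g.\ by always arranging $m\in\{0,1\}$ and treating the single imposed point carefully), but as written it is incomplete; the paper's parity trick with the $\mathbb{F}_e$ model sidesteps the whole issue.
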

\begin{proof} If $n=1$, then we can just take a section of $X$, or equivalently an exceptional curve. Otherwise, there is a blowdown $\rho\colon X \to \bar{X}$, where $\bar{X}$ is a smooth minimal rational surface with a smooth anticanonical divisor, and hence $\bar{X}= \mathbb{F}_0$, $\mathbb{F}_2$, or $\Pee^2$. Thus $X$ dominates $\mathbb{F}_0$, $\mathbb{F}_1$, or $\mathbb{F}_2$. Moreover, $X$ is a blowup of either  $\mathbb{F}_0$, $\mathbb{F}_1$, or $\mathbb{F}_2$ at $8$ points, and if $X$ is a blowup of $\mathbb{F}_2$, then the first point of the blowup does not lie on the negative section. It then follows easily that $X$ either simultaneously dominates $\mathbb{F}_0$ and  $\mathbb{F}_1$, or $X$  simultaneously dominates $\mathbb{F}_1$ and  $\mathbb{F}_2$. It thus suffices to show: if $n$ is even and $n\geq 2$, then there exist base point free linear systems on $\mathbb{F}_0$ and  $\mathbb{F}_2$ whose general members are smooth rational curves $D$ with $D^2 =n-2$, and if $n$ is odd and $n\geq 3$, then there exists a  linear system on $\mathbb{F}_1$ whose general member is a smooth rational curve $D$ with $D^2 =n-2$. The proper transform in $X$ of a general $D$ will then have the desired properties.

If $n=2$, then we can take $D$ to be a general fiber of the ruling on $\mathbb{F}_a$, $a=0,1,2$. So we can assume $n\geq 3$. In case $n=2a$ is even, $a\geq 2$, and $X$ dominates $\mathbb{F}_2$, choose the base point free linear system $|\sigma + aF|$, where $\sigma$ is the negative section. Then the general member $D$ of $|\sigma + aF|$ is smooth and satisfies $D^2 = 2a-2 = n-2$. The case where $X$ dominates $\mathbb{F}_0$ is similar, using the linear system $|\sigma + (a-1)F|$, where $\sigma$, $F$ are the classes of the two rulings on $\mathbb{F}_0$. If $n= 2a+1$ is odd and $n\geq 3$, so that $a\geq 1$, choose the base point free linear system $|\sigma + aF|$ on $\mathbb{F}_1$, where $\sigma$ is the negative section. The general member $D$ of $|\sigma + aF|$ is smooth and satisfies $D^2 = 2a-1 = n-2$.
\end{proof}

Next we  construct a degenerating family of elliptic surfaces with a section which is the analogue of the family $\mathcal{Y}$ of genus one fibrations:

\begin{theorem}\label{existdegens} Let $\pi \colon X\to \Pee^1$ be a smooth rational elliptic surface, let $\pi' \colon X'\to \Pee^1$ be an elliptic surface with $\chi(\scrO_{X'}) = d-1$, let $F\subseteq X$ and $F' \subseteq X'$ be two smooth fibers which are isomorphic as elliptic curves. Then there exists a family $\mathcal{X} \to \mathcal{B} \to \Delta$ where $\mathcal{X}$ is a smooth threefold and $\mathcal{B}$ is a smooth surface, $\Delta$ is a smooth curve and $t_0\in \Delta$ as in Lemma~\ref{vectbundsdegen}, with the following properties: let $B_t$ be the fiber of $\mathcal{B}$ over $t\in \Delta$, and similarly for $X_t$, and let $\pi_t\colon X_t \to B_t$ be the induced morphism. Then: 
\begin{enumerate}
\item[\rm(i)] $B_t \cong \Pee^1$ for $t\neq t_0$ and $B_{t_0} \cong \Pee^1\cup\Pee^1$ meeting normally;
\item[\rm(ii)] The induced morphism $\pi_t \colon X_t \to B_t\cong \Pee^1$ realizes $X_t$ as an elliptic surface over $\Pee^1$ with  $\chi(\scrO_{X_t}) = d$, for $t\neq {t_0}$;
\item[\rm(iii)] $X_{t_0} = X'\cup X$, where $X'$ and $X$ meet normally  along the curves $F\cong F'$, and the   morphism $\pi_{t_0} = \pi'\cup \pi\colon X'\cup X \to \Pee^1\cup \Pee^1$ induces the two given fibrations on the two components of the normalization of $X_{t_0}$.
\end{enumerate}
\end{theorem}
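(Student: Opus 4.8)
The plan is to build $\mathcal{X}\to\mathcal{B}\to\Delta$ explicitly from Weierstrass models, following the classical recipe for degenerating elliptic surfaces (Kodaira, Kas \cite{Kas}, \cite[Section 1.5]{FriedMorg}). For the base, take $\Delta$ a disk with parameter $t$ centred at $t_0=0$ and let $\mathcal{B}$ be the blow-up of $\Pee^1\times\Delta$ at $(\infty,0)$, with $\sigma\colon\mathcal{B}\to\Pee^1\times\Delta$ the blow-down, $p_1\colon\mathcal{B}\to\Pee^1$ and $\phi\colon\mathcal{B}\to\Delta$ the two projections, and $E$ the exceptional curve. Then $\phi^{-1}(t)\cong\Pee^1$ for $t\neq 0$ while $\phi^{-1}(0)=C_1\cup C_2$, with $C_1$ the strict transform of $\Pee^1\times\{0\}$ and $C_2=E$, meeting transversally at one point $q$; this is the base of Lemma~\ref{vectbundsdegen}. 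Set $\mathcal{L}_0=\sigma^*p_1^*\scrO_{\Pee^1}(d)\otimes\scrO_{\mathcal{B}}(-E)$; one checks $\mathcal{L}_0|_{\phi^{-1}(t)}=\scrO_{\Pee^1}(d)$ for $t\neq 0$, $\mathcal{L}_0|_{C_1}=\scrO_{\Pee^1}(d-1)$ and $\mathcal{L}_0|_{C_2}=\scrO_{\Pee^1}(1)$, which is exactly the splitting of the invariant $\chi=d$ as $(d-1)+1$.

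Next, assemble the Weierstrass data. The elliptic surface $X$ (resp.\ $X'$) is the minimal resolution of a Weierstrass model $W\to\Pee^1$ (resp.\ $W'\to\Pee^1$) given by sections $g_2\in H^0(\scrO_{\Pee^1}(4))$, $g_3\in H^0(\scrO_{\Pee^1}(6))$ (resp.\ $g_2'\in H^0(\scrO_{\Pee^1}(4(d-1)))$, $g_3'\in H^0(\scrO_{\Pee^1}(6(d-1)))$). Choose coordinates on the two copies of $\Pee^1$ so that $F$ (resp.\ $F'$) lies over the point corresponding to $q$. Since $F\cong F'$ as elliptic curves, after replacing $(g_2',g_3')$ by $(\lambda^4 g_2',\lambda^6 g_3')$ for a suitable $\lambda\in k^\times$ --- which changes neither $W'$ nor $X'$ --- we may assume $g_2(q)=g_2'(q)$ and $g_3(q)=g_3'(q)$, so the Weierstrass cubics over $q$ literally agree. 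Because the restriction maps $H^0(\mathcal{B};\mathcal{L}_0^{\otimes 4})\to H^0(C_1\cup C_2;\mathcal{L}_0^{\otimes 4}|_{C_1\cup C_2})$ and $H^0(\mathcal{B};\mathcal{L}_0^{\otimes 6})\to H^0(C_1\cup C_2;\mathcal{L}_0^{\otimes 6}|_{C_1\cup C_2})$ are surjective (an elementary vanishing, or a direct computation in the blow-up charts), there exist $G_2\in H^0(\mathcal{B};\mathcal{L}_0^{\otimes 4})$ and $G_3\in H^0(\mathcal{B};\mathcal{L}_0^{\otimes 6})$ restricting to $g_2',g_3'$ on $C_1$ and to $g_2,g_3$ on $C_2$ (compatibility at $q$ being the normalization just arranged); choose $G_2,G_3$ generically subject to this, and shrink $\Delta$ so that $G_2^3-27G_3^2$ does not vanish on any $\phi^{-1}(t)$.

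Let $\mathcal{W}\subseteq\Pee(\scrO_{\mathcal{B}}\oplus\mathcal{L}_0^{\otimes 2}\oplus\mathcal{L}_0^{\otimes 3})$ be cut out by the Weierstrass equation with coefficients $G_2,G_3$. Then $\mathcal{W}\to\mathcal{B}$ is a Weierstrass fibration with $\mathcal{W}|_{C_1}=W'$, $\mathcal{W}|_{C_2}=W$, and $\mathcal{W}|_{\phi^{-1}(t)}$ the Weierstrass model of an elliptic surface with $\chi=d$ for $t\neq 0$; its fibre over $t_0$ is $W'\cup W$, glued along their common fibre over $q$, which is the smooth elliptic curve $F\cong F'$, and this gluing is normal crossings since $C_1,C_2$ meet transversally at $q$ and $\mathcal{W}\to\mathcal{B}$ is smooth over $q$. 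Finally, $\mathcal{W}$ has at worst rational double point (compound Du Val) singularities, and a relative resolution of a Weierstrass elliptic fibration over a smooth surface (Kodaira, Kas, and Miranda's theory of smooth models of elliptic threefolds) produces a birational morphism $\mathcal{X}\to\mathcal{W}$ over $\mathcal{B}$ with $\mathcal{X}$ smooth and $\mathcal{X}|_{\phi^{-1}(t)}$ the relatively minimal model of $\mathcal{W}|_{\phi^{-1}(t)}$ for every $t$. For $t\neq t_0$ this is the required $X_t$ with $\chi(\scrO_{X_t})=d$, and for $t=t_0$ it is $X'\cup X$ --- since nothing is blown up over $q$, the double curve is still $F\cong F'$ and the induced fibration is $\pi'\cup\pi$ --- so (i)--(iii) hold.

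The one substantive point is this last step: $\mathcal{X}$ must be smooth with $\mathcal{X}|_{\phi^{-1}(t_0)}$ equal to $X'\cup X$ and \emph{nothing more}, in particular without base-changing $\Delta$, since a base change would force a chain of rational curves into $\phi^{-1}(t_0)$ (violating (i)) or chains of (elliptic) ruled surfaces into $X_{t_0}$ (as in the degenerations of Section~3). What makes this go through is that one resolves the Weierstrass threefold over the \emph{two}-dimensional base $\mathcal{B}$, so the rational double points of the surface fibres coming from reducible fibres of $X,X'$ are resolved using the extra $\mathcal{B}$-direction rather than a base change on $\Delta$; in the case needed in the sequel, where all fibres of $X$ and of $X'$ are irreducible, $W'=X'$ and $W=X$ are already smooth and this difficulty disappears. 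Alternatively one can avoid the explicit resolution: the normal crossings surface $X'\cup X$ glued along $F\cong F'$ is automatically $d$-semistable, since $F^2=0$ on each component gives $N_{F/X'}\otimes N_{F/X}\cong\scrO_F$, its higher obstruction to smoothing vanishes, and a semistable smoothing $\mathcal{X}\to\Delta$ with central fibre $X'\cup X$ then exists by the theory of logarithmic deformations (Friedman, Kawamata--Namikawa), after which one checks it fibres compatibly over a degenerating family of $\Pee^1$'s.
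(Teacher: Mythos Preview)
Your approach is correct and genuinely different from the paper's. The paper does not build a Weierstrass model directly over the blown-up base $\mathcal{B}$; instead it works entirely on the $X'$ side. It views the Weierstrass data $(g_2',g_3')$ of $X'$ as sections of $\scrO_{\Pee^1}(4d),\scrO_{\Pee^1}(6d)$ vanishing to orders $4,6$ at the chosen point $p$, so that the resulting degree-$d$ Weierstrass surface $\overline{X}$ acquires a simple elliptic singularity $y^2=4x^3-c_2xu^4-c_3u^6$ over $p$. It then writes down an explicit one-parameter unfolding $\overline{\mathcal X}\to\mathbb A^1$ of this singularity inside the degree-$d$ Weierstrass family, performs a weighted blowup of the ambient fourfold with weights $(1,1,2,3)$ on $(u,v,x,y)$, and checks that the exceptional divisor is a degree-one del Pezzo $\hat X\subset\Pee(1,1,2,3)$ containing $F$ as an anticanonical curve. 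A Type~I flip of the residual $(-1)$-curve on the proper transform $\tilde X$ of $\overline X$ then replaces $\tilde X$ by $X'$ and blows up $\hat X$ to the rational elliptic surface $X$; the morphism to $\Pee^1\times\mathbb A^1$ factors through its blowup at $(p,0)$, which is your $\mathcal B$.

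Your route avoids the weighted blowup and the flip by committing to $\mathcal B$ from the outset and gluing the two given Weierstrass models via the line bundle $\mathcal L_0=\sigma^*p_1^*\scrO_{\Pee^1}(d)\otimes\scrO_{\mathcal B}(-E)$, which is cleaner. What the paper's construction buys is that the rational elliptic surface $X$ \emph{emerges} from the unfolding parameters $\alpha_i,\beta_j$, making it transparent that every $X$ with a fiber isomorphic to $F$ occurs; you achieve the same thing more directly by starting with $X$ and extending sections, at the cost of having to match the Weierstrass cubics at $q$ (your $\lambda$-rescaling). One caution: your appeal to ``Miranda's theory'' for the resolution step in the reducible-fiber case is not quite right as stated --- if the discriminant of $\mathcal W\to\mathcal B$ is smooth and tangent to $C_1$ at an $I_n$ point of $X'$, then $\mathcal W$ itself may already be smooth there while $\mathcal W|_{C_1}=W'$ is not, so no resolution over $\mathcal B$ produces $X'$ on the nose without a small resolution or base change. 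But you correctly isolate this issue and note that in the case actually used in Claim~\ref{mainclaim} (all fibers of $X$ and $X'$ irreducible) one has $W=X$, $W'=X'$ and $\mathcal W$ is already smooth; the paper's proof has the same limitation, as is visible in its parenthetical ``at least in a neighborhood of the fiber over~$0$.''
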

\begin{proof} The surface $X'$ is defined by  sections $g_2'\in H^0(\Pee^1; \scrO_{\Pee^1}(4(d-1)))$ and $g_3'\in H^0(\Pee^1; \scrO_{\Pee^1}(6(d-1)))$. Let $p\in \Pee^1$ be the point corresponding to the fiber $F'$. Since $\scrO_{\Pee^1}(4d) \otimes \scrO_{\Pee^1}(-4p) \cong \scrO_{\Pee^1}(4(d-1))$ and $\scrO_{\Pee^1}(6d) \otimes \scrO_{\Pee^1}(-6p) \cong \scrO_{\Pee^1}(6(d-1))$,  $g_2'$  defines a section  $g_2\in H^0(\Pee^1; \scrO_{\Pee^1}(4d))$ vanishing to order $4$ at $p$, and likewise $g_3'$  defines a section  $g_3\in H^0(\Pee^1; \scrO_{\Pee^1}(6d))$ vanishing to order $6$ at $p$. The Weierstrass equation $y^2z = 4x^3 - g_2xz^2 - g_3z^3$   defines a hypersurface $\overline{X}$ in the $\Pee^2$-bundle $\Pee(\scrO_{\Pee^1}(2d) \oplus \scrO_{\Pee^1}(3d) \oplus \scrO_{\Pee^1})$, with a cuspidal fiber $\overline{E}$ over $p$, and $\overline{X}$ has a simple elliptic singularity at the cusp of $\overline{E}$. More precisely, fixing a local coordinate $u$ on $\Pee^1$ at $p$ and a local section of $\scrO_{\Pee^1}(1)$ at $p$, and hence of $\scrO_{\Pee^1}(4d)$ and $\scrO_{\Pee^1}(6d)$, the singularity defined by the above Weierstrass equation is analytically $y^2 = 4x^3 - c_2xu^4 -c_3u^6$. For simplicity, we shall just consider the case where $c_3\neq 0$, i.e.\ where the $j$-invariant of the fiber $F$ is not $1728$. 

Because the restriction homomorphism 
$$H^0(\Pee^1; \scrO_{\Pee^1}(4d)) \to \scrO_{\Pee^1}(4d)/u^5\scrO_{\Pee^1}(4d)$$ is surjective, for $i=0,1,2,3$ there exist sections $h_2^{(i)}\in H^0(\Pee^1; \scrO_{\Pee^1}(4d))$ such that, using the local coordinate $u$ at $p$ and the fixed trivialization of $\scrO_{\Pee^1}(4d))$ chosen above, $h_2^{(i)}(u) = u^i + O(u^5)$. Likewise, for $i=0, \dots, 5$, there exist sections $h_3^{(i)}\in H^0(\Pee^1; \scrO_{\Pee^1}(6d))$ such that, using the local coordinate $u$ at $p$ and the fixed trivialization of $\scrO_{\Pee^1}(6d))$ chosen above, $h_3^{(i)}(u) = u^i + O(u^7)$. Let $v$ be a coordinate of $\mathbb{A}^1$. For fixed $\alpha_1, \dots, \alpha_4, \beta_1, \dots, \beta_6\in k$, consider the family $\overline{\mathcal{X}} \subseteq \Pee(\scrO_{\Pee^1}(2d) \oplus \scrO_{\Pee^1}(3d) \oplus \scrO_{\Pee^1}) \times \mathbb{A}^1$ of elliptic surfaces parametrized by $v\in \mathbb{A}^1$ defined by $y^2z = 4x^3 - G_2(v)xz^2 - G_3(v)z^3$, where
\begin{align*}
G_2(v) &= g_2 + \sum_{i=1}^4\alpha_iv^ih_2^{(4-i)};\\
G_3(v) &= g_3 + \sum_{i=1}^6\beta_iv^ih_3^{(6-i)}.
\end{align*} 
Note that there is an induced morphism $\overline{\mathcal{X}} \to \Pee^1\times \mathbb{A}^1$ as well as a morphism $\overline{\mathcal{X}} \to  \mathbb{A}^1$ whose fiber over $0$ is $\overline{X}$. It is easy to check that, if the $\alpha_1, \dots,  \beta_6$ are not all $0$, then the general fiber is an elliptic surface over $\Pee^1$ with at worst rational double points. There is a point $q\in \overline{\mathcal{X}}$ lying over $(p,0)$ and corresponding to $x=y=0, z=1$; the local equation for $\overline{\mathcal{X}}$ at $q$ is 
$$y^2 = 4x^3 - (c_2u^4 + \sum_{i=1}^4\alpha_iv^iu^{4-i} + O(u^5))x - (c_3u^6 + \sum_{i=1}^6\beta_iv^iu^{6-i}+ O(u^7)).$$
Make a weighted blowup of the open subset $\{z\neq 0\}$ of $\Pee(\scrO_{\Pee^1}(2d) \oplus \scrO_{\Pee^1}(3d) \oplus \scrO_{\Pee^1}) \times \mathbb{A}^1$ with coordinates $u,v,x,y$, where $u$ and $v$ have weight $1$, $x$ has weight $2$ and $y$ has weight $3$. Thus the exceptional divisor is a weighted projective space $\Pee(1,1,2,3)$. A standard calculation shows that the proper transform $\widetilde{\mathcal{X}}$ of $\overline{\mathcal{X}}$ in this weighted blowup has the effect of resolving the simple elliptic singularity at $q\in \overline{X}$, and in fact is the minimal  resolution of the simple elliptic singularity, as well as resolving the cusp singularity on $\overline{E}$. If $\tilde{X}$ is the proper transform of $\overline{X}$, then the new exceptional divisor on $\tilde{X}$ is an elliptic curve $\tilde{F}$ of self-intersection $-1$, the proper transform of $\overline{E}$ is an exceptional curve $E$, and contracting $E$ gives a morphism  $\tilde{X}\to X'$ which is the blowup of $X'$ at a point on the fiber $F$. 

The exceptional divisor of the morphism $\widetilde{\mathcal{X}} \to \overline{\mathcal{X}}$  is a hypersurface $\hat{X}$ in the weighted projective space $\Pee(1,1,2,3)$ with homogeneous coordinates $u,v,x,y$. As $\hat{X}$ is defined by the homogeneous degree $6$ equation 
$$y^2 = 4x^3 - (c_2u^4 + \sum_{i=1}^4\alpha_iv^iu^{4-i} )x - (c_3u^6 + \sum_{i=1}^6\beta_iv^iu^{6-i}),$$ 
it is a degree one (generalized) del Pezzo surface in $\Pee(1,1,2,3)$, i.e.\ rational double points are allowed. Clearly, by choosing the $\alpha_i, \beta_j$, one can arrange that $\hat{X}$ is an arbitrary del Pezzo surface subject to the condition that  the curve defined by $u=0$ is isomorphic to $F$. If $\hat{X}$ is  smooth  (no rational double points), then $\mathcal{X}$ will be smooth as well, at least in a neighborhood of the fiber over $0$. By a standard construction in threefold birational geometry (a ``Type I modification," see for example \cite [p.\ 13]{Birat}), one can flip the exceptional curve $E$ on $\tilde{X}$ to $\hat{X}$. The new fiber over $0\in\mathbb{A}^1$ then consists of $X'$ together with the blowup $X$ of the del Pezzo surface $\hat{X}$ at the base point of $|-K_{\hat{X}}|$ ($x=1, y=2, u=v=0$), which is then a rational elliptic surface.  The construction can be summarized in the following picture:
$$\begin{matrix}
 \tilde{X}\cup \hat{X} & \subseteq & \widetilde{\mathcal{X}} & \dasharrow & \mathcal{X} & \supseteq & X'\cup X\\
 \downarrow & {} &\downarrow & {} &   \downarrow {} & {}& {} \\
 \overline{X} & \subseteq & \overline{\mathcal{X}} & {} & \mathcal{B} &{} &{}\\
& {} &\downarrow & \swarrow & & &\\
 & & \Pee^1\times \mathbb{A}^1 & & & 
\end{matrix}$$
 The birational morphism $\widetilde{\mathcal{X}}  \dasharrow  \mathcal{X}$ is the Type I modification, which contracts the exceptional curve $E$ on $\tilde{X}$ and blows up the corresponding point of $\hat{X}$ to obtain a rational elliptic surface $X$.  
The inverse image of $(p,0)\in \Pee^1\times \mathbb{A}^1$ in $\overline{\mathcal{X}}$ is the cuspidal fiber $\overline{E}$. Thus the preimage of $(p,0)$ in the weighted blowup of $\overline{\mathcal{X}}$ consists of the union of the exceptional divisor $\hat{X}$ and the proper transform of $\overline{E}$, namely  $E$. After flipping $E$, the inverse image of $(p,0)\in \Pee^1\times \mathbb{A}^1$ is just the divisor $X$, and hence the morphism $\mathcal{X}\to \Pee^1\times \mathbb{A}^1$ factors through the blowup $\mathcal{B}$ of $\Pee^1\times \mathbb{A}^1$ at the point $(p,0)$. Clearly the fiber over $0\in \mathbb{A}^1$ of the morphism $\mathcal{X}\to \mathbb{A}^1$ is $X'\cup X$, and for $t\neq 0$, $t$ in a nonempty Zariski open subset of $\mathbb{A}^1$, the fiber over $t$ of the morphism $\mathcal{X}\to \mathbb{A}^1$ is a smooth elliptic surface. It is easy to check that the induced morphism $X'\cup X \to \Pee^1\cup \Pee^1$ induces the given fibrations on $X'$ and $X$. Replace $\mathbb{A}^1$ by the nonempty open set $\Delta$ which is the complement of points other than $0$ where $\mathcal{X}\to \mathbb{A}^1$ fails to be smooth, and $\mathcal{X}, \mathcal{B}$ by the respective preimages of $\Delta$. It is then straightforward to see that $\mathcal{X}\to \mathcal{B}$ is as claimed.
\end{proof}

We now complete the proof of Claim~\ref{mainclaim} and thus of Theorem~\ref{existence}. Let $d\geq 3$. By induction there exists an elliptic surface $\pi'\colon X'\to \Pee^1$  with $\chi(\scrO_{X'}) = d-1 \geq 2$, with all fibers of $\pi'$ irreducible, and such that there exists a class $\xi' \in H^1(X'; R^1(\pi')_*\mmu_n)$ so that the corresponding pair $(Y', D')$ satisfies: $(f')_*\scrO_{Y'}(D') = \scrO_{\Pee^1}^k\oplus \scrO_{\Pee^1}(-1)^{n-k}$. Let $\pi\colon X\to \Pee^1$ be a rational elliptic surface with all fibers irreducible, let $D$ be a divisor of fiber degree $n$ on $X$ such that $\pi_*\scrO_X(D) = \scrO_{\Pee^1}^n$ (whose existence is guaranteed by Theorem~\ref{goodratsurf}), and let $\xi \in H^1(X; R^1\pi_*\mmu_n)$ be the class corresponding to the pair $(X, D)$. Using the section $\sigma \subseteq X$, we can identify $H^1(X; R^1\pi_*\mmu_n)$ with $\{\sigma, F\}^\perp \subseteq H^2(X; \mmu_n)$, and similarly for $X'$.  Let $\mathcal{X} \to \mathcal{B} \to \Delta$ be the family constructed in Theorem~\ref{existdegens}. From the Mayer-Vietoris sequence for $X_0 = X'\cup X$, namely
$$0\to (\mmu_n)_{X_0} \to (\mmu_n)_{X'}\oplus (\mmu_n)_{X}\to (\mmu_n)_{F}\to 0,$$
the pair $(\xi', \xi)$ induces an element in $H^2(X; \mmu_n) \oplus H^2(X'; \mmu_n)$ which is orthogonal to the classes of $F$ and $F'$, and hence an element of $\lambda_0\in H^2(X_0; \mmu_n)$ orthogonal to the class of a fiber in each surface.  

By the proper base change theorem, after replacing $\Delta$ by an \'etale cover, which we continue to denote by $\Delta$, there exists a class $\lambda \in H^2(\mathcal{X}; \mmu_n)$ whose restriction to $H^2(X_0; \mmu_n)$ is $\lambda_0$, and hence $\lambda \cdot [F] = 0$ for every fiber $F$ of the morphism $\mathcal{X} \to \mathcal{B}$. So finally there is an induced element of $H^1(\mathcal{X}; R^1\pi_*\mmu_n)$, also denoted by $\lambda$, which restricts in the appropriate sense on $X'$ and $X$ to give the classes $\xi'$ and $\xi$, respectively. 

By the remarks at the beginning of the last section, since all fibers of the morphism $\mathcal{X}\to \mathcal{B}$ are irreducible, the class $\lambda\in H^1(\mathcal{X}; R^1\pi_*\mmu_n)$ corresponds to a principal homogeneous space $\mathcal{Y}\to \mathcal{B}$, together with a divisor $\mathcal{D}$ on $\mathcal{Y}$ of relative degree $n$. By construction, it is clear that the conditions of Claim~\ref{mainclaim} are satisfied. This completes the proof of Claim~\ref{mainclaim} and  Theorem~\ref{existence}. \qed

\bigskip
\noindent
Department of Mathematics \\
Columbia University \\
New York, NY 10027 \\
USA

\bigskip
\noindent
{\tt dejong@math.columbia.edu, rf@math.columbia.edu}

\end{document}